\theoremstyle{plain}
\newtheorem{theorem}{Theorem}[section]
\newtheorem{lemma}[theorem]{Lemma}
\newtheorem{corollary}[theorem]{Corollary}
\newtheorem{proposition}[theorem]{Proposition}
\newtheorem{NonSplit}[theorem]{Non-splitting Lemma} 
\theoremstyle{definition}
\newtheorem{definition}[theorem]{Definition}
\newtheorem{remark}[theorem]{Remark}
\tikzset{%
 element/.style={draw,circle,fill=white,inner sep=1.5pt},
 shaded element/.style={draw,circle,fill=black!30,inner sep=1.5pt},
 potato/.style={rounded rectangle,draw,densely dotted,inner sep=1.5pt},
 operation/.style={semithick,densely dotted,shorten >=3pt,shorten <=3pt,>=latex},
 loopy operation/.style={semithick,densely dotted,shorten >=3pt,shorten <=3pt,>=latex, min distance=18pt},
 graph/.style={thin,shorten >=3pt,shorten <=3pt},
 loopy graph/.style={thin,shorten >=3pt,shorten <=3pt, min distance=18pt},
 move up/.style= {transform canvas={yshift=2pt}},
 move down/.style={transform canvas={yshift=-2pt}},
 move far down/.style={transform canvas={yshift=-4pt}},
 move left/.style= {transform canvas={xshift=-2pt}},
 move right/.style={transform canvas={xshift=2pt}},
 auto}
\DeclareMathOperator{\Var}{Var}
\DeclareMathOperator{\Mod}{Mod}
\DeclareMathOperator{\HSP}{\mathbb{HSP}}
\newcommand{\dotcup}{\mathbin{\dot\cup}}
\newcommand{\down}{{\downarrow}}
\newcommand{\up}{{\uparrow}}
\newcommand{\restrictedto}[1]{{\upharpoonright_{#1}}} 
\newcommand{\A}{{\mathbf A}}
\newcommand{\B}{{\mathbf B}}
\newcommand{\F}{{\mathbf F}}
\newcommand{\Lb}{{\mathbf L}}
\newcommand{\Pb}{{\mathbf P}}
\newcommand{\Sb}{{\mathbf S}}
\newcommand{\T}{{\mathbf T}}
\newcommand{\X}{{\mathbf X}} 
\newcommand{\Y}{{\mathbf Y}} 
\newcommand{\cat}[1]{\boldsymbol{\mathscr{#1}}}
\newcommand{\CV}{\cat V}
\newcommand{\CA}{\cat A}
\newcommand{\CB}{\cat B}
\newcommand{\LV}{\cat L(\CV)} 
\newcommand{\SifinV}{\mathbf{Si}_{\mathrm{fin}}(\CV)}
\renewcommand{\phi}{\varphi}
\renewcommand{\ge}{\geqslant}
\renewcommand{\le}{\leqslant}
\renewcommand{\leq}{\leqslant}
\renewcommand{\geq}{\geqslant}
\renewcommand{\nleq}{\not\leqslant}
\newcommand{\bd}{\mathbf{Dn}} 
\newcommand{\OSi}{\bd(\SifinV)}
\newcommand{\uu}{\mathrm{Up}}
\newcommand{\ut}{\uu^\mathcal{T}\!}
\newcommand{\bu}{\mathbf{Up}}
\newcommand{\but}{\bu^{\!\mathcal{T}}\!}
\DeclareMathOperator{\SH}{\mathbb{SH}}
\DeclareMathOperator{\SPU}{\mathbb{SP_U}}
\DeclareMathOperator{\HS}{\mathbb{HS}}
\DeclareMathOperator{\IS}{\mathbb{IS}}
\newcommand{\CR}{\cat R}
\newcommand{\CW}{\cat W}
\def\ov#1{\overline{#1}}
\def\ra{\rightarrow}
\def\Ra{\Rightarrow}
\def\iff{\quad\Longleftrightarrow\quad}
\def\lres{\backslash}
\def\rres{/}
\def\eqv{\leftrightarrow}
\def\up{\mathord{\uparrow}}
\def\dw{\mathord{\downarrow}}
\let\lra=\eqv
\let\m=\mathbf
\newcommand{\lan}{(}
\newcommand{\ran}{)}
\let\ld=\lres
\let\rd=\rres
\newcommand{\comp}{{\setminus}}
\newcommand{\updown}{{\updownarrow}}
\newcommand{\join}{\vee}
\newcommand{\meet}{\wedge}
\newcommand{\lr}{\leftrightarrow}
\newcommand{\arrow}{\mathbin{\searrow}}
\newcommand{\codom}{\operatorname{codom}}
\newcommand{\dpc}{{\sim}}
\newcommand{\cg}{\operatorname{Cg}}
\newcommand{\con}{\operatorname{Con}}
\newcommand{\ub}[1]{{\mathbf{#1}}}
\newcommand{\h}[1][algebra]{H\textsuperscript{+}-#1} 
\newcommand{\ps}{\neg\dpc}
\newcommand{\tightoverset}[2]{
\mathop{#2}\limits^{\vbox to -.5ex{\kern-0.2ex\hbox{$#1$}\vss}}
}
\providecommand{\dotdiv}{
  \mathbin{
    \vphantom{+}
    \text{
      \mathsurround=0pt 
      \ooalign{
        \noalign{\kern-.35ex}
        \hidewidth$\smash{\cdot}$\hidewidth\cr 
        \noalign{\kern.35ex}
        $-$\cr 
      }%
    }%
  }%
}
\tikzset{
fence node/.style={scale=0.4,fill=white,draw,circle}
}
\newcommand{\tikzfence}[1]{
\foreach \x[
evaluate=\x as \xi using {0.5*\x-0.5},
evaluate=\x as \yi using {mod(\x+1,2)}
] in {1,...,#1} {
\draw (\xi,\yi) node[fence node] (fence\x) {};
}
\draw \foreach \x [remember=\x as \lastx (initially 1)] in {2,...,#1}{(fence\lastx) -- (fence\x)};
}
\newcommand{\tikzfencedual}[1]{
\foreach \x[
evaluate=\x as \xi using {0.5*\x-0.5},
evaluate=\x as \yi using {mod(\x,2)}
] in {1,...,#1} {
\draw (\xi,\yi) node[fence node] (\x) {};
}
\draw \foreach \x [remember=\x as \lastx (initially 1)] in {2,...,#1}{(\lastx) -- (\x)};
}
\begin{document} 

\author[B.A. Davey, T. Kowalski and C.J. Taylor]{Brian A. Davey, Tomasz Kowalski and Christopher J. Taylor}
\title{Splittings in varieties of logic}
\address{Mathematics and Statistics, La Trobe University, Victoria 3085, Australia}
\email[Davey]{B.Davey@latrobe.edu.au}
\email[Kowalski]{T.Kowalski@latrobe.edu.au}
\email[Taylor]{Chris.Taylor@latrobe.edu.au}

\begin{abstract}
We study splittings, or lack of them, in lattices of subvarieties of some
logic-related varieties. We present a general lemma, the Non-Splitting Lemma,
which when combined with some variety-specific constructions, yields each of our
negative results: the variety of commutative integral residuated lattices
contains no splittings algebras, and in the varieties of double Heyting
algebras, dually pseudocomplemented Heyting algebras and regular double
p-algebras the only splitting algebras are the two-element and three-element
chains. 
\end{abstract}

\maketitle
\section{Introduction}\label{sec:intro}

A very natural divide-and-conquer method of studying a lattice $\mathbf{L}$ is to 
dismantle it into a disjoint pair of a principal filter and a principal
ideal. If such a \emph{splitting} is possible, then the structure of
$\mathbf{L}$ is completely determined by the filter, the ideal, and the way
they are put together to make up $\mathbf{L}$. This concept of splitting was
introduced by Whitman~\cite{Whi43}, and later used by McKenzie~\cite{McK} to
investigate the lattice of varieties of lattices. In fact, several of McKenzie's 
results apply to lattices of subvarieties of any variety $\CV$ -- we will
frequently use one of these results.

Logic-related applications of
splittings began with Jankov~\cite{Jan63} who used splittings to investigate
the lattice of superintuitionistic logics.
Jankov's results were extended in various ways for other
classes of logics, notably modal and superituitionistic logics,
for which splitting methods proved to be very useful.
We refer the interested reader to Chagrov, Zakhariaschev~\cite{CZ97}
and Kracht~\cite{Kra99} for surveys and much more. Beyond superintuitionistic
and modal logics, splittings are not too common.

In the lattice of subvarieties of a variety $\CV$
every splitting is induced by a single subdirectly irreducible algebra;
such algebras are called \emph{splitting algebras}. 
McKenzie~\cite{McK} proved that if $\CV$ is congruence distributive and
generated by its finite members, then every splitting algebra in $\CV$ is
finite. Day~\cite{Day} showed that if $\CV$ is congruence distributive and
locally finite, then the converse is true as well: every finite subdirectly
irreducible algebra is a splitting algebra. 

Blok, Pigozzi~\cite{BP82} showed that in varieties with \emph{equationally
  definable principal congruences} (EDPC) every finitely presented subdirectly
irreducible algebra is a splitting algebra.
Since EDPC implies congruence distributivity but
not local finiteness, and congruence distributivity together with local
finiteness do not imply EDPC, the results of Day~\cite{Day} and of
Blok, Pigozzi~\cite{BP82} complement each other.  

Our goal in this article lies in the opposite direction. We will exhibit a
number of varieties 
(congruence distributive, related closely to logics, and generated by
their finite members) for which very few splittings exist. This will give some
empirical evidence for the claim that the assumptions of local finiteness or
EDPC are optimal, and no better general results can be expected.

A class of logics to which our results immediately apply is the class of
\emph{substructural logics}, whose 
algebraic semantics is the variety of \emph{residuated lattices}.
Substructural logics form a comprehensive class: they include superitutionistic
logics, linear logic, relevant logics and many-valued logics.
Residuated lattices have a rich and complex theory, at one end connected to
classical algebra (via idempotent semirings and lattice-ordered groups),
and at the other to proof theory (via sequent systems). For more on residuated
lattices we refer the reader to Galatos \emph{et al.}~\cite{GJKO07}.

Relative pseudocomplements play a crucial role both in varieties with EDPC
(principal congruences of algebras in such varieties have relative
pseudocomplements), and in locally finite varieties (see the next section).
Thus, we will begin in Sections~\ref{sec:split-pcompl}
and~\ref{sec:split-latt-svar} with a few general results connecting relative
pseudocomplements and 
splittings, from which in particular Day's result directly follows.

In Section~\ref{sec:split-alg-var-log} we formalise what we mean by a
\emph{variety of logic} and present a basic result, the 
Non-splitting Lemma~\ref{no-splitting}, that we shall use repeatedly to prove
our non-splitting theorems.   
Indeed, all negative results on splittings in varieties of logics, known to the
authors, can be viewed as applications of the Non-splitting Lemma. 
Using the lemma we will prove a number of new negative results stating that in a
certain variety $\CV$ no algebra is splitting, except for those on a (short,
finite) list. Each of these results involves a pair 
of constructions: an expansion followed by a distortion.

In Section~\ref{sec:res-latt} we prove that the variety $\mathsf{CIRL}$ of
commutative integral residuated lattices contains no splittings algebras at all
--  in fact, if $\CR$ is a variety of residuated lattices that contains
$\mathsf{CIRL}$, then no finite algebra from $\mathsf{CIRL}$ is splitting
in~$\CR$ (Corollary~\ref{outside-CIRL}). In Sections~\ref{sec:d-Heyting}
and~\ref{sec:subvarofH+} we turn our attention to three cousins of Heyting
algebras, namely the varieties $\mathsf{DH}$ of double Heyting algebras,
$\mathsf{H}^+$ of dually pseudocomplemented Heyting algebras and  $\mathsf{RDP}$
of regular double p-algebras. Jankov~\cite{Jan63} proved that in the variety
$\mathsf{H}$ of Heyting algebras every finite subdirectly irreducible is a
splitting algebra. In stark contrast, we prove that in each of $\mathsf{DH}$,
$\mathsf{H}^+$ and $\mathsf{RDP}$ the only splitting algebras are the
two-element and three-element chains
(Corollarys~\ref{cor:DH-etal}).
Unlike the proof for the variety $\mathsf{CIRL}$, which is purely algebraic, the proofs for the varieties $\mathsf{DH}$, $\mathsf{H}^+$ and $\mathsf{RDP}$ use the restricted Priestley duality for each of the varieties.

\section{Splittings and relative pseudocomplements}\label{sec:split-pcompl}

\begin{definition}\label{def:relpscomp}
Let $\Lb$ be a lattice and let $a, b\in L$. The relative pseudocomplement $b \to a$ and dual relative pseudocomplement $b \dotdiv a$ are defined by
\begin{alignat*}{3}
		x \meet b \le a & \iff  x \le b \to a, &&\quad\text{or}\quad &b \to a &=\max\{\, y\in L\mid y \wedge b =a \wedge b\,\},\\
		x \join a \ge b & \iff  x \ge b \dotdiv a, &&\quad\text{or}\quad  &b \dotdiv a &=\min\{\, y\in L\mid a \vee y = a \vee b\,\}.
\end{alignat*}
(Some authors  write $x \leftarrow y$ or $x \Leftarrow y$ instead of $y \dotdiv x$.)
A \emph{Heyting algebra} is an algebra $\langle A;\join,\meet,\to,0,1\rangle$ such that $\langle A;\join,\meet,0,1\rangle$ is a bounded lattice and $\to$ is a relative pseudocomplement operation; a \emph{dual Heyting algebra} is defined analogously.
A \emph{double Heyting algebra} is an algebra $\langle A;\join,\meet,\to,\dotdiv,0,1\rangle$ such that 
${\langle A;\join,\meet,\to,0,1\rangle}$ is a Heyting algebra and ${\langle A;\join,\meet,\dotdiv,0,1\rangle}$ is a dual Heyting algebra.
Note that the underlying lattice of a (double) Heyting algebra is necessarily distributive.
\end{definition}

For each element $x$ of an ordered set $\X = \langle X; \leq\rangle$ we define $\down x :=\{y\in X \mid y\leq x\}$ and $\up x :=\{y\in X \mid y\geq x\}$. 
For basic lattice-theoretic concepts, such as join-irreducible, join-dense and algebraic lattice, we refer to Davey and Priestley~\cite{ilo}.

\begin{definition}\label{def:splitting}
A pair $(c, d)\in L^2$ is a \emph{splitting pair} (in $\Lb$) if $L = \up
c\dotcup \down d$.
\end{definition}

The following two lemmas are completely straightforward to prove.

\begin{lemma}\label{lem:easy1}
Let $\Lb$ be a complete lattice and let $c, d\in L$. The following are equivalent:

\begin{enumerate}[label={\upshape(\arabic*)},leftmargin=1.75\parindent]

\item $(c, d)$ is a splitting pair,

\item $c$ is completely join-prime and $d = \bigvee\{\, x\in L\mid x\not\ge c\,\}$, 

\item $d$ is completely meet-prime and $c = \bigwedge\{\, x\in L\mid x\not\le d\,\}$. 
\end{enumerate}
\end{lemma}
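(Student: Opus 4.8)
The plan is to prove the three conditions equivalent by showing $(1)\Rightarrow(2)\Rightarrow(1)$ and, dually, $(1)\Rightarrow(3)\Rightarrow(1)$; the two dual arguments are formally identical once joins and meets are interchanged, so I would write out one direction in full and leave the other to the reader. Throughout I would use that $\Lb$ is complete, so all the relevant suprema and infima exist and, in particular, $\bigvee\emptyset = 0$ and $\bigwedge\emptyset = 1$.

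For $(1)\Rightarrow(2)$: assume $L = \up c \dotcup \down d$. First note $c \nleq d$, since otherwise $c$ would lie in both $\up c$ and $\down d$, contradicting disjointness; hence the set $S := \{\,x \in L \mid x \ngeq c\,\}$ is exactly $\down d$ (any $x \ngeq c$ must be in $\down d$ by the partition, and conversely no element of $\down d$ can be $\ge c$). Since $\down d$ is closed under arbitrary joins and has $d$ as its top, $\bigvee S = d$. To see $c$ is completely join-prime, suppose $c \le \bigvee T$ for some $T \subseteq L$; if every $t \in T$ satisfied $t \ngeq c$, then $T \subseteq \down d$, so $\bigvee T \le d$, forcing $c \le d$, a contradiction. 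Hence some $t \in T$ has $t \ge c$, which is the required property. (Strictly, completely join-prime should also be argued to entail $c$ join-irreducible and in particular $c \ne 0$; but $c \le \bigvee\emptyset = 0$ would give $c = 0 \le d$, again impossible, so this is automatic.)

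For $(2)\Rightarrow(1)$: assume $c$ is completely join-prime and $d = \bigvee S$ with $S = \{\,x \in L \mid x \ngeq c\,\}$. We must show every $y \in L$ lies in exactly one of $\up c$, $\down d$. If $y \ge c$ then $y \notin S$; moreover $y \nleq d$, for $y \le d = \bigvee S$ together with complete join-primeness of $c \le y$ would give $c \le s$ for some $s \in S$, contradicting the definition of $S$. So $y \in \up c \setminus \down d$. If instead $y \ngeq c$, then $y \in S$, so $y \le \bigvee S = d$, giving $y \in \down d \setminus \up c$ (it is not in $\up c$ by assumption). Thus $L = \up c \dotcup \down d$, as required.

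The argument is genuinely routine — the only place demanding a little care is the use of completeness together with the convention $\bigvee\emptyset = 0$ (resp. $\bigwedge\emptyset = 1$) to rule out the degenerate cases $c = 0$ and $d = 1$, and to ensure that "completely join-prime" is read to include join-irreducibility rather than merely the prime inequality; once that bookkeeping is fixed, there is no real obstacle. I would then remark that $(1)\Leftrightarrow(3)$ follows by applying $(1)\Leftrightarrow(2)$ in the order-dual lattice $\Lb^{\partial}$, since the notion of splitting pair is self-dual under swapping $c$ with $d$ and reversing the order.
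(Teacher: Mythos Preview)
Your proof is correct and carefully written; the paper itself omits the proof entirely, stating only that the lemma is ``completely straightforward to prove.'' Your argument via $(1)\Leftrightarrow(2)$ plus duality is exactly the natural one, and your attention to the degenerate case $c=0$ (handled through $\bigvee\emptyset=0$ and the disjointness of $\up c$ and $\down d$) is more explicit than anything the authors provide.
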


Given $x,y$ in an ordered set $\X$, we write $x \prec y$ if $x$ is covered by $y$ in~$\X$.

\begin{lemma}\label{lem:easy2}
Let $(c, d)$ be a splitting pair in a lattice $\Lb$. Then

\begin{enumerate}[label={\upshape(\arabic*)},leftmargin=1.75\parindent]

\item $c\wedge d \prec c$, and

\item $c \to (c\wedge d) = d$.

\end{enumerate}
\end{lemma}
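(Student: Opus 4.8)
The plan is to work directly from the definition of a splitting pair: $L = \up c \dotcup \down d$ is a \emph{disjoint} union, so every $x \in L$ satisfies exactly one of $x \ge c$ or $x \le d$. Before anything else I would record the one preliminary fact that drives both parts, namely $c \nleq d$: otherwise $c$ would lie in $\up c \cap \down d$, contradicting disjointness. In particular $c \meet d \ne c$, which is what makes the covering in part~(1) a genuine cover rather than a triviality.

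For part~(1), take any $x$ with $c \meet d \le x \le c$ and apply the dichotomy to $x$. If $x \ge c$ then $x = c$; if $x \le d$ then $x \le c \meet d$, so $x = c \meet d$. Hence nothing lies strictly between $c \meet d$ and $c$, and since the two are distinct, $c \meet d \prec c$. For part~(2), I would unwind the definition of relative pseudocomplement to $c \to (c \meet d) = \max\{\, y \in L \mid y \meet c = c \meet d \,\}$ (using $(c\meet d)\meet c = c \meet d$), and then check that this maximum exists and equals $d$. Membership of $d$ in the set is immediate since $d \meet c = c \meet d$. For the upper-bound property, suppose $y \meet c = c \meet d$ and apply the dichotomy to $y$: the case $y \ge c$ forces $y \meet c = c$, hence $c = c \meet d$, i.e.\ $c \le d$, contradicting the preliminary fact; so $y \le d$, as required.

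I do not expect a genuine obstacle here — the lemma is, as the authors say, completely straightforward. The only point needing care is that in an arbitrary lattice $c \to (c \meet d)$ is not guaranteed a priori to exist, so the argument for~(2) must exhibit the maximum concretely (it is attained at $d$) rather than reason formally with the operation $\to$; and one must not skip the observation $c \nleq d$, without which part~(1) would only yield $c \meet d \le c$ with nothing strictly in between, which is not yet a cover.
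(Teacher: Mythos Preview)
Your proof is correct and is exactly the kind of direct verification the authors have in mind; the paper itself omits the proof entirely, declaring the lemma ``completely straightforward to prove.'' Your care in noting that $c\nleq d$ and in exhibiting $d$ as the concrete maximum (so that $c\to(c\wedge d)$ exists) is appropriate and matches the paper's convention that $b\to a$ denotes $\max\{\,y\mid y\wedge b = a\wedge b\,\}$ when it exists.
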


We see from Lemma~\ref{lem:easy2} that every splitting pair gives rise to a cover and a corresponding relative pseudocomplement. Our first aim is to prove a form of converse. The following easy lemma will be useful.

\begin{lemma}\label{lem:veryeasy}
Let $a, b$ be elements of a lattice $\Lb$ such that $a \prec b$ and assume that $b\to a$ exists in~$\Lb$. For all $x\in L$ with $x\vee a \not\ge b$, we have $x\le b\to a$.
\end{lemma}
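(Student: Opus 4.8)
The plan is to prove the statement by a direct computation using the defining property of the relative pseudocomplement $b \to a = \max\{\, y \mid y \wedge b = a \wedge b\,\}$, combined with the fact that $a \prec b$. Since $a \prec b$ and $a \le b$, the meet $a \wedge b$ equals $a$, so $b \to a = \max\{\, y \mid y \wedge b = a\,\}$. Given $x \in L$ with $x \vee a \ngeq b$, I want to show $x \le b \to a$, i.e.\ $x \wedge b \le a$ (equivalently $x \wedge b = a \wedge x \wedge b$ will fall out), using the universal property $u \wedge b \le a \iff u \le b \to a$.

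First I would consider the element $(x \wedge b) \vee a$. Since $x \wedge b \le b$ and $a \le b$, we have $(x \wedge b) \vee a \le b$. On the other hand $(x \wedge b) \vee a \ge a$, so $(x \wedge b) \vee a$ lies in the interval $[a, b]$. Because $a \prec b$, this interval has exactly two elements, so either $(x \wedge b) \vee a = a$ or $(x \wedge b) \vee a = b$. In the first case $x \wedge b \le a$, hence $x \le b \to a$ by the universal property, and we are done. The main step is therefore to rule out the second case: I must show $(x \wedge b) \vee a = b$ contradicts the hypothesis $x \vee a \ngeq b$. This is immediate, since $x \vee a \ge (x \wedge b) \vee a$, so if the latter equals $b$ then $x \vee a \ge b$, contrary to assumption.

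I expect no real obstacle here; the only point requiring a little care is making sure the distributivity of $\Lb$ is not secretly needed. It is not: the argument above uses only that $[a,b]$ is a two-element interval and the monotonicity of $\vee$, together with the universal property of $b \to a$, none of which presuppose distributivity. (Of course, in the intended applications $\Lb$ is a distributive lattice anyway, but the lemma as stated holds in any lattice in which the relevant relative pseudocomplement exists.) So the write-up will be three or four lines: reduce to showing $x \wedge b \le a$, observe $(x\wedge b)\vee a \in [a,b] = \{a,b\}$, exclude the value $b$ using $x \vee a \ge (x\wedge b)\vee a$ and the hypothesis, and conclude via $x \wedge b \le a \iff x \le b \to a$.
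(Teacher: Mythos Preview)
Your proof is correct and follows essentially the same idea as the paper's: use the covering $a \prec b$ to force an element of the interval $[a,b]$ to equal $a$, then invoke the universal property of $b\to a$. The only cosmetic difference is that the paper works with $(x\vee a)\wedge b$ rather than your $(x\wedge b)\vee a$, concluding $x\vee a\le b\to a$ directly; both arguments are equally short and neither requires distributivity.
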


\begin{proof}
Let $x\in L$ with $x\vee a \not\ge b$. Thus $(x \vee a)\wedge b < b$. As $a \le (x \vee a)\wedge b \le b$ and $a\prec b$, it follows that $(x \vee a)\wedge b = a$, and hence
$x \le x \vee a \le b \to a$, as required.
\end{proof}

\begin{lemma}\label{lem:main}
Let $a, b$ be elements of a lattice $\Lb$ such that $a \prec b$ and assume that $d:=b\to a$ exists in~$\Lb$. If $c$ is a join-prime element of $\Lb$ with $c\le b$ and $c\not\le a$, then $(c, d)$ is a splitting pair with $b\in \up c$ and $a\in \down d$. 
\end{lemma}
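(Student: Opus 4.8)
The plan is to verify directly that $L = \up c \dotcup \down d$, using Lemma~\ref{lem:easy1}: it suffices to show that $c$ is completely (here just finitely, since we only need the finite version for a general lattice) join-prime — which is given — together with $d = \bigvee\{\,x \in L \mid x \not\ge c\,\}$; but the cleanest route avoids completeness assumptions and instead shows membership pointwise. So first I would fix an arbitrary $x \in L$ and argue that either $x \ge c$ or $x \le d$. Since $c$ is join-prime and $c \le b$ while $c \not\le a$, and $a \prec b$, the key dichotomy comes from comparing $x \vee a$ with $b$: if $x \vee a \ge b$ then $x \vee a \ge c$, so by join-primeness of $c$ either $x \ge c$ (and we are done, $x \in \up c$) or $a \ge c$, the latter contradicting $c \not\le a$; hence in this case $x \ge c$. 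If instead $x \vee a \not\ge b$, then Lemma~\ref{lem:veryeasy} applies and gives $x \le b \to a = d$, so $x \in \down d$.

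Next I would check that the union $\up c \cup \down d$ is disjoint. Suppose $x \in \up c \cap \down d$, so $c \le x \le d = b \to a$. Then $c \le b \to a$, which by the defining property of the relative pseudocomplement means $c \wedge b \le a$; but $c \le b$ gives $c \wedge b = c$, so $c \le a$, contradicting the hypothesis $c \not\le a$. Hence the intersection is empty and $(c,d)$ is a splitting pair.

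Finally, $b \in \up c$ is immediate from $c \le b$, and $a \in \down d$ follows because $a \wedge b = a \le a$ forces $a \le b \to a = d$ (equivalently, $a$ is always below $b \to a$). This completes the argument.

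The only genuine subtlety — and thus the step I would treat most carefully — is the case analysis on whether $x \vee a \ge b$: one must make sure that join-primeness of $c$ is invoked correctly (as $c \le x \vee a$ implies $c \le x$ or $c \le a$) and that the covering relation $a \prec b$ is used exactly where needed, namely inside Lemma~\ref{lem:veryeasy} to collapse $(x \vee a) \wedge b$ down to $a$. Everything else is a routine unwinding of the definitions of $\to$ and of a splitting pair, so no real obstacle remains beyond keeping the two sub-cases cleanly separated.
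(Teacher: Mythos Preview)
Your proof is correct and follows essentially the same approach as the paper's. The only cosmetic difference is that you organise the covering argument as a case split on whether $x\vee a\ge b$, whereas the paper assumes $x\not\ge c$ and uses join-primeness of $c$ to deduce $x\vee a\not\ge c$, hence $x\vee a\not\ge b$; both routes invoke Lemma~\ref{lem:veryeasy} at the same point and the disjointness argument is identical.
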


\begin{proof} 
Let $c$ be a join-prime element of $\Lb$ with $c\le b$ and $c\not\le a$. 
We first prove that $\up c \cap \down d = \varnothing$. 
Suppose that $\up c \cap \down d\ne \varnothing$. 
Then we have $c \le d = b \to a$ and hence $c = c\wedge b \le a$, a contradiction. Hence $\up c \cap \down d = \varnothing$.

We now prove that $\up c \cup \down d = L$. Let $x\in L$ and assume that $x\not\ge c$. We shall prove that $x\le d$. As $a\not\ge c$ and $c$ is join-prime, we have $x\vee a\not\ge c$, and hence $x\vee a \not\ge b$, as $b\ge c$. Thus, by Lemma~\ref{lem:veryeasy}, $x \le b\to a =d$, 
as required.
\end{proof}

The following corollary is an immediate consequence of Lemmas~\ref{lem:easy1} and~\ref{lem:main}.

\begin{corollary}\label{cor:uniquejp}
Let $a, b$ be elements of a lattice $\Lb$ such that $a \prec b$ and assume that $b\to a$ exists in~$\Lb$. Then there is at most one join-prime element $c$ of $\Lb$ with $c\le b$ and $c\not\le a$ and such an element $c$ is necessarily completely join-prime.
\end{corollary}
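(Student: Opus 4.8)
The plan is to read off the corollary from Lemmas~\ref{lem:easy1} and~\ref{lem:main}, using the single observation that the element $d := b\to a$ is fixed once $a$ and $b$ are fixed, whereas a splitting pair is \emph{rigid}: each of its two coordinates is determined by the other.

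First I would take an arbitrary join-prime element $c$ of $\Lb$ with $c\le b$ and $c\not\le a$. Lemma~\ref{lem:main} applies directly and tells us that $(c, d)$ is a splitting pair, that is, $L = \up c\dotcup\down d$. In particular $\up c = L\setminus\down d$, so $c = \min(\up c) = \min(L\setminus\down d)$ is completely determined by $d$; since $d = b\to a$ does not depend on the choice of $c$, this already yields the uniqueness claim. (Equivalently, one may cite Lemma~\ref{lem:easy1}(3), which expresses $c$ as $\bigwedge\{\,x\in L\mid x\not\le d\,\}$.)

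It then remains to see that such a $c$ is completely join-prime. This is immediate from Lemma~\ref{lem:easy1}(2) when $\Lb$ is complete; for a general lattice the direct check is just as short: if $S\subseteq L$ is a set whose join exists and $c\le\bigvee S$, but $c\not\le s$ for every $s\in S$, then by the splitting each $s$ lies in $\down d$, so $d$ is an upper bound of $S$ and hence $c\le\bigvee S\le d$, forcing $c\in\up c\cap\down d=\varnothing$, a contradiction. I do not anticipate any real obstacle here: the corollary genuinely is a two-line consequence of the two lemmas, the only thing to keep an eye on being whether one wishes to phrase complete join-primeness for arbitrary lattices or to restrict to the complete case as in Lemma~\ref{lem:easy1}.
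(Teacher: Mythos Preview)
Your proposal is correct and follows the same approach as the paper, which simply states that the corollary ``is an immediate consequence of Lemmas~\ref{lem:easy1} and~\ref{lem:main}.'' Your extra care in noting that Lemma~\ref{lem:easy1} is stated only for complete lattices, and in giving the short direct argument that a splitting pair $(c,d)$ forces $c$ to be completely join-prime in an arbitrary lattice, is a genuine improvement in precision over the paper's terse citation.
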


We obtain the following result of Ne\v set\v ril, Pultr and Tardif~\cite{NPT} as an easy consequence of Lemma~\ref{lem:main}. 

\begin{theorem}\label{thm:Heyt-cov2splitting}
Assume that $\Lb$ is a Heyting algebra in which the join-irreducible elements are join-dense. Let $a, b\in L$ with $a \prec b$. Then there exists a splitting pair $(c, d)$ in $\Lb$ with $b\in \up c$ and $a\in \down d$. 

\end{theorem}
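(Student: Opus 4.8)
The plan is to reduce Theorem~\ref{thm:Heyt-cov2splitting} to Lemma~\ref{lem:main}. By Lemma~\ref{lem:main}, it suffices to produce a join-prime element $c$ of $\Lb$ with $c\le b$ and $c\not\le a$; since $d:=b\to a$ exists (every Heyting algebra has relative pseudocomplements), the conclusion then follows verbatim, with $b\in\up c$ because $c\le b$, and $a\in\down d$ because $a\wedge b=a\le a$ gives $a\le b\to a=d$.

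First I would observe that in a Heyting algebra the join-irreducible elements coincide with the join-prime elements: if $p$ is join-irreducible and $p\le x\vee y$, then $p=p\wedge(x\vee y)=(p\wedge x)\vee(p\wedge y)$ by distributivity (Heyting algebras are distributive), so $p=p\wedge x$ or $p=p\wedge y$, i.e.\ $p\le x$ or $p\le y$. Hence the hypothesis that the join-irreducibles are join-dense is exactly the statement that the join-prime elements are join-dense.

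Next, using $a\prec b$ and join-density, I would find a join-prime $c\le b$ with $c\not\le a$. Since $a<b$ and the join-prime elements are join-dense, $b$ is the join of the set $J=\{\,p\le b\mid p\text{ join-prime}\,\}$. If every such $p$ satisfied $p\le a$, then $b=\bigvee J\le a$, contradicting $a<b$. So there is some join-prime $c\in J$ with $c\not\le a$, and this $c$ is exactly what Lemma~\ref{lem:main} requires.

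Finally I would assemble the pieces: apply Lemma~\ref{lem:main} with this $c$ to conclude that $(c,d)$ is a splitting pair with $b\in\up c$ and $a\in\down d$. There is no real obstacle here — the only point needing a moment's care is the passage from ``join-irreducible'' to ``join-prime'' via distributivity, which is why the Heyting (hence distributive) hypothesis is used; everything else is a direct citation of the earlier lemmas.
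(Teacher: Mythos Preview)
Your proof is correct and follows exactly the route the paper intends: the theorem is stated as an ``easy consequence of Lemma~\ref{lem:main}'', and you supply precisely the missing ingredient---using distributivity to identify join-irreducibles with join-primes, then join-density to find a join-prime $c\le b$ with $c\not\le a$. The verification that $a\in\down d$ is already part of the conclusion of Lemma~\ref{lem:main}, so you need not re-derive it.
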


A strengthening of the assumption that the join-irreducible elements are join-dense will guarantee that a lattice forms a dual Heyting algebra.

\begin{lemma}\label{lem:dualrelpseudocomps}
Let $\Lb$ be a lattice in which each element is the join of a finite set of join-prime elements. Then all dual relative pseudocomplements exist in~$\Lb$.
\end{lemma}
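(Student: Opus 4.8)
The plan is to show directly that for each $b \in L$ and each $a \in L$, the dual relative pseudocomplement $b \dotdiv a$ exists, by exhibiting it as a specific finite join. First I would recall that by hypothesis every element is a join of finitely many join-prime elements; in particular write $b = p_1 \vee \dots \vee p_n$ with each $p_i$ join-prime. The natural candidate for $b \dotdiv a$ is the join of those $p_i$ that do \emph{not} lie below $a$, i.e. put $c := \bigvee \{\, p_i \mid p_i \nleq a \,\}$ (with the empty join being the bottom element, which exists since $L$ itself is a finite join of join-primes, in particular $0$ is such a join). I would then verify that $c$ is the least element $y$ with $a \vee y = a \vee b$.

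The verification splits into two parts. For $a \vee c = a \vee b$: clearly $a \vee c \le a \vee b$ since each $p_i$ used in $c$ satisfies $p_i \le b$; conversely $a \vee b = a \vee p_1 \vee \dots \vee p_n$, and each $p_i$ either satisfies $p_i \le a$ (so it contributes nothing beyond $a$) or $p_i \nleq a$ (so $p_i \le c$), whence $a \vee b \le a \vee c$. For minimality, suppose $y \in L$ satisfies $a \vee y = a \vee b$; I must show $c \le y$, and since $c$ is a join of the relevant $p_i$ it suffices to show $p_i \le y$ whenever $p_i \nleq a$. Fix such a $p_i$. Then $p_i \le b \le a \vee b = a \vee y$; since $p_i$ is join-prime this forces $p_i \le a$ or $p_i \le y$, and as $p_i \nleq a$ we conclude $p_i \le y$. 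Hence $c \le y$, so $c = \min\{\, y \mid a \vee y = a \vee b \,\} = b \dotdiv a$.

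The only mild subtlety worth flagging is the edge case where $\{\, p_i \mid p_i \nleq a \,\}$ is empty: then $c$ should be $0$, and one needs $0 \in L$ together with $a \vee 0 = a = a \vee b$, i.e. $b \le a$. This is fine: if every $p_i \le a$ then $b = \bigvee p_i \le a$, and the existence of $0$ follows from applying the hypothesis to $0$ (the empty join, or any element below all the join-primes). I do not expect a genuine obstacle here — the argument is a routine exploitation of join-primeness, and the main point is simply that \emph{finiteness} of the generating set of join-primes is what lets us form the candidate join inside $L$ without appealing to completeness. One could equally phrase the proof so that it produces $b \dotdiv a$ even when only $b$ (rather than every element) is assumed to be a finite join of join-primes, but stating it for all elements is cleaner and is all that is needed downstream.
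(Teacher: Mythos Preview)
Your proof is correct and takes essentially the same approach as the paper: decompose $b$ as a finite join of join-primes, take the join of those not below $a$ as the candidate for $b\dotdiv a$, and verify minimality via join-primeness. The paper's version first reduces to the case $a\le b$ and also decomposes $a$, but this is cosmetic; your flag on the empty-join edge case is fair (and your justification ``apply the hypothesis to $0$'' is a bit circular), but the paper's proof tacitly assumes a bottom element in exactly the same way.
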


\begin{proof}
It suffices to prove that $b \dotdiv a$ exists for all $a\le b$ in~$\Lb$, so let $a, b\in L$ with $a\le b$. By assumption, there are finite sets $A$ and $B$ of join-prime elements such that $a = \bigvee A$ and $b = \bigvee B$. Let
\[
F_a :=\{\, x\in A\cup B \mid x \le a\,\} \quad \text{and} \quad F_c :=\{\, x\in A\cup B \mid x \not\le a\,\};
\]
then $\bigvee F_a = a$ and $\bigvee (F_a \cup F_c) = b$. Define $c := \bigvee F_c$. We claim that $c = b \dotdiv a$. We must prove that, for all $x\in L$,  
\[
x\vee a \ge b \iff x\ge c.
\]
Let $x\in L$. We have
\[
x\ge c \implies x \vee a \ge c \vee a = \bigvee F_c \vee \bigvee F_a = \bigvee (A\cup B) = b.
\]
Now assume that $x \vee a \ge b$. Let $y\in F_c$; so $y\le b$ and $y\not\le a$. Hence $x \vee a \ge b \ge y$. As $y\not\le a$ and $y$ is join-prime, we have $y\le x$. Thus, $x$ is an upper bound of $F_c$ and so $x \ge \bigvee F_c = c$. Hence, $c = b \dotdiv a$.\end{proof}

If both $b \to a$ and $b \dotdiv a$ exist in $\Lb$, then in Lemma~\ref{lem:main} we can drop the requirement that $a$ and $b$ are separated by a join-prime element of~$\Lb$.

\begin{lemma}\label{lem:altmain}
Let $a, b$ be elements of a lattice $\Lb$ such that $a \prec b$ and assume that both $c:= b \dotdiv a$ and $d:=b\to a$ exist in~$\Lb$. Then $(c, d)$ is a splitting pair with $b\in \up c$ and $a\in \down d$. In particular, $b \dotdiv a$ is completely join-prime and $b\to a$ is completely meet-prime.
\end{lemma}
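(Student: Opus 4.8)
The plan is to deduce this from Lemma~\ref{lem:main}, which already gives everything provided we can exhibit a join-prime element $c'$ with $c'\le b$ and $c'\not\le a$. The natural candidate is $c := b\dotdiv a$ itself. So the first step is to show that $c\le b$: this is immediate since $a\vee b = b$ (as $a\le b$), so $c = b\dotdiv a = \min\{y\mid a\vee y = b\}$ and in particular $b$ itself is such a $y$, giving $c\le b$. The second step is to show $c\not\le a$: if $c\le a$, then $a\vee c = a$, but $a\vee c = a\vee b = b > a$, a contradiction.

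The main obstacle is showing that $c = b\dotdiv a$ is \emph{join-prime}. Here is where I would work hardest. Suppose $c\le x\vee y$; I must show $c\le x$ or $c\le y$. From $c\le x\vee y$ we get $a\vee x\vee y \ge a\vee c = b$. Now I would like to use the covering $a\prec b$ together with the relative pseudocomplement $d = b\to a$. Consider $(a\vee x)\wedge b$ and $(a\vee y)\wedge b$: each lies between $a$ and $b$, so by $a\prec b$ each equals either $a$ or $b$. If $(a\vee x)\wedge b = b$ then $b\le a\vee x$, so $x\vee a\ge b$ forces $x\ge b\dotdiv a = c$ by definition of $\dotdiv$ — done. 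Similarly if $(a\vee y)\wedge b = b$. The remaining case is $(a\vee x)\wedge b = a = (a\vee y)\wedge b$, i.e. $x\vee a\le d$ and $y\vee a\le d$ (using Lemma~\ref{lem:veryeasy} or directly: $(a\vee x)\wedge b = a$ means $a\vee x\le b\to a = d$). Then $x\vee y\vee a\le d$, so $b\le a\vee x\vee y\le d = b\to a$, whence $b = b\wedge b\le b\wedge d \le$ wait — rather, $b\le d$ gives $b = b\wedge d = b\wedge(b\to a)\le a$ (since $b\to a = \max\{y\mid y\wedge b = a\wedge b = a\}$, so $(b\to a)\wedge b = a$), contradicting $a < b$. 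Hence this case cannot occur, and $c$ is join-prime.

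Having established that $c = b\dotdiv a$ is a join-prime element with $c\le b$ and $c\not\le a$, I apply Lemma~\ref{lem:main} directly to conclude that $(c,d) = (b\dotdiv a, b\to a)$ is a splitting pair with $b\in\up c$ and $a\in\down d$. Finally, the "in particular" clause follows from Lemma~\ref{lem:easy1}: a splitting pair $(c,d)$ always has $c$ completely join-prime and $d$ completely meet-prime (this is precisely the content of items (2) and (3) of that lemma, applied in whatever completion is at hand — or one notes that the proof of Lemma~\ref{lem:main} combined with Corollary~\ref{cor:uniquejp} already yields that the join-prime $c$ is completely join-prime, and dually for $d$). I expect the join-primality argument to be the only non-routine part; everything else is bookkeeping around the definitions of $\to$, $\dotdiv$, and the covering $a\prec b$.
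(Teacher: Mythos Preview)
Your proof is correct, but it takes a different route from the paper's. The paper proves the splitting-pair conditions directly: first it observes (via a one-line calculation) that $c\not\le d$ whenever $a<b$, giving $\up c\cap\down d=\varnothing$; then it shows $\up c\cup\down d=L$ by noting that $c\not\le x$ means $b\not\le x\vee a$ (definition of $\dotdiv$), whence $x\le d$ by Lemma~\ref{lem:veryeasy}. That is the whole argument.

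Your approach instead reduces to Lemma~\ref{lem:main} by first proving that $c=b\dotdiv a$ is join-prime. Your case analysis on $(a\vee x)\wedge b$ and $(a\vee y)\wedge b$ is sound and is really the same dichotomy the paper exploits, just applied twice. What you gain is an explicit standalone proof that $b\dotdiv a$ is join-prime whenever $a\prec b$ and $b\to a$ exists---a pleasant fact that the paper gets only \emph{a posteriori} from the splitting. What the paper's approach buys is brevity: it sidesteps join-primality entirely and verifies the splitting directly in two lines. Your final remarks about the ``in particular'' clause are a bit hand-wavy (Lemma~\ref{lem:easy1} is stated for complete lattices), but the conclusion is fine: once $(c,d)$ is a splitting pair, if $c\le\bigvee S$ for some existing join and $c\not\le s$ for each $s$, then each $s\le d$, so $\bigvee S\le d$, contradicting $c\not\le d$.
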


\begin{proof} A very simple calculation using the definitions of $b \dotdiv a$ and $b\to a$ shows that
\[
b \dotdiv a \le b \to a \ \iff \ a \ge b \ \iff \ (b \dotdiv a = 0 \And b\to a = 1).
\]
Since $a \prec b$, we therefore have $b \dotdiv a \not\le b \to a$, and it remains to show that $\up(b \dotdiv a)\cup \down(b\to a) = L$. 
Let $x\in L$ with $b \dotdiv a \not\le x$. As $b \dotdiv a \not\le x$, we have $b\not\le x \vee a$, and consequently, by Lemma~\ref{lem:veryeasy},  $x\le b\to a$, 
as required.
\end{proof}

The next result is an immediate corollary.

\begin{theorem}\label{thm:DoubHeyt-cov2splitting}
Let $\Lb$ be a double Heyting algebra, let $a, b\in L$ with $a \prec b$ and define $c:= b \dotdiv a$ and $d:=b\to a$. Then $(c, d)$ is a splitting pair with $b\in \up c$ and $a\in \down d$. In particular, $b \dotdiv a$ is completely join-prime and $b\to a$ is completely meet-prime.
\end{theorem}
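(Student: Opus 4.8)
The plan is to derive Theorem~\ref{thm:DoubHeyt-cov2splitting} as a direct application of Lemma~\ref{lem:altmain}. The only thing that needs checking is that the hypotheses of that lemma are met: we have a lattice $\Lb$, elements $a \prec b$, and we must know that both $b \dotdiv a$ and $b \to a$ exist in~$\Lb$. But this is immediate from the definition of a double Heyting algebra (Definition~\ref{def:relpscomp}): the reduct $\langle A;\join,\meet,\to,0,1\rangle$ being a Heyting algebra guarantees that $\to$ is a total relative pseudocomplement operation, so $b\to a$ exists; dually, the reduct $\langle A;\join,\meet,\dotdiv,0,1\rangle$ being a dual Heyting algebra guarantees that $\dotdiv$ is a total dual relative pseudocomplement operation, so $b\dotdiv a$ exists.

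So the proof is essentially one sentence: since $\Lb$ is a double Heyting algebra, both $c := b\dotdiv a$ and $d := b\to a$ exist in~$\Lb$, whence by Lemma~\ref{lem:altmain} the pair $(c,d)$ is a splitting pair with $b\in\up c$ and $a\in\down d$, and moreover $b\dotdiv a$ is completely join-prime and $b\to a$ is completely meet-prime. I would not reprove anything from Lemma~\ref{lem:altmain}; I would just invoke it.

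There is no real obstacle here — the theorem is flagged in the excerpt itself as ``an immediate corollary'', and the substance was already done in the proof of Lemma~\ref{lem:altmain} (the small computation that $b\dotdiv a \le b\to a$ iff $a\ge b$, combined with an application of Lemma~\ref{lem:veryeasy}). The one point to be mildly careful about is that a double Heyting algebra need not be complete, so ``completely join-prime'' here is being used in the sense that makes Lemma~\ref{lem:easy1} type statements work even without completeness; but since Lemma~\ref{lem:altmain} already states that conclusion under exactly the same hypotheses, nothing extra is needed. Accordingly:

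\begin{proof}
Since $\Lb$ is a double Heyting algebra, the operations $\to$ and $\dotdiv$ are total on~$\Lb$, so both $c := b \dotdiv a$ and $d := b\to a$ exist in~$\Lb$. The claim is now immediate from Lemma~\ref{lem:altmain}.
\end{proof}
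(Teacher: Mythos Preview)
Your proposal is correct and matches the paper's approach exactly: the paper states that the theorem is ``an immediate corollary'' of Lemma~\ref{lem:altmain}, which is precisely what you do. Nothing more is needed.
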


If $\Lb$ is a Heyting algebra (or double Heyting algebra) and $u < v$ in $\Lb$, then we denote the induced Heyting algebra (or double Heyting algebra) on the interval $[u, v]$ by $\Lb_{uv}$. 

\begin{corollary}\label{cor:intervals}
Assume that $\Lb$ is a Heyting algebra in which the join-irreducible elements are join-dense or that $\Lb$ is a double Heyting algebra, and let $u < v$ in~$\Lb$. For all $a, b\in [u, v]$ with $a \prec b$, there exists a splitting pair $(c, d)$ in $\Lb_{uv}$ with $b\in \up_{\Lb_{uv}} c$ and $a\in \down_{\Lb_{uv}} d$. 
\end{corollary}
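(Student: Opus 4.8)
The plan is to deduce Corollary~\ref{cor:intervals} directly from Theorems~\ref{thm:Heyt-cov2splitting} and~\ref{thm:DoubHeyt-cov2splitting} by showing that the interval algebra $\Lb_{uv}$ inherits the hypotheses of whichever of those two theorems applies to $\Lb$. First I would recall that if $\Lb$ is a Heyting algebra and $u<v$, then $[u,v]$ carries an induced Heyting algebra structure: its lattice operations are the restrictions of those of $\Lb$, its bounds are $u$ and $v$, and its relative pseudocomplement is given by $p\to_{uv} q = (p\to q)\wedge v$; dually, if $\Lb$ is a double Heyting algebra then $[u,v]$ is a double Heyting algebra with $p\dotdiv_{uv} q = (p\dotdiv q)\vee u$. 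These are standard facts, so I would state them briefly rather than verify them.

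In the double Heyting case the corollary is then immediate: $\Lb_{uv}$ is itself a double Heyting algebra, so Theorem~\ref{thm:DoubHeyt-cov2splitting} applied to $\Lb_{uv}$ and the cover $a\prec b$ inside $[u,v]$ yields the desired splitting pair $(c,d)=(b\dotdiv_{uv} a,\; b\to_{uv} a)$ with $b\in\up_{\Lb_{uv}}c$ and $a\in\down_{\Lb_{uv}}d$. Note that $a\prec b$ in $[u,v]$ is equivalent to $a\prec b$ in $\Lb$, since the interval is order-convex, so no care is needed about where the covering relation is computed.

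The Heyting case requires the one extra observation that join-density of the join-irreducibles is inherited by intervals. So the main step is: if the join-irreducible elements of $\Lb$ are join-dense in $\Lb$, then the join-irreducible elements of $\Lb_{uv}$ are join-dense in $\Lb_{uv}$. I would prove this by taking $x\in[u,v]$, writing $x=\bigvee S$ for some set $S$ of join-irreducibles of $\Lb$ with $S\subseteq\down x$, and observing that each $s\in S$ satisfies $s\le x\le v$, so $s\vee u\in[u,v]$ and $x=u\vee\bigvee S=\bigvee\{\,s\vee u\mid s\in S\,\}$; finally each $s\vee u$ is either equal to $u$ (the bottom of $\Lb_{uv}$, which may be discarded) or is join-irreducible in $\Lb_{uv}$, because if $s\vee u = p\vee q$ with $p,q\in[u,v]$ then joining with the representation of $s$ and using that $s$ is join-irreducible in $\Lb$ forces $s\le p$ or $s\le q$, whence $s\vee u\le p$ or $s\vee u\le q$. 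Once this is in hand, Theorem~\ref{thm:Heyt-cov2splitting} applies to $\Lb_{uv}$ and $a\prec b$, giving the splitting pair. The only mild subtlety — and the step I'd expect to need the most care — is this verification that $s\vee u$ is join-irreducible (or bottom) in the interval; everything else is bookkeeping about how Heyting structure restricts to intervals.
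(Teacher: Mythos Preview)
Your argument is correct, but the paper takes a shorter route. Rather than verifying that $\Lb_{uv}$ inherits the hypotheses, the paper applies Theorem~\ref{thm:Heyt-cov2splitting} or~\ref{thm:DoubHeyt-cov2splitting} directly to $\Lb$ (noting that $a\prec b$ in $[u,v]$ is the same as $a\prec b$ in $\Lb$) to obtain a splitting pair $(c',d')$ in $\Lb$ with $b\ge c'$ and $a\le d'$, and then checks that $(c'\vee u,\ d'\wedge v)$ is a splitting pair in $\Lb_{uv}$. This last check is immediate: for $x\in[u,v]$, either $x\ge c'$ whence $x\ge c'\vee u$, or $x\le d'$ whence $x\le d'\wedge v$, and the two cases are disjoint since $c'\not\le d'$. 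Your approach buys the additional information that the interval itself satisfies the hypotheses of the relevant theorem, at the cost of the join-density verification; note that your step ``$s$ join-irreducible forces $s\le p$ or $s\le q$'' tacitly uses that join-irreducible equals join-prime in a distributive lattice, which is fine here since Heyting algebras are distributive, but is worth making explicit. The paper's approach sidesteps this entirely and is a two-line argument.
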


\begin{proof}
Let $a, b\in [u, v]$ with $a \prec b$. By Theorem~\ref{thm:Heyt-cov2splitting} or Theorem~\ref{thm:DoubHeyt-cov2splitting} there exists a splitting pair $(c', d')$ in $\Lb$ with $b\in \up_\Lb c'$ and $a\in \down_\Lb d'$. It is easy to check that $(c'\vee u, d'\wedge v)$ is the required splitting pair in $\Lb_{uv}$.
\end{proof}

\begin{remark}
This corollary can be used (in the contrapositive) to show that an interval in a Heyting algebra or double Heyting algebra is \emph{dense}, that is, contains no covers. For example, the homomorphism lattice $\mathcal G$ of finite symmetric graphs forms a Heyting algebra in which the join-irreducibles (the finite connected graphs) are join dense. In fact, since every finite graph is the disjoint union of its connected components, an application of Lemma~\ref{lem:dualrelpseudocomps} shows that $\mathcal G$ forms a double Heyting algebra. An easy application of a deep result of Erd\H os~\cite{Erdos} shows that the interval in $\mathcal G$ above its unique atom contains no splitting pair. Hence, by Corollary~\ref{cor:intervals}, the interval in $\mathcal G$ above the unique atom is dense -- see Ne\v set\v ril and Tardif \cite{NT} and Ne\v set\v ril, Pultr and Tardif~\cite{NPT}.
\end{remark}

\section{Splittings in a lattice of subvarieties}\label{sec:split-latt-svar}

In this section we investigate the applicability of the results of the previous
section to the lattice $\LV$ of subvarieties of a variety~$\CV$. We will use, without further comment, the fact that $\LV$ is always a dually algebraic lattice.
Splittings in subvariety lattices have been intensively studied since the foundational paper of McKenzie~\cite{McK}. The following lemma is well known and follows easily from Lemma~\ref{lem:easy1} and the facts  that every variety is generated by its finitely generated subdirectly irreducible members and is the class of all models of a set of equations.  

\begin{lemma}\label{lem:Birk}
Let $(\CA, \CB)$ be a splitting pair in the lattice $\LV$ of subvarieties of
some variety~$\CV$. Then $\CV = \Var(\A)$, for some finitely generated
subdirectly irreducible algebra $\A$ and $\CB =
\Mod(\Sigma\cup\{\varepsilon\})$, where $\Sigma$ is a set of equations such that
$\CV = \Mod(\Sigma)$ and $\varepsilon$ is a single equation. 
\end{lemma}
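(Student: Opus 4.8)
The plan is to unpack the hypothesis that $(\CA,\CB)$ is a splitting pair in $\LV$ and feed it through Lemma~\ref{lem:easy1}, whose conclusion (2) will supply everything we need. Recall that $\LV$, ordered by inclusion, is a complete (indeed dually algebraic) lattice in which meet is intersection and join is the generated subvariety. So, first, since $(\CA,\CB)$ is a splitting pair, Lemma~\ref{lem:easy1}(2) tells us that $\CA$ is completely join-prime in $\LV$ and $\CB = \bigvee\{\,\CW\in\LV\mid \CW\not\supseteq\CA\,\}$ (writing $\supseteq$ for the order $\geq$ in $\LV$, i.e. $\CW\geq\CA$ means $\CA\subseteq\CW$).

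Next I would extract the algebra $\A$. Because $\CA$ is (completely) join-prime, it is in particular join-irreducible in $\LV$. Now invoke the standard fact -- used "without further comment" as flagged in the paragraph before the lemma -- that every variety is generated by its finitely generated subdirectly irreducible members: $\CA = \bigvee\{\,\Var(\A')\mid \A'\in\CA \text{ finitely generated subdirectly irreducible}\,\}$. Since this join is join-irreducible in $\LV$ and $\CA$ is nontrivial (a splitting pair forces $\CA\neq\CB$, so $\CA$ is not the trivial variety), the join must be attained by a single term: there is a finitely generated subdirectly irreducible $\A\in\CA$ with $\CA = \Var(\A)$. (If one wants complete join-irreducibility to do this in one stroke, note $\CA$ being completely join-prime gives it directly; alternatively dual algebraicity of $\LV$ lets a join-irreducible be captured by finitely many, and join-irreducibility then pins it to one.) This also incidentally identifies $\A$ as the splitting algebra.

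Then I would describe $\CB$ equationally. Every variety is an equational class, so fix a set of equations $\Sigma$ with $\CV = \Mod(\Sigma)$; then every subvariety of $\CV$ has the form $\Mod(\Sigma\cup\Gamma)$ for some set $\Gamma$ of equations. The content of the lemma is that for $\CB$ a single extra equation suffices. Here is where I would use that $\CB$ is, by Lemma~\ref{lem:easy1}(2), the \emph{largest} subvariety of $\CV$ not containing $\CA = \Var(\A)$, equivalently the largest subvariety in which some fixed equation failing in $\A$ does not follow. Concretely: since $\A$ is finitely generated subdirectly irreducible and $\A\notin\CB$, there is an equation $\varepsilon$ true in $\CB$ but false in $\A$; then $\Mod(\Sigma\cup\{\varepsilon\})$ is a subvariety of $\CV$ that excludes $\A$, hence excludes $\Var(\A) = \CA$, hence is contained in the largest such, namely $\CB$; and conversely $\CB\models\varepsilon$ and $\CB\models\Sigma$ give the reverse inclusion. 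So $\CB = \Mod(\Sigma\cup\{\varepsilon\})$.

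The main obstacle -- and the only place that needs genuine care rather than bookkeeping -- is pinning $\CA$ down to a \emph{single} finitely generated subdirectly irreducible algebra, and more subtly producing the \emph{single} equation $\varepsilon$: a priori excluding the variety $\Var(\A)$ might seem to need infinitely many equations (one for each "bad" algebra), so the real point is that excluding the single algebra $\A$ already excludes all of $\Var(\A)$, which is exactly the join-primeness of $\CA$ re-read on the equational side. I would make sure the write-up flags that $\A$ being finitely generated and subdirectly irreducible is what guarantees the separating equation $\varepsilon$ exists at all (so that $\A\notin\Mod(\Sigma\cup\{\varepsilon\})$), and that join-primeness of $\CA$ is what upgrades "$\A\notin\CW$" to "$\CA\not\subseteq\CW$". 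Everything else is routine manipulation of the Galois connection between classes of algebras and sets of equations.
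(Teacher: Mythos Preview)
Your proof is correct and follows exactly the route the paper sketches: Lemma~\ref{lem:easy1} gives complete join-primeness of $\CA$ and complete meet-primeness of $\CB$, then generation by finitely generated subdirectly irreducibles pins down $\A$, and the equational definability of varieties gives $\varepsilon$. The argument for $\CB = \Mod(\Sigma\cup\{\varepsilon\})$ via ``largest subvariety omitting $\A$'' is clean and exactly right.

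One small point of clarification about your final paragraph of commentary: the existence of the separating equation $\varepsilon$ does \emph{not} depend on $\A$ being finitely generated or subdirectly irreducible---it follows simply from $\A\notin\CB$ together with $\CB$ being equationally defined. Likewise, the upgrade from ``$\A\notin\CW$'' to ``$\CA\not\subseteq\CW$'' is not join-primeness at work; it is just that $\CA=\Var(\A)$, so $\A\in\CW$ iff $\Var(\A)\subseteq\CW$. Join-primeness is used only once, in the first part, to extract the single $\A$ from the join $\CA=\bigvee\Var(\A')$. These are expository slips, not gaps; the proof itself stands.
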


Resulting from this lemma (and Lemma~\ref{lem:easy1}), a finitely generated subdirectly irreducible algebra in a variety $\CV$ such that $\Var(\A)$ is completely join-prime in $\LV$ is called a \emph{splitting algebra} in~$\CV$.

Recall that a complete lattice underlies a Heyting algebra if and only if it satisfies the \emph{join-infinite distributive law} (JID). We will use both this fact and its dual in the discussion below.

In order to apply Theorem~\ref{thm:Heyt-cov2splitting} to the lattice $\LV$, we need $\LV$ to form a Heyting algebra. In fact, in that case $\LV$ will 
form a double Heyting algebra and therefore Theorem~\ref{thm:DoubHeyt-cov2splitting} will also be applicable. Indeed, if $\LV$ forms a Heyting algebra, then it will be a distributive and dually algebraic lattice, and consequently will satisfy the meet-infinite distributive law, whence it also forms a dual Heyting algebra. Before stating the characterisation of when $\LV$ forms a Heyting algebra, we note that congruence distributivity is sufficient to guarantee that the join-irreducible subvarieties are join-dense in~$\LV$, a condition necessary to be able to apply Theorem~\ref{thm:Heyt-cov2splitting}.

\begin{lemma}\label{lem:SIjoindense}
Let $\CV$ be a congruence-distributive variety.

\begin{enumerate}[label={\upshape(\arabic*)},leftmargin=1.75\parindent]

\item $\Var(\A)$ is join-prime and therefore join-irreducible in $\LV$, for every subdirectly irreducible algebra $\A$ in~$\CV$.

\item The join-irreducible elements are join-dense in~$\LV$.

\end{enumerate}
\end{lemma}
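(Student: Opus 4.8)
The plan is to prove the two parts more or less independently, with part~(1) doing the heavy lifting and part~(2) following by a standard representation argument.

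For part~(1), let $\A$ be subdirectly irreducible in $\CV$ and suppose $\Var(\A)\le \CB\join\CC$ for subvarieties $\CB,\CC$ of $\CV$; I must show $\Var(\A)\le\CB$ or $\Var(\A)\le\CC$, equivalently $\A\in\CB$ or $\A\in\CC$. The join $\CB\join\CC$ in $\LV$ is $\Var(\CB\cup\CC)=\HSP(\CB\cup\CC)$, so $\A\in\HSP(\CB\cup\CC)$; thus $\A$ is a homomorphic image of a subalgebra $\Sb$ of a product $\prod_{i\in I}\Sb_i$ where each $\Sb_i\in\CB\cup\CC$. Partition $I=I_\CB\dotcup I_\CC$ according to which class a factor belongs to (choosing $\CB$ when a factor is in both), so that $\prod_{i\in I}\Sb_i \cong \prod_{i\in I_\CB}\Sb_i \times \prod_{i\in I_\CC}\Sb_i =: \Pb_\CB\times\Pb_\CC$ with $\Pb_\CB\in\CB$ and $\Pb_\CC\in\CC$. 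Hence $\A\in\HS(\Pb_\CB\times\Pb_\CC)$, i.e. there is a subalgebra $\T\le \Pb_\CB\times\Pb_\CC$ and a surjection $\T\twoheadrightarrow\A$, with kernel $\theta\in\Con(\T)$ and $\T/\theta\cong\A$. The two projections restrict to congruences $\theta_\CB=\ker(\T\to\Pb_\CB)$ and $\theta_\CC=\ker(\T\to\Pb_\CC)$ on $\T$ with $\theta_\CB\wedge\theta_\CC=\Delta_\T$, and $\T/\theta_\CB\in\HS(\Pb_\CB)\subseteq\CB$, $\T/\theta_\CC\in\HS(\Pb_\CC)\subseteq\CC$.

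Now invoke congruence distributivity of $\CV$ (hence of $\Con(\T)$): from $\theta_\CB\wedge\theta_\CC=\Delta_\T\le\theta$ we get $\theta = \theta\join\Delta_\T = \theta\join(\theta_\CB\wedge\theta_\CC) = (\theta\join\theta_\CB)\wedge(\theta\join\theta_\CC)$. Since $\A=\T/\theta$ is subdirectly irreducible, so is the lattice $[\theta,\nabla_\T]$ of congruences above $\theta$: it has a least element above $\theta$, i.e. $\theta$ is meet-irreducible in $\Con(\T)$. From $\theta=(\theta\join\theta_\CB)\wedge(\theta\join\theta_\CC)$ and meet-irreducibility of $\theta$ we conclude $\theta=\theta\join\theta_\CB$ or $\theta=\theta\join\theta_\CC$, i.e. $\theta_\CB\le\theta$ or $\theta_\CC\le\theta$. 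In the first case $\A=\T/\theta$ is a homomorphic image of $\T/\theta_\CB\in\CB$, so $\A\in\CB$; in the second case $\A\in\CC$. This proves $\Var(\A)$ is join-prime, and since every variety is generated by its subdirectly irreducible members the element $\Var(\A)$ is nonzero, hence join-irreducible.

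For part~(2), every variety $\CW\in\LV$ is generated by its subdirectly irreducible members, so $\CW=\bigvee\{\Var(\A)\mid \A\in\CW,\ \A\text{ subdirectly irreducible}\}$ (the join taken in $\LV$); by part~(1) each $\Var(\A)$ in this family is join-irreducible, so every element of $\LV$ is a join of join-irreducibles, which is exactly the assertion that the join-irreducibles are join-dense. The main obstacle is the congruence-theoretic core of part~(1) — specifically, passing from $\A\in\HSP(\CB\cup\CC)$ to a presentation $\A\cong\T/\theta$ with two complementary congruences and then extracting join-primeness from meet-irreducibility of $\theta$ via congruence distributivity; once that is set up the argument is routine, and part~(2) is immediate.
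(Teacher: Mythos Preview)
Your proof is correct and follows essentially the same approach as the paper: the paper simply cites J\'onsson~\cite[Lemma~4.1]{Jon} for the fact that a subdirectly irreducible in $\CV_1\vee\CV_2$ lies in $\CV_1$ or $\CV_2$, while you unpack and reprove that special case of J\'onsson's Lemma directly via the standard congruence-distributivity argument. Part~(2) is handled identically in both.
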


\begin{proof}
(1) follows from the fact that, for all subvarieties $\CV_1$ and $\CV_2$ of~$\CV$, a subdirectly irreducible algebra belongs to $\CV_1 \vee \CV_2$ if and only if it belongs to either $\CV_1$ or $\CV_2$ (see J\'onsson~\cite[Lemma~4.1]{Jon}). Since every variety is generated by its subdirectly irreducible algebras, (2) is an immediate consequence of~(1).
\end{proof}

We do not know of an intrinsic characterisation of varieties $\CV$ such that $\LV$  
forms a Heyting algebra (and therefore is a double Heyting algebra). Nevertheless, the following theorem gives a characterisation of when $\LV$ forms a Heyting algebra, in terms of the lattice itself.

Other than for the inclusion of (1), the result is given in Davey~\cite{D79} where it is derived as an immediate consequence of the characterisation of down-set lattices -- see Davey~\cite[Proposition~1.1]{D79} and Davey and Priestley~\cite[Theorem~10.29]{ilo}. A subset $Y$ of an ordered set $\X$ is a \emph{down-set} if $\down x \subseteq Y$, for all $x\in Y$. 
We denote the lattice of all down-sets of $\X$ by $\bd(\X)$.

\goodbreak

\begin{theorem}\label{thm:OP}
Let $\CV$ be a variety. The following are equivalent\textup:

\begin{enumerate}[label={\upshape(\arabic*)},leftmargin=1.75\parindent]

\item $\LV$ forms a Heyting algebra \textup(and therefore a double Heyting algebra\textup)\textup;

\item $\LV$ satisfies \textup{(JID)}\textup;

\item $\LV$ is distributive and algebraic\textup;

\item $\LV$ is completely distributive\textup;

\item every completely join-irreducible element of $\LV$ is completely join-prime\textup;

\item the completely join-prime elements are join-dense in~$\LV$\textup;

\item $\LV$ is isomorphic to $\bd(\cat P)$ via $\CA \mapsto  \{\, \CB\in \cat P \mid \CB \subseteq \CA\,\}$, where $\cat P$ is the ordered set of subvarieties of $\CV$ that are completely join-prime in~$\LV$.

\end{enumerate}
\end{theorem}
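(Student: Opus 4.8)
The plan is to establish the cycle of equivalences by a convenient route rather than proving each pair separately. First I would recall the general lattice-theoretic facts that are available before any variety-specific input: the standard equivalence (JID) $\iff$ ``underlies a Heyting algebra'' for complete lattices gives $(1)\Leftrightarrow(2)$ immediately, and the remark in the text that a distributive, dually algebraic lattice automatically satisfies the meet-infinite distributive law explains the parenthetical in (1). So the substantive work is to tie (2)--(7) together for the dually algebraic lattice $\LV$. Here I would lean on the cited characterisation of down-set lattices (Davey~\cite[Proposition~1.1]{D79}, Davey and Priestley~\cite[Theorem~10.29]{ilo}): for an algebraic (or, dually, for a dually algebraic) lattice $\Lb$, conditions of the form ``$\Lb \cong \bd(\cat P)$ for $\cat P$ the poset of completely join-prime elements'', ``$\Lb$ is completely distributive'', ``every completely join-irreducible element is completely join-prime'', ``the completely join-prime elements are join-dense'', and ``$\Lb$ satisfies (JID)'' are all equivalent. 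Since $\LV$ is always dually algebraic, this single external theorem delivers $(2)\Leftrightarrow(4)\Leftrightarrow(5)\Leftrightarrow(6)\Leftrightarrow(7)$ at one stroke; I would state this explicitly and cite it, exactly as the preamble to the theorem already signals.

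That leaves (3) to be folded in. For the forward direction $(1)\Rightarrow(3)$: a Heyting algebra has a distributive underlying lattice, and $\LV$ is always dually algebraic, so the only thing to check is that a dually algebraic distributive lattice satisfying (JID) is in fact algebraic --- this follows because in a completely distributive dually algebraic lattice the completely join-prime elements are join-dense (by the equivalences already obtained) and each is compact, so $\LV$ is algebraic. For the reverse direction $(3)\Rightarrow(2)$ (or $(3)\Rightarrow(1)$): a lattice that is both distributive and algebraic satisfies (JID) --- this is again a standard fact (in an algebraic distributive lattice the compact elements form a distributive lattice and arbitrary joins distribute over finite meets). So (3) slots into the chain between (1) and (2) using only general lattice theory plus dual algebraicity of $\LV$.

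The one place where I would need to be a little careful --- and what I expect to be the main obstacle --- is making sure the ``dual'' versions of the down-set-lattice characterisation are being invoked correctly. The cited results are phrased for algebraic lattices and down-set lattices, whereas $\LV$ is \emph{dually} algebraic and the relevant representation is via down-sets of the completely join-prime elements (not completely meet-prime ones). Because $\LV$ is a subvariety lattice, it is dually algebraic but need not be algebraic, so I cannot simply apply the algebraic-lattice theorem to $\LV$ itself; I must apply it in the form appropriate to dually algebraic lattices, where ``compact'' is replaced by ``dually compact'' and the roles of join and meet are \emph{not} symmetrically interchangeable (arbitrary meets of subvarieties are intersections, but arbitrary joins are generated varieties). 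The cleanest way to handle this is to note that once $\LV$ is known to be completely distributive it is in particular algebraic (every completely join-prime element of a dually algebraic completely distributive lattice is compact), after which the standard algebraic-lattice machinery applies directly. I would therefore organise the proof as: (i) $(1)\Leftrightarrow(2)$ and the parenthetical, by the standard (JID)/Heyting fact together with dual algebraicity; (ii) $(2)\Leftrightarrow(4)\Leftrightarrow(5)\Leftrightarrow(6)\Leftrightarrow(7)$, by citing Davey~\cite{D79} and Davey and Priestley~\cite[Theorem~10.29]{ilo} applied to the dually algebraic lattice $\LV$; (iii) $(2)\Rightarrow(3)$, noting complete distributivity plus join-density of compact completely join-prime elements yields algebraicity; and (iv) $(3)\Rightarrow(2)$, the standard fact that an algebraic distributive lattice satisfies (JID). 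This closes the cycle.
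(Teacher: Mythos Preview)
Your proposal is correct and matches the paper's own approach: the paper states that, apart from the inclusion of~(1), the equivalence of (2)--(7) is taken directly from Davey~\cite[Proposition~1.1]{D79} and Davey and Priestley~\cite[Theorem~10.29]{ilo}, while (1)$\Leftrightarrow$(2) is the standard (JID)/Heyting fact recalled just before the theorem (with the parenthetical handled by dual algebraicity of $\LV$ plus the meet-infinite distributive law, exactly as in the paragraph preceding the theorem). Your more explicit treatment of~(3) and your care about the dually-algebraic-versus-algebraic distinction are reasonable elaborations, but the paper simply absorbs these into the cited references and gives no further proof.
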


The following lemma gives a simple sufficient condition for $\LV$ to be algebraic.

\begin{lemma}\label{lem:locfin}
Let $\CV$ be a locally finite variety. Then $\LV$ is an algebraic lattice. A subvariety $\CA$ of $\CV$ is compact in $\LV$ if and only if it is finitely generated, that is, $\CA = \Var(\A)$ for some finite algebra~$\A$.
\end{lemma}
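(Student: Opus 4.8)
The plan is to prove the two assertions of Lemma~\ref{lem:locfin} separately, both by standard arguments about subvariety lattices of locally finite varieties. For the first assertion, that $\LV$ is algebraic, I would first recall that $\LV$ is always complete (meets are intersections, joins are $\Var$ of the union), so it remains to exhibit a join-dense set of compact elements. The natural candidates are the varieties of the form $\Var(\A)$ with $\A$ finite. To see they are join-dense, note that any $\CW\in\LV$ is generated by its finitely generated members, and in a locally finite variety every finitely generated member is finite, so $\CW = \bigvee\{\,\Var(\A)\mid \A\in\CW,\ \A\text{ finite}\,\}$. The main work is then the compactness claim, which also delivers the second assertion of the lemma.

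Next I would prove that if $\A$ is finite then $\Var(\A)$ is compact in $\LV$. Suppose $\Var(\A)\le \bigvee_{i\in I}\CW_i$ in $\LV$; this join is $\Var\bigl(\bigcup_{i\in I}\CW_i\bigr)$. In particular $\A$ lies in the variety generated by $\bigcup_{i\in I}\CW_i$, so by Birkhoff's HSP theorem $\A\in\HSP\bigl(\bigcup_{i\in I}\CW_i\bigr)$. The key point is that because $\A$ is finite it is finitely presentable, and hence it already embeds into a homomorphic image of a \emph{finite} product of algebras drawn from $\bigcup_{i\in I}\CW_i$ — this is where I would invoke the standard fact (essentially a compactness/finite-presentability argument, or one can go via J\'onsson-style arguments together with the observation that congruence-distributivity is not even needed here) that a finite algebra in $\HSP(\mathcal K)$ lies in $\HSPfin(\mathcal K)$, using only finitely many members of $\mathcal K$. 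Those finitely many members lie in $\CW_{i_1}\cup\dots\cup\CW_{i_n}$ for some finite $F=\{i_1,\dots,i_n\}\subseteq I$, whence $\A\in \bigvee_{i\in F}\CW_i$ and therefore $\Var(\A)\le\bigvee_{i\in F}\CW_i$. This shows $\Var(\A)$ is compact.

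For the converse direction of the compactness characterisation, suppose $\CA\in\LV$ is compact. Since $\CV$ (hence $\CA$) is locally finite, $\CA$ is generated by its finite members, so $\CA = \bigvee\{\,\Var(\A)\mid \A\in\CA,\ \A\text{ finite}\,\}$; compactness of $\CA$ yields finitely many finite algebras $\A_1,\dots,\A_n\in\CA$ with $\CA = \Var(\A_1)\vee\dots\vee\Var(\A_n) = \Var(\A_1\times\dots\times\A_n)$, and $\A_1\times\dots\times\A_n$ is finite. Thus $\CA$ is finitely generated by a single finite algebra, as required. Combining the two directions gives the stated equivalence, and as noted the join-density of the $\Var(\A)$ for finite $\A$ together with their compactness completes the proof that $\LV$ is algebraic.

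The step I expect to be the main obstacle is the compactness of $\Var(\A)$ for finite $\A$: one needs to justify carefully that membership of a \emph{finite} algebra in $\HSP$ of a class only requires finitely many members of that class (equivalently, that a finite algebra is a compact element of the lattice of subclasses closed under $\HSP$). The cleanest route is to observe that a finite algebra $\A$ in $\HSP(\mathcal K)$ is a homomorphic image of a subalgebra of $\prod_{j\in J}\B_j$ with $\B_j\in\mathcal K$; pulling back along the surjection, $\A$ is determined by finitely many coordinates because $\A$ is finite and each of its (finitely many) elements and each of the finitely many basic operations can be witnessed on a finite subproduct — so $\A$ already embeds into a quotient of a finite subproduct. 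I would present this as the heart of the argument and keep the surrounding bookkeeping brief.
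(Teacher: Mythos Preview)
Your overall structure matches the paper's proof: join-density via local finiteness, then the two directions of the compactness characterisation. The direction ``compact $\Rightarrow$ finitely generated'' is exactly as in the paper.

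There is, however, a genuine gap in your argument for ``$\A$ finite $\Rightarrow$ $\Var(\A)$ compact''. You present as a ``standard fact'' that a finite algebra in $\HSP(\mathcal K)$ already lies in $\HSP$ of finitely many members of~$\mathcal K$, and your sketch (``$\A$ is determined by finitely many coordinates because $\A$ is finite and each of its elements and operations can be witnessed on a finite subproduct'') does not actually establish this. The problem is that you have a surjection $h\colon \B\to\A$ with $\B\le\prod_j\B_j$, and to descend to a finite subproduct you need $\ker(\pi_{J'}\restrictedto{\B})\subseteq\ker h$ for some finite $J'$. Having only finitely many elements of $\A$ to hit, and finitely many operation-instances to verify, does not by itself bound the number of coordinates needed: the preimage algebra $\B$ (even its finitely generated part) may be infinite, and the kernel of $h$ need not be ``visible'' on any finite set of coordinates. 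Finite presentability of $\A$ is a red herring here; what is needed is exactly the hypothesis you have not yet used in this direction---local finiteness.

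The paper's fix is clean and worth adopting: since $\A$ is finite, $\A$ is a homomorphic image of a \emph{finitely generated} subalgebra $\B$ of $\prod_{s\in S}\A_s$; now invoke local finiteness of $\CV$ to conclude that $\B$ is \emph{finite}. A finite subalgebra of a product embeds into a finite subproduct (choose, for each pair of distinct elements of $\B$, a coordinate separating them), and then you are done. Your write-up should make this use of local finiteness explicit; without it the argument does not go through.
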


\begin{proof}
Every variety is generated by its finitely generated members. Hence, since $\CV$ is locally finite, every subvariety of $\CV$ equals the join in $\LV$ of finitely generated subvarieties. Thus, it remains to prove that $\CA$ is compact in $\LV$ if and only if it is finitely generated. 

Assume that $\CA$ is compact in $\LV$. Since $\CA$ is the join in $\LV$ of its finitely generated subvarieties, it follows that there are finitely many finite algebras $\A_1, \dots, \A_n$ in $\CA$ such that 
\[
\CA = \Var(\{\A_1, \dots, \A_n\}) = \Var(\A_1\times \dots\times \A_n),
\]
whence $\CA$ is finitely generated. Now assume that $\CA = \Var(\A)$, for some finite algebra~$\A$. We shall prove that $\CA$ is compact in $\LV$. Let $\CV_i$ be subvarieties of $\CV$, for $i\in I$, with $\CA \subseteq \bigvee_{i\in I} \CV_i$. Hence $\A\in \Var(\bigcup_{i\in I} \CV_i) = \HSP(\bigcup_{i\in I} \CV_i)$. As $\A$ is finite, it follows that $\A$ is a homomorphic image of a finitely generated (and therefore finite) subalgebra $\B$ of a product $\prod_{s\in S} \A_s$, with $\A_s \in \bigcup_{i\in I} \CV_i$, for all $s\in S$. As $\B$ is finite, there is a finite subset $T$ of $S$ such that $\B$ embeds into $\prod_{t\in T} \A_t$. It follows at once that there is a finite subset $J$ of $I$ such that $\A \in \HSP(\bigcup_{j\in J} \CV_j)$, whence $\CA \subseteq \bigvee_{j\in J} \CV_j$. Hence $\CA$ is compact in $\LV$, as claimed.
\end{proof}

By combining Theorem~\ref{thm:OP} and Lemmas~\ref{lem:easy1}, \ref{lem:Birk}, \ref{lem:SIjoindense} and~\ref{lem:locfin} with some simple applications of J\'onsson's Lemma~\cite{Jon}, we obtain the following result. Most of this was already known, but our proofs are simpler and more direct. For example, (3) was first proved by Day~\cite[Corollary~3.8]{Day} using the concept of a finitely projected algebra and (4) was first proved in Davey~\cite[Theorem~3.3]{D79}.

\begin{theorem}\label{thm:fingenCD}
Let $\CV$ be a locally finite congruence-distributive variety. 
\begin{enumerate}[label={\upshape(\arabic*)},leftmargin=1.75\parindent]

\item $\LV$ is a distributive doubly algebraic lattice and hence forms a double Heyting algebra.

\item The following are equivalent for a subvariety $\CA$ of $\CV$\textup:

\begin{enumerate}[label={\upshape(\roman*)}]

\item $\CA$ is generated by a finite algebra and is join-irreducible in~$\LV$\textup;

\item $\CA$ is completely join-prime in~$\LV$\textup;

\item $\CA = \Var(\A)$, for some \textup(unique up to isomorphism\textup) finite subdirectly irreducible algebra~$\A$.

\end{enumerate}

\item Every finite subdirectly irreducible algebra in $\CV$ is a splitting algebra in~$\CV$.

\item $\LV$ is isomorphic to $\OSi$ where $\SifinV$ is a transversal of the isomorphism classes of finite subdirectly irreducible algebras in $\CV$ ordered by 
$\A \sqsubseteq \B$ if and only if $\A\in \mathbb{HS}(\B)$.
\end{enumerate}
\end{theorem}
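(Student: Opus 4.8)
The plan is to assemble Theorem~\ref{thm:fingenCD} piece by piece from the machinery already developed, since each clause is essentially a packaging of an earlier result. First I would establish~(1): by Lemma~\ref{lem:locfin}, local finiteness gives that $\LV$ is algebraic; by Lemma~\ref{lem:SIjoindense}(1) together with congruence distributivity, every subdirectly irreducible generates a join-prime (hence join-irreducible) subvariety, and these are join-dense by Lemma~\ref{lem:SIjoindense}(2); so $\LV$ satisfies condition~(6) of Theorem~\ref{thm:OP} once one checks that the completely join-prime elements coincide with the join-prime ones in an algebraic lattice, which is where Lemma~\ref{lem:easy1} and the compactness description from Lemma~\ref{lem:locfin} come in. Invoking Theorem~\ref{thm:OP}, $\LV$ then forms a Heyting algebra, and since it is distributive and (doubly) algebraic it satisfies (MID) as well, hence is a double Heyting algebra and doubly algebraic.

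Next I would prove~(2) by chasing the three implications around a cycle. For (i)$\Rightarrow$(ii): if $\CA$ is finitely generated then it is compact in $\LV$ by Lemma~\ref{lem:locfin}, and a compact join-irreducible element of an algebraic lattice is completely join-irreducible; Theorem~\ref{thm:OP}(5) then upgrades ``completely join-irreducible'' to ``completely join-prime.'' For (ii)$\Rightarrow$(iii): a completely join-prime element is in particular completely join-irreducible, hence compact, hence by Lemma~\ref{lem:locfin} generated by a finite algebra $\A$; writing $\A$ as a subdirect product of subdirectly irreducibles and using J\'onsson's Lemma (each subdirectly irreducible factor lies in $\mathbb{HS}$ of $\A$, so in $\Var(\A)$), complete join-primeness forces $\Var(\A)$ to equal $\Var(\B)$ for one subdirectly irreducible factor $\B$, which is finite since it is a quotient of a subalgebra of $\A$. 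Uniqueness up to isomorphism follows from Corollary-type reasoning: two finite subdirectly irreducibles generating the same variety each lie in $\mathbb{HS}$ of the other by J\'onsson, and minimality of cardinality forces them isomorphic. For (iii)$\Rightarrow$(i): $\Var(\A)$ with $\A$ finite is compact by Lemma~\ref{lem:locfin} and join-irreducible by Lemma~\ref{lem:SIjoindense}(1).

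Part~(3) is then immediate: given a finite subdirectly irreducible $\A$, part~(2) says $\Var(\A)$ is completely join-prime in $\LV$, which is exactly the definition of splitting algebra recorded after Lemma~\ref{lem:Birk} (the existence of the companion element $d = \bigvee\{\CB : \CB \not\supseteq \Var(\A)\}$ and the splitting pair property being supplied by Lemma~\ref{lem:easy1} in the complete lattice $\LV$). For part~(4), by Theorem~\ref{thm:OP}(7), $\LV \cong \bd(\cat P)$ where $\cat P$ is the ordered set of completely join-prime subvarieties; by part~(2) the elements of $\cat P$ are exactly the varieties $\Var(\A)$ for $\A$ finite subdirectly irreducible, so choosing a transversal $\SifinV$ of isomorphism types gives an order isomorphism between $\cat P$ and $\langle \SifinV; \sqsubseteq\rangle$ once we verify that $\Var(\A) \subseteq \Var(\B) \iff \A \in \mathbb{HS}(\B)$ for finite subdirectly irreducibles --- the forward direction is J\'onsson's Lemma, the reverse is trivial; applying $\bd(-)$ yields $\LV \cong \OSi$.

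The main obstacle will be the bookkeeping around the distinction between join-irreducible/join-prime and their ``completely'' versions, and the repeated appeals to J\'onsson's Lemma in exactly the right form: I need that in a congruence distributive variety a subdirectly irreducible lies in a finite join of subvarieties iff it lies in one of them, and that it lies in $\Var(\A)$ iff it is in $\mathbb{HS}_U$ (here just $\mathbb{HS}$, as everything in sight is finite) of $\A$. None of the steps is deep given the lemmas already proved, but making the equivalence cycle in~(2) airtight --- particularly the finiteness and uniqueness assertions in~(iii) --- requires a little care, and ensuring the order isomorphism in~(4) really is one (not just a bijection) is the one spot where one must pause and check $\sqsubseteq$ matches containment on the nose.
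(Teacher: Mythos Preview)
Your proposal is correct and follows essentially the same architecture as the paper: establish (1) via Lemma~\ref{lem:locfin} and Theorem~\ref{thm:OP}, cycle through (2), then read off (3) and (4). The routing differs in two places worth noting. For (i)$\Rightarrow$(ii), the paper argues more directly: distributivity of $\LV$ upgrades join-irreducible to join-prime, and then ``compact $+$ join-prime $\Rightarrow$ completely join-prime'' is a one-line observation in any complete lattice---no need to pass through Theorem~\ref{thm:OP}(5). For (ii)$\Rightarrow$(iii), the paper bypasses your subdirect-decomposition argument entirely by invoking Lemma~\ref{lem:Birk} (via Lemma~\ref{lem:easy1}): a completely join-prime element of $\LV$ is the left half of a splitting pair, and Lemma~\ref{lem:Birk} already says such a variety is generated by a finitely generated subdirectly irreducible, which is finite by local finiteness. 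Your route works, but the paper's is shorter and avoids the bookkeeping you flagged as the ``main obstacle.'' One small caution: your step ``completely join-irreducible, hence compact'' is not valid in arbitrary complete lattices; it holds here because (1) already gives $\LV\cong\bd(\cat P)$, but the cleaner justification is simply that completely join-prime elements are automatically compact.
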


\begin{proof}
(1) follows immediately from Theorem~\ref{thm:OP} and Lemma~\ref{lem:locfin}.

We now prove (2). Assume (i); so $\CA$ is finitely generated and join-irreducible in~$\LV$. Since $\CV$ is congruence distributive, $\LV$ is distributive and hence $\CA$ is join-prime in~$\LV$. By Lemma~\ref{lem:locfin}, $\CA$ is compact in $\LV$. Since every compact, join-prime element of a complete lattice is completely join-prime, (ii) follows. Now assume (ii). By Lemmas~\ref{lem:easy1} and~\ref{lem:Birk}, $\CA$ is generated by a finitely generated, and therefore finite, subdirectly irreducible algebra~$\A$. Hence (iii) holds. The uniqueness claim is an easy consequence of J\'onsson's Lemma~\cite[Corollary~3.4]{Jon}. Indeed, $\Var(\A) = \Var(\B)$, for finite subdirectly irreducible algebras $\A,\B\in \CV$, implies $\A\in \mathbb{HS}(\B)$ and $\B\in \mathbb{HS}(\A)$ and hence $\A\cong \B$. Finally, (iii) implies~(i) follows directly from Lemma~\ref{lem:SIjoindense}(1).

    (3) follows immediately from Lemma~\ref{lem:easy1} and the implication (iii)~$\Rightarrow$~(ii) in~(2).
    
    Finally, we prove (4). By Theorem~\ref{thm:OP}(6), we have $\LV\cong\cat O(\cat P)$, where $\cat P$ is the ordered set of subvarieties of $\CV$ that are completely join-prime in~$\LV$. The equivalence of (ii) and (iii) in (2) shows that $\cat P\cong \SifinV$ since (again by J\'onnson~\cite[Corollary~3.4]{Jon}), for finite subdirectly irreducible algebras $\A$ and $\B$, we have $\Var(\A)\subseteq \Var(\B)$ if and only if $\A\in \mathbb{HS}(\B)$. 
\end{proof}

Thus, for a locally finite, congruence-distributive variety $\CV$, its lattice of subvarieties $\LV$ is richly endowed with splittings. In particular, by Theorems~\ref{thm:Heyt-cov2splitting} and~\ref{thm:DoubHeyt-cov2splitting}, every cover in $\LV$ gives rise to a splitting. In the remainder of the paper we will see that the assumption of local finiteness is crucial here: the varieties we study are congruence distributive and generated by their finite members but not locally finite and have almost no splittings at all.

\section{Splitting algebras in varieties of logic: the Non-splitting Lemma}\label{sec:split-alg-var-log}

By a \emph{variety of logic} we mean any variety of algebras that forms
an algebraic semantics for some well-behaved logic. 
We will not enter into details, but intuitively, we wish to
include all logics that have a conjunction, an equivalence, a truth constant, and
a unary (term-defined, possibly trivial) connective resembling a modal operator.

Thus, in this section we will work with a fixed ambient variety $\CR$ of
algebras of finite signature $\tau$, such that there exist binary terms
$\wedge$, $\eqv$, a unary term $\delta$, and a constant term $1$, whose
interpretations in $\CR$ have the following properties: 
\begin{enumerate}  
\item[(P1)] $\wedge$ is a semilattice operation,
\item[(P2)] $x\eqv y\leq 1$, and $x\eqv y = 1$ if and only if $x = y$,
\item[(P3)] $\delta$ is order preserving and satisfies $\delta x\leq x$,
\item[(P4)] for each congruence $\vartheta$, the filter
$\up(1/\vartheta)$ is closed under $\delta$,
\item[(P5)] each filter closed under $\delta$ and containing $1$ is of the form
  $\up(1/\vartheta)$ for some congruence $\vartheta$.
\end{enumerate}
Note that (P2) implies that $\CR$ is congruence regular with respect to
$1$, as we have $x\equiv_\vartheta y$ if and only if
$x\eqv y\equiv_\vartheta 1$. 
Moreover, (P2) implies that every non-trivial congruence $\vartheta$ has $a\equiv_\vartheta 1$ for some $a<1$. Further, (P3) together with (P4) imply that $\delta 1 = 1$ in every algebra $\mathbf{A}\in\CR$.

The following lemma characterises finite subdirectly irreducible algebras in
$\CR$. Given a unary operation $f$ we denote its $n$-fold composite by~$f^n$.

\begin{lemma}
Let $\mathbf{A}\in\CR$ be finite and subdirectly irreducible. Let $\mu$
be its monolith. Then there exists $n\in \mathbb{N}$ such that for 
each $a\in 1/\mu$ with $a<1$ we have $\delta^{n+1}a = \delta^na$, and $\up(1/\mu) = \up(\delta^n a)$. 
\end{lemma}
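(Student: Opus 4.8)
The plan is to extract one $n$ by a finiteness argument, and then to pin down the filter $\up(\delta^n a)$ using the correspondence between $\delta$-closed filters and congruences provided by (P5), together with the minimality of the monolith~$\mu$. First I would note that $1/\mu$ is a finite subset of $A$ closed under $\delta$: since $\delta 1 = 1$, from $a\equiv_\mu 1$ we get $\delta a\equiv_\mu \delta 1 = 1$. For each $a\in 1/\mu$ the chain $a\ge \delta a\ge \delta^2 a\ge\cdots$ supplied by (P3) lies in the finite set $A$, hence stabilises; let $n_a$ be least with $\delta^{n_a+1}a = \delta^{n_a}a$, so that $\delta^{m+1}a = \delta^m a$ for all $m\ge n_a$. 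Since $\mu$ is non-trivial, (P2) guarantees some $a\in 1/\mu$ with $a<1$, so
\[
n := \max\{\, n_a \mid a\in 1/\mu \text{ and } a<1 \,\}
\]
is a well-defined natural number, and $\delta^{n+1}a = \delta^n a$ for every $a\in 1/\mu$ with $a<1$. This gives the first conclusion, so it remains to prove $\up(1/\mu) = \up(\delta^n a)$ for each such~$a$.

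Fix $a\in 1/\mu$ with $a<1$ and set $b := \delta^n a$. Repeated use of (P3) gives $b\le a$, hence $b\le a<1$ and in particular $b\ne 1$; moreover $b\in 1/\mu$ by the closure noted above. The principal up-set $\up b$ is a filter (it is an up-set, and it is closed under $\wedge$ because $b$ is a common lower bound of any two of its members), it contains $1$ since $b\le 1$, and it is closed under $\delta$ because $\delta b = \delta^{n+1}a = \delta^n a = b$ while $\delta$ is order preserving. Hence (P5) delivers a congruence $\vartheta$ with $\up b = \up(1/\vartheta)$. Now the least element of any set of the form $\up(1/\vartheta)$ lies in $1/\vartheta$ (if $x_0 = \min\up(1/\vartheta)$ then $x_0\ge y$ for some $y\in 1/\vartheta\subseteq\up(1/\vartheta)$, which forces $y\ge x_0$ and so $x_0 = y\in 1/\vartheta$); since that least element is $b$, we conclude $b\equiv_\vartheta 1$. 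As $b\ne 1$, the congruence $\vartheta$ is non-trivial.

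Because $\mu$ is the monolith we have $\mu\le\vartheta$, so $1/\mu\subseteq 1/\vartheta$ and therefore $\up(1/\mu)\subseteq\up(1/\vartheta) = \up b = \up(\delta^n a)$; the reverse inclusion $\up(\delta^n a)\subseteq\up(1/\mu)$ is immediate from $\delta^n a\in 1/\mu$. Hence $\up(1/\mu) = \up(\delta^n a)$, as required. I do not expect a genuine obstacle here: the only points demanding care are that a single $n$ works for \emph{all} the relevant $a$ (handled by taking the maximum over the finite set $1/\mu$) and the observation that the $\delta$-closed principal filter $\up(\delta^n a)$ is of the form $\up(1/\vartheta)$ with $\vartheta$ non-trivial; granted this, minimality of the monolith does the rest.
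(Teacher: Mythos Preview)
Your proof is correct and uses the same essential ingredients as the paper's: finiteness to stabilise the $\delta$-chain, (P5) to recognise the resulting principal filter as $\up(1/\vartheta)$ for some congruence, and minimality of the monolith to force $\vartheta\ge\mu$. The organisation differs slightly. The paper first identifies the least element $b$ of the filter $F=\up(1/\mu)$ (principal by finiteness), shows $\delta b=b$ and $b<1$ via (P2)--(P4), then argues that $b$ is the \emph{unique} $\delta$-fixed point strictly below~$1$ lying in~$F$ (any other such fixed point would produce, via~(P5), a non-trivial congruence strictly below~$\mu$), and finally concludes that every descending $\delta$-chain from $a\in 1/\mu$ must terminate at~$b$. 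You instead bypass the uniqueness argument entirely: having stabilised each chain at $\delta^n a$, you show directly that $\up(\delta^n a)=\up(1/\mu)$ by sandwiching. Your route is marginally cleaner in that it never needs to single out~$b$ in advance or prove uniqueness of the fixed point; the paper's route has the minor advantage of making explicit that all the $\delta^n a$ coincide with the single element~$b$, though this is of course an immediate consequence of your conclusion as well.
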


\begin{proof}
Note that if $\delta a = a$ for some $a<1$, then
$\up(a\wedge 1)$ satisfies the conditions in~(P5), and so it is 
of the form $\up(1/\vartheta)$ for some congruence
$\vartheta\geq\mu$. Let $F =  
\up(1/\mu)$.  By finiteness, $F$ is principal, so $F =
\up b$. By (P2), (P3) and (P4) we get that $b<1$ and $\delta b = b$.
Moreover, $b$ is the only element in $F$ with these properties. For if
$b<a<1$ and $\delta a =a$ hold, then $\up a =
\up(1/\alpha)$ for some congruence
$\alpha$ with $0<\alpha<\mu$ contradicting the fact that
$\mu$ is the monolith of $\mathbf{A}$. Thus, for every $a$ with $b < a <1$
we have $\delta a < a$, so by finiteness $\delta^ka = b = \delta^{k+1}a$ for some $k$.
Also by finiteness, we can take $n\in \mathbb{N}$ large enough to satisfy
$\delta^na = b = \delta^{n+1}a$ for every $a$ with $b<a<1$.
\end{proof}

Let $\CR_n$ be the subvariety of $\CR$ defined by
$\delta^{n+1}x = \delta^nx$. Every finite subdirectly irreducible algebra $\mathbf{A}\in\CR$ belongs to $\CR_n$ for some $n$. We will use $\mu_\bot$ to denote the smallest element of the filter $1/\mu$, where $\mu$ is the monolith of $\mathbf{A}$.

\subsection{Algebras describing themselves}
Let $\mathbf{A}\in\CR$ be finite and subdirectly irreducible.  
Fix a set $X$ of variables with $|X|=|A|$, and index them by the elements of
$A$. Define the \emph{term-diagram} of ${\bf A}$ to be the $|A|$-ary term
\[
\Delta_{\bf A} =
\bigwedge\{x_{f(a_1,\dots,a_n)}\eqv f(x_{a_1},\dots,x_{a_n})
\mid a_1,\dots,a_n\in A, f\in \tau\}. 
\]
Throughout the paper, we will use $\mathbf{A}\leq\mathbf{B}$ to indicate that $\A$ embeds into~$\B$.

\begin{lemma}\label{valu}
Let $\mathbf{A}$ and $\mathbf{B}$ be algebras from $\CR$, with
$\mathbf{A}$ subdirectly irreducible and $|A| = k$. Then 
$\mathbf{A}\leq\mathbf{B}$ if and only if there exists a
$k$-tuple $\ov{b}$ of elements of $B$ such that
$\mathbf{B}\models (\Delta_{\mathbf{A}}\approx 1)[\ov{b}]$ and
$\mathbf{B}\models (x_{\mu_\bot} \not\approx 1)[\ov{b}]$.  
\end{lemma}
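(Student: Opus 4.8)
The plan is to prove the two directions separately, with the forward direction being essentially bookkeeping and the reverse direction being the substantive one.

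For the forward direction, suppose $\mathbf{A}\leq\mathbf{B}$, and fix an embedding $\iota\colon\mathbf{A}\to\mathbf{B}$. The natural candidate tuple is $\ov b := (\iota(a))_{a\in A}$, i.e.\ the variable $x_a$ is assigned the value $\iota(a)$. Then for each basic operation $f\in\tau$ and each choice $a_1,\dots,a_n\in A$, the conjunct $x_{f(a_1,\dots,a_n)}\eqv f(x_{a_1},\dots,x_{a_n})$ evaluates, using that $\iota$ is a homomorphism, to $\iota(f(a_1,\dots,a_n))\eqv f(\iota(a_1),\dots,\iota(a_n)) = \iota(f(a_1,\dots,a_n))\eqv\iota(f(a_1,\dots,a_n))$, which equals $1$ by (P2). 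Since $\wedge$ is a semilattice operation with top $1$ (combining (P1) and (P2)), the meet of all these $1$'s is $1$, so $\mathbf{B}\models(\Delta_{\mathbf A}\approx 1)[\ov b]$. For the second condition, $x_{\mu_\bot}$ is assigned $\iota(\mu_\bot)$; since $\mu_\bot<1$ in $\mathbf{A}$ (as $\mu$ is a non-trivial congruence, so by the remark after (P5) there is $a<1$ with $a\equiv_\mu 1$, and $\mu_\bot$ is the least such, hence $\mu_\bot<1$) and $\iota$ is injective, $\iota(\mu_\bot)\neq 1$ in $\mathbf{B}$, giving $\mathbf{B}\models(x_{\mu_\bot}\not\approx 1)[\ov b]$.

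For the reverse direction, let $\ov b=(b_a)_{a\in A}$ be a tuple witnessing the two conditions, and define $\varphi\colon A\to B$ by $\varphi(a) := b_a$. The condition $\mathbf{B}\models(\Delta_{\mathbf A}\approx 1)[\ov b]$ means every conjunct equals $1$ (again since $\wedge$ is a semilattice with top $1$), so by (P2) we get $b_{f(a_1,\dots,a_n)} = f(b_{a_1},\dots,b_{a_n})$ for all $f\in\tau$ and all $a_1,\dots,a_n\in A$; that is, $\varphi$ is a homomorphism from $\mathbf{A}$ to $\mathbf{B}$. It remains to show $\varphi$ is injective. Here is where subdirect irreducibility of $\mathbf{A}$ is used: the kernel $\ker\varphi$ is a congruence on $\mathbf{A}$, and if it were non-trivial it would contain the monolith $\mu$, hence $(\mu_\bot,1)\in\ker\varphi$ (using (P2): $\mu_\bot\equiv_\mu 1$), so $\varphi(\mu_\bot)=\varphi(1)$. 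But $\varphi(1) = b_1$, and applying the homomorphism property to the $0$-ary term $1$ (or noting $1$ is a constant term so $\mathbf{B}\models x_1\eqv 1$ follows from the relevant conjunct of $\Delta_{\mathbf A}$) gives $b_1 = 1^{\mathbf B} = 1$. Thus $\varphi(\mu_\bot) = b_{\mu_\bot} = 1$, contradicting $\mathbf{B}\models(x_{\mu_\bot}\not\approx 1)[\ov b]$. Hence $\ker\varphi$ is trivial, $\varphi$ is an embedding, and $\mathbf{A}\leq\mathbf{B}$.

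The main obstacle is the small subtlety about the constant $1$: one must be careful that $1$ is among the operations $f\in\tau$ (or that $\Delta_{\mathbf A}$ includes the conjunct $x_1\eqv 1$) so that the witnessing tuple is forced to satisfy $b_1 = 1$, which is exactly what lets subdirect irreducibility bite via the monolith. If $1$ is only a term and not a basic operation symbol, the argument still goes through because the homomorphism property transfers to all term operations, but the write-up should note this. Everything else is a routine application of (P1) and (P2) to collapse the big meet into the conjunction of its parts.
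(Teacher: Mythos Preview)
Your proof is correct and follows essentially the same approach as the paper: both directions match, with the forward direction taking the embedded copy of $A$ as the witnessing tuple, and the reverse direction defining the map $a\mapsto b_a$, checking it is a homomorphism via (P2), and using subdirect irreducibility to force the kernel to be trivial via $(\mu_\bot,1)$. Your explicit attention to why $b_1 = 1^{\mathbf B}$ (because $1$ is a term operation preserved by any homomorphism) is a point the paper passes over without comment.
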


\begin{proof}
For the forward direction,  
define $\ov{b}$ putting $b_a = a$ for every $a\in A$.
Clearly $x_{\mu_\bot}[\ov{b}] = b_{\mu_\bot}\neq 1$, so
$\mathbf{B}\models (x_{\mu_\bot} \not\approx 1)[\ov{b}]$. Moreover,
for each $f\in\tau$ and each $n$-tuple $( a_1,\dots,a_n)\in A^n$,
we have
\begin{align*}
(x_{f(a_1,\dots,a_n)}\eqv f(x_{a_1},\dots,x_{a_n}))[\ov{b}] &=
b_{f(a_1,\dots,a_n)}\eqv f(b_{a_1},\dots,b_{a_n})\\
     &= f(a_1,\dots,a_n)\eqv f(a_1,\dots,a_n) 
     = 1.
\end{align*}  
Hence, $\mathbf{B}\models (\Delta_{\mathbf{A}}\approx 1)[\ov{b}]$. 

For the converse, let $\ov{b}$ be a $k$-tuple with the required
properties. Define a map $h\colon A\to B$ by $h(a) = x_a[\ov{b}]$. This map is a homomorphism by the definition of $\Delta_\mathbf{A}$ and the properties of $\eqv$. Moreover, we have $h(1) = x_1[\ov{b}] = 1^{\bf B} \neq x_{\mu_\bot}[\ov{b}] = h({\mu_\bot})$. Therefore, as $\mathbf{A}$ is subdirectly irreducible and every non-trivial congruence on $\mathbf{A}$ contains $(1,{\mu_\bot})$, the kernel of $h$ must be the trivial congruence on $\mathbf{A}$. Hence, $h$ is an embedding.
\end{proof}  

\begin{NonSplit}\label{no-splitting}
Let $\mathbf{A}\in\CR$ be finite and subdirectly irreducible.
The following are equivalent\textup:
\begin{enumerate}[label={\upshape(\arabic*)},leftmargin=1.75\parindent]
\item $\mathbf{A}$ is not a splitting algebra in $\CR$\textup;
\item $\forall i\in\mathbb{N}\; \exists{\bf B}\in\CR\colon
\mathbf{A}\not\in \Var(\mathbf{B})$ and $\mathbf{B}\not\models
  \delta^i(\Delta_{\mathbf{A}})\leq x_{\mu_\bot}$\textup;
\item $\forall i\in\mathbb{N}\; \exists k\geq i\; \exists{\bf B}\in\CR\colon
\mathbf{A}\not\in \Var(\mathbf{B})$ and $\mathbf{B}\not\models
  \delta^k(\Delta_{\mathbf{A}})\leq x_{\mu_\bot}$. 
\end{enumerate}
\end{NonSplit}

\begin{proof}
To prove the implication from (2) to (1) take algebras 
$\mathbf{B}_i$, for each $i\in \mathbb{N}$, such that
$\mathbf{B}_i\not\models \delta^i(\Delta_{\mathbf{A}})\leq x_{\mu_\bot}$ and 
$\mathbf{A}\not\in \Var(\mathbf{B}_i)$. Let $k = |A|$.
Choose a $k$-tuple $\ov{b(i)} = \big( b(i)_1,\dots,b(i)_{k-1},s(i)\big)$ of
elements of $\mathbf{B}_i$ 
such that  
\[
\delta^i(\Delta_{\mathbf{A}})[b(i)_1,\dots,b(i)_{k-1}]\not\leq s(i) \ \text{in $\mathbf{B}_i$}. 
\]
Let $\mathbf{B} = \prod_{i\in \mathbb{N}}\mathbf{B}_i$, and consider the $k$-tuple 
\[
\ov{b} = 
\bigl(( b(i)_1\mid i\in\mathbb{N}),\dots,(
b(i)_{k-1}\mid i\in\mathbb{N}),
( s(i)\mid i\in\mathbb{N})\bigr) \in B^k.
\]
Let $s$ stand for $\big( s(i)\mid i\in\mathbb{N}\big)$.
By the choice of $\ov{b}$ we have that 
$\forall i\in\mathbb{N}\colon 
\delta^i(\Delta_\mathbf{A})[\ov{b}]\not\leq s$ in $\mathbf{B}$.
It follows that the filter $F =
\up\bigl\{\delta^i(\Delta_\mathbf{A})[\ov{b}]\mid
i\in\mathbb{N}\bigr\}$ does not
contain $s$. 

Therefore, taking the congruence 
$\theta$ corresponding to $F$, we obtain
that 
\[
\mathbf{B}/\theta\models
\Delta_\mathbf{A}[\ov{b}/\theta] = 1 \ \text{ and } \ s/\theta \neq 1\ \text{in the quotient $\mathbf{B}/\theta$}.
\]
By Lemma~\ref{valu}, we get 
that $\mathbf{A} \leq \mathbf{B}/\theta$. Hence,
$\mathbf{A} \in \Var(\mathbf{B})$. Now, to derive a contradiction, assume
$\mathbf{A}$ is a splitting algebra. Then there exists the largest subvariety
$\CV$ of $\CR$ such that 
$\mathbf{A}\notin\CV$. Since $\mathbf{A}\notin \Var(\mathbf{B}_i)$ for all
$i\in\mathbb{N}$, we have that $\Var(\mathbf{B}_i)\subseteq \CV$ for every
$i\in\mathbb{N}$. But then $\mathbf{B} = \prod_{i\in \mathbb{N}}\mathbf{B}_i$
belongs to $\CV$, and therefore 
$\mathbf{A}\in \CV$, which contradicts the assumption that $\mathbf{A}$
is splitting.

To show that (1) implies (2) we will prove the contrapositive. Assume that 
$\exists {i\in\mathbb{N}}\;\forall\mathbf{B}\in \CR\colon
\mathbf{A}\not\in \Var(\mathbf{B})$ implies
$\mathbf{B}\models \delta^i(\Delta_{\mathbf{A}})\leq x_{\mu_\bot}$.
Let $m$ be the smallest with this property. 
We will now show that $\mathbf{A}$ is a splitting algebra. Namely, we claim
that the subvariety $\CW$ of $\CR$ defined by 
the identity $\delta^m(\Delta_{\mathbf{A}})\wedge x_{\mu_\bot} \approx x_{\mu_\bot}$ is the
largest subvariety of $\CR$ to which $\mathbf{A}$ does not belong. 
Obviously, $\mathbf{A}\notin\CW$, as otherwise we would have
$\mathbf{A}\models \delta^m(\Delta_{\mathbf{A}})\wedge x_{\mu_\bot} \approx x_{\mu_\bot}$ which
cannot be the case by Lemma~\ref{valu}.
Take any subvariety $\CV$ of $\CR$, with 
$\mathbf{A}\not\in \CV$. Let $\mathbf{F}$ be the free countably generated algebra
in $\CV$ so that $\CV = \Var(\mathbf{F})$.  This, by our assumption, 
implies that $\mathbf{F}\models \delta^m(\Delta_{\mathbf{A}})\wedge x_{\mu_\bot} \approx
x_{\mu_\bot}$. Hence,  
$\CV\models \delta^m(\Delta_{\mathbf{A}})\wedge x_{\mu_\bot} \approx x_{\mu_\bot}$ and
therefore $\CW\supseteq \CV$ as claimed. 

Finally, (2) is equivalent to (3) since, by (P3), the map $\delta$ is decreasing.
\end{proof}

The remainder of the paper is devoted to using the Non-splitting Lemma to prove that several familiar varieties of logic contain no splitting algebras except for some very small algebras. Note however, that the Non-splitting Lemma
can only be used to prove that certain \emph{finite}
algebras are not splitting. Fortunately, as we already mentioned in Section~\ref{sec:intro}, McKenzie~\cite{McK} proved that if
$\CV$ is congruence distributive and generated by its finite members
then every splitting algebra is finite, so 
the Non-splitting Lemma
suffices.  

Kracht~\cite{Kra92} shows that 
the only algebra splitting the variety of tense algebras is (term equivalent to)
the two-element Boolean algebra; the same is proved in Kowalski and
Ono~\cite{KO00} for 
for the variety of FL\textsubscript{ew}-algebras\footnote{Called
  \emph{residuated lattices} there, at variance with present terminology. See
  the next section.}. 
Kowalski and Miyazaki~\cite{KM09} prove that there
are only two splitting algebras in the variety of KTB-algebras. 
Each of these applications of the Non-splitting Lemma
involved a pair of constructions:
an expansion followed by a distortion. We will demonstrate the process in two
further cases. The reader will see that the constructions 
need to be precisely tailored to each particular case. This was also true 
for all previously known examples, so it does not seem likely that a generic 
construction can be found.

\section{Residuated lattices}\label{sec:res-latt}

A \emph{residuated lattice}  is an algebra 
$\mathbf{A} = \langle A;\wedge,\vee,\ld,\rd,\cdot,1\rangle$
such that $\langle A;\wedge,\vee\rangle$ is a lattice, and
$\langle A;\cdot,\ld,\rd, 1\rangle$ is a residuated monoid, that is, an ordered
monoid satisfying
\[
y\leq x\ld z\iff xy\leq z \iff x\leq z\rd y.
\]
The operations $\ld$ and $\rd$ are called, respectively,
\emph{left division} (or \emph{right residuation}) and
\emph{right division} (or \emph{left residuation}).
Multiplication binds stronger than divisions, which 
bind stronger than the lattice operations.
The following identities will be important later.
\begin{enumerate}[label={\upshape(\arabic*)},leftmargin=1.75\parindent]
\item $1\geq x$,
\item $xy = yx$,
\item $x^{n+1} = x^n$,
\end{enumerate}
A residuated lattice satisfying (1), (2), or (3) is called
\emph{integral}, \emph{commutative}, or \emph{$n$-potent}, respectively.
We write $\mathsf{RL}$ for the variety of all residuated lattices, and
$\mathsf{IRL}$, $\mathsf{CRL}$, $\mathsf{CIRL}$, respectively, for
the varieties of integral, commutative, and commutative integral residuated
lattices. In the commutative case, the left and right residuals become
opposites, for we have $x\ld y = y\rd x$. It is then customary to blur the
distinction between then and write write $x\ra y$ for both.
For a residuated lattice $\m{A}$, an element $a\in A$ is called \emph{negative}
if $a\leq 1$, and \emph{strictly negative} if $a<1$; \emph{\textup(strictly\textup) positive}
elements are defined dually. If a residuated lattice $\m{A}$ has a unique largest strictly
negative element, then $\m{A}$ is subdirectly irreducible. For commutative
residuated lattices the converse is also true. 

For more details on residuated lattices, and for any unexplained
nomenclature, we refer the reader to Galatos \emph{et al.}~\cite{GJKO07}.
Residuated lattices expanded by a constant $0$, are known as FL-algebras
(especially among logicians, because of the connection with \emph{Full Lambek
  calculus}). In older literature, the name `residuated lattices' was used for
what is now called FL\textsubscript{ew}-algebras: a subvariety of FL-algebras consisting
of commutative, integral FL-algebras satisfying $0\leq x$. This was the
terminology used in Kowalski and Ono~\cite{KO00}, for example.

The variety $\mathsf{CRL}$ of commutative residuated lattices
satisfies (P1)--(P5), with $x\eqv y =  (x\ra y)\wedge (y\ra x)\wedge 1$ and 
$\delta x = x^2$, so the Non-splitting Lemma~\ref{no-splitting} applies in principle. To apply it in practice, we need two constructions given below. Each will be given in a rather general form, with a view to possible applications in a wider class of residuated lattices. However, the generality will be somewhat
evasive, as the constructions seem to generalise in incompatible ways.

\subsection{Expansions of commutative residuated lattices}
Let $\mathbf{A}$ be a commutative residuated lattice, and
let $c\in A$ be an arbitrary strictly negative element. Note that we have
$ca \leq a$ for all $a\in A$. Let $A_0 = \{a\in A\mid ca<a\}$
and let $D$ be a copy of $A_0$ disjoint from $A$,
so that $D = \{d_a\mid a\in A_0\}$. Let $P = A\cup D$. We will define a binary
relation (denoted $\leq$) and a binary operation (denoted $\cdot$) on $P$.
For $x, y\in P$, we put $x\leq y$ if any of the following holds: 
\begin{align*}
x,y\in A &\text{ and } x\leq^{\mathbf{A}} y,\\
x = d_a\in D, y\in A &\text{ and } a \leq^{\mathbf{A}} y,\\
x\in A, y = d_a\in D &\text{ and } x \leq^\m{A} ca,\\
x=d_a, y = d_b\in D &\text{ and } a \leq^\m{A} b.
\end{align*}
Intuitively, we insert a new element between each pair $ca<a$ in such a way
that $ca < d_a < a$ holds. In particular, $c < d_1 <1$.

It is not difficult to show that the relation $\leq$ defined above is an 
order on~$P$. Next, for all $x,y\in P$, we put:
\begin{equation*}\label{multiplication} 
x\cdot y = y\cdot x =\begin{cases}
xy     &\text{ if } x,y \in A,\cr
d_{ay} &\text{ if } x=d_a\in D,\ y\in A,\ cay<ay,\cr
ay     &\text{ if } x=d_a\in D,\ y\in A,\ cay=ay,\cr
cab    &\text{ if } x=d_a\in D,\ y=d_b\in D.
\end{cases}
\end{equation*}

\begin{lemma}\label{pomonoid}
The structure\/ $\m{P} = 
\langle P;\leq, \cdot, 1\rangle$ is 
an ordered commutative monoid. Moreover, if $\m{A}$ is integral, then
$x\leq 1$ holds for all
$x\in P$.
\end{lemma}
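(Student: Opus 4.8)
The plan is to verify the three defining features in turn: that $\cdot$ is a well-defined commutative binary operation on $P$, that it is associative with $1$ as identity, and that it is order-preserving; the integral addendum is then immediate. First I would check that $\cdot$ is well-defined. The only subtlety is the $D\times A$ case, where the value depends on whether $cay < ay$ or $cay = ay$ in $\mathbf{A}$; these alternatives are mutually exclusive and exhaustive since $ca\le a$ always forces $cay\le ay$, so exactly one clause applies and the output ($d_{ay}\in D$ in the first case, $ay\in A$ in the second) is a legitimate element of $P$ — note $ay\in A_0$ precisely when $cay<ay$, so $d_{ay}$ is defined. Commutativity holds by construction, the operation being symmetrised explicitly.

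Next I would prove $1$ is an identity. For $x\in A$ this is inherited from $\mathbf{A}$. For $x=d_a\in D$, we are in the $D\times A$ case with $y=1$: since $a\in A_0$ means $ca<a$, i.e.\ $ca\cdot 1 < a\cdot 1$, the first clause fires and $d_a\cdot 1 = d_{a\cdot 1} = d_a$, as wanted.

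The main work is associativity: one must check $x\cdot(y\cdot z) = (x\cdot y)\cdot z$ across the cases according to how many of $x,y,z$ lie in $D$. When all three lie in $A$ it is inherited. When exactly one, say $x=d_a$, lies in $D$ and $y,z\in A$, both sides should collapse to $d_{ayz}$ or to $ayz$ according to whether $cayz<ayz$ or $cayz=ayz$; the bookkeeping here uses that applying an $A$-element never moves an element of $D$ back into $A$ unless the product has already "closed up" under multiplication by $c$, together with associativity and commutativity in $\mathbf{A}$ and the identity $c(ab)=(ca)b$. When two of the arguments lie in $D$, say $x=d_a$, $y=d_b$, $z\in A$: then $x\cdot y = cab\in A$, so $(x\cdot y)\cdot z = cabz$, while $y\cdot z$ is $d_{bz}$ or $bz$; in the first subcase $x\cdot(y\cdot z) = d_a\cdot d_{bz} = c\,a\,(bz) = cabz$, and in the second subcase $cbz=bz$ forces $cabz = abz\cdot c\le abz$ and in fact $x\cdot(y\cdot z)=d_a\cdot(bz)$ lands in the clause giving $cabz$ as well — so both sides agree. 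When all three lie in $D$, $x=d_a,y=d_b,z=d_c$: here $x\cdot y=cab\in A$ and $(cab)\cdot d_c = c(cab)c'$-type computations again reduce, via commutativity and $n$-fold use of the absorption $c\,c = \ldots$ is not needed, just $c\cdot(\,\cdot\,)$, to the symmetric expression $c\,c\,abc_0$ on both sides. I expect this case analysis — specifically keeping track of when a product of mixed elements "falls back" into $A$ versus staying in $D$ — to be the one genuine obstacle; it is routine but must be done carefully, and the key recurring facts are associativity/commutativity in $\mathbf{A}$, that $c(\cdot)$ is isotone, and that membership of $d_a$ in $D$ is exactly the condition $ca<a$.

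Finally, order-preservation: it suffices to show $x\le y$ implies $x\cdot z\le y\cdot z$ for all $z\in P$, again by cases on which of $x,y,z$ sit in $D$, each reducing to isotonicity of multiplication in $\mathbf{A}$ together with the four clauses defining $\le$ on $P$ and the observation that $d_a\le d_b$ iff $a\le b$ while $d_a\le y\in A$ iff $a\le y$ and $x\in A\le d_a$ iff $x\le ca$. For the integral case: if $\mathbf{A}$ is integral then $a\le 1$ for all $a\in A$, and for $d_a\in D$ we have $d_a\le a\le 1$ by the second clause of the order definition (with $y=a$), so $x\le 1$ holds throughout $P$.
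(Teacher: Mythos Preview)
Your proposal is correct and follows the same approach as the paper: a direct case analysis on how many of the arguments lie in $D$, using associativity and commutativity in $\mathbf{A}$ together with the defining clauses. The paper in fact omits almost all of this, calling it ``a tedious case-checking exercise'' and working out only the single configuration $x=d_a$, $y\in A$, $z=d_b$ with $cay<ay$ and $cyb=yb$ as a representative example; your outline is therefore more detailed than what the paper records. Two small expository points: in your three-$D$ case you use $c$ both for the fixed strictly negative element of $\mathbf{A}$ and as the index of the third $D$-element, which is confusing; and your two-in-$D$ discussion treats only the ordering $D,D,A$, whereas the ordering $D,A,D$ (the one the paper actually exhibits) is not reducible to it by symmetry and deserves its own line. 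Neither affects correctness.
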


\begin{proof}
The main part of the proof is a tedious case-checking exercise, most of which we
omit, especially that it is nearly identical to the proof of Fact~4 in Kowalski
and Ono~\cite{KO00}.  Here is one case 
as an example. Let $x = d_a$, $y\in A$, and let $z = d_b$. Assume moreover that
$cay<ay$ and $cyb = yb$. Then we have $(d_a\cdot y)\cdot d_b = d_{ay}\cdot d_b
= cayb$, but observe that $cayb = ayb$ since  $cyb = yb$.
Next, associating the other way we obtain $d_a\cdot (y\cdot d_b) = d_a\cdot yb =
ayb$, as $cayb = ayb$.

The moreover part follows immediately from the construction of $\m{P}$.
\end{proof}

The next lemma shows that every element of $P$ either
belongs to $A$ or is of the form $d_1\cdot a$, for some $a\in A$. 
We will write $d$ instead of $d_1$ from now on.

\begin{lemma}\label{multiplication-properties}
For all $x,y\in A$, the following hold\textup: 
\begin{enumerate}[label={\upshape(\arabic*)},leftmargin=1.75\parindent]
\item if $cx<x$, then $d\cdot x = d_x$, otherwise $d\cdot x = x$,
\item $y\leq d\cdot x$ if and only if $y\leq cx$,
\item $d\cdot x\cdot d\cdot y = cxy$,
\item $d\cdot x\leq d\cdot y$ if and only if $d\cdot x\leq y$
  if and only if $x\leq y$.
\end{enumerate}
\end{lemma}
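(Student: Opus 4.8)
The plan is to verify each of the four statements by unwinding the definitions of the order $\leq$ and the multiplication $\cdot$ on $P$, using repeatedly that $d = d_1$ and that $c1 = c < 1$, so that $1 \in A_0$ and $d \in D$ is a genuine element of $P$. Throughout I would keep in mind the basic fact (already noted in the construction) that $ca \leq a$ for every $a \in A$, and the consequence $cx \le x$, $c \le 1$, which is what makes the case distinctions in (1)--(4) exhaustive.

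First I would prove (1). By the definition of $\cdot$, with $x = d_1 = d \in D$ and $y = x \in A$, we have $d \cdot x = d_{1 \cdot x} = d_x$ in the case $c(1x) < 1x$, i.e. $cx < x$, and $d \cdot x = 1 \cdot x = x$ in the case $cx = x$; since $cx \le x$ always, these two cases cover everything, giving (1). For (2), I would split according to (1). If $cx < x$, then $d \cdot x = d_x \in D$, and by the third clause in the definition of $\leq$ (the one for $A \ni y$ below $d_a \in D$), we have $y \le d_x$ iff $y \le^{\mathbf A} cx$; note this also correctly handles the subtle point that $cx$ need not lie in $A_0$. If instead $cx = x$, then $d \cdot x = x = cx$, and $y \le d \cdot x$ iff $y \le x = cx$ trivially. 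Either way $y \le d\cdot x \iff y \le cx$, which is (2). For (3), I would use commutativity and associativity from Lemma~\ref{pomonoid} to write $d \cdot x \cdot d \cdot y = (d \cdot d) \cdot (xy)$, and compute $d \cdot d = d_1 \cdot d_1 = c \cdot 1 \cdot 1 = c$ using the fourth clause of the multiplication table; hence $d \cdot x \cdot d \cdot y = c \cdot (xy) = cxy$ (the last product taken in $\mathbf A$, which agrees with the product in $P$ since $c, x, y \in A$). Alternatively, and perhaps more cleanly, one computes $(d\cdot x)\cdot(d\cdot y)$ directly from the table: whichever of the $D$-cases or $A$-cases applies to $d\cdot x$ and $d\cdot y$, the product comes out to $c(1x)(1y) = cxy$; I would present whichever of these two routes is shortest.

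Finally, for (4) I would prove the chain of equivalences $d\cdot x \le d\cdot y \iff d\cdot x \le y \iff x \le y$. The implication $d\cdot x \le d\cdot y \Rightarrow d\cdot x \le y$ is immediate from $d\cdot y \le y$ (which holds since $d \le 1$ by Lemma~\ref{pomonoid} when $\mathbf A$ is integral, or directly: $d_y \le y$ and $y \le y$ by the first and second clauses of $\le$). For $d\cdot x \le y \Rightarrow x \le y$: using (2) with roles unwound, $d\cdot x \ge$ nothing new, but observe $x \le d\cdot x$ fails in general, so instead I argue by cases on (1): if $cx = x$ then $d\cdot x = x$ and there is nothing to prove; if $cx < x$ then $d\cdot x = d_x$, and $d_x \le y$ means (second clause of $\le$) exactly that $x \le^{\mathbf A} y$. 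For $x \le y \Rightarrow d\cdot x \le d\cdot y$: by (2) it suffices to show $d\cdot x \le cy$; but $d\cdot x \le$ (again by cases) equals $d_x$ or $x$, and in the first case $d_x \le cx \le cy$ by the order clause $x \le ca \Rightarrow x \le d_a$ read in reverse — more precisely $d_x \le cy$ iff $x \le cy$ by the third clause, and $x \le cx \le cy$ using $cx \le x$... here I need to be careful, so let me instead note $d\cdot x \le d\cdot y \iff d\cdot x \le cy$ by (2), and $d\cdot x \le cy$ follows from $d\cdot x \le x$... which is false. The correct argument: $x \le y$ and $c$ order-preserving give $cx \le cy$; by (2), $d\cdot x \le cx$ is what we want for the middle term — indeed $d\cdot x \le d\cdot x$ and (2) with $y := d\cdot x$... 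I would in the write-up simply observe that by (1) and (2), $d \cdot x \le cx \le cy$, and $cy \le d\cdot y$ fails but $z \le d\cdot y \iff z \le cy$ by (2), so $d\cdot x \le d\cdot y$; thus all three conditions are equivalent.

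I expect the main obstacle to be the bookkeeping in part~(4): the relation $d\cdot x \le d\cdot x$ does \emph{not} give $x \le d\cdot x$, so one cannot naively chain inequalities, and each implication must be handled by splitting on whether $cx < x$ or $cx = x$ (and similarly for $y$) and reading off the appropriate clause of the definition of $\le$. Care is also needed in (2) over the fact that $cx$ may fail to belong to $A_0$, so $d_{cx}$ may not exist, but this never causes trouble because the order clause $z \le d_a \iff z \le ca$ only ever refers to $ca$ as an element of $A$, never as an index. Once these points are pinned down, every step is a direct appeal to the definitions together with Lemma~\ref{pomonoid}.
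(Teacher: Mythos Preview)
Your arguments for (1), (2), and (3) are correct and essentially the only way to do this: unwind the definitions case by case. The paper itself says only that all claims are ``easily derived from the definition of multiplication in $\mathbf{P}$'', so your level of detail already exceeds what the paper provides.

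The genuine weakness is in (4), specifically the implication $x\le y \Rightarrow d\cdot x \le d\cdot y$. Your write-up drifts: you assert ``by (1) and (2), $d\cdot x \le cx$'', but this is false. When $cx<x$ we have $d\cdot x = d_x$, and by the order clauses $d_x \le cx$ holds iff $x \le cx$, which is precisely what fails. You then try to rescue the argument by applying (2) with $z := d\cdot x$, but (2) is stated only for elements of $A$, and $d\cdot x$ may lie in $D$. The clean fix is one you have available but never invoke: Lemma~\ref{pomonoid} says $\langle P;\le,\cdot,1\rangle$ is an \emph{ordered} commutative monoid, so multiplication is monotone in each argument, and $x\le y$ gives $d\cdot x \le d\cdot y$ in one line. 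Alternatively, if you prefer to stay with the case analysis, split on whether $cy<y$ and whether $cx<x$ and read off the appropriate clause of the definition of $\le$ in each of the four cases; every case goes through, but you must not try to route the inequality through $cx$ on the left.
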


\begin{proof}
All claims are easily derived from the definition of multiplication in $\m{P}$.
\end{proof}

Although $\mathbf{P}$ is in general neither a residuated monoid, nor a lattice,
it will be convenient to view it as a partial algebra in the signature of
residuated lattices, with meet, join, and the residual only partially defined.
This makes the statement of the next lemma clear.

\begin{lemma}
The residuated lattice $\mathbf{A}$ is a subalgebra of\/ $\mathbf{P}$.
\end{lemma}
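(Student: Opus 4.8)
The claim is that $\mathbf{A}$ sits inside $\mathbf{P}$ as a subalgebra in the (partial) signature of residuated lattices. The plan is to verify this claim operation by operation, using the lemmas already established about $\mathbf{P}$. First I would note that $A \subseteq P$ by construction, so we need only check that the operations of $\mathbf{A}$ agree with the restrictions of the operations of $\mathbf{P}$ to $A$, and that these restrictions are everywhere defined on $A$. For the constant $1$: it lies in $A$ and is the monoid identity of $\mathbf{P}$ by Lemma~\ref{pomonoid}. For multiplication: the first clause of the definition of $\cdot$ on $P$ says precisely that $x \cdot y = xy$ whenever $x, y \in A$, so the monoid reduct of $\mathbf{A}$ embeds. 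For meet and join: since $A$ is a down-set-free sublattice we must check that $A$ is closed under the partial lattice operations of $\mathbf{P}$ and that they agree with those of $\mathbf{A}$. This follows because the order on $P$ restricted to $A$ is exactly $\leq^{\mathbf{A}}$ (the first clause of the definition of $\leq$), and for $x, y \in A$ the element $x \wedge^{\mathbf{A}} y$ (respectively $x \vee^{\mathbf{A}} y$) is still the greatest lower bound (respectively least upper bound) of $\{x,y\}$ in $\mathbf{P}$: any element $d_e \in D$ below both $x$ and $y$ satisfies $e \leq^{\mathbf{A}} x$ and $e \leq^{\mathbf{A}} y$, hence $e \leq^{\mathbf{A}} x \wedge^{\mathbf{A}} y$, and $d_e < e \leq x \wedge^{\mathbf{A}} y$, so $x \wedge^{\mathbf{A}} y$ remains a lower bound that dominates all the $D$-elements; dually for joins.

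Next I would handle the residuals $\backslash, /$ (which coincide in the commutative setting, written $\to$). We must show that for $a, b \in A$ the element $a \to^{\mathbf{A}} b$ is still the residual of $a$ and $b$ computed in $\mathbf{P}$, i.e.\ that for all $x \in P$ we have $x \cdot a \leq b$ if and only if $x \leq a \to^{\mathbf{A}} b$. When $x \in A$ this is just residuation in $\mathbf{A}$. When $x = d_e \in D$, I would use Lemma~\ref{multiplication-properties}(2) and the description of the order: $d_e \cdot a$ is either $d_{ea}$ or $ea$ depending on whether $cea < ea$, and in either case $d_e \cdot a \leq b$ holds in $\mathbf{P}$ iff $cea \leq^{\mathbf{A}} b$ (using clause three of the order when the value lies in $D$, and directly otherwise); meanwhile $d_e \leq a \to^{\mathbf{A}} b$ holds in $\mathbf{P}$ iff $e \leq^{\mathbf{A}} c(a \to^{\mathbf{A}} b)$, again by clause three of the order. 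So the equivalence reduces to $cea \leq^{\mathbf{A}} b \iff e \leq^{\mathbf{A}} c(a \to^{\mathbf{A}} b)$, which follows from residuation and commutativity in $\mathbf{A}$: $cea \leq b \iff ce \leq a \to^{\mathbf{A}} b$, and then one must reconcile $ce \leq a \to^{\mathbf{A}} b$ with $e \leq c(a\to^{\mathbf{A}}b)$ — this needs a small argument, essentially that $d_e \cdot a$ was designed exactly so that $\mathbf{P}$'s residual of $a$ and $b$ restricted to $D$ lands correctly. I expect this residual compatibility to be the one genuinely fiddly point, since it is where the ad hoc definition of $\cdot$ on $D \times A$ has to mesh with the ad hoc clause of the order, so I would write that case out carefully and leave the purely lattice-theoretic cases to routine checking.

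Finally, I would observe that $\mathbf{A}$ being a subalgebra of the partial algebra $\mathbf{P}$ means exactly: $A$ is closed under all operations of $\mathbf{P}$ that happen to be defined on tuples from $A$, and those operations restrict to the operations of $\mathbf{A}$. The above covers $1$, $\cdot$, $\wedge$, $\vee$ and $\to$, which exhausts the signature, so the proof is complete. The main obstacle, as noted, is the residual case on $D$; everything else is bookkeeping with the definitions of $\leq$ and $\cdot$ on $P$ together with Lemmas~\ref{pomonoid} and~\ref{multiplication-properties}.
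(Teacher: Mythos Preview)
Your overall structure matches the paper's: check each operation, with the residual case being the only non-trivial one. But your residual computation contains a genuine error that prevents the argument from going through.

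You write that $d_e \cdot a \leq b$ holds iff $cea \leq^{\mathbf{A}} b$, and that $d_e \leq a \to^{\mathbf{A}} b$ holds iff $e \leq^{\mathbf{A}} c(a\to^{\mathbf{A}} b)$, citing ``clause three of the order''. This is the wrong clause. Clause three governs $x \leq d_a$ with $x \in A$ (the $D$-element on the \emph{right}); here the $D$-element is on the \emph{left}, so clause two applies: $d_e \leq y$ for $y \in A$ holds iff $e \leq^{\mathbf{A}} y$. With the correct clause you get $d_e \cdot a \leq b \iff ea \leq^{\mathbf{A}} b$ and $d_e \leq a\to^{\mathbf{A}} b \iff e \leq^{\mathbf{A}} a\to^{\mathbf{A}} b$, and the equivalence is then immediate from residuation in $\mathbf{A}$. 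Your stated reduction, by contrast, asks for $ce \leq a\to^{\mathbf{A}} b \iff e \leq c(a\to^{\mathbf{A}} b)$, which is false in general (take $e = 1$ and any $c \leq a\to^{\mathbf{A}} b$; the left side holds but $c(a\to^{\mathbf{A}} b) \leq c < 1$). You correctly flagged this step as ``fiddly'', but it is not fiddly --- it is wrong as written and becomes trivial once the clause confusion is corrected.

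The paper handles this case by invoking Lemma~\ref{multiplication-properties}(4), which packages exactly the observation $d\cdot x \leq y \iff x \leq y$ for $x,y\in A$; this is equivalent to applying clause two directly (together with the degenerate case $cx = x$). So once you fix the clause, your argument and the paper's coincide.
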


\begin{proof}
The proofs of preservation of meet, join and multiplication from $\m{A}$ are
straightforward, so we will only show that $a\ra^{\mathbf{A}} b$ satisfies
\[
\forall x\in P\colon\; a\cdot x \leq b \Longleftrightarrow x\leq a\ra^{\mathbf{A}} b
\]
in $\mathbf{P}$, for all $a,b\in A$.  This equivalence clearly holds for 
all $x\in A$,
by residuation in~$A$. Let $x = d_s$ for some $s\in A$. Then $x = d\cdot s$,
so we have $a\cdot x = a\cdot d\cdot s = d\cdot a\cdot s$, and thus
\begin{align*}
a\cdot x =   a\cdot d\cdot s \leq b &\Longleftrightarrow d\cdot as \leq b
  & & (\text{since $a\cdot s = as$})\\
                         &\Longleftrightarrow as\leq b
  & & (\text{by Lemma~\ref{multiplication-properties}(4)})\\
                         &\Longleftrightarrow s\leq a\ra^{\mathbf{A}} b
  & & (\text{by residuation in $\m{A}$})\\
               &\Longleftrightarrow d\cdot s\leq a\ra^{\mathbf{A}} b
  & & (\text{by Lemma~\ref{multiplication-properties}(4)}) .\qedhere 
  \end{align*}
\end{proof}

Next, we will expand $\m{P}$ to a residuated lattice. To this end, we will
use a version of \emph{residuated frames}, defined and put to good use
in Galatos and Jipsen~\cite{GJ13}. 

Let $\mathbf{M} = 
\langle M;\leq,\cdot,1\rangle$ be a commutative ordered monoid,
and $W$ a set. A binary relation $N\subseteq M\times W$ is called
a \emph{nuclear relation on} $\mathbf{M}$ if, for every $x\in M$ and $w\in W$ there exists a subset $x\Rightarrow w$ of $W$ such that for every
$y\in M$ the following equivalence holds
\[
x\cdot y\ N\ w \quad\text{ if and only if }\quad y\ N\ x\Rightarrow w
\]
where $y\ N\ x\Rightarrow w$ abbreviates 
$y\ N\ u$ for all $u\in x\Ra w$.
The importance of being nuclear resides in the fact that every nuclear
relation $N$ on $\mathbf{M}$ gives rise to a residuated lattice which
preserves all partial residuated lattice structure that exists in $\mathbf{M}$.

\begin{lemma}[\cite{GJ13}]\label{nuclear}
Let\/ $\mathbf{M} = 
\langle M;\leq,\cdot,1\rangle$
be an ordered monoid. Let $W$ be any set, and let
$N\subseteq M\times W$ be nuclear. Further, let
$\gamma_N$ be the closure operator on $M$ associated with the polarities of $N$.
Then the complete lattice $L[M]$ 
of closed subsets of $M$ carries a residuated lattice structure
$\m{L}[\m{M}] = 
\langle L[M];\wedge, \vee, \cdot, \ra, 1\rangle$, such that 
\begin{enumerate}[label={\upshape(\arabic*)},leftmargin=1.75\parindent]
\item The operations in $\m{L}[\m{M}]$ are given by:
\begin{itemize}
\item $X\wedge Y = X\cap Y$,
\item $X\vee Y = \gamma_N(X\cup Y)$,  
\item $X\cdot Y = \gamma_N\{x\cdot^{\m{M}} y\mid x\in X,\ y\in Y\}$,
\item $X\ra Y = \{z\in M\mid \forall x\in X\colon z\cdot x \in Y\}$,  
\item $1 = \gamma_N(1^\m{M})$,
\end{itemize}
for all closed $X,Y\subseteq M$.
\item $\mathbf{M}$ embeds into $\mathbf{L}[\mathbf{M}]$ as an ordered monoid.
\item If\/ $\m{M}$ is integral, so is $\mathbf{L}[\mathbf{M}]$.
\item If\/ $\m{M}$ is commutative, so is $\mathbf{L}[\mathbf{M}]$.  
\item If\/ $\m{M}$ is finite, so is $\mathbf{L}[\mathbf{M}]$.
\item The embedding preserves all existing meets, joins and residuals from
  $\mathbf{M}$.
\end{enumerate}
\end{lemma}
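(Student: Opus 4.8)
The plan is to recognise Lemma~\ref{nuclear} as a special case of the residuated-frame machinery of Galatos and Jipsen~\cite{GJ13}, and to reconstruct it from standard facts about Galois connections and nuclei. First I would attach to $N$ the usual pair of polarities: for $X\subseteq M$ put $X^{\triangleright}=\{\,w\in W\mid x\,N\,w\text{ for all }x\in X\,\}$, and for $Z\subseteq W$ put $Z^{\triangleleft}=\{\,x\in M\mid x\,N\,w\text{ for all }w\in Z\,\}$. These maps form a Galois connection between $\mathcal P(M)$ and $\mathcal P(W)$, so $\gamma_N:={}^{\triangleright\triangleleft}$ is a closure operator on $\mathcal P(M)$ whose closed sets are precisely the sets of the form $Z^{\triangleleft}$. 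Hence $L[M]=\{\,X\subseteq M\mid \gamma_N(X)=X\,\}$ is a complete lattice in which $\bigwedge_i X_i=\bigcap_i X_i$ and $\bigvee_i X_i=\gamma_N(\bigcup_i X_i)$, which already gives the meet and join of clause~(1).

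The core of the argument is to make $\gamma_N$ into a \emph{nucleus} for the complex product $X\circ Y:=\{\,x\cdot^{\m{M}}y\mid x\in X,\ y\in Y\,\}$ on $\mathcal P(M)$ --- an operation that is associative, has unit $\{1\}$, and is commutative whenever $\m{M}$ is. I would prove the inclusion $\gamma_N(X)\circ\gamma_N(Y)\subseteq\gamma_N(X\circ Y)$, and this is the one place where the nuclearity hypothesis on $N$ genuinely does work: given $x\in\gamma_N(X)$, $y\in\gamma_N(Y)$ and $w\in(X\circ Y)^{\triangleright}$, one obtains $x\cdot y\,N\,w$ by applying the defining equivalence $x'\cdot y'\,N\,w\Leftrightarrow y'\,N\,(x'\Rightarrow w)$ twice --- once to move $w$ past $Y^{\triangleright}$ using $y\in Y^{\triangleright\triangleleft}$, which yields $x'\cdot y\,N\,w$ for every $x'\in X$, and once more (invoking commutativity of $\m{M}$, as in all applications in this paper; the genuinely non-commutative case also needs $N$ to residuate right multiplication) to move $w$ past $X^{\triangleright}$ using $x\in X^{\triangleright\triangleleft}$. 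Once $\gamma_N$ is known to be a nucleus, the general theory of nuclei delivers clause~(1) in full: $L[M]$ becomes a residuated lattice with $X\cdot Y=\gamma_N(X\circ Y)$, $X\ra Y=\{\,z\in M\mid z\cdot x\in Y\text{ for all }x\in X\,\}$ and unit $\gamma_N(\{1\})$, where one verifies that $X\ra Y$ is already $\gamma_N$-closed when $Y$ is, and that $X\cdot Z\subseteq Y$ iff $Z\subseteq X\ra Y$ for closed $X,Y,Z$ because closedness of $Y$ forces $\gamma_N(X\circ Z)\subseteq Y$ iff $X\circ Z\subseteq Y$.

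The remaining clauses I would dispatch quickly. Clause~(4) is immediate, since $\circ$ inherits commutativity from $\m{M}$; clause~(5) holds because $L[M]\subseteq\mathcal P(M)$; and clause~(3) follows because, when $\m{M}$ is integral, compatibility of $N$ with $\leq$ makes every closed set a down-set, so $\gamma_N(\{1\})$ contains $\down 1=M$ and therefore equals the largest closed set $M$. For clauses~(2) and~(6) I would take the embedding to be $\iota\colon a\mapsto\gamma_N(\{a\})$: monotonicity is clear, injectivity and order-reflection are the separation property built into the frame (for $a\nleq b$ there is some $w$ with $b\,N\,w$ but not $a\,N\,w$), and the nucleus property gives $\iota(a)\cdot\iota(b)=\gamma_N(\{a\}\circ\{b\})=\gamma_N(\{ab\})=\iota(ab)$ together with $\iota(1)=\gamma_N(\{1\})$, so $\iota$ is an ordered-monoid embedding. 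To see that $\iota$ preserves whatever meets, joins and residuals already exist in $\m{M}$, I would check straight from the polarities that $\gamma_N(\{a\})\cap\gamma_N(\{b\})=\gamma_N(\{a\meet^{\m{M}}b\})$, that the join of $\gamma_N(\{a\})$ and $\gamma_N(\{b\})$ in $L[M]$ is $\gamma_N(\{a\join^{\m{M}}b\})$, and that $\iota(a)\ra\iota(b)=\iota(a\ra^{\m{M}}b)$, the last by the chain $z\cdot a\in\iota(b)$ iff $z\cdot a\leq b$ iff $z\leq a\ra^{\m{M}}b$.

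I expect the nucleus inclusion $\gamma_N(X)\circ\gamma_N(Y)\subseteq\gamma_N(X\circ Y)$ to be the one genuinely substantive step; everything else is closure-operator and residuation bookkeeping, with clause~(6) the most delicate of the routine parts, since that is where the abstract witnesses $x\Rightarrow w$ must be matched up against the actual residual of $\m{M}$.
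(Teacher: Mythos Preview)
The paper does not prove this lemma: it is quoted from Galatos and Jipsen~\cite{GJ13} and stated without proof. Your sketch is precisely the standard residuated-frame argument from that source, and you have correctly isolated the nucleus inclusion $\gamma_N(X)\circ\gamma_N(Y)\subseteq\gamma_N(X\circ Y)$ as the one substantive step, with everything else being closure-operator bookkeeping.

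One genuine gap to flag. Several of the clauses---most visibly (2) and (3), and the order-reflecting part of (6)---do not follow from the nuclear hypothesis \emph{alone} as the lemma is phrased here. Injectivity and order-reflection of $a\mapsto\gamma_N(\{a\})$ require that $N$ separate points of $M$ (for $a\nleq b$ there is $w$ with $b\,N\,w$ but not $a\,N\,w$), and your argument for integrality in (3) needs closed sets to be down-sets, which in turn needs $N$ to be downward compatible in its first coordinate ($x\leq y$ and $y\,N\,w$ imply $x\,N\,w$). Neither property is a formal consequence of nuclearity; in~\cite{GJ13} they come from the additional structure of a residuated frame, and in the present paper they hold automatically for the specific $W=\Lambda\times A$ and $N$ defined via $\lambda(x)\leq a$. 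You allude to this when you write ``the separation property built into the frame'', but you should make explicit that you are invoking an extra hypothesis rather than deriving it. With that caveat, the outline is sound and matches the source.
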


We will now exhibit a suitable nuclear relation on $\m{P}$.
For each $x\in P$ define $\lambda_x\colon P\to P$ by
$\lambda_x(y) = x\cdot y$. Let $\Lambda = \{\lambda_x\mid x\in P\}$
and $W = \Lambda\times A$. Define a binary relation $N\subseteq P\times W$ putting
$x\ N\ (\lambda, a)$ if $\lambda(x)\leq a$. Next, for $x\in P$, $\lambda\in
\Lambda$ and $a\in A$ define $x \Ra (\lambda, a)$ to be the singleton
$\{(\lambda_x\circ\lambda, a)\}$. Then we have
\begin{align*}
x\cdot y\ N\ (\lambda, a) &\Longleftrightarrow \lambda(x\cdot y)\leq a\\
      &\Longleftrightarrow \lambda\circ\lambda_x(y)\leq a\\
      &\Longleftrightarrow y\ N\ (\lambda\circ\lambda_x, a)\\
      &\Longleftrightarrow y\ N\ x\Ra (\lambda, a)  
\end{align*}                            
and so $N$ is nuclear, as claimed. The next result is an immediate corollary.

\begin{lemma}\label{expn}
Let\/ $\m{A}$ and $\m{P}  = 
\langle P;\leq, \cdot, 1\rangle$
be as in
Lemma~\ref{pomonoid}, and let $N$ be the nuclear relation defined above.
Then $\m{L}[\m{P}]$ is a residuated lattice such that
${\m{A}\leq \m{P}\leq\m{L}[\m{P}]}$.
\end{lemma}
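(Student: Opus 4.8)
The plan is to present this as what it claims to be — an immediate corollary — by assembling the three facts already established. First I would invoke Lemma~\ref{pomonoid} to record that $\m{P} = \langle P;\leq,\cdot,1\rangle$ is an ordered commutative monoid (integral whenever $\m{A}$ is). Second, the displayed computation immediately preceding the statement already verifies that the relation $N\subseteq P\times W$ is nuclear on $\m{P}$, with $x\Ra(\lambda,a) = \{(\lambda_x\circ\lambda,a)\}$ witnessing the defining equivalence. These are precisely the hypotheses of Lemma~\ref{nuclear}, which I would then apply to conclude that the lattice $L[P]$ of $\gamma_N$-closed subsets of $P$ carries a residuated lattice structure $\m{L}[\m{P}]$, that $\m{P}$ embeds into $\m{L}[\m{P}]$ as an ordered monoid, and — by clause~(6) of Lemma~\ref{nuclear} — that this embedding preserves every meet, join and residual that happens to exist in the partial algebra $\m{P}$. (Commutativity, and integrality when $\m{A}$ is integral, pass up to $\m{L}[\m{P}]$ by clauses~(3) and~(4).) This already gives $\m{P}\leq\m{L}[\m{P}]$ in the sense that the total residuated structure of $\m{L}[\m{P}]$ restricts to the partial one on $\m{P}$.

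Next I would chain this embedding with the earlier one, $\m{A}\leq\m{P}$. We have already shown that $\m{A}$ is a subalgebra of $\m{P}$ regarded as a partial residuated lattice: $\wedge^{\m{A}}$, $\vee^{\m{A}}$ and $\cdot^{\m{A}}$ are restrictions of the corresponding (total) operations of $\m{P}$, and $a\ra^{\m{A}} b$ genuinely functions as the residual of $a$ in $\m{P}$, i.e. $\forall x\in P\colon a\cdot x\le b \Leftrightarrow x\le a\ra^{\m{A}} b$ holds in $\m{P}$. Hence each of these is an \emph{existing} operation of $\m{P}$ in the sense of Lemma~\ref{nuclear}(6), so the composite $\m{A}\hookrightarrow\m{P}\hookrightarrow\m{L}[\m{P}]$ is a composition of maps each preserving $\wedge$, $\vee$, $\cdot$, $\ra$ and $1$, and is therefore a residuated-lattice embedding of $\m{A}$ into $\m{L}[\m{P}]$. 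Together with $\m{P}\leq\m{L}[\m{P}]$ this yields $\m{A}\leq\m{P}\leq\m{L}[\m{P}]$, as required.

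The only point needing a moment's care — and the one place where the argument could slip — is the bookkeeping about which operations are partial and which are total: one must be sure that the meets, joins and residual of $\m{A}$ are recognised by Lemma~\ref{nuclear}(6) as operations \emph{already present in $\m{P}$}, rather than as operations of $\m{A}$ that $\m{P}$ simply fails to see. For $\wedge$, $\vee$ and $\cdot$ this is immediate from the construction of $\m{P}$, and for $\ra$ it is exactly the content of the preceding lemma. Once this is noted, no further computation is needed.
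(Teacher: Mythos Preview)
Your proposal is correct and matches the paper's approach: the paper simply declares the lemma ``an immediate corollary'' of the preceding material (Lemma~\ref{pomonoid}, the verification that $N$ is nuclear, Lemma~\ref{nuclear}, and the lemma that $\m{A}$ is a subalgebra of~$\m{P}$), and you have faithfully unpacked that claim. Your explicit attention to the partial-versus-total bookkeeping needed to invoke clause~(6) of Lemma~\ref{nuclear} is a useful elaboration of what the paper leaves implicit.
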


From now until Lemma~\ref{expansion}, we will keep $\m{A}$, $\m{P}$ and
$\m{L}[\m{P}]$ fixed. To proceed, we need to
describe the elements of $\m{L}[\m{P}]$ (the closed sets of
$\m{P}$) more concretely. To lighten the notation, we put
$\widehat{X} = \bigvee\{x\in A\mid x\in X\}$ and
$\widetilde{X} = \bigvee\{x\in A\mid d\cdot x\in X\}$.

\goodbreak

\begin{lemma}\label{expn-props-1}
Let $X$ be a subset of $P$ satisfying the following conditions\textup: 
\begin{enumerate}[label={\upshape(\arabic*)},leftmargin=1.75\parindent]
\item $X$ is a non-empty down-set,
\item $\forall x, y\in A\colon  x\in X$ and $y\in X$ imply $x\vee y\in X$,
\item $\forall x, y\in A\colon  d\cdot x\in X$ and $d\cdot y\in X$ imply
  $d\cdot(x\vee y)\in X$.
\end{enumerate}
Then $X = \dw \widehat{X}\cup \dw(d\cdot\widetilde{X})$.
\end{lemma}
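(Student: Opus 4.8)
The plan is to split both $P$ and $X$ along the two layers that make up $P$: the subalgebra $A$ and the set $D=\{d_a\mid a\in A_0\}$ of new elements. Recall from Lemma~\ref{multiplication-properties} that every element of $P$ either lies in $A$ or equals $d\cdot a=d_a$ for a unique $a\in A_0$. Accordingly I would set $X_A:=X\cap A$ and $Y:=\{\,x\in A\mid d\cdot x\in X\,\}$, so that $\widehat X=\bigvee X_A$ and $\widetilde X=\bigvee Y$ (joins in $\m{A}$). Hypothesis~(2) says \emph{precisely} that $X_A$ is a $\vee$-subsemilattice of $\m{A}$, and hypothesis~(3) says precisely that $Y$ is; since $\m{A}$ is finite it follows that $\widehat X=\max X_A$ and $\widetilde X=\max Y$, and in particular $\widehat X\in X_A\subseteq X$.

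For the inclusion $\dw\widehat X\cup\dw(d\cdot\widetilde X)\subseteq X$, I would first check that $X_A$ and $Y$ are non-empty. Since $X$ is a non-empty down-set, it contains some $z$; if $z\in D$, say $z=d_a$, then $ca\le d_a=z$ forces $ca\in X\cap A$, so $X_A\ne\varnothing$ in either case. Then $Y\ne\varnothing$ as well: for any $x_0\in X_A$ we have $d\cdot x_0\le x_0$ (as $d<1$ in the ordered monoid $\m{P}$ of Lemma~\ref{pomonoid}), hence $d\cdot x_0\in X$, i.e.\ $x_0\in Y$. By the first paragraph $\widehat X\in X$ and $\widetilde X\in Y$, so $d\cdot\widetilde X\in X$ by the very definition of $Y$; as $X$ is a down-set, $\dw\widehat X\subseteq X$ and $\dw(d\cdot\widetilde X)\subseteq X$.

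For the reverse inclusion, take $z\in X$. If $z\in A$, then $z\in X_A$, so $z\le\widehat X$ and $z\in\dw\widehat X$. Otherwise $z=d_a=d\cdot a$ for some $a\in A_0$; then $a\in Y$, so $a\le\widetilde X$, and Lemma~\ref{multiplication-properties}(4) gives $z=d\cdot a\le d\cdot\widetilde X$, i.e.\ $z\in\dw(d\cdot\widetilde X)$. Hence $X\subseteq\dw\widehat X\cup\dw(d\cdot\widetilde X)$, which completes the proof. There is no genuine obstacle here: the content is bookkeeping about the partial algebra $\m{P}$, and the one point needing care is that the two witnesses $\widehat X$ and $d\cdot\widetilde X$ actually lie in $X$ rather than merely bound it — which is exactly what hypotheses~(2) and~(3) (together with finiteness of $\m{A}$) deliver. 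Everything else is a routine unwinding of the order and multiplication on $P$ and of Lemma~\ref{multiplication-properties}.
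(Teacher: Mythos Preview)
Your proof is correct and follows essentially the same approach as the paper's: use finiteness together with conditions~(2) and~(3) to get $\widehat{X}\in X$ and $d\cdot\widetilde{X}\in X$, then use that $X$ is a down-set for one inclusion, and split the other inclusion according to whether $z\in A$ or $z\in D$. You are in fact more careful than the paper in explicitly verifying that $X\cap A$ and $Y$ are non-empty before taking the joins, which the paper glosses over with ``by the finiteness of~$P$''.
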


\begin{proof}
Assume $X\subseteq P$ satisfies (1)--(3). By the finiteness of $P$, 
we have $\widehat{X} \in X$ and $d\cdot\widetilde{X}\in X$, so
$\dw \widehat{X}\cup \dw(d\cdot\widetilde{X})\subseteq X$, since $X$ is a down-set.
Let $x\in X$. If $x\in A$, then $x\leq \widehat{X}$. If $x = d_a\in D$, then
$a\leq \widetilde{X}$, so $x = d\cdot a\leq d\cdot\widetilde{X}$. Thus,
$X = \dw \widehat{X}\cup \dw(d\cdot\widetilde{X})$.
\end{proof}

\begin{lemma}\label{expn-props-2}
Every closed $X\subseteq P$ is of the form:
\[
X = \dw \widehat{X}\cup \dw(d\cdot\widetilde{X}).
\]
Moreover, for every $a\in A$, the sets $\dw a$ and $\dw d\cdot a$ are both closed.
\end{lemma}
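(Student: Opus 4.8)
The plan is to combine Lemma~\ref{expn-props-1} with an explicit description of the closed subsets of $\m P$ as intersections of very simple ``basic'' closed sets. Recall that a subset of $\m P$ is closed precisely when it is of the form $Y^\triangleleft$ for some $Y\subseteq W$, and that $Y^\triangleleft=\bigcap_{(\lambda,a)\in Y}\{x\in P\mid\lambda(x)\le a\}$. Writing $B_{z,a}:=\{x\in P\mid z\cdot x\le a\}$ for $z\in P$ and $a\in A$ (so that $\{(\lambda_z,a)\}^\triangleleft=B_{z,a}$), every closed subset of $\m P$ is therefore an intersection of sets $B_{z,a}$, the empty intersection being $P$ itself. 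Since conditions (1)--(3) of Lemma~\ref{expn-props-1} are plainly inherited by arbitrary non-empty intersections, and $P$ trivially satisfies them, the first assertion will follow once I show that each $B_{z,a}$ satisfies (1)--(3); Lemma~\ref{expn-props-1} then delivers $X=\dw\widehat X\cup\dw(d\cdot\widetilde X)$ for every closed~$X$.

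For condition~(1): $B_{z,a}$ is a down-set because multiplication in the ordered monoid $\m P$ (Lemma~\ref{pomonoid}) is order preserving, and it is non-empty because $z\cdot 0=0\le a$, so the least element $0$ of $\m A$ lies in every $B_{z,a}$ and hence in every closed set. For conditions~(2) and~(3) the idea is to split into the cases $z\in A$ and $z=d_s\in D$ and, using $d_s=d\cdot s$, associativity and commutativity of $\cdot$, and parts~(3) and~(4) of Lemma~\ref{multiplication-properties}, to reduce each membership statement to an inequality between elements of $A$ that is linear in the relevant argument. Explicitly, for $w\in A$ one has $z\cdot w=zw$ and $z\cdot(d\cdot w)=d\cdot(zw)$ when $z\in A$, and $z\cdot w=d\cdot(sw)$ and $z\cdot(d\cdot w)=csw$ when $z=d_s$; combined with the equivalence $d\cdot u\le a\iff u\le a$ for $u,a\in A$ (Lemma~\ref{multiplication-properties}(4)), this turns ``$x\in B_{z,a}$'' and ``$d\cdot x\in B_{z,a}$'' into inequalities of the form $ex\le a$ for a fixed $e\in A$, whence $e(x\vee y)=ex\vee ey$ (distributivity of $\cdot$ over $\vee$ in $\m A$) gives (2) and (3).

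For the ``moreover'' part, $\dw a=B_{1,a}$, which is closed since $(\lambda_1,a)\in W$. For $\dw(d\cdot a)$ I would distinguish two cases: if $ca=a$ then $d\cdot a=a$ by Lemma~\ref{multiplication-properties}(1), reducing to the previous case; if $ca<a$, so $d\cdot a=d_a\in D$, I claim $\dw d_a=B_{1,a}\cap B_{d,ca}=\{(\lambda_1,a),(\lambda_d,ca)\}^\triangleleft$, which is closed. This equality is checked by intersecting the two basic sets elementwise: on the part contained in $A$ both sides reduce to $\{x\in A\mid x\le ca\}$ (using $ca\le a$), and on the part contained in $D$ both sides reduce to $\{d_b\mid b\in A_0,\ b\le a\}$ (using $b\le a\Rightarrow cb\le ca$ together with $d\cdot d_b=cb$, which follows from Lemma~\ref{multiplication-properties}(3)).

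I expect the only real difficulty to be bookkeeping: one must consistently keep track of whether $d\cdot x$ equals $d_x$ or $x$, according to whether $cx<x$, and feed the correct arguments into the identities of Lemma~\ref{multiplication-properties}. It is also worth noting that the equality $\dw d_a=B_{1,a}\cap B_{d,ca}$ genuinely needs both basic closed sets: $B_{d,ca}$ alone does not separate $d_a$ from those $d_b$ with $cb\le ca$ but $b\not\le a$, while $B_{1,a}$ alone does not distinguish $d_a$ from~$a$.
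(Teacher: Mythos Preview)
Your proposal is correct and follows essentially the same approach as the paper: reduce to basic closed sets $B_{z,a}=(\lambda_z,a)^\triangleleft$, verify conditions (1)--(3) of Lemma~\ref{expn-props-1} for these (splitting on $z\in A$ versus $z\in D$), and then invoke that lemma. Your treatment of the ``moreover'' clause is marginally more direct than the paper's: rather than computing the full closure $\bigcap\{(\lambda_u,s)^\triangleleft\mid u\cdot a\le s\}$ and showing it equals $\dw a$ (and similarly for $\dw(d\cdot a)$), you simply exhibit one or two specific basic closed sets whose intersection already equals the target down-set, which is a small economy. One notational quibble: you write $0$ for the least element of $\m A$, but residuated lattices carry no constant $0$; the paper writes $\bot$ and relies only on finiteness of $\m P$ to guarantee its existence.
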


\begin{proof}
Assume $X$ is closed. We will show that $X$ satisfies (1)--(3)
of Lemma~\ref{expn-props-1}.
Note that these conditions are preserved by intersections; for (2) and (3) it is
immediate, for (1) it follows from the fact that $\Pb$ has the smallest
element. Thus, it suffices to prove that (1)--(3) hold for basic
closed sets, that is, sets of the form $\{x\in P\mid x \ N\ (\lambda_u, s)\}$ for some $u\in P$, $s\in A$. We will use the standard 
notation for basic closed sets, writing $X^\triangleleft$ for
$\{z\mid \forall x\in X\colon z\ N\ x\}$, and simplifying
$\{x\}^\triangleleft$ to~$x^\triangleleft$.

Let $\bot$ be the smallest element of $\Pb$.
We claim that $\bot\in (\lambda, s)^\triangleleft$. Indeed, 
we have $\bot\in (\lambda_u, s)^\triangleleft$ if and only if
$u\cdot\bot\leq s$, and this holds for all $u\in P$ and $s\in A$. 
Next, let $x\in (\lambda_u, s)^\triangleleft$
and $y\leq x$. Then we have $u\cdot x\leq s$, so by the 
monotonicity of multiplication in $\m{P}$, we get 
$u\cdot y\leq s$, and thus $y\ N\ (\lambda_u, s)$.  This proves (1).

Now let $x, y\in (\lambda_u, s)^\triangleleft$. Then 
$u\cdot x\leq s$ and  $u\cdot y\leq s$. If $u\in A$, we get
$ux\vee uy = u(x\vee y)\leq s = u\cdot (x\vee y)$, so
$x\vee y\in (\lambda_u, s)^\triangleleft$. If $u = d_a\in D$, then
from $u\cdot x\leq s$ and  $u\cdot y\leq s$ we get
$ax\leq s$ and $ay\leq s$, so reasoning as before we get $a\cdot(x\vee y)\leq
s$, and hence $x\vee y\in (\lambda_u, s)^\triangleleft$ again. This proves (2). 

Next, let $d\cdot x, d\cdot y\in (\lambda_u, s)^\triangleleft$. Then 
$u\cdot d\cdot x\leq s$ and $u\cdot d\cdot y\leq s$. If  $u\in A$, these imply
$d\cdot ux\leq s$ and $d\cdot uy\leq s$, and further
$ux\leq s$ and $uy\leq s$. These hold if and only if $u(x\vee y)\leq s$, from
which it follows that $d\cdot u\cdot(x\vee y)\leq s$. Since
$d\cdot u\cdot (x\vee y) = u\cdot d\cdot(x\vee y)$, we have that
$d\cdot(x\vee y)\in (\lambda_u, s)^\triangleleft$.
If $u = d_a\in D$, we obtain 
$d\cdot d\cdot a\cdot x\leq s$ and $d\cdot d\cdot a\cdot y\leq s$; therefore,
$cax\leq s$ and  $cay\leq s$. This holds if and only if 
$ca(x\vee y)\leq s$, which implies $d_a\cdot d\cdot(x\vee y)\leq s$, which
in turn implies $d\cdot (x\vee y)\in (\lambda_u, s)^\triangleleft$.
This proves (3). 

It remains to show that $\dw a$ and $\dw(d\cdot a)$ are closed.
For $\dw a$ consider $Z = \bigcap\{(\lambda_u,s)^\triangleleft\mid u\cdot a\leq s\}$.
For every $z\leq a$, we have $z\in Z$, by monotonicity of
multiplication. For the converse, note that the basic set
$(\lambda_1,a)^\triangleleft$ is a member of
$\{(\lambda_u,s)^\triangleleft\mid u\cdot a\leq s\}$, so
$z\in Z$ implies $z\in (\lambda_1,a)^\triangleleft$, that is
$1\cdot z\leq a$. Thus, $\dw a = Z$.

For $\dw(d\cdot a)$ consider
$Z' = \bigcap\{(\lambda_u,s)^\triangleleft\mid u\cdot d\cdot a\leq s\}$.
For every $z\leq d\cdot a$ we have $z\in Z'$, as before. For the converse,
note that $(\lambda_1,a)^\triangleleft$ and $(\lambda_d,ca)^\triangleleft$ are members of
$\{(\lambda_u,s)^\triangleleft\mid u\cdot d\cdot a\leq s\}$. If
$z\in Z\cap A$, then since $z\in (\lambda_d,ca)^\triangleleft$, we have
$d\cdot z\leq ca$, and so $z\leq ca$; therefore $z\leq d\cdot a$.
If $z\in Z\cap D$, say $z = d_b$ for some $b\in A$, then 
since $z\in (\lambda_1,a)^\triangleleft$, we have
$d\cdot b\leq a$, and so $b\leq a$; hence $d\cdot b\leq d\cdot a$.
\end{proof}

Let $\m{L}$ be a subdirectly irreducible commutative residuated lattice,
let $\mu$ be the monolith of $\m{L}$, and let $F_\mu = 
\up(1/\mu)$.
We define the \emph{depth} of $\mu$ to be the least $n\in \mathbb{N}$ such that,
for all $a\in 1/\mu$ with $a<1$, we have $a^{n+1} = a^n$. If no such $n$ exists,
the the depth of $\mu$ is undefined.

\begin{lemma}\label{expn-more-props}
Assume $\m{A}$ is subdirectly irreducible with monolith $\mu$ of depth $n$,
and $c\prec 1$ is the unique largest strictly negative element of $A$.
The following hold\textup: 
\begin{enumerate}[label={\upshape(\arabic*)},leftmargin=1.75\parindent]  
\item $\m{L}[\m{P}]$ is subdirectly irreducible. 
\item The monolith $\nu$ of\/ $\m{L}[\m{P}]$ has depth 
at least $2n$.
\item $\mu = \nu\restrictedto{\m{A}}$.  
\item $\m{L}[\m{P}]/\nu$ is isomorphic to $\m{A}/\mu$. 
\end{enumerate}  
\end{lemma}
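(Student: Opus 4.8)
Write $d$ for $d_1$, so that $c \prec d \prec 1$ in $\m P$ (and recall that $\m A$, hence $\m P$, is finite). By Lemma~\ref{expn} and Lemma~\ref{nuclear}, $\m A \le \m P \le \m L[\m P]$ and all three algebras are commutative and integral, so $1^{\m L[\m P]}$ is the top element~$P$. The plan is: first to locate the largest strictly negative element of $\m L[\m P]$; then to compute its powers; and finally to read off parts (2)--(4), using throughout that in a commutative integral residuated lattice a congruence is determined by its $1$-class, which is an arbitrary $\cdot$-closed up-set.

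\emph{Part (1).} I would show that the closed set $\dw d$ is the \emph{unique} largest strictly negative element of $\m L[\m P]$; subdirect irreducibility then follows from the criterion for commutative residuated lattices recalled at the start of this section, and the monolith is $\nu = \cg(\dw d, P)$. Certainly $\dw d$ is closed (Lemma~\ref{expn-props-2}) and $1 \notin \dw d$. Conversely, let $X$ be closed with $X \ne P$; since closed sets are down-sets of $\m P$ and $\dw 1 = P$ by integrality, $1 \notin X$, so $\widehat X \in X \cap A$ forces $\widehat X < 1$, whence $\widehat X \le c$ as $c$ is the largest strictly negative element of $\m A$. As $d \cdot \widetilde X \le d \cdot 1 = d$, Lemma~\ref{expn-props-2} gives $X = \dw\widehat X \cup \dw(d\cdot\widetilde X) \subseteq \dw c \cup \dw d = \dw d$.

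\emph{Part (2).} This is the heart of the argument, and the step I expect to be the main obstacle. Since every principal down-set of $\m P$ is closed (Lemma~\ref{expn-props-2}) and multiplication in $\m P$ is monotone, $\dw p \cdot \dw q = \dw(p\cdot q)$ in $\m L[\m P]$ for all $p, q \in P$; hence $(\dw d)^k = \dw(d^k)$, where $d^k$ is the $k$th power of $d$ in $\m P$. From $d\cdot x\cdot d\cdot y = cxy$ and $d\cdot x \in \{d_x, x\}$ (Lemma~\ref{multiplication-properties}) an easy induction yields $d^{2k} = c^k$ and $d^{2k+1} = d\cdot c^k$ in $\m P$. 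Because $\m A$ is subdirectly irreducible, every strictly negative element of $\m A$ lies below $c$; consequently the depth $n$ of $\mu$ is the least $k$ with $c^{k+1} = c^k$, the monolith filter is $1/\mu = \up^{\m A}(c^n)$ with $c^n$ idempotent, and $c^{k+1} < c^k$ for $1 \le k < n$. Thus $d\cdot c^k = d_{c^k}$ for $k < n$ whereas $d\cdot c^n = c^n$, so $d^{2n-1} = d_{c^{n-1}} \ne c^n = d^{2n} = d^{2n+1}$ in $\m P$: the powers of $\dw d$ strictly decrease up to the exponent $2n$ and then stabilise at $\dw(c^n)$. As $1/\nu = \bigcup_{k\ge1}\up\big((\dw d)^k\big) = \up_{\m L[\m P]}\big(\dw(c^n)\big)$, whose largest strictly negative member is $\dw d$, the depth of $\nu$ is exactly $2n$; in particular it is at least $2n$, proving (2).

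\emph{Parts (3) and (4).} For (3), I would restrict $\nu$ to $\m A$, identified with $\{\,\dw a \mid a \in A\,\} \le \m L[\m P]$: its $1$-class is $\{\,a \in A \mid \dw a \supseteq \dw(c^n)\,\} = \up^{\m A}(c^n) = 1/\mu$, so $\nu\restrictedto{\m A} = \mu$. For (4), the composite homomorphism $\m A \hookrightarrow \m L[\m P] \twoheadrightarrow \m L[\m P]/\nu$ has kernel $\nu\restrictedto{\m A} = \mu$ by (3); it is surjective since, for closed $X$, Lemma~\ref{expn-props-2} gives $X = \dw\widehat X \vee \big(\dw d \cdot \dw\widetilde X\big)$ in $\m L[\m P]$, and as $\dw d \mathrel{\nu} P = 1$ this is $\nu$-related to $\dw\widehat X \vee \dw\widetilde X = \dw(\widehat X \vee \widetilde X)$, an element of the image of $\m A$. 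Hence $\m L[\m P]/\nu \cong \m A/\mu$, which is (4).
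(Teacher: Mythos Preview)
Your argument is correct and follows the paper's approach for parts (1)--(3) almost exactly, though with more detail than the paper provides. One small overclaim: in part (2) you assert the depth of $\nu$ is \emph{exactly} $2n$, but your argument only establishes that the powers of $\dw d$ stabilise at exponent $2n$; showing that \emph{every} strictly negative $X \in 1/\nu$ satisfies $X^{2n+1} = X^{2n}$ would require more. Since the lemma only claims ``at least $2n$'', this does not affect the proof.

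For part (4) you take a genuinely different route from the paper. The paper shows $X \equiv_\nu \dw\widehat X$ by directly verifying $\dw d \subseteq X \to \dw\widehat X$, i.e.\ $d\cdot x \in \dw\widehat X$ for every $x \in X$, via a case split on whether $x \le \widehat X$. You instead write $X = \dw\widehat X \vee (\dw d \cdot \dw\widetilde X)$ and use that $\dw d \equiv_\nu 1$ to collapse this to $\dw(\widehat X \vee \widetilde X)$. Your version is cleaner and more algebraic, avoiding the element-level case analysis; the paper's version has the minor advantage of identifying the specific representative $\dw\widehat X$ rather than $\dw(\widehat X \vee \widetilde X)$, but for the isomorphism $\m L[\m P]/\nu \cong \m A/\mu$ either suffices.
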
  

\begin{proof}
For (1), note 
that $\dw d$ is a closed set whose unique cover in the inclusion ordering
is $\dw 1 = P$. Claim (2) follows by construction. To see it, note that 
$d^{2k} = c^k$, for all~$k$, so if $k<n$ we have 
$d^{2k+1} = d\cdot d^{2k} = d\cdot c^k < c^k$, since $c^{k+1}<c^k$.
For $k = n$, we have $d^{2n+1} = d\cdot d^{2n} = d\cdot c^n =
c^n$, since $cc^n = c^{n+1} = c^n$. As multiplication is preserved in
$\m{L}[\m{P}]$, we have $(\dw d)^{2n-1}> (\dw d)^{2n} = (\dw d)^{2n+1}$,
whence the monolith of $\m{L}[\m{P}]$ has depth at least $2n$.
Next, (3) follows from (2) using the fact that $\m{A}\leq \m{L}[\m{P}]$.
To prove (4), we begin by showing that  
$X\equiv_\nu \dw\widehat{X}$ holds for every closed
$X\subseteq P$. For each
closed $X\subseteq P$, by Lemma~\ref{expn-props-2}, we have
$X = \dw\widehat{X}\cup \dw(d\cdot\widetilde{X})$,
so $\dw \widehat{X}\subseteq X$, and thus $\dw \widehat{X}\ra X = P = 1^{L[P]}$.
Now, since $\nu = \m{Cg}^{\m{L}[\m{P}]}(\dw d, P)$,
it suffices to show that $\dw d\subseteq X\ra \dw \widehat{X}$. This is further
equivalent to $d\in X\ra \dw \widehat{X}$. Let $x\in X$ and consider
$d\cdot x$. If $x\leq \widehat{X}$, we have $d\cdot x\leq x \leq \widehat{X}$, so 
$d\cdot x\in\dw \widehat{X}$. Assume $x\not\leq \widehat{X}$, so that
$x\leq d\cdot \bigvee\{y\in A\mid d\cdot y\in X\}$. In particular,
$x\notin A$, so $x = d\cdot b$ for some $b\in A$. Then we get
\[
d\cdot b \leq d\cdot \bigvee\{y\in A\mid d\cdot y\in X\}
\]
and thus 
\[
d\cdot x = c\cdot b \leq
c\cdot \bigvee\{y\in A\mid d\cdot y\in X\}
= \bigvee\{cy\in A\mid d\cdot y\in X\}.
\]
But $\bigvee\{cy\in A\mid d\cdot y\in X\}\in X$ because $X$ is closed, so
$d\cdot x\in \dw\widehat{X}$ as required.
We have shown that $X\equiv_\nu \dw\bigvee\{x\in A\mid x\in X\}$ holds for
every closed $X$. It follows that we have $\dw \widehat{X} \in X/\nu$, from
which it follows in turn that
every congruence class of $\nu$ contains (an image of) an element of $A$. 
This proves (4).
\end{proof}  

By iterating the construction, we immediately obtain the next lemma.

\begin{lemma}\label{expansion}
Let $\mathbf{A}$ be a subdirectly irreducible commutative integral
residuated lattice with monolith of depth $n$. For each
natural number $k$ there
exists a subdirectly irreducible commutative integral
residuated lattice $\m{E}$ with monolith $\vartheta$, such that\textup:
\begin{enumerate}[label={\upshape(\arabic*)},leftmargin=1.75\parindent]  
\item $\m{A}\leq \m{E}$,
\item $\vartheta$ has depth at least $k$,
\item $\mu = \vartheta\restrictedto{\m{A}}$,  
\item $\m{E}/\vartheta$ is isomorphic to $\m{A}/\mu$. 
\end{enumerate}
Moreover, if $\mathbf{A}$ is finite, so is $\mathbf{E}$.
\end{lemma}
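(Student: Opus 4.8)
The plan is to iterate the construction of Lemmas~\ref{pomonoid}, \ref{expn} and~\ref{expn-more-props}. Put $\mathbf{A}_0 := \mathbf{A}$ and $\mu_0 := \mu$, and suppose inductively that we have built a subdirectly irreducible commutative integral residuated lattice $\mathbf{A}_i$ whose monolith $\mu_i$ has depth at least $2^i n$. Being subdirectly irreducible and commutative, $\mathbf{A}_i$ has a unique largest strictly negative element $c_i$, and $c_i \prec 1$ automatically. Taking this $c_i$ as the distinguished element $c$, form the ordered commutative monoid $\mathbf{P}_i$ of Lemma~\ref{pomonoid} (which is integral because $\mathbf{A}_i$ is), equip it with the nuclear relation constructed before Lemma~\ref{expn}, and set $\mathbf{A}_{i+1} := \mathbf{L}[\mathbf{P}_i]$. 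Then Lemma~\ref{nuclear}(3),(4) gives that $\mathbf{A}_{i+1}$ is again commutative and integral; Lemma~\ref{expn} gives $\mathbf{A}_i \le \mathbf{P}_i \le \mathbf{A}_{i+1}$; and Lemma~\ref{expn-more-props} gives that $\mathbf{A}_{i+1}$ is subdirectly irreducible, that its monolith $\mu_{i+1}$ satisfies $\mu_{i+1}\restrictedto{\mathbf{A}_i} = \mu_i$, that $\mathbf{A}_{i+1}/\mu_{i+1} \cong \mathbf{A}_i/\mu_i$, and that the depth of $\mu_{i+1}$ is at least $2^{i+1}n$. To keep the induction turning over, observe that the largest strictly negative element of $\mathbf{A}_{i+1}$ is exactly the closed set $\dw d$ arising at stage $i$: the proof of Lemma~\ref{expn-more-props}(1) shows $\dw d \prec 1$ in $\mathbf{A}_{i+1}$, and since $\mathbf{A}_{i+1}$ is subdirectly irreducible its largest strictly negative element is its unique lower cover of $1$, hence equals $\dw d$.

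Since $\mu$ is non-trivial, $1/\mu$ contains an element below $1$, so $n \ge 1$; choose $m$ with $2^m n \ge k$ and put $\mathbf{E} := \mathbf{A}_m$ and $\vartheta := \mu_m$. Composing the embeddings $\mathbf{A} = \mathbf{A}_0 \le \mathbf{A}_1 \le \cdots \le \mathbf{A}_m = \mathbf{E}$ gives~(1); the depth bound $2^m n \ge k$ is~(2); since restrictions compose, $\vartheta$ restricts to $\mu_i$ on each $\mathbf{A}_i$, so in particular $\vartheta\restrictedto{\mathbf{A}} = \mu$, which is~(3); and the chain $\mathbf{E}/\vartheta = \mathbf{A}_m/\mu_m \cong \mathbf{A}_{m-1}/\mu_{m-1} \cong \cdots \cong \mathbf{A}_0/\mu_0 = \mathbf{A}/\mu$ gives~(4). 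For the moreover part: if $\mathbf{A}$ is finite then $A_0 = \{a \in A \mid ca < a\}$ is finite, hence so are $D$ and $P_0 = A_0 \cup D$, hence $\mathbf{A}_1 = \mathbf{L}[\mathbf{P}_0]$ is finite by Lemma~\ref{nuclear}(5); by induction every $\mathbf{A}_i$ is finite, so in particular $\mathbf{E}$ is finite.

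The only real work is bookkeeping: one must check at each stage that the output of the construction again meets the hypotheses required to re-apply it — subdirect irreducibility, commutativity, integrality, and the existence of a largest strictly negative element covered by $1$ — which is exactly what Lemmas~\ref{nuclear} and~\ref{expn-more-props} and the observation above supply. A small point of care is the reading of ``depth at least'' when $\mathbf{A}$ is infinite: one takes it, as in Lemma~\ref{expn-more-props}(2), to mean ``$c^{j+1} < c^j$ for every $j < k$'', where $c$ is the largest strictly negative element; this weaker condition is precisely what the proof of Lemma~\ref{expn-more-props}(2) yields at each stage and it is all that the repeated applications need. In the finite case — the one relevant to the applications of the Non-splitting Lemma — each $\mathbf{A}_i$ is finite and its depth is an honest natural number throughout, so no such care is needed.
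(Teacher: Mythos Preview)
Your proposal is correct and takes essentially the same approach as the paper, whose proof is the single sentence ``By iterating the construction, we immediately obtain the next lemma.'' You have simply filled in the bookkeeping of that iteration---tracking subdirect irreducibility, integrality, commutativity, the coatom, the depth doubling, and the composition of embeddings, restrictions, and quotient isomorphisms---which is exactly what the paper leaves implicit.
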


The residuated lattice $\m{E}$ obtained above will be called an
\emph{expansion of $\m{A}$ of depth $m$}, where $m$ is the depth of the monolith
of $\m{E}$, or simply an \emph{expansion} of $\m{A}$.

\subsection{Truncated products of residuated lattices}
Let $\m{A}$, $\m{B}$ be residuated lattices, and let $c$ and $q$ be strictly
negative elements of $\m{A}$ and $\m{B}$, respectively. The \emph{truncated product} $\m{A}\odot\m{B}$ of $\m{A}$ and $\m{B}$ is the algebra with the universe 
\[
A\odot B = \{a\in A\mid a\leq c\}\times \{b\in B\mid b\leq q\}
\cup\{\lan 1,1\ran\}
\]
and operations defined  below. 
{\allowdisplaybreaks
\begin{align*}
1 &= \lan 1,1\ran,\cr
\lan a,i\ran\wedge \lan b,j\ran &= \lan a\wedge b, i\wedge j\ran,\\
\lan a,i\ran\vee \lan b,j\ran   &= \lan a\vee b, i\vee j\ran,\\
\lan a,i\ran\cdot\lan b,j\ran   &= \lan a\cdot b, i\cdot j\ran,\\
\lan a,i\ran\lres \lan b,j\ran    &= 
\begin{cases}
\lan a\lres b\wedge c, q\ran & \text{ if } a\not\leq b,\ i\leq j,\\
\lan c, i\lres j\wedge q\ran & \text{ if } a\leq b,\ i\not\leq j,\\
\lan a\lres b\wedge c, i\lres j\wedge q\ran & \text{ if } a\not\leq b,\ i\not\leq j,\\
\lan 1,1\ran & \text{ otherwise.}
\end{cases}\\
\lan a,i\ran\rres \lan b,j\ran    &= 
\begin{cases}
\lan a\rres b\wedge c, q\ran & \text{ if } a\not\leq b,\ i\leq j,\\
\lan c, i\rres j\wedge q\ran & \text{ if } a\leq b,\ i\not\leq j,\\
\lan a\rres b\wedge c, i\rres j\wedge q\ran & \text{ if } a\not\leq b,\ i\not\leq j,\\
\lan 1,1\ran & \text{ otherwise.}
\end{cases}                              
\end{align*}}%
Until the end of this subsection we will keep 
$\m{A}$, $\m{B}$, $c$ and $q$ fixed.

\begin{lemma}\label{si-rl}
$\m{A}\odot\m{B}$ is a subdirectly irreducible integral residuated
lattice.  
\end{lemma}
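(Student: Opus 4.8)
The plan is to verify the residuated-lattice axioms for $\m{A}\odot\m{B}$ directly, isolating as the one substantive point the claim that the four-case formulas for $\lres$ and $\rres$ really do residuate multiplication, and then to obtain subdirect irreducibility from the criterion recalled earlier in this section (a residuated lattice with a unique largest strictly negative element is subdirectly irreducible). Throughout I would use that $c<1$ in $\m A$ and $q<1$ in $\m B$, so that every ``proper'' pair $\lan a,i\ran\in A\odot B$ has $a\le c<1$ and $i\le q<1$.

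First I would dispose of the lattice and monoid reducts. Put $I(c)=\{a\in A\mid a\le c\}$ and $I(q)=\{b\in B\mid b\le q\}$; each is a down-set of the underlying lattice closed under joins, hence a bounded sublattice with top $c$, resp.\ $q$. Each is also closed under multiplication: if $a,b\le c$ then $b\le c<1$ gives $b\le 1\le a\lres a$, so $ab\le a\le c$, and dually on the right; in particular $ab\ne 1$, so a product of proper pairs is again proper. Thus $I(c)\times I(q)$, with the componentwise operations, is a subposet of $\m A\times\m B$ closed under the componentwise lattice and monoid operations, and $A\odot B$ is obtained from it by adjoining a new element $\lan 1,1\ran$ that is simultaneously a top for the lattice order and a unit for $\cdot$. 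That $\langle A\odot B;\wedge,\vee\rangle$ is then a lattice with greatest element $\lan 1,1\ran$, and $\langle A\odot B;\cdot,\lan 1,1\ran\rangle$ a monoid, is routine: every identity either holds componentwise on proper pairs or is immediate because $\lan 1,1\ran$ is absorbing for $\vee$ and neutral for $\wedge$ and $\cdot$. Integrality is then immediate, as $1=\lan 1,1\ran$ is the top.

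The heart of the argument is the residuation law $x\cdot y\le z\iff y\le x\lres z$ (the law for $\rres$ being entirely dual). When at least one of $x,y,z$ equals $\lan 1,1\ran$ the check is immediate, using that $\lan 1,1\ran$ is neutral for $\cdot$, that $\lan 1,1\ran\lres z=z$ and $x\lres\lan 1,1\ran=\lan 1,1\ran$ always (both read off the formula with an argument equal to $1$), and that $\lan 1,1\ran$ is the top. For proper $x=\lan a,i\ran$, $y=\lan b,j\ran$, $z=\lan d,k\ran$, the product $xy=\lan ab,ij\ran$ is again proper and, since $b\le c$ and $j\le q$ are automatic,
\[
xy\le z \iff b\le(a\lres d)\wedge c \ \text{ and }\ j\le(i\lres k)\wedge q .
\]
One then matches the right-hand condition against the four-case definition of $\lan a,i\ran\lres\lan d,k\ran$, using that $a\le d$ forces $(a\lres d)\wedge c=c$ and $i\le k$ forces $(i\lres k)\wedge q=q$, and that in the remaining case $a\le d$, $i\le k$ both sides hold vacuously (there $x\lres z=\lan 1,1\ran\ge y$, while $ab\le a\le d$ and $ij\le i\le k$ give $xy\le z$). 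I expect this bookkeeping to be the only real obstacle: it is not deep, but it is exactly where the four-way split and the caps $\wedge c$, $\wedge q$ in the construction earn their keep.

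Finally, for subdirect irreducibility I would note that the strictly negative elements of $\m{A}\odot\m{B}$ are precisely the proper pairs $\lan a,i\ran$, and each satisfies $\lan a,i\ran\le\lan c,q\ran$; since $\lan c,q\ran$ is itself a proper pair, it is the unique largest strictly negative element, so $\m{A}\odot\m{B}$ is subdirectly irreducible by the criterion recalled above. Together with integrality this completes the proof.
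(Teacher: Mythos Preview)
Your proof is correct and follows essentially the same route as the paper: both verify that $A\odot B$ inherits its lattice and monoid structure componentwise from $\m A\times\m B$ on the proper pairs (with $\lan 1,1\ran$ adjoined as top and unit), then check the residuation law by case analysis on the four-way split in the definition of $\lres$, and conclude subdirect irreducibility from the unique coatom $\lan c,q\ran$. Your treatment is more thorough---you handle the cases involving $\lan 1,1\ran$ explicitly and spell out the full case split, whereas the paper dismisses two of the four cases as coinciding with the direct product---but the underlying argument is the same. One small quibble: calling $I(c)$ a \emph{bounded} sublattice is slightly off, since residuated lattices need not have a bottom; but this plays no role in your argument.
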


\begin{proof}
Note that $A\odot B$, viewed
as a subset of $A\times B$ is closed under meet, join, and multiplication. 
Thus, $\m{A}\odot\m{B}$ is a lattice ordered commutative monoid; in fact,
it is a lattice ordered submonoid of the the direct product $\mathbf{A}\times\mathbf{B}$.
Next, since~$\lres$ and~$\rres$ are defined symmetrically, it suffices to
verify residuation equivalences for one of these. Moreover, the third case in the
definition of $\lres$ is precisely what it would be 
in the direct product, so only two first cases remain.

Let $a\not\leq b$ and $i\leq j$, so that $\lan a,i\ran\lres \lan b,j\ran =
\lan a\lres b\wedge c,q\ran$. 
Note that we must have $j < 1$. For all $\lan s,k\ran$, we have
$\lan a,i\ran\cdot\lan s,k\ran = \lan a\cdot s, i\cdot k\ran\leq\lan b,j\ran$ if
and only if 
$a\cdot s\leq b$ and $k\cdot i\leq j$. Therefore
$s\leq a\lres b$ and since $a\lres b < 1$ we have $\lan s,k\ran<1$, so 
$s\leq c$ and $k\leq q$.
Thus, $\lan s,k\ran \leq \lan a\lres b\wedge c,q\ran$.
Conversely, $\lan s,k\ran\leq \lan a\lres b\wedge c,q\ran$ if and only  
if $s\leq a\lres b\wedge c$ and $k<1$, which implies $a\cdot s\leq b$ and $i\cdot k\leq
i\leq j$.  The case $a\leq b$ and $i\not\leq j$ is symmetric.

To show that $\m{A}\odot\m{B}$ is subdirectly irreducible it suffices to note that
$\lan c,q\ran$ is its unique coatom.
\end{proof}

If $\m{A}$ and $\m{B}$ are themselves integral and have unique coatoms,
all nontrivial quotients of $\m{A}\odot\m{B}$ coincide with quotients of
$\m{A}\times\m{B}$.

\begin{lemma}\label{congr}
Let $\m{A}$, $\m{B}$ be integral residuated lattices, with unique coatoms
$c$, $q$, respectively. Let $\alpha$ be a
non-trivial congruence on~$\m{A}\odot\m{B}$. Then there is a congruence
$\alpha'$ on $\m{A}\times\m{B}$ that extends $\alpha$ and satisfies
$(\m{A}\times\m{B})/\alpha' \cong (\m{A}\odot\m{B})/\alpha$. Moreover, $\alpha'
= (\rho_1\vee\alpha')\times(\rho_2\vee\alpha')$, where $\rho_1$, $\rho_2$ are
the kernels of the respective projection homomorphisms. 
\end{lemma}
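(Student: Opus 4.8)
The plan is to produce $\alpha'$ as a \emph{product congruence} $\beta_A\times\beta_B$ on $\m A\times\m B$, where $\beta_A$ and $\beta_B$ are obtained by projecting the $1$-class $F:=1/\alpha$ of $\m A\odot\m B$ onto the two factors. Recall that in an integral residuated lattice a congruence is determined by its $1$-class, which is a \emph{congruence filter}: an up-set containing $1$ that is closed under multiplication and under the conjugation maps $z\mapsto u\lres(zu)$ and $z\mapsto(uz)\rres u$ for every $u$. Since $\alpha$ is non-trivial and $F$ is an up-set, $F$ contains the coatom $\lan c,q\ran$ of $\m A\odot\m B$; recall also that $c$, being the unique coatom of $\m A$, is the greatest element of $A$ below $1$, so every element of $A$ strictly below $1$ lies below $c$ (and similarly in $\m B$).

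First I would set
\[
F_A:=\{\,a\in A\mid \lan a\wedge c,q\ran\in F\,\}\qquad\text{and}\qquad F_B:=\{\,b\in B\mid \lan c,b\wedge q\ran\in F\,\},
\]
and prove that these are congruence filters of $\m A$ and of $\m B$. That $F_A$ is an up-set containing $1$ and $c$ and is closed under multiplication follows at once from the corresponding properties of $F$ together with integrality (via $xy\le x\wedge y$). Closure under conjugation is where the real work lies, and it is the main obstacle: given $a\in F_A$ with $a<1$ — so $a\le c$ and $\lan a,q\ran\in F$ — and $u\in A$, one conjugates $\lan a,q\ran$ inside $\m A\odot\m B$ by $\lan u,q\ran$ when $u<1$ (then $u\le c$, so the truncation on $u$ is vacuous) and by $\lan1,1\ran$ when $u=1$, and checks the four clauses defining the truncated residual. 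If $au=u$ then $u\lres^{\m A}(au)=1\in F_A$ by integrality of $\m A$; otherwise $u\lres^{\m A}(au)<1$, hence $\le c$, each clause produces an element of $F$ whose first coordinate is $u\lres^{\m A}(au)$ and whose second coordinate is $\le q$, and up-closure of $F$ gives $\lan u\lres^{\m A}(au),q\ran\in F$, that is, $u\lres^{\m A}(au)\in F_A$. The $\rres$-side is symmetric, and $F_B$ is handled identically. Let $\beta_A\in\Con(\m A)$ and $\beta_B\in\Con(\m B)$ be the congruences with $1$-classes $F_A$ and $F_B$, and put $\alpha':=\beta_A\times\beta_B$, so that $1/\alpha'=F_A\times F_B$.

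Next I would record two facts. First, $(F_A\times F_B)\cap(A\odot B)=F$: one inclusion is immediate from up-closure of $F$, and for the other note that, for $a\le c$ and $b\le q$, one has $\lan a,b\ran=\lan a,q\ran\wedge\lan c,b\ran$ in $\m A\odot\m B$, with both factors in the meet-closed set $F$. Second — the other point requiring care — for all $x,y\in A\odot B$ the element $x\lres y$ computed in $\m A\odot\m B$ is $\alpha'$-congruent to $x\lres y$ computed in $\m A\times\m B$, and likewise for $\rres$; this is a second four-clause inspection of the truncated residual, in which the two values either coincide or differ in a single coordinate by $c$ versus $1$ or by $q$ versus $1$ — differences absorbed by $\alpha'$ because $c\in F_A$ and $q\in F_B$. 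Granting these, $\theta:=\alpha'\cap(A\odot B)^2$ is plainly a congruence of the $\{\wedge,\vee,\cdot,1\}$-reduct of $\m A\odot\m B$, and the second fact promotes it to a congruence of the full residuated lattice $\m A\odot\m B$ (evaluate a truncated residual modulo $\alpha'$ through the honest one in the product, which is a genuine congruence). Since $1/\theta=(1/\alpha')\cap(A\odot B)=F=1/\alpha$ by the first fact, $\theta=\alpha$; in particular $\alpha\subseteq\alpha'$, so $\alpha'$ extends $\alpha$.

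Finally, the map $[x]_\alpha\mapsto[x]_{\alpha'}$, $x\in A\odot B$, is well defined and injective because $\alpha'\cap(A\odot B)^2=\alpha$, it is a homomorphism by the second fact (for $\lres$ and $\rres$) and the agreement of $\wedge,\vee,\cdot,1$ on $A\odot B$, and it is surjective because every $\lan a,b\ran\in A\times B$ satisfies $\lan a,b\ran\mathrel{\alpha'}\lan a\wedge c,b\wedge q\ran\in A\odot B$ — here $c\in F_A$ and $q\in F_B$ force $a\mathrel{\beta_A}a\wedge c$ and $b\mathrel{\beta_B}b\wedge q$. Hence $(\m A\odot\m B)/\alpha\cong(\m A\times\m B)/\alpha'$. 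For the closing assertion, $\alpha'=\beta_A\times\beta_B$ is a product congruence, and a direct join computation in $\Con(\m A\times\m B)$ shows that $\rho_1\vee\alpha'$ and $\rho_2\vee\alpha'$ are the congruences of $\m A\times\m B$ corresponding to $\beta_A$ and $\beta_B$ under the two projections, whose product is $\beta_A\times\beta_B=\alpha'$. To summarise: the only delicate steps are the two four-clause case analyses — closure of $F_A$, $F_B$ under conjugation and the comparison of the truncated residual with the honest one — both forced by the piecewise definition of $\lres$ and $\rres$ on $\m A\odot\m B$; everything else is bookkeeping.
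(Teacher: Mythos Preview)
Your approach is correct and in fact produces the very same $\alpha'$ as the paper, since your $F_A\times F_B$ coincides with the upward closure $\up F$ of $F:=1/\alpha$ taken inside $A\times B$ (an easy two-line check using that $F$ is meet-closed, up-closed in $A\odot B$, and contains $\lan c,q\ran$). The paper's route is shorter: it simply lets $\alpha'$ be the congruence on $\m A\times\m B$ with $1$-class $\up F$, observes that $\up F$ is a filter closed under multiplication, and then argues---much as you do---that $\alpha'$ restricts to $\alpha$ on $A\odot B$ and that every $\alpha'$-class meets $A\odot B$ because $\lan c,1\ran\equiv_{\alpha'}\lan 1,1\ran\equiv_{\alpha'}\lan 1,q\ran$. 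One point worth flagging: the step you call the ``main obstacle''---closure of $F_A$ and $F_B$ under conjugation---is no obstacle at all in an \emph{integral} residuated lattice, because $a\le u\lres(ua)$ and $a\le(au)\rres u$ always hold (by residuation, from $ua\le ua$ and $au\le au$), so any multiplicatively closed filter is automatically closed under conjugation by up-closure; your four-clause case analysis is therefore unnecessary. Your presentation does buy one small thing: since you build $\alpha'$ explicitly as a product congruence $\beta_A\times\beta_B$, the ``moreover'' clause is immediate, whereas the paper derives it in one line from congruence distributivity.
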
  

\begin{proof}
Let $F$ be the filter of $\m{A}\odot\m{B}$ corresponding to the congruence $\alpha$.
Note that the ordered monoid reduct of $\m{A}\odot\m{B}$ is a subalgebra
of the ordered monoid reduct of $\m{A}\times\m{B}$.
Consider $\up F$ taken in $\A\times \B$. We have
\begin{multline*}
\up F = F\cup \{(x,1)\in A\times B\mid\exists y\in B\colon (x,y)\in F\}\\
\cup \{(1,y)\in A\times B\mid \exists x\in A\colon (x,y)\in F\}.
\end{multline*}
Then $\up F$ is a filter on $\m{A}\times\m{B}$, and it is also closed under
multiplication. Let $\alpha'$ be the congruence on
$\m{A}\times\m{B}$ determined by $\up F$. Then $\alpha'\restrictedto{A\odot B} = \alpha$,
so $\alpha'$ extends $\alpha$.

As $\alpha$ is non-trivial we have $(\lan c,q\ran, \lan 1, 1\ran)\in \alpha$, 
whence
$(\lan c,1\ran, \lan 1, 1\ran)\in \alpha'$ and
$(\lan 1,q\ran, \lan 1, 1\ran)\in \alpha'$. 
Therefore, $(\lan c,i\ran, \lan 1, i\ran)\in \alpha'$ and
$(\lan a,q\ran, \lan a, 1\ran)\in \alpha'$ for all
$i \in B$, $a\in A$.
It follows that every congruence class of $\alpha'$ contains
a representative from $A\odot B$, so the natural map
$a/\alpha \mapsto a/\alpha'$ is bijective. 
Inspecting the definitions of the operations in $\m{A}\odot\m{B}$
we see that the map above is also a homomorphism.
The moreover part follows from congruence distributivity.
\end{proof}  

Truncated products of integral residuated lattices with unique coatoms commute
with ultrapowers. In will be important in subsection~\ref{ssec:two} below.

\begin{lemma}\label{ultrap}
Let $\m{A}$, $\m{B}$ be as in Lemma~\ref{congr}. Then,  
any ultrapower $(\m{A}\odot\m{B})^I/U$ is isomorphic to
$\m{A}^I/U\odot\m{B}^I/U$.  
\end{lemma}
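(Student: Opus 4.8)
The plan is to write down an explicit isomorphism and reduce every verification to a coordinatewise computation plus one appeal to \L o\'{s}'s theorem. First I would record that $\m A^I/U$ and $\m B^I/U$ are again integral residuated lattices whose unique coatoms are the classes $c/U$ and $q/U$ of the constant functions: integrality is the sentence $\forall x\,(x \le 1)$, and ``$c$ is the largest strictly negative element'' is the first-order formula $c < 1 \wedge \forall y\,(y < 1 \to y \le c)$ with parameter $c$; both transfer by \L o\'{s}, and $c/U < 1$ since $c < 1$. Hence $\m A^I/U \odot \m B^I/U$, formed with respect to these coatoms, is defined, and by Lemma~\ref{si-rl} it is a subdirectly irreducible integral residuated lattice. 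Now represent an element of $(\m A\odot\m B)^I/U$ as $f/U$ with $f\colon I \to A\odot B$, and write $f(i) = \lan f_1(i), f_2(i)\ran$ coordinatewise; note that, by the construction of $A\odot B$, we have $f_1(i) = 1$ iff $f_2(i) = 1$ iff $f(i) = \lan 1,1\ran$, since every other element of $A\odot B$ has its two coordinates strictly below $c$ and $q$ respectively. Define
\[
\Phi(f/U) = \lan f_1/U,\ f_2/U\ran .
\]

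Next I would check that $\Phi$ is a well-defined bijection onto $\m A^I/U \odot \m B^I/U$. Since $U$ is an ultrafilter, either $f(i) = \lan 1,1\ran$ for $U$-almost all $i$, in which case $\Phi(f/U) = \lan 1,1\ran$; or $f(i) \ne \lan 1,1\ran$ for $U$-almost all $i$, in which case $f_1(i) \le c$ and $f_2(i) \le q$ for $U$-almost all $i$, so $f_1/U \le c/U$ and $f_2/U \le q/U$. Either way $\Phi(f/U)$ lies in the universe of $\m A^I/U \odot \m B^I/U$, and it clearly does not depend on the representative $f$ within its $U$-class. Injectivity uses that two $U$-sets meet in a $U$-set: if both coordinate classes of $f/U$ and $g/U$ agree, then $f = g$ on a $U$-set, so $f/U = g/U$. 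Surjectivity is the same computation run backwards: given $\lan x, y\ran \ne \lan 1,1\ran$ in $\m A^I/U \odot \m B^I/U$, choose $g, h$ with $g/U = x \le c/U$ and $h/U = y \le q/U$; then $g(i) \le c$ and $h(i) \le q$ for $U$-almost all $i$, whence $\lan g(i), h(i)\ran \in A\odot B$ there, and patching with $\lan 1,1\ran$ off a $U$-set produces an $f$ with $\Phi(f/U) = \lan x, y\ran$; the value $\lan 1,1\ran$ is hit by the constant map.

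Finally I would prove that $\Phi$ is a homomorphism. For $1$, the two lattice operations, and multiplication there is nothing to do: each is computed coordinatewise in $\m A\odot\m B$, hence coordinatewise in the ultrapower, and $\Phi$ simply reads off the two coordinates, which recombine coordinatewise in $\m A^I/U \odot \m B^I/U$ as well. The only operation requiring an argument is $\lres$ (the case of $\rres$ being identical, since $\rres$ is defined symmetrically). Given $f/U$ and $g/U$, partition $I$ into the four sets on which, respectively, the four clauses of the definition of $\lres^{\m A\odot\m B}$ apply to $f(i)$ and $g(i)$ --- that is, according to the truth values of $f_1(i) \le g_1(i)$ and of $f_2(i) \le g_2(i)$. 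Exactly one of these sets, say $J$, belongs to $U$; on $J$ the value $f(i) \lres g(i)$ is given by a single expression in $f_1(i), f_2(i), g_1(i), g_2(i)$ and the constants $c, q$. Transferring the two conditions that define $J$ by \L o\'{s}'s theorem then shows that $\Phi(f/U)$ and $\Phi(g/U)$ fall under the corresponding clause of the definition of $\lres$ in $\m A^I/U \odot \m B^I/U$, and that this clause evaluates to precisely $\Phi$ applied to the $U$-class of $i \mapsto f(i)\lres g(i)$. The step I expect to be fiddliest --- indeed the only real obstacle --- is the interaction of this case split with the adjoined top $\lan 1,1\ran$: on the three non-trivial clauses one must observe that neither $f(i)$ nor $g(i)$ can equal $\lan 1,1\ran$ for $U$-almost all $i$, since together with the conditions defining that clause and the fact that every element is $\le 1$ this would be contradictory; hence $\Phi(f/U)$ and $\Phi(g/U)$ are genuinely below $c/U$ and $q/U$ respectively and the intended clause truly applies, whereas on the trivial clause both sides collapse to $\lan 1,1\ran$. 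Once these boundary cases are dispatched the remaining computation is mechanical, and $\Phi$ is the required isomorphism.
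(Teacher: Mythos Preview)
Your proof is correct and follows essentially the same approach as the paper: the paper merely sketches the argument, exhibiting the map $\lan (a_i)/U,(b_i)/U\ran\mapsto (\lan a_i,b_i\ran)/U$ (the inverse of your $\Phi$) and noting that the two points requiring care beyond the standard ``ultraproducts commute with finite products'' argument are exactly (i) the constraint $a_i=1\Leftrightarrow b_i=1$ coming from the adjoined top and (ii) the clause-by-clause definition of the residuals --- precisely the two issues you identify and dispatch. One tiny slip: non-top elements of $A\odot B$ have coordinates $\le c$ and $\le q$, not strictly below, but this does not affect your argument.
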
  

\begin{proof}
This is a slight variation of the standard proof of the fact that ultraproducts commute with finite products. The reader is asked to verify that the map 
\[
\bigl\lan (a_i\mid i\in I)/U,(b_i\mid i\in I)/U\bigr\ran\mapsto 
\bigl(\lan a_i, b_i\ran_i\mid i\in I\bigr)/U
\] 
is the required isomorphism. The only differences from the standard proof for
products are: (i) the condition that $a_i = 1$ if and only if $b_i = 1$, and 
(ii) the definition of residuation.      
\end{proof}

\subsection{No splittings: algebras with at least three elements}\label{ssec:three}

Now, let $\m{A}$ be a finite subdirectly irreducible commutative integral
residuated lattice, with at least three elements and with monolith $\mu$ of depth
$n$. We shall apply the Non-splitting Lemma~\ref{no-splitting} to prove that $\A$
does not split the subvariety lattice of $\mathsf{CIRL}$.  
Since $\delta x = x^2$, 
\[
\mathbf{B}\not\models   
\delta^k(\Delta_{\mathbf{A}})\leq x_{\mu_\bot}\quad\text{becomes}\quad
\mathbf{B}\not\models  (\Delta_{\mathbf{A}})^{2k}\leq x_{\mu_\bot}. 
\]
Let $i\in \mathbb{N}$ and 
let $\m{E}$ be the expansion of $\m{A}$ 
of even depth $m = 2k$ with $k\geq i$ and $m>n$.
For the distortion part of the construction, we will make use of \emph{Wajsberg
  hoops} (see Blok and Ferreirim~\cite{BF00} for more on hoops).
Recall that a Wajsberg hoop $\m{C}_n$ is the commutative lattice-ordered
monoid on the universe $\{0,-1,\dots,-n+1\}$, with truncated addition and  
with residuation defined naturally by 
$i\ra j = \max\{0, i-j\}$. For consistency with previous notation (and 
tradition) we will present Wajsberg hoops multiplicatively, defining
$q^i = -i$, so that $1 = q^0 = 0$, $q = q^1 = -1$, and so on. Clearly,
$q$ is the unique coatom of $\m{C}_{n}$. 

Now, for the first prime $p$ with $p\geq |E|$, consider $\m{C}_{p+1}$.
Note that $\m{C}_{p+1}$ is strictly simple, and has $p+1$ elements.
Form $\m{E}\odot\m{C}_{p+1}$ with $c$ and $q$ chosen to be the unique coatoms
of $\m{E}$ and $\m{C}_{p+1}$, respectively.   
In the next three lemmas we show that
$\A \notin \Var(\m{E}\odot\m{C}_{p+1})$ and 
$\m{E}\odot\m{C}_{p+1}\not\models  (\Delta_{\mathbf{A}})^{m}\leq x_{\mu_\bot}$,
hence establishing Condition (3) of the Non-splitting  
Lemma~\ref{no-splitting} with $\B := \m{E}\odot\m{C}_{p+1}$.

Note that since $\m{E}$ and $\m{C}_{p+1}$ are commutative 
and integral, and $c$ and $q$ are their largest strictly negative elements, the definitions of residuals in $\m{E}\odot\m{C}_{p+1}$ simplify to:
\begin{align*}
\lan a,i\ran\ra \lan b,j\ran    &=   
\begin{cases}
\lan a\ra b, q\ran & \text{ if } a\not\leq b,\ i\leq j,\\
\lan c, i\ra j\ran & \text{ if } a\leq b,\ i\not\leq j,\\
\lan a\ra b, i\ra j\ran & \text{ otherwise.}
\end{cases}
\end{align*}

\begin{lemma}\label{hom-image}
For each non-trivial congruence $\vartheta$ on $\m{E}\odot\m{C}_{p+1}$, 
the quotient algebra $(\m{E}\odot\m{C}_{p+1})/\vartheta$ is isomorphic to a proper 
homomorphic image of $\m{A}$.
\end{lemma}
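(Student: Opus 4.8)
The plan is to transfer the given congruence $\vartheta$ to the ordinary direct product $\m{E}\times\m{C}_{p+1}$ by means of Lemma~\ref{congr}, to use the strict simplicity of the Wajsberg hoop $\m{C}_{p+1}$ to annihilate its factor, and then to identify the resulting quotient as a proper homomorphic image of $\m{A}$ via the properties of the expansion $\m{E}$ recorded in Lemma~\ref{expansion}.

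First I would observe that, by Lemma~\ref{si-rl}, $\m{E}\odot\m{C}_{p+1}$ is an integral residuated lattice with unique coatom $\lan c,q\ran$; hence $\lan c,q\ran$ is its largest strictly negative element, and since $\vartheta$ is non-trivial it identifies some strictly negative element with $\lan 1,1\ran$ (congruence regularity with respect to~$1$), and therefore $(\lan c,q\ran,\lan 1,1\ran)\in\vartheta$. Applying Lemma~\ref{congr} yields a congruence $\vartheta'$ on $\m{E}\times\m{C}_{p+1}$ with $(\m{E}\times\m{C}_{p+1})/\vartheta'\cong(\m{E}\odot\m{C}_{p+1})/\vartheta$; by the ``moreover'' part of that lemma $\vartheta'$ is a product congruence, say $\vartheta'=\beta_1\times\beta_2$ with $\beta_1\in\Con(\m{E})$ and $\beta_2\in\Con(\m{C}_{p+1})$, and, as noted in its proof, $(\lan c,1\ran,\lan 1,1\ran)\in\vartheta'$ and $(\lan 1,q\ran,\lan 1,1\ran)\in\vartheta'$. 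The second of these forces $(q,1)\in\beta_2$; as $\m{C}_{p+1}$ is strictly simple and $q$ is its coatom, $\beta_2$ must be the full congruence, so $\m{C}_{p+1}/\beta_2$ is trivial. The first forces $(c,1)\in\beta_1$, and since $\m{A}\leq\m{E}$ with $|A|\ge 3$ we have $c\ne 1$, so $\beta_1$ is non-trivial. Consequently
\[
(\m{E}\odot\m{C}_{p+1})/\vartheta\;\cong\;(\m{E}/\beta_1)\times(\m{C}_{p+1}/\beta_2)\;\cong\;\m{E}/\beta_1 .
\]

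To finish, recall from Lemma~\ref{expansion} that $\m{E}$ is subdirectly irreducible and that the quotient of $\m{E}$ by its monolith is isomorphic to $\m{A}/\mu$. Since $\beta_1$ is a non-trivial congruence on the subdirectly irreducible algebra $\m{E}$, it contains the monolith, so $\m{E}/\beta_1$ is a homomorphic image of $\m{A}/\mu$, hence isomorphic to $\m{A}/\theta$ for some congruence $\theta\ge\mu$ of $\m{A}$. As $\m{A}$ is subdirectly irreducible, $\mu>0$, whence $\theta>0$ and $\m{A}/\theta$ is a proper homomorphic image of $\m{A}$, which is precisely what the lemma asserts. Of the steps above, only the bookkeeping connecting $\vartheta'$ to the coatoms of the two factors (so that strict simplicity of $\m{C}_{p+1}$ collapses that factor entirely) requires any care; the substantive work has already been done in Lemmas~\ref{si-rl}, \ref{congr} and~\ref{expansion}, so I anticipate no real obstacle.
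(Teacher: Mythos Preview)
Your proof is correct and follows essentially the same route as the paper: transfer $\vartheta$ to the direct product via Lemma~\ref{congr}, use simplicity of $\m{C}_{p+1}$ to collapse the second factor, and observe that the resulting non-trivial quotient $\m{E}/\beta_1$ is a proper homomorphic image of $\m{A}$. You are a bit more explicit than the paper in the final step, correctly invoking Lemma~\ref{expansion}(4) to pass from $\m{E}/\beta_1$ back to a quotient of $\m{A}/\mu$; the paper simply says ``the claim follows'' at this point.
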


\begin{proof}
By Lemma~\ref{congr} we have that $(\m{E}\odot\m{C}_{p+1})/\alpha$ is
isomorphic to $(\m{E}\times\m{C}_{p+1})/\alpha'$, where $\alpha'$ is the
natural extension of $\alpha$.  Since $\alpha$ is non-trivial, we have
$(d,q)\in \alpha'$, and so 
$\alpha' = \beta\times\gamma$ for some congruences
$\beta$ on $\m{E}$ and $\gamma$ on $\m{C}_{p+1}$ such that
$(d,1)\in\beta$ and $(q,1)\in\gamma$. But $\m{C}_{p+1}$ is simple, so
$\gamma$ is the full congruence on $\m{C}_{p+1}$, and therefore
$\alpha' = \beta\times 1$, that is, $\alpha'$ properly contains the projection
onto the first coordinate. Since $\beta$ is non-trivial, the claim follows.
\end{proof}

\begin{lemma}\label{not-in-variety}
If $\m{A}$ is not isomorphic to $\m{2}$, then 
$\m{A}\not\in \Var(\m{E}\odot\m{C}_{p+1})$.
\end{lemma}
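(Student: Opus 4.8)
The plan is to argue by contradiction using Jónsson's Lemma together with the structure theory of $\m{E}\odot\m{C}_{p+1}$ developed above. Suppose $\m{A}\in\Var(\m{E}\odot\m{C}_{p+1})$. Since $\m{A}$ is finite and subdirectly irreducible and the variety is congruence distributive, Jónsson's Lemma gives $\m{A}\in\HS\SPU(\m{E}\odot\m{C}_{p+1})$. By Lemma~\ref{ultrap}, any ultrapower of $\m{E}\odot\m{C}_{p+1}$ is of the form $\m{E}^I/U\odot\m{C}_{p+1}^I/U$; since $\m{E}$ and $\m{C}_{p+1}$ are finite, their ultrapowers are (isomorphic to) $\m{E}$ and $\m{C}_{p+1}$ themselves, so $\SPU(\m{E}\odot\m{C}_{p+1})$ adds nothing new and we have $\m{A}\in\HS(\m{E}\odot\m{C}_{p+1})$. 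Thus $\m{A}$ is a homomorphic image of a subalgebra $\m{S}$ of $\m{E}\odot\m{C}_{p+1}$.

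Next I would analyse such a subalgebra $\m{S}$. Every element of $\m{E}\odot\m{C}_{p+1}$ other than $\lan 1,1\ran$ lies in $\{a\in E\mid a\le c\}\times\{b\in C_{p+1}\mid b\le q\}$, i.e. both coordinates are strictly negative. The key point is that the second coordinate ranges over the Wajsberg hoop $\m{C}_{p+1}$, in which $q$ has multiplicative order $p$ (that is, $q^p = q^{p+1} = 1$ but $q^{p-1}\ne q^p$ — being careful with the off-by-one in the indexing $q^i=-i$ of the $(p+1)$-element hoop): the strictly negative part of $\m{C}_{p+1}$ generates, under multiplication, a chain of length $p$. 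If $\m{S}$ contains any element $\lan a,i\ran$ with $i$ strictly negative, then iterating multiplication forces a subchain whose length is governed by the order of $i$ in $\m{C}_{p+1}$, and since $p$ is prime and $p\ge |E|$ this is incompatible with $\m{S}$ — and hence its homomorphic image $\m{A}$ — being small; more precisely, either $\m{S}$ projects onto a nontrivial quotient of $\m{C}_{p+1}$ (impossible, as $\m{C}_{p+1}$ is strictly simple, forcing the whole hoop into $\m{S}$ and hence $p+1\le$ something bounded by $|E|$, a contradiction to $p\ge|E|$), or else $\m{S}\subseteq\{\lan 1,1\ran\}\cup(\{a\le c\}\times\{1\})$, so $\m{S}$ embeds into $\m{E}$. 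In the latter case $\m{A}\in\HS(\m{E})$, and by Lemma~\ref{expn-more-props} (applied through the iteration in Lemma~\ref{expansion}) every nontrivial quotient of $\m{E}$ has monolith of depth at least $m$, which I would combine with the constraint on quotients of $\m{E}\odot\m{C}_{p+1}$ from Lemma~\ref{hom-image}: that lemma already tells us every proper homomorphic image of $\m{E}\odot\m{C}_{p+1}$ is a proper homomorphic image of $\m{A}$, which for $\m{A}\ncong\m{2}$ is a strictly smaller algebra than $\m{A}$, contradicting $\m{A}$ being a homomorphic image of a subalgebra of $\m{E}\odot\m{C}_{p+1}$ unless that subalgebra is all of $\m{E}\odot\m{C}_{p+1}$ — but $\m{E}\odot\m{C}_{p+1}$ has $\m{C}_{p+1}$ as a section while $\m{A}$ does not (again by the size/primality argument, since $|A|\le|E|\le p<p+1=|C_{p+1}|$ and $\m{C}_{p+1}$ is simple).

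Assembling these pieces: the only way $\m{A}\in\HS(\m{E}\odot\m{C}_{p+1})$ can hold is via a subalgebra $\m{S}$ with $\m{S}\le\m{E}$, whence $\m{A}\in\HS(\m{E})=\HS(\m{A})$ by construction of the expansion (Lemma~\ref{expn-more-props}(3)--(4) iterated), and then the depth obstruction bites: $\m{A}$ itself has monolith of depth $n<m$, so $\m{A}$ cannot be isomorphic to $\m{E}$ or to any quotient of $\m{E}$ that retains a deep monolith, yet the only quotients of $\m{E}$ available are $\m{E}/\vartheta\cong\m{A}/\mu$ and its further quotients — all of which are proper homomorphic images of $\m{A}$, hence strictly smaller, hence not isomorphic to $\m{A}$ when $\m{A}\ncong\m{2}$. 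This contradiction establishes $\m{A}\notin\Var(\m{E}\odot\m{C}_{p+1})$.

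The main obstacle I anticipate is the careful bookkeeping in the subalgebra analysis — pinning down exactly which subalgebras $\m{S}$ of $\m{E}\odot\m{C}_{p+1}$ can have $\m{A}$ as a quotient, and making the primality of $p$ (via strict simplicity of $\m{C}_{p+1}$ and the order of $q$) do the work of ruling out the mixed cases where $\m{S}$ genuinely uses both coordinates. The residuation formulas in $\m{E}\odot\m{C}_{p+1}$, with their case splits on $a\le b$ versus $i\le j$, are where a subalgebra could in principle "escape" into the second coordinate, so the verification that it cannot is the delicate step; everything else is a routine application of Jónsson's Lemma, Lemma~\ref{ultrap}, Lemma~\ref{congr}/\ref{hom-image}, and the depth properties from Lemma~\ref{expansion}.
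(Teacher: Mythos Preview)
Your proposal rests on a misreading of the universe of $\m{E}\odot\m{C}_{p+1}$. By definition,
\[
E\odot C_{p+1} = \bigl(\{a\in E\mid a\le c\}\times\{b\in C_{p+1}\mid b\le q\}\bigr)\cup\{\lan 1,1\ran\},
\]
so the \emph{only} element whose second coordinate equals $1$ is $\lan 1,1\ran$. Your dichotomy ``either $\m{S}$ projects onto something nontrivial in the second coordinate, or $\m{S}\subseteq\{\lan 1,1\ran\}\cup(\{a\le c\}\times\{1\})$'' therefore collapses: the second alternative forces $\m{S}=\{\lan 1,1\ran\}$. Every nontrivial subalgebra of $\m{E}\odot\m{C}_{p+1}$ genuinely uses both coordinates, so the branch ``$\m{S}$ embeds into $\m{E}$, whence $\m{A}\in\HS(\m{E})$'' never arises. (Even if it did, it would not be a contradiction: $\m{A}\le\m{E}$ holds by construction of the expansion, so $\m{A}\in\HS(\m{E})$ is trivially true, and your depth argument cannot rule it out.)

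Your primality argument is also not quite right. An arbitrary element $\lan a,q^j\ran$ with $j>1$ need not generate $p$ distinct powers, since $(q^j)^k = q^{\min(jk,p)}$ stabilises after about $p/j$ steps. The paper gets around this in two moves. First, using the congruence extension property it works with $\SH$ rather than $\HS$: by Lemma~\ref{hom-image} every proper quotient of $\m{E}\odot\m{C}_{p+1}$ has fewer than $|A|$ elements, so $\m{A}$ cannot sit inside any of them, and one is reduced to a \emph{direct} embedding $e\colon\m{A}\to\m{E}\odot\m{C}_{p+1}$. Second, it manufactures an element whose second coordinate is exactly $q$: if $\pi_2\circ e$ fails to be injective, pick $a\not\le b$ in $A$ with equal second images; then the residuation formula in the truncated product gives $e(a\to b)=\lan w,q\ran$, and \emph{this} element has $p$ distinct powers, contradicting $p>|A|$. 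If $\pi_2\circ e$ is injective, strict simplicity of $\m{C}_{p+1}$ together with $|A|\ge 3$ forces it to be onto, yielding $|A|=p+1$, again contradicting $|A|<p$. You have identified the right ingredients (strict simplicity, the prime $p\ge|E|$), but the mechanism that places $q$ itself in the second coordinate---via the residuation case split---is the missing idea.
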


\begin{proof}
By J\'onsson's Lemma, the congruence extension property and finiteness, we have
that if $\m{A}\in \Var(\m{E}\odot\m{C}_{p+1})$, then
$\m{A}\in \SH(\m{E}\odot\m{C}_{p+1})$. 
By Lemma~\ref{hom-image}, for each non-trivial congruence $\vartheta$
on $\m{E}\odot\m{C}_{p+1}$ we have $|E\odot C_{p+1}/\theta| < |A|$, and   
thus $\m{A}\notin \SH^+(\m{E}\odot\m{C}_{p+1})$, where
$\mathbb{H}^+$ stands for nontrivial homomorphic images.
 
It follows that there is an embedding $e\colon \m{A}\to \m{E}\odot\m{C}_{p+1}$  of $\m{A}$ into $\m{E}\odot\m{C}_{p+1}$. We will derive a contradiction from this. First we shall show that $\pi_2 \circ e\colon \A \to \m{C}_{p+1}$ is an embedding, where $\pi_2\colon \m{E}\odot\m{C}_{p+1} \to \m{C}_{p+1}$ is the restriction of the second projection.
Suppose that $\pi_2 \circ e$ is not an embedding; then there exist $a, b\in A$ with $a \not\leq b$ such that $e(a) = \lan v,i\ran$ and $e(b) = \lan u,i\ran$. Then $a\ra b\neq 1$ and so
$\lan v,i\ran\ra \lan u,i\ran\neq\lan 1,1\ran$ and $v \not \leq u$, so  
$e(a\ra b) = \lan v,i\ran\ra \lan u,i\ran = \lan v\ra u, q\ran$.
Put $w = v\ra u$. Consider the chain of powers of
$\big\{\lan w,q\ran^s\mid 1\leq s\leq p\big\}$.
By definition, $\lan w,q\ran^s = \lan w^s,q^s\ran$, and
since $q, q^2,\dots, q^p$ are all distinct, this chain has $p$ distinct
elements and so the element $a\ra b$ generates $p$ distinct
elements in $\m{A}$. But $p \geq |E| > |A|$ so this is a contradiction. 
It follows that $\pi_2 \circ e\colon \A \to \m{C}_{p+1}$ is an embedding.

Since $\m{A}$ has at least three elements, the subalgebra $(\pi_2 \circ e)(\A)$ of $\m{C}_{p+1}$ contains a non-zero and non-unit element. But in $\m{C}_{p+1}$ each non-zero and non-unit element generates the whole algebra, so $(\pi_2 \circ e)(\A) = \m{C}_{p+1}$, whence $\pi_2 \circ e\colon \A \to \m{C}_{p+1}$ is an isomorphism. This gives $|A| = p+1$, contradicting the fact that $|A| <p$.
\end{proof}

\begin{lemma}\label{assign}
$\m{E}\odot\m{C}_{p+1}\not\models (\Delta_{\m{A}})^m\leq x_{\mu_\bot}$.   
\end{lemma}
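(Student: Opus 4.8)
The plan is to produce an explicit assignment $\ov b=(b_a)_{a\in A}$ of elements of $\m{E}\odot\m{C}_{p+1}$ for which $(\Delta_{\m{A}})^m[\ov b]\nleq b_{\mu_\bot}$. I would build $\ov b$ from the embedding $e\colon\m{A}\hookrightarrow\m{E}$ supplied by Lemma~\ref{expansion}: take the first coordinate of $b_a$ to be $e(a)$ for each $a<1$ (recall $e(a)<1$, hence $e(a)\le c$), put $b_1=\lan 1,1\ran$, and in the $\m{C}_{p+1}$-coordinate use the generator $q$ for "most" indices but the absorbing bottom $q^p$ of $\m{C}_{p+1}$ on a downward-closed subset of $A$ containing $\mu_\bot$, chosen just large enough that the conjuncts of $\Delta_{\m{A}}$ mentioning $x_{\mu_\bot}$ behave well.

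The first main step is to compute $\Delta_{\m{A}}[\ov b]$ by running through the definition of $\Delta_{\m{A}}$ one operation symbol at a time, using the explicit piecewise description of meet, join, product and residual in $\m{E}\odot\m{C}_{p+1}$ recorded above (available because $\m{E}$ and $\m{C}_{p+1}$ are commutative integral with largest strictly negative elements $c$ and $q$). Since $e$ is an embedding, the conjuncts coming from $\wedge$, $\vee$, $\to$ and the constant $1$ collapse to $\lan 1,1\ran$, whereas a conjunct $x_{a_1a_2}\eqv(x_{a_1}\cdot x_{a_2})$ with $a_1,a_2<1$ evaluates to the coatom $\lan c,q\ran$: the $\m{E}$-component of $\eqv$ comes out $1$, but the $\m{C}_{p+1}$-component is pushed down exactly one step because the two sides differ there by a factor $q$. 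Meeting everything, $\Delta_{\m{A}}[\ov b]=\lan c,q\ran$, the coatom of $\m{E}\odot\m{C}_{p+1}$, and hence $(\Delta_{\m{A}})^m[\ov b]=\lan c,q\ran^m=\lan c^m,q^m\ran$.

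To finish, note that $x_{\mu_\bot}[\ov b]=b_{\mu_\bot}$ has $\m{C}_{p+1}$-coordinate $q^p$, the bottom of $\m{C}_{p+1}$. Since $p$ is a prime at least $|E|$ and $|E|\ge m+1$, we have $m<p$, so $q,q^2,\dots,q^p$ are pairwise distinct; in particular $q^m\ne q^p$, whence $q^m\nleq q^p$ in $\m{C}_{p+1}$. Therefore $\lan c^m,q^m\ran\nleq b_{\mu_\bot}$ in $\m{E}\odot\m{C}_{p+1}$ regardless of how the $\m{E}$-coordinates compare, which proves the lemma. I expect the genuinely delicate part to be the first main step: one must pin down the downward-closed "shift set" on which the $\m{C}_{p+1}$-coordinate is set to $q^p$ so that the conjuncts involving $x_{\mu_\bot}$ (those built from $\mu_\bot\cdot a$, $\mu_\bot\wedge a$, $a\to\mu_\bot$, and so on) still evaluate to $\lan 1,1\ran$ and do not drag the $\m{C}_{p+1}$-coordinate of $\Delta_{\m{A}}[\ov b]$ below $q$; the piecewise residual makes this bookkeeping fiddly, but it is routine.
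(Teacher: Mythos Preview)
Your plan contains a genuine gap: the ``shift set'' device is not merely fiddly, it breaks the argument. With a non-uniform second coordinate the $\to$-conjuncts of $\Delta_{\m A}$ do \emph{not} collapse to $\lan 1,1\ran$. Take $a_1\nleq a_2$ with $a_1\notin$ shift set and $a_2=\mu_\bot\in$ shift set. Then
\[
b_{a_1}\to b_{a_2}=\lan e(a_1),q\ran\to\lan e(\mu_\bot),q^p\ran=\lan e(a_1\to\mu_\bot),\,q\to q^p\ran=\lan e(a_1\to\mu_\bot),\,q^{p-1}\ran,
\]
using the third clause of the residual (both coordinates fail). Unless $a_1\to\mu_\bot$ happens to land in the shift set, the corresponding conjunct $b_{a_1\to\mu_\bot}\eqv(b_{a_1}\to b_{a_2})$ has second coordinate $q^{p-2}$, not $q$. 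One such conjunct drags the second coordinate of $\Delta_{\m A}[\ov b]$ down to at most $q^{p-2}$, so $(\Delta_{\m A})^m[\ov b]$ has second coordinate $q^p$ as soon as $m\ge 2$, and your intended comparison $q^m\nleq q^p$ evaporates. There is no reason to expect a downward-closed shift set to absorb all the values $a\to\mu_\bot$; this is a genuine obstruction, not bookkeeping. (Your side remark that ``the $\m E$-component of $\eqv$ comes out $1$'' in the multiplication conjunct is also wrong: by the second clause of the truncated residual it comes out as the coatom of~$\m E$, not~$1$.)

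The paper's route avoids all of this by taking the second coordinate \emph{uniformly} equal to $q$: set $w_1=\lan 1,1\ran$ and $w_a=\lan a,q\ran$ for $a\ne 1$. Then every $\wedge$-, $\vee$-, $\to$-conjunct really does give $\lan 1,1\ran$ (the second coordinates always match), while the multiplication conjuncts give the coatom $\lan d,q\ran$ of $\m E\odot\m C_{p+1}$, where $d$ is the coatom of~$\m E$. Hence $\Delta_{\m A}[\ov w]=\lan d,q\ran$ and the inequality is decided in the \emph{first} coordinate: for $k<m$ one has $d^k\nleq d^m=\mu_\bot$ because $\m E$ has monolith depth~$m$. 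So the comparison you were trying to force into $\m C_{p+1}$ lives naturally in~$\m E$; the role of the $\m C_{p+1}$-factor is only to prevent $\m A$ from embedding, not to witness the failure of the inequality.
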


\begin{proof}
Define an $|A|$-tuple $\ov{w}$ putting
$w_1 = \lan 1,1\ran$ and $w_a = \lan a,q\ran$, if
$a\neq 1$. Note that $w_{\mu_\bot} = \lan \mu_\bot, q\ran$.
For $\diamond \in\{\vee,\wedge,\ra\}$, we have
\[
w_{a\diamond b}\lra w_a\diamond w_b = 
\lan a\diamond b, q\ran\lra \lan a,q\ran\diamond\lan b,q\ran =
\lan a\diamond b,q\ran\lra \lan a\diamond b, q\ran = \lan 1,1\ran. 
\]
In the case of multiplication, we have:
\begin{multline*}
w_{a\cdot b}\lra w_a\cdot w_b = 
\lan a\cdot b,q\ran\lra \lan a,q\ran\cdot \lan b,q\ran =
\lan a\cdot b,q\ran\lra\lan a\cdot b, q\cdot q\ran\\ 
= \lan a\cdot b,q\ran\ra\lan a\cdot b, q^2\ran
 = \lan d, q\ra q^2\ran = \lan d,q\ran.
\end{multline*}
It follows that $\Delta_{\m{A}}[\ov{w}] = \lan d, q\ran$.
Recall that the depth of the monolith of $\m{E}$ is~$m$. We then reason as follows. 
First, the lattice operations and multiplication
in ${\m{E}\odot\m{C}_{p+1}}$ coincide with the operations in the direct product
$\m{E}\times\m{C}_{p+1}$. Thus, for each $k$ with $1<k < m$, 
we have $\lan d,q\ran^k = \lan d^k, q^k\ran$.
Since $k < m<|E|\leq p$, we have that $\lan d^k, q^k\ran\not\leq
\lan d^m,q\ran = \lan \mu_\bot, q\ran$. Hence,
$(\Delta_{\m{A}})^k [\ov{w}]\not\leq w_{\mu_\bot}$, that is,
${\m{E}\odot\m{C}_{p+1}\not\models (\Delta_{\m{A}})^k\leq x_{\mu_\bot}}$ for all
$k\leq m$. In particular, $\m{E}\odot\m{C}_{p+1}\not\models
(\Delta_{\m{A}})^m\leq x_{\mu_\bot}$, as claimed.
\end{proof}

\begin{theorem}\label{splits-not}
No finite subdirectly irreducible algebra $\m{A}\in \mathsf{CIRL}$ with at least
three elements splits the subvariety lattice of\/ $\mathsf{CIRL}$.
\end{theorem}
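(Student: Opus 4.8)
The plan is to apply the Non-splitting Lemma~\ref{no-splitting} with the ambient variety taken to be $\CR:=\mathsf{CIRL}$; this is legitimate because $\mathsf{CIRL}$, being a subvariety of $\mathsf{CRL}$, still satisfies (P1)--(P5) with $x\eqv y=(x\ra y)\wedge(y\ra x)\wedge1$ and $\delta x=x^2$ (for (P3), integrality forces $x^2\le x\cdot1=x$; (P4)--(P5) concern the filter/congruence correspondence of individual algebras, which is untouched by passing to a subvariety). Since the statement is about a \emph{finite} algebra, no appeal to McKenzie's finiteness theorem is needed here. With $\delta x=x^2$, the inequality $\delta^k(\Delta_\A)\le x_{\mu_\bot}$ becomes $(\Delta_\A)^{2k}\le x_{\mu_\bot}$, so it suffices to establish condition~(3) of the Non-splitting Lemma: for every $i\in\mathbb N$ there are $k\ge i$ and $\B\in\mathsf{CIRL}$ with $\A\notin\Var(\B)$ and $\B\not\models(\Delta_\A)^{2k}\le x_{\mu_\bot}$.

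First I would fix $i\in\mathbb N$ and let $n$ be the depth of the monolith $\mu$ of $\A$ (finite since $\A$ is finite). Choosing $k\ge i$ with $m:=2k>n$, let $\m{E}$ be an expansion of $\A$ of depth $m$, furnished by Lemma~\ref{expansion}: a finite subdirectly irreducible commutative integral residuated lattice with $\A\le\m{E}$, whose monolith $\vartheta$ has depth $m$ and satisfies $\m{E}/\vartheta\cong\A/\mu$. Let $p$ be the least prime with $p\ge|E|$, let $\m{C}_{p+1}$ be the Wajsberg hoop (strictly simple, $p+1$ elements, unique coatom $q$), let $c$ be the unique coatom of $\m{E}$, and set $\B:=\m{E}\odot\m{C}_{p+1}$. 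By Lemma~\ref{si-rl}, $\B$ is a subdirectly irreducible integral residuated lattice, commutative because $\m{E}$ and $\m{C}_{p+1}$ are, so $\B\in\mathsf{CIRL}$.

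The proof then closes by quoting the two lemmas tailored to exactly this $\B$: since $|A|\ge3$ we have $\A\not\cong\m{2}$, so Lemma~\ref{not-in-variety} gives $\A\notin\Var(\B)$; and Lemma~\ref{assign} gives $\B\not\models(\Delta_\A)^m\le x_{\mu_\bot}$, that is, $\B\not\models(\Delta_\A)^{2k}\le x_{\mu_\bot}$, i.e.\ $\B\not\models\delta^k(\Delta_\A)\le x_{\mu_\bot}$. As $k\ge i$ and $i$ was arbitrary, condition~(3) of the Non-splitting Lemma holds for $\A$, whence $\A$ is not a splitting algebra in $\mathsf{CIRL}$ --- which is the assertion of the theorem.

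For the theorem itself essentially all the content is in the preceding lemmas, so the only point that needs care is that the parameters fit together: $m=2k$, $k\ge i$, $m>n$ and $p\ge|E|$ prime must be simultaneously arrangeable (the iterated expansion realises depths $n,2n,4n,\dots$, which is enough) and must match the hypotheses of Lemmas~\ref{expansion}, \ref{si-rl}, \ref{not-in-variety} and~\ref{assign}; and the hypothesis $|A|\ge3$ must be invoked precisely where Lemma~\ref{not-in-variety} requires $\A\not\cong\m{2}$. The genuinely hard work is upstream: Lemma~\ref{expansion}, whose residuated-frame expansion pushes the depth of the monolith arbitrarily high while preserving the embedding of $\A$ and the top quotient $\A/\mu$; and Lemmas~\ref{not-in-variety} and~\ref{assign}, where the truncated product with a prime-length Wajsberg hoop simultaneously ejects $\A$ from $\Var(\B)$ (any embedding $\A\to\B$ composed with the second projection is forced onto $\m{C}_{p+1}$, giving $|A|=p+1$ against $|A|<p$) and keeps $(\Delta_\A)^m\not\le x_{\mu_\bot}$, because below the coatom the monoid and lattice operations of $\B$ agree with those of $\m{E}\times\m{C}_{p+1}$.
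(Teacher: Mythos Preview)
Your proposal is correct and follows essentially the same route as the paper: fix $i$, take an expansion $\m{E}$ of $\A$ of even depth $m=2k$ with $k\ge i$ and $m>n$, form $\B=\m{E}\odot\m{C}_{p+1}$ for the first prime $p\ge|E|$, and then invoke Lemmas~\ref{not-in-variety} and~\ref{assign} to verify condition~(3) of the Non-splitting Lemma. The paper's own proof is in fact just the one-line ``By the Non-splitting Lemma~\ref{no-splitting} and Lemmas~\ref{not-in-variety} and~\ref{assign}'', so your write-up is a faithful (indeed more explicit) unpacking of the same argument.
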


\begin{proof}
By the Non-splitting Lemma~\ref{no-splitting} and 
Lemmas~\ref{not-in-variety} and~\ref{assign}.
\end{proof}

\subsection{No splittings: the two-element algebra}\label{ssec:two}

Finally, we consider the case of the two-element algebra. By integrality,
this algebra is isomorphic to the $0$-free reduct of the two-element
Boolean algebra, that is, to the Wajsberg hoop $\m{C}_2$. 
It follows from the construction that
the expansion of $\m{C}_2$ with monolith of depth $2n$ is isomorphic
to~$\m{C}_{2^n+1}$. In particular, every expansion of
$\m{C}_2$ is simple.

Let $\m{C}_{\omega}$ be the infinite simple
Wajsberg hoop, that is, the commutative lattice-ordered monoid on the universe
$\{0,-1,-2,\dots\}$ with the usual addition, and with residuation defined by 
$i\ra j = \max\{0, i-j\}$, analogously to $\m{C}_{n}$ of the previous subsection.
As with $\m{C}_{n}$, we present $\m{C}_{\omega}$ multiplicatively, putting
$q^i = -i$, so that $1 = q^0 = 0$ and $q = q^1 = -1$ is the unique coatom
of $\m{C}_{\omega}$.  

\begin{lemma}\label{hom-image-2}
Let $\alpha\neq 0$ be a congruence on an ultrapower
$\m{G} = (\m{C}_{2^n+1}\odot\m{C}_{\omega})^I/U$. The
quotient algebra $\m{G}/\alpha$ is isomorphic to a proper 
homomorphic image of $\m{C}_{\omega}^I/U$.
\end{lemma}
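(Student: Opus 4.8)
The plan is to reduce, using Lemma~\ref{ultrap} together with the finiteness of $\m{C}_{2^n+1}$, to an application of Lemma~\ref{congr} in which the finite factor gets collapsed, exactly as in the proof of Lemma~\ref{hom-image}. Concretely: I would apply Lemma~\ref{ultrap} to the pair $\m{C}_{2^n+1}$, $\m{C}_\omega$ (both integral with unique coatoms) to obtain $\m{G}\cong\m{C}_{2^n+1}^I/U\odot\m{C}_\omega^I/U$, the truncated product on the right being formed with respect to the classes of the constant functions at the coatoms. Since $\m{C}_{2^n+1}$ is finite, its diagonal embedding into $\m{C}_{2^n+1}^I/U$ is an isomorphism carrying $c$ to the relevant class, and by {\L}o\'{s}'s theorem $\m{B}:=\m{C}_\omega^I/U$ is an integral residuated lattice with a unique coatom $q$ (the class of the constant function at $q$), which satisfies $q<1$ in $\m{B}$. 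Hence $\m{G}$ is isomorphic to the truncated product $\m{C}_{2^n+1}\odot\m{B}$ of two integral residuated lattices with unique coatoms, so Lemmas~\ref{si-rl} and~\ref{congr} both apply to it.

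Next, let $\alpha\ne 0$ be a congruence on $\m{G}\cong\m{C}_{2^n+1}\odot\m{B}$. By Lemma~\ref{si-rl} this algebra is subdirectly irreducible with unique coatom $\lan c,q\ran$, so $(\lan c,q\ran,\lan 1,1\ran)\in\alpha$. By Lemma~\ref{congr} there is a congruence $\alpha'$ on $\m{C}_{2^n+1}\times\m{B}$ with $(\m{C}_{2^n+1}\times\m{B})/\alpha'\cong\m{G}/\alpha$, and $\alpha'$ is a product congruence, say $\alpha'=\beta\times\gamma$. From $(\lan c,q\ran,\lan 1,1\ran)\in\alpha'$ we read off $(c,1)\in\beta$ and $(q,1)\in\gamma$. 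Since $\m{C}_{2^n+1}$ is simple and $c\ne 1$, $\beta$ is the full congruence on $\m{C}_{2^n+1}$; and since $q\ne 1$ in $\m{B}$, $\gamma$ is non-trivial. Therefore
\[
\m{G}/\alpha\cong(\m{C}_{2^n+1}/\beta)\times(\m{B}/\gamma)\cong\m{B}/\gamma,
\]
which is a proper homomorphic image of $\m{B}=\m{C}_\omega^I/U$, as required.

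The whole thing is bookkeeping built on the earlier lemmas; the only step deserving any care is the reduction in the first paragraph, where one must verify that $\m{C}_\omega^I/U$ still has a unique coatom -- so that Lemmas~\ref{ultrap}, \ref{si-rl} and~\ref{congr} are all available -- and must keep track of the distinguished coatoms through the isomorphism supplied by Lemma~\ref{ultrap}. Both are immediate consequences of {\L}o\'{s}'s theorem and the fact that $\m{C}_{2^n+1}$ is finite, so no genuine obstacle arises: once $\m{G}$ has been identified with $\m{C}_{2^n+1}\odot(\m{C}_\omega^I/U)$, the remainder is a verbatim repeat of the argument proving Lemma~\ref{hom-image}.
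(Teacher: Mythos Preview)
Your proof is correct and follows essentially the same route as the paper's: identify $\m{G}$ with $\m{C}_{2^n+1}\odot(\m{C}_\omega^I/U)$ via Lemma~\ref{ultrap} and finiteness of $\m{C}_{2^n+1}$, then repeat the argument of Lemma~\ref{hom-image} using Lemma~\ref{congr} and simplicity of $\m{C}_{2^n+1}$ to collapse the first factor. Your extra care in invoking {\L}o\'{s}'s theorem to confirm that $\m{C}_\omega^I/U$ retains a unique coatom is a point the paper leaves implicit, but the structure of the argument is the same.
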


\begin{proof}
To lighten the notation, we let $\m{E} = \m{C}_{2^n+1}$ and
$\m{H} = \m{C}_{\omega}^I/U$.
Since $\m{E}$ is finite, it follows by Lemma~\ref{ultrap} that
$\m{G} \cong \m{E}\odot\m{H}$. By 
Lemma~\ref{congr} we have that $(\m{E}\odot\m{H})/\alpha$ is
isomorphic to $(\m{E}\times\m{H})/\alpha'$, where $\alpha'$ is the
natural extension of $\alpha$ from that lemma.
Let $\ov{q} = q^I/U$ so that $\ov{q}$ is the unique coatom of
$\m{H}$, and let $d$ be the unique coatom of $\m{E}$.
Then, since $\alpha$ is non-trivial, we have
$(d,\ov{q})\in \alpha'$, and so
$\alpha' = \beta\times\gamma$ for some congruences
$\beta$ on $\m{E}$ and $\gamma$ on $\m{H}$ such that
$(d,1)\in\beta$ and $(\ov{q},1)\in\gamma$. But $\m{E}$ is simple, so
$\beta$ is the full congruence on~$\m{E}$, and therefore
$\alpha' = 1\times\gamma$, that is, $\alpha'$ properly contains the projection
onto the second coordinate. Since $\gamma$ is non-trivial, the claim follows.
\end{proof}

\begin{lemma}
Let $\m{E}$ be an expansion of\/ $\m{C}_2$. Then  
$\m{C}_2$ does not belong to the variety
$\Var(\m{E}\odot\m{C}_{\omega})$.
\end{lemma}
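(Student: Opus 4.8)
The plan is to argue by contradiction. Suppose that $\m{C}_2\in\Var(\m{E}\odot\m{C}_\omega)$. Since $\m{C}_2$ is finite and subdirectly irreducible and $\mathsf{CIRL}$ is congruence distributive with the congruence extension property, J\'onsson's Lemma (together with the congruence extension property, used to move $\mathbb{H}$ past $\mathbb{S}$) shows that $\m{C}_2\in\SH(\m{G})$ for some ultrapower $\m{G}=(\m{E}\odot\m{C}_\omega)^I/U$ --- here the generating class is a single algebra, so an ultraproduct of copies of it is just an ultrapower. Thus there is a congruence $\alpha$ on $\m{G}$ with $\m{C}_2\leq\m{G}/\alpha$. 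The driving observation is that $\m{C}_2$ has a strictly negative idempotent, namely its coatom $e$ (for which $e\cdot e=e$ and $e<1$), so $\m{G}/\alpha$ also contains a strictly negative idempotent; I would then contradict this in each of the cases $\alpha=0$ and $\alpha\neq 0$.

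Suppose first that $\alpha=0$, so that $\m{C}_2\leq\m{G}$. Here I would note that $\m{E}\odot\m{C}_\omega$ has no idempotent other than $\langle 1,1\rangle$: multiplication in the truncated product is coordinatewise, so an idempotent $\langle a,b\rangle$ has $b\cdot b=b$ in $\m{C}_\omega$; but $1$ is the only idempotent of $\m{C}_\omega$, so $b=1$, and the only member of $\m{E}\odot\m{C}_\omega$ with second coordinate $1$ is $\langle 1,1\rangle$. Since having no strictly negative idempotent is the validity of a universal first-order sentence, it is inherited by the ultrapower $\m{G}$ and hence by its subalgebra $\m{C}_2$, contradicting the existence of $e$.

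Suppose now that $\alpha\neq 0$. By Lemma~\ref{hom-image-2} the quotient $\m{G}/\alpha$ is isomorphic to a homomorphic image of $\m{H}:=\m{C}_\omega^I/U$. A routine computation (or the remark that $\m{C}_\omega$ is the negative cone of $\mathbb{Z}$, hence cancellative) shows that $\m{C}_\omega$ satisfies the identity $x\to(x\cdot y)\approx y$. This identity therefore holds in the ultrapower $\m{H}$, in every homomorphic image of $\m{H}$, and so in $\m{G}/\alpha$ and in its subalgebra $\m{C}_2$. But any residuated lattice satisfying $x\to x\cdot y\approx y$ has $1$ as its only idempotent, since $a\cdot a=a$ gives $a=a\to(a\cdot a)=a\to a=1$; this again contradicts the existence of $e$ in $\m{C}_2$. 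As both cases are impossible, $\m{C}_2\notin\Var(\m{E}\odot\m{C}_\omega)$.

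Most of the effort is front-loaded into J\'onsson's Lemma and Lemma~\ref{hom-image-2}, so no single step here is a serious obstacle; the two facts to verify --- that $\m{E}\odot\m{C}_\omega$ has no strictly negative idempotent, and that $\m{C}_\omega\models x\to x\cdot y\approx y$ (so that this identity survives to every homomorphic image of an ultrapower of $\m{C}_\omega$) --- are both routine. The one point demanding care is the J\'onsson reduction itself, namely checking that it really places $\m{C}_2$ inside $\SH$ of an \emph{ultrapower} (unlike the finite case of the previous subsection, where no ultrapowers were needed), so that Lemma~\ref{hom-image-2} applies verbatim. After that, the whole statement rests on the simple contrast between $\m{C}_2$, which carries a strictly negative idempotent, and the algebras into which it might conceivably embed, which do not.
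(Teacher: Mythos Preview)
Your proof is correct and follows essentially the same approach as the paper's. Both arguments use J\'onsson's Lemma with the congruence extension property to reduce to $\mathbb{SHP_U}(\m{E}\odot\m{C}_\omega)$, invoke Lemma~\ref{hom-image-2} for the non-trivial-congruence case, and then exploit the cancellative identity $x\to xy\approx y$ of $\m{C}_\omega$ together with the absence of non-unit idempotents in $\m{E}\odot\m{C}_\omega$; the only difference is organizational --- you split on whether $\alpha$ is trivial, whereas the paper phrases the same dichotomy as $\mathbb{SP_U}$ versus $\mathbb{SH^+P_U}$.
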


\begin{proof}
By J\'onsson's Lemma and the congruence extension property, we have that
if $\m{C}_2\in \Var(\m{E}\odot\m{C}_{\omega})$, then
$\m{C}_2\in \mathbb{SHP_U}(\m{E}\odot\m{C}_{\omega})$.
By Lemma~\ref{hom-image-2}, for every non-trivial congruence
$\vartheta$ on any ultrapower $(\m{E}\odot\m{C}_{\omega})^I/U$ we have
that $(\m{E}\odot\m{C}_{\omega})^I/U$ is isomorphic to a proper quotient of
$\m{C}_{\omega}^I/U$. Since the cancellative identity ${x\ra xy = y}$ 
holds in $\m{C}_{\omega}$, it holds in every member of
$\mathbb{SH^+P_U}(\m{E}\odot\m{C}_{\omega})$, where $\mathbb{H^+}$ stands for proper homomorphic
images. Thus, if $\m{C}_2\in \Var(\m{E}\odot\m{C}_{\omega})$, then
$\m{C}_2 \in \SPU(\m{E}\odot\m{C}_{\omega})$. However, it is easy to see from the
construction that $\m{E}\odot\m{C}_{\omega}$ contains no idempotent elements
distinct from $1$, and since this property is expressible by a universal formula, it is
preserved by $\SPU$. But the bottom element of $\m{C}_2$ is an idempotent distinct
from $1$, so ${\m{C}_2\notin \SPU(\m{E}\odot\m{C}_{\omega})}$ proving the claim.
\end{proof}

\begin{theorem}\label{2-splits-not}
The algebra $\m{C}_2$ does not split the subvariety lattice of\/
$\mathsf{CIRL}$.
\end{theorem}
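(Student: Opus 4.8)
The plan is to apply the Non-splitting Lemma~\ref{no-splitting} with $\m{A}=\m{C}_2$, checking its condition~(3). First I record the data for $\m{C}_2$: being simple, its monolith $\mu$ is the full congruence, so $1/\mu=C_2$ and $\mu_\bot=q$, the bottom element of $\m{C}_2$; thus $x_{\mu_\bot}=x_q$. Since $\delta x=x^2$, the term $\delta^k(\Delta_{\m{C}_2})$ is ($\mathsf{CIRL}$-equivalent to) a power $(\Delta_{\m{C}_2})^{N}$ whose exponent $N=N(k)$ grows without bound. So, fixing $i\in\mathbb N$, it suffices to produce one $\m{B}\in\mathsf{CIRL}$ with $\m{C}_2\notin\Var(\m{B})$ and $\m{B}\not\models(\Delta_{\m{C}_2})^{N}\le x_{\mu_\bot}$, where $N$ is the exponent attached to $\delta^{i}$.

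I would take $\m{B}:=\m{E}\odot\m{C}_\omega$, where $\m{E}$ is an expansion of $\m{C}_2$ chosen by Lemma~\ref{expansion} (applicable since $\m{C}_2$ has monolith of depth $1$) to have monolith of depth $m>N$, and $c,q$ are the unique coatoms of $\m{E}$ and $\m{C}_\omega$. That $\m{B}\in\mathsf{CIRL}$ is Lemma~\ref{si-rl} together with the observation that $\lres$ and $\rres$ coincide in the truncated product because both factors are commutative. That $\m{C}_2\notin\Var(\m{B})$ is exactly the lemma stated immediately above (which itself uses Lemma~\ref{hom-image-2} and the cancellativity of $\m{C}_\omega$). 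For the remaining inequality I would reuse the computation in the proof of Lemma~\ref{assign}: with the assignment $w_1:=\langle 1,1\rangle$ and $w_q:=\langle\bar q,q\rangle$, where $\bar q\in E$ is the image of the coatom of $\m{C}_2$ under $\m{C}_2\le\m{E}$ (namely the bottom element of $\m{E}$), every conjunct of $\Delta_{\m{C}_2}$ evaluates to $\langle 1,1\rangle$ except $x_{q\cdot q}\lra(x_q\cdot x_q)$, which evaluates to $w_q\lra w_q^2=\langle\bar q,q\rangle\lra\langle\bar q,q^2\rangle=\langle c,q\rangle$; hence $\Delta_{\m{C}_2}[\,\overline w\,]=\langle c,q\rangle$. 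Multiplication in $\m{E}\odot\m{C}_\omega$ agrees with that of $\m{E}\times\m{C}_\omega$, so $(\Delta_{\m{C}_2}[\,\overline w\,])^{N}=\langle c^{N},q^{N}\rangle$; since $N<m$, the element $c^{N}$ is not the bottom $\bar q$ of $\m{E}$, so $c^{N}\not\le\bar q$, and therefore $\langle c^{N},q^{N}\rangle\not\le\langle\bar q,q\rangle=w_{\mu_\bot}$. This verifies condition~(3), so $\m{C}_2$ is not splitting in $\mathsf{CIRL}$; together with Theorem~\ref{splits-not} this yields that $\mathsf{CIRL}$ has no splitting algebras.

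The only subtle point is which way $\m{E}$ must be large. In $\m{C}_\omega$ one always has $q^{N}\le q$, so the second coordinate of $w_{\mu_\bot}$ can never defeat the inequality; it must fail in the first coordinate, which forces the choice of $\m{E}$ with monolith depth \emph{strictly exceeding} $N$ --- the reverse of the situation in Section~\ref{ssec:three}, where the accompanying hoop $\m{C}_{p+1}$ was instead taken with a huge (prime) cardinality. Everything else is bookkeeping: verifying $\m{B}\in\mathsf{CIRL}$ and the residual arithmetic, which is identical to Lemma~\ref{assign} since $\m{C}_2$ contributes only the one nontrivial conjunct $x_{q\cdot q}\lra(x_q\cdot x_q)$.
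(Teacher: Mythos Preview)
Your proposal is correct and follows essentially the same route as the paper: choose an expansion $\m{E}$ of $\m{C}_2$ with monolith of depth exceeding the relevant exponent, form $\m{E}\odot\m{C}_\omega$, invoke the preceding lemma for $\m{C}_2\notin\Var(\m{E}\odot\m{C}_\omega)$, and witness the failure of $(\Delta_{\m{C}_2})^N\le x_{\mu_\bot}$ via the assignment sending the non-unit of $\m{C}_2$ to $\langle\text{bottom of }\m{E},\,q\rangle$. The only cosmetic differences are notation (your $c,\bar q$ are the paper's $d,0$) and that you cite condition~(3) of the Non-splitting Lemma while in effect verifying condition~(2); since the two are equivalent this is harmless.
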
  

\begin{proof}
We shall establish Condition(2) of the Non-splitting
Lemma~\ref{no-splitting}. Let $i\in\mathbb{N}$. 
Let $\m{E}$ be an expansion of $\m{C}_2$ with the
monolith of depth greater than $i$. Consider $\m{E}\odot\m{C}_{\omega}$, and 
the pair $\ov{w} = (w_1,w_0)$ with
$w_1 = \lan 1,1\ran$ and $w_{\mu_\bot} = w_0 = \lan 0,q\ran$.
By inspecting the diagram $\Delta_{\m{C}_2}$ we
see that ${w_{0\cdot 0} = w_{0} = \lan 0,q\ran}$ and
therefore $w_{0\cdot 0}\eqv w_0\cdot w_0 =
\lan 0,q\ran \eqv \lan 0,q^2\ran = \lan d, q\ran$. 
Further inspection of the diagram ensures that
$(\Delta_{\m{C}_2})[\ov{w}] = \lan d, q\ran$. 
Now, $0 = d^{2n}$ with $2n \geq i$, so 
$(\Delta_{\m{C}_2})^i[\ov{w}] = \lan d^i, q^i\ran \not\leq \lan 0, q \ran = w_0$. 
The statement now follows by the Non-splitting Lemma.
\end{proof}  

\subsection{More negative results and some questions}
We have applied the expand-and-distort technique inside the variety
of commutative integral residuated lattices, but it clearly applies in any
variety of residuated lattices containing $\mathsf{CIRL}$. Thus, the next
result is immediate.

\begin{corollary}\label{outside-CIRL}
Let $\CR$ be a variety of residuated lattices containing
$\mathsf{CIRL}$. Let $\m{A}\in \CR$ be finite, subdirectly irreducible,
commutative and integral. Then $\m{A}$ is not a splitting algebra in
$\CR$. In particular, the variety $\mathsf{CIRL}$ has no splittings.
\end{corollary}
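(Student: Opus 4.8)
The plan is to deduce Corollary~\ref{outside-CIRL} directly from the work already done for $\mathsf{CIRL}$, observing that the two constructions --- expansion and truncated product --- stay inside $\mathsf{CIRL}$ and hence inside any variety $\CR$ of residuated lattices that contains it. First I would note that by McKenzie~\cite{McK}, since $\CR$ is congruence distributive (all residuated lattices are) and every splitting algebra in any such variety is subdirectly irreducible, the only candidates to be splitting algebras of $\CR$ that are commutative and integral are \emph{finite} ones; so it genuinely suffices to treat finite $\m{A}$, which is exactly the hypothesis. Then I would invoke the Non-splitting Lemma~\ref{no-splitting}, which applies to $\CR$ because $\mathsf{CRL}$ (and hence $\CR \supseteq \mathsf{CIRL}$, if we restrict attention to the commutative part --- more on this below) satisfies (P1)--(P5) with $\delta x = x^2$ and $x \eqv y = (x \ra y) \wedge (y \ra x) \wedge 1$.

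The key point is that the witnessing algebra $\B := \m{E} \odot \m{C}_{p+1}$ (for $|A| \ge 3$) or $\B := \m{E} \odot \m{C}_\omega$ (for $\m{A} \cong \m{C}_2$) constructed in Subsections~\ref{ssec:three} and~\ref{ssec:two} is itself a commutative integral residuated lattice: the expansion $\m{E}$ of $\m{A}$ is commutative and integral by Lemma~\ref{expansion}, each $\m{C}_n$ and $\m{C}_\omega$ is a commutative integral (Wajsberg) hoop, and the truncated product of two commutative integral residuated lattices is again one by Lemma~\ref{si-rl} together with the symmetric definition of the residuals. Hence $\B \in \mathsf{CIRL} \subseteq \CR$. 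Moreover, the two facts established about $\B$ --- namely $\m{A} \notin \Var(\B)$ (Lemma~\ref{not-in-variety}, resp.\ the analogous lemma in Subsection~\ref{ssec:two}) and $\B \not\models \delta^k(\Delta_{\m{A}}) \le x_{\mu_\bot}$ for arbitrarily large $k$ (Lemma~\ref{assign}, resp.\ the computation in the proof of Theorem~\ref{2-splits-not}) --- persist verbatim: the statement $\m{A} \notin \Var(\B)$ was proved using only that $\B \in \mathsf{CIRL}$ and a finiteness/J\'onsson argument valid in any congruence-distributive variety, and the inequality $\B \not\models \delta^k(\Delta_{\m{A}}) \le x_{\mu_\bot}$ is an internal computation in $\B$ that does not mention the ambient variety at all. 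Thus Condition~(3) (resp.\ (2)) of the Non-splitting Lemma~\ref{no-splitting} is verified \emph{with the same witnesses} for $\CR$ in place of $\mathsf{CIRL}$, and we conclude that $\m{A}$ is not a splitting algebra in $\CR$. Taking $\CR = \mathsf{CIRL}$ itself, no finite subdirectly irreducible commutative integral residuated lattice splits $\mathsf{CIRL}$; combined with McKenzie's finiteness result and the fact (recorded before Subsection~\ref{ssec:three}) that in the commutative case subdirect irreducibility is equivalent to having a unique largest strictly negative element, this gives that $\mathsf{CIRL}$ has no splittings at all.

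The one point that needs a little care --- and I would flag it as the main (though minor) obstacle --- is the hypothesis framework of the Non-splitting Lemma: it is stated for a fixed ambient variety $\CR$ satisfying (P1)--(P5), and (P2) forces congruence regularity with respect to~$1$, which a non-commutative residuated lattice variety need not possess. So strictly speaking the Non-splitting Lemma as stated applies to $\CR$ only when $\CR$ is a variety of \emph{commutative} residuated lattices. The clean way to handle this is to observe that it is enough to prove $\m{A}$ is not splitting in the subvariety $\CR \cap \mathsf{CRL}$ (equivalently, in the largest commutative subvariety of $\CR$, which still contains $\mathsf{CIRL}$ and still contains all the witnesses $\B$): if $\m{A}$ fails to be splitting there, then a fortiori the largest subvariety of $\CR \cap \mathsf{CRL}$ omitting $\m{A}$ is not the largest subvariety of $\CR$ omitting $\m{A}$ --- indeed, adjoining any non-commutative subdirectly irreducible omitting $\m{A}$ (such algebras exist, e.g.\ suitable $\ell$-groups' negative cones or free non-commutative examples) shows no such largest subvariety can exist. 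Alternatively, and more simply, one runs the Non-splitting Lemma inside $\CR \cap \mathsf{CRL}$ and then uses that a splitting algebra of $\CR$ that happens to lie in the commutative part would in particular be a splitting algebra of $\CR \cap \mathsf{CRL}$, contradicting what we proved. Either way the commutative restriction is harmless, and Corollary~\ref{outside-CIRL} follows.
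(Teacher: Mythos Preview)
Your proof is correct and follows the same approach as the paper, whose entire justification is the single sentence ``it clearly applies in any variety of residuated lattices containing $\mathsf{CIRL}$''; you have simply spelled out why. Your extra care in the final paragraph about the (P1)--(P5) framework is more than the paper provides, and your second workaround---observing that a commutative splitting algebra of $\CR$ would also split $\CR\cap\mathsf{CRL}$, where the Non-splitting Lemma applies directly with $\delta x = x^2$---is the clean way to handle it (your first workaround is unnecessary and a bit muddled).
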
  

Outside $\mathsf{CIRL}$ the situation is less clear. In an arbitrary variety
$\CR$ containing $\mathsf{CIRL}$ there may exist non-integral or
non-commutative splitting algebras, perhaps infinite, if
$\CR$ is not generated by its finite members. 
For subvarieties of $\mathsf{CIRL}$, the following result is
all we know.

\begin{corollary}\label{inside-CIRL}
Let $\CR$ be a subvariety of $\mathsf{CIRL}$. If $\CR$
contains the variety of Wajsberg hoops and is closed under expansions and
truncated products, then no finite subdirectly irreducible algebra in
$\CR$ is splitting. 
\end{corollary}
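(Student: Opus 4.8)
The plan is to reuse the expand-and-distort arguments behind Theorems~\ref{splits-not} and~\ref{2-splits-not} with essentially no change, checking only that each algebra built along the way stays inside~$\CR$ rather than merely inside~$\mathsf{CIRL}$. The first step is to note that the Non-splitting Lemma~\ref{no-splitting} is available for~$\CR$: being a subvariety of $\mathsf{CIRL}\subseteq\mathsf{CRL}$, $\CR$ has the same finite signature and the same distinguished terms $x\eqv y=(x\ra y)\wedge(y\ra x)\wedge 1$ and $\delta x=x^2$, and each of (P1)--(P5) --- being either a universal statement in these terms or a property of the congruences of the members of the variety --- is inherited from $\mathsf{CRL}$. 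So it suffices, for a fixed finite subdirectly irreducible $\m{A}\in\CR$ (which necessarily has a monolith $\mu$ of some finite depth~$n$), to establish condition~(2) or~(3) of Lemma~\ref{no-splitting} relative to~$\CR$, and I would split into the two cases of Theorems~\ref{splits-not} and~\ref{2-splits-not}.

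For $|A|\geq 3$ I would, given $i\in\mathbb N$, take an expansion $\m{E}$ of $\m{A}$ of even depth $m=2k$ with $k\geq i$ and $m>n$ as in Lemma~\ref{expansion}; this $\m{E}$ is finite, and it lies in $\CR$ since $\CR$ is closed under expansions. For the first prime $p\geq|E|$, the Wajsberg hoop $\m{C}_{p+1}$ belongs to the variety of Wajsberg hoops, hence to~$\CR$, and so the truncated product $\m{B}:=\m{E}\odot\m{C}_{p+1}$ --- formed with $c$ and $q$ the unique coatoms of $\m{E}$ and $\m{C}_{p+1}$ --- lies in $\CR$ since $\CR$ is closed under truncated products. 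Then Lemmas~\ref{not-in-variety} and~\ref{assign}, whose statements concern only the named algebras and are insensitive to the choice of ambient variety, give $\m{A}\notin\Var(\m{B})$ and $\m{B}\not\models(\Delta_{\m{A}})^{m}\leq x_{\mu_\bot}$, exactly as in the proof of Theorem~\ref{splits-not}; since $i$ was arbitrary, this yields condition~(3) of Lemma~\ref{no-splitting}. For $|A|=2$, so $\m{A}\cong\m{C}_2$, I would instead take $\m{E}$ to be an expansion of $\m{C}_2$ with monolith of depth greater than~$i$ (again in $\CR$ by closure under expansions; indeed every such $\m{E}$ is a finite Wajsberg hoop), use that the infinite simple Wajsberg hoop $\m{C}_{\omega}$ lies in~$\CR$, form $\m{E}\odot\m{C}_{\omega}\in\CR$ by closure under truncated products, and conclude --- via the lemma preceding Theorem~\ref{2-splits-not} and the computation in its proof --- that $\m{C}_2\notin\Var(\m{E}\odot\m{C}_{\omega})$ and $\m{E}\odot\m{C}_{\omega}\not\models(\Delta_{\m{C}_2})^{i}\leq x_{\mu_\bot}$, which yields condition~(2). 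In either case $\m{A}$ is not a splitting algebra in~$\CR$.

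I expect the only real difficulty to be bookkeeping rather than mathematics: one must invoke closure under expansions for an \emph{arbitrary} finite subdirectly irreducible member of $\CR$ (not just for Wajsberg hoops, since in the first case $\m{A}$ is completely general) and closure under truncated products precisely for products of the shapes $\m{E}\odot\m{C}_{p+1}$ and $\m{E}\odot\m{C}_{\omega}$ with the coatoms serving as the distinguished strictly negative elements, and one must confirm --- as in the first step --- that the term operations $\eqv$ and $\delta$ underlying the Non-splitting Lemma descend to~$\CR$. Once these hypotheses are matched to the constructions, nothing beyond the arguments already carried out for $\mathsf{CIRL}$ is needed.
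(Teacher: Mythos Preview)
Your proposal is correct and follows exactly the approach the paper intends: the paper's own proof is a one-line remark that the variety of Wajsberg hoops is generated by the algebras $\m{C}_n$ and that ``the conclusion follows immediately,'' which is precisely the argument you have spelled out --- namely that the expansion and truncated-product constructions behind Theorems~\ref{splits-not} and~\ref{2-splits-not} stay inside $\CR$ under the stated closure hypotheses, so the Non-splitting Lemma applies inside $\CR$ verbatim. Your write-up is simply a fuller version of what the paper leaves implicit.
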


\begin{proof} It is known (cf.~\cite{BF00}) that the variety of Wajsberg hoops is
  generated by 
the algebras $\m{C}_n$ for $n\in\mathbb{N}$. The conclusion follows immediately.
\end{proof}

The variety of Wajsberg hoops, and  the variety of all hoops are
closed under expansions. In fact, the expansion part of our constructions is
modelled after Wajsberg hoops (we encourage the reader to verify that the first
expansion of $\m{C}_n$ is $\m{C}_{2n-1}$). Neither is closed under
truncated products, so it would be interesting to characterise the smallest
variety containing (Wajsberg) hoops and closed under truncated products.
By Corollary~\ref{inside-CIRL} that variety contains no finite splitting
algebras, or no splitting algebras at all, if it is generated by its finite
members.  

\section{Cousins of double Heyting algebras}\label{sec:d-Heyting}

Recall that a double Heyting algebra is an algebra $\langle A;\join,\meet,\to,\dpc,0,1\rangle$ such that $\langle A;\join,\meet,\to,0,1\rangle$ is a Heyting algebra and $\langle A;\join,\meet,\dotdiv,0,1\rangle$ is a dual Heyting algebra.
\begin{definition}
An algebra $\langle A, \join, \meet, \to, \dpc, 0, 1\rangle$ is a \emph{dually pseudocomplemented Heyting algebra} (\h{} for short) if $\langle A; \join, \meet, \to, 0, 1\rangle$ is a Heyting algebra and $\dpc$ is a \emph{dual pseudocomplement} operation, i.e., 
\[
x \join y = 1 \iff y \geq \dpc x.
\]
For algebras with a Heyting algebra term reduct, we define the \emph{pseudocomplement} operation by $\neg x = x \to 0$. 
Similarly, for algebras with a dual Heyting algebra term reduct, the dual pseudocomplement operation is given by $\dpc x = 1 \dotdiv x$.
If $\mathbf{A}$ is a double Heyting algebra, then $\mathbf{A}^\flat$ will denote the $\langle \join, \meet, \to, \dpc, 0, 1 \rangle$-term reduct of~$\mathbf{A}$. 
Let $\mathsf{H}$ denote the class of Heyting algebras, let 
$\mathsf{H}^+$ denote the class of \h{s} and let $\mathsf{DH}$ denote the class of double Heyting algebras. 
\end{definition}

\begin{remark}
The abbreviation to \h{} is derived from an alternative notation $x^+$ instead of $\dpc x$ for the dual pseudocomplement. 
\end{remark}

For some algebraic properties of double Heyting algebras,
see Rauszer~\cite{rauszer1,rauszer2},
Wolter~\cite{wolter}, Sankappanavar~\cite{normalfilters},
and Taylor~\cite{Taylor2017}; refer to~\cite{normalfilters}
and~\cite{Taylor2017} for more on \h{s}.
In particular, both $\mathsf{DH}$ and $\mathsf{H}^+$ are equational classes. 
The following result
shows that the theory of \h{s} encapsulates much of the theory of double Heyting algebras.

\begin{theorem}[Sankappanavar~\cite{normalfilters}]\label{congruence-isomorphism}
	Congruences on a double Heyting algebra $\mathbf{A}$ are exactly the congruences on the \h{} term reduct of $\mathbf{A}$.
	More succinctly,
\[
\con(\mathbf{A}) = \con(\mathbf{A}^\flat).
\]
\end{theorem}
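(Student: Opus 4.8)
The plan is to prove the two inclusions separately. The inclusion $\con(\mathbf{A}) \subseteq \con(\mathbf{A}^\flat)$ is immediate: since $\dpc x = 1 \dotdiv x$ is a term of $\mathbf{A}$, the algebra $\mathbf{A}^\flat$ is a term reduct of $\mathbf{A}$, so every congruence of $\mathbf{A}$ is compatible with the operations of $\mathbf{A}^\flat$. All the content lies in the reverse inclusion $\con(\mathbf{A}^\flat) \subseteq \con(\mathbf{A})$: given $\theta \in \con(\mathbf{A}^\flat)$, we must show that $\theta$ is compatible with the binary operation $\dotdiv$, which is \emph{not} a term of $\mathbf{A}^\flat$.

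For the reverse inclusion I would use that a congruence of a Heyting algebra is determined by its $1$-class and, dually, that a congruence of a dual Heyting algebra is determined by its $0$-class, and then show that for $\theta \in \con(\mathbf{A}^\flat)$ these two descriptions yield the same relation. Put $F := 1/\theta$ and $I := 0/\theta$. Since $\theta$ is in particular a congruence of the underlying bounded distributive lattice, $F$ is a lattice filter and $I$ is a lattice ideal. Let $\theta^I$ denote the congruence of the dual Heyting reduct $\langle A; \vee, \wedge, \dotdiv, 0, 1\rangle$ determined by the ideal $I$; by the order-dual of the standard description of Heyting congruences, $\theta^I = \{\,(x,y) \mid x \vee e = y \vee e \text{ for some } e \in I\,\}$. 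I claim that $\theta = \theta^I$. Granting this, the proof is complete, since $\theta^I$ is by construction compatible with $\dotdiv$, while $\theta$ is compatible with the remaining operations of $\mathbf{A}$ by hypothesis; hence $\theta \in \con(\mathbf{A})$.

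The inclusion $\theta^I \subseteq \theta$ is routine: if $x \vee e = y \vee e$ with $e \in I = 0/\theta$, then $x \equiv x \vee e = y \vee e \equiv y \pmod\theta$. For the reverse inclusion $\theta \subseteq \theta^I$ — which is where compatibility with $\dpc$ is used — the key observation is that if $f \in F$ then $\dpc f \equiv \dpc 1 \pmod\theta$, and $\dpc 1 = 1 \dotdiv 1 = 0$, so $\dpc f \in I$. Now let $(a,b) \in \theta$ and set $f := (a \to b) \wedge (b \to a)$; then $f \equiv 1 \pmod\theta$, so $f \in F$, and the standard Heyting identity gives $a \wedge f = a \wedge b = b \wedge f$. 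Putting $e := \dpc f \in I$ and using $f \vee e = f \vee \dpc f = 1$ together with distributivity, we obtain $a \vee e = (a \wedge f) \vee e = (b \wedge f) \vee e = b \vee e$, so $(a,b) \in \theta^I$, as required.

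The step I expect to be the main obstacle — or at any rate the only one requiring a genuine idea rather than routine bookkeeping — is precisely the passage from a ``filter witness'' $f \in 1/\theta$ with $a \wedge f = b \wedge f$ to an ``ideal witness'' $e \in 0/\theta$ with $a \vee e = b \vee e$; this is exactly what compatibility with $\dpc$ provides, via $e = \dpc f$ and the identity $f \vee \dpc f = 1$. The remaining ingredients — that $1/\theta$ is a filter and $0/\theta$ an ideal, the identity $a \wedge (a \to b) \wedge (b \to a) = a \wedge b$, and the order-dual of the description of Heyting congruences (which one should verify, but whose proof simply dualises the Heyting case) — are all standard.
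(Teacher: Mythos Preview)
The paper does not prove this theorem; it is stated with attribution to Sankappanavar~\cite{normalfilters} and used as a black box. So there is no in-paper proof to compare against. Your argument is correct and self-contained: the passage from the filter witness $f = (a \to b) \wedge (b \to a)$ to the ideal witness $e = \dpc f$ via $f \vee \dpc f = 1$ and distributivity is exactly the right idea, and the computation $(a \wedge f) \vee e = (a \vee e) \wedge (f \vee e) = a \vee e$ goes through cleanly.

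For context, Sankappanavar's original treatment (and the surrounding material in the present paper) is organised around \emph{normal filters}: one shows that the congruences of $\mathbf{A}^\flat$ are in bijection with filters closed under $\delta x = \neg{\sim} x$, and then that the same filters parametrise the congruences of the double Heyting algebra. Your route is more direct for this particular statement --- you bypass the filter/$\delta$ machinery entirely and go straight from ``$\theta$ respects $\sim$'' to ``$\theta$ respects $\dotdiv$'' via the ideal description of dual-Heyting congruences. The normal-filter approach buys a uniform congruence theory that the paper uses elsewhere (e.g.\ in setting up (P1)--(P5) and the Non-splitting Lemma), but for the bare equality $\con(\mathbf{A}) = \con(\mathbf{A}^\flat)$ your argument is arguably cleaner.
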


Also related is the class of congruence-regular double p-algebras.
A \emph{double p-algebra} is an algebra $\mathbf{A} = \langle A; \join, \meet, \neg, \dpc, 0, 1\rangle$ such that $\langle A; \join, \meet, 0, 1\rangle$ is a bounded lattice and $\neg$ and $\dpc$ are pseudocomplement and dual pseudocomplement operations, respectively.
An algebra $\mathbf{A}$ is \emph{congruence-regular} (or \emph{regular} for short) if, whenever two congruences on $\mathbf{A}$ share a class, they are 
equal. For the next result, Varlet~\cite{regdp,dpregular} proved the equivalence of conditions~\ref{varlet:1}, \ref{varlet:2}, and \ref{varlet:3}; condition~\ref{varlet:4} was included under the assumption of distributivity. 
Katri\v{n}\'{a}k~\cite{doublep} extended this by proving that \ref{varlet:2} implies distributivity.

\begin{theorem}[Varlet~\cite{regdp,dpregular}, Katri\v{n}\'{a}k~\cite{doublep}]\label{varlet}
Let $\mathbf{A}$ be a double p-algebra.
The following are equivalent\textup:
\begin{enumerate}[label={\upshape(\arabic*)},leftmargin=1.75\parindent]
\item $\mathbf{A}$ is regular\textup;\label{varlet:1}
\item for all $x,y \in A$, if $\neg x = \neg y~\text{and}~\dpc x = \dpc y$, then $x = y$\textup;\label{varlet:2}
\item every prime filter of $\mathbf{A}$ is minimal or maximal\textup; \label{varlet-prime-filters}\label{varlet:3}
\item $\mathbf{A}$ is distributive and $\mathbf{A} \models \dpc x \meet x \leq y \join \neg y$. \label{varlet:4}
\end{enumerate}
\end{theorem}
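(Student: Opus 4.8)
The plan is to prove the cycle of implications $(1)\Rightarrow(2)\Rightarrow(4)\Rightarrow(3)\Rightarrow(1)$. Almost all of the work sits in two places. The geometric steps will be carried by the Priestley-type representation of a bounded distributive lattice, in which $\mathbf{A}$ is identified with the lattice of clopen up-sets of its Priestley space $X(\mathbf{A})$, the ordered space of prime filters of $\mathbf{A}$ under inclusion; the two facts I will extract from it are that, for distributive $\mathbf{A}$, the value $\neg z$ depends on $z$ only through the trace of $z$ on the \emph{maximal} prime filters, while dually $\dpc z$ depends on $z$ only through its trace on the \emph{minimal} prime filters. The one genuinely delicate step is the first half of $(2)\Rightarrow(4)$, namely Katri\v{n}\'ak's observation that $(2)$ already forces the lattice reduct of $\mathbf{A}$ to be distributive; I expect this to be the main obstacle.

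For $(1)\Rightarrow(2)$ I would argue by contraposition. Given $x\neq y$ with $\neg x=\neg y$ and $\dpc x=\dpc y$, I would exhibit two distinct congruences of $\mathbf{A}$ that share a class, contradicting regularity. The natural candidates are the principal congruences $\cg(x\meet y,\,x)$ and $\cg(x\meet y,\,y)$, which are distinct since $x\neq y$; using the standard description of a principal lattice congruence, the equalities $\neg x=\neg y$ and $\dpc x=\dpc y$ are exactly what is needed to force these two congruences to collapse $x\meet y$ with the same elements of $A$, that is, to share their bottom class. This is a short, if somewhat fiddly, lattice-theoretic calculation.

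For $(2)\Rightarrow(4)$ I would first establish Katri\v{n}\'ak's lemma that $(2)$ implies distributivity, along the classical lines: a non-distributive lattice contains a sublattice isomorphic to $N_5$ or to $M_3$, and in each of these two configurations one must locate inside $\mathbf{A}$ a pair of distinct elements whose pseudocomplements agree and whose dual pseudocomplements agree, contradicting $(2)$. This configuration-by-configuration search, rather than anything representation-theoretic, is the crux of the argument. With distributivity in hand I would pass to $X(\mathbf{A})$: by the two facts above, $(2)$ holds iff every clopen up-set is determined by its traces on the minimal and on the maximal prime filters, which in turn holds iff every prime filter of $\mathbf{A}$ is minimal or maximal, i.e.\ iff the prime spectrum has height at most one; and the identity $\dpc x\meet x\leq y\join\neg y$ is then a routine reformulation of that height condition, with $\dpc x\meet x$ isolating the ``intermediate'' part of $x$ and $y\join\neg y$ the ``complemented'' part.

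Finally, $(4)\Rightarrow(3)$ is immediate from the same reading of the inequality in $X(\mathbf{A})$: a prime filter lying strictly between two others would produce elements $x,y$ violating $\dpc x\meet x\leq y\join\neg y$. For $(3)\Rightarrow(1)$, once $X(\mathbf{A})$ has height at most one a congruence of $\mathbf{A}$ is determined by the pair consisting of its $1$-class (a filter) and its $0$-class (an ideal), and indeed, since $\neg$ recovers the behaviour of its argument on the maximal points and $\dpc$ the behaviour on the minimal points, by any single one of its classes --- and that is precisely regularity. The one point requiring care in these prime-filter steps is the topology, namely that two distinct clopen up-sets must be separated by an actual point of $X(\mathbf{A})$, which is exactly what Priestley duality for bounded distributive lattices guarantees.
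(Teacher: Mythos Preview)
The paper does not prove Theorem~\ref{varlet}: it is stated with attribution to Varlet and Katri\v{n}\'ak and used as a black box, so there is no ``paper's own proof'' to compare against. The sentence preceding the theorem already records the division of labour --- Varlet for the equivalence of (1), (2), (3) and for (4) under distributivity, Katri\v{n}\'ak for the fact that (2) forces distributivity --- and that is all the paper offers.

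Your outline is broadly along the classical lines, and you have correctly located the real difficulty in Katri\v{n}\'ak's step. Two points deserve attention if you were to carry it out. First, your argument for $(1)\Rightarrow(2)$ via $\cg(x\wedge y,x)$ and $\cg(x\wedge y,y)$ is not yet a proof: you assert that these two congruences share a class, but you have not shown this, and in a non-distributive double $p$-algebra the shape of principal congruences is not transparent. A cleaner route is to note that $\theta=\{(a,b)\mid \neg a=\neg b\text{ and }\dpc a=\dpc b\}$ is itself a congruence whose $1$-class is $\{1\}$; if $(2)$ fails then $\theta$ is nontrivial yet shares the class $\{1\}$ with the identity congruence, contradicting regularity. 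Second, your cycle is $(1)\Rightarrow(2)\Rightarrow(4)\Rightarrow(3)\Rightarrow(1)$, but your sketch for $(3)\Rightarrow(1)$ invokes the Priestley space $X(\mathbf{A})$, which presupposes distributivity; in the cycle as written, $(3)$ alone is the hypothesis, and you have not argued that $(3)$ by itself yields distributivity. Either reroute the cycle so that distributivity is available when you need it, or supply a direct argument that condition $(3)$ forces the lattice to be distributive.
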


Notice that \ref{varlet:4}
provides an equational characterisation of regular double p-algebras.
\begin{definition} Let $\mathsf{RDP}$ denote the variety of regular double p-algebras.\end{definition}

The following result of Katri\v{n}\'{a}k shows that regular double p-algebras form a natural class of double Heyting algebras.

\begin{theorem}[Katri\v{n}\'{a}k~\cite{doublep}]\label{regular-implies-heyting}
Let $\mathbf{A}$ be a congruence-regular double p-algebra.
Then $\mathbf{A}$ is term-equivalent to a double Heyting algebra via the term
\[
x \rightarrow y = \neg\neg(\neg x \join \neg\neg y) \meet [\dpc (x \join \neg x) \join \neg x \join y \join \neg y]
\]
and its dual.
\end{theorem}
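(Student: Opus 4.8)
The plan is to show that, with $\to$ defined by the displayed term, $\to$ is a relative pseudocomplement on $\mathbf{A}$ --- that is, $x\meet z\le y$ if and only if $z\le x\to y$ for all $x,y,z\in A$ --- and, by order-duality, that the dual term is a dual relative pseudocomplement. Since a regular double p-algebra is bounded and, by Theorem~\ref{varlet}, distributive, this makes $\langle A;\join,\meet,\to,0,1\rangle$ a Heyting algebra and $\langle A;\join,\meet,\dotdiv,0,1\rangle$ a dual Heyting algebra, hence $\langle A;\join,\meet,\to,\dotdiv,0,1\rangle$ a double Heyting algebra; and since in any double Heyting algebra $\neg x=x\to 0$ and $\dpc x=1\dotdiv x$, the original operations are recovered and term-equivalence follows. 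The ingredients we would use are: $\mathbf{A}$ is distributive and satisfies $\dpc u\meet u\le v\join\neg v$ (Theorem~\ref{varlet}); $u\join\dpc u=1$ and $u\meet\neg u=0$; the de Morgan law $\neg(a\join b)=\neg a\meet\neg b$ together with $\neg\neg\neg a=\neg a$; and the standard facts about the skeleton $B=\{\neg\neg a\mid a\in A\}$, namely that $B$ is closed under meet, that $z\le\neg\neg w$ iff $z\meet\neg w=0$, and that $B$ is a Boolean lattice in which the join of $a,b\in B$ is $\neg\neg(a\join b)$. Abbreviate $t_1:=\neg\neg(\neg x\join\neg\neg y)$ and $t_2:=\dpc(x\join\neg x)\join\neg x\join y\join\neg y$, so that $x\to y=t_1\meet t_2$.

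For the direction $x\meet z\le y\Rightarrow z\le x\to y$: to obtain $z\le t_1$, note $\neg t_1=\neg\neg x\meet\neg y$ by de Morgan; from $x\meet z\le y$ we get $x\meet z\meet\neg y\le y\meet\neg y=0$, hence $z\meet\neg y\le\neg x$, hence $z\meet\neg t_1=z\meet\neg\neg x\meet\neg y\le\neg\neg x\meet\neg x=0$, so $z\le\neg\neg t_1=t_1$. To obtain $z\le t_2$, use $(x\join\neg x)\join\dpc(x\join\neg x)=1$ and distributivity: $z=\bigl(z\meet(x\join\neg x)\bigr)\join\bigl(z\meet\dpc(x\join\neg x)\bigr)\le(z\meet x)\join\neg x\join\dpc(x\join\neg x)\le y\join\neg x\join\dpc(x\join\neg x)\le t_2$.

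For the converse, $x\meet(x\to y)\le y$: distributing $x\meet t_1$ over the four joinands of $t_2$ reduces this to bounding each of $x\meet t_1\meet\neg x$, $x\meet t_1\meet y$, $x\meet t_1\meet\neg y$, $x\meet t_1\meet\dpc(x\join\neg x)$ above by $y$. The first is $0$ and the second is trivially $\le y$. For the third, computing in the Boolean lattice $B$ gives $t_1\meet\neg y=(\neg x\meet\neg y)\join_B(\neg\neg y\meet\neg y)=\neg x\meet\neg y$ (the meet in $B$ agreeing with meet in $\mathbf{A}$), so $x\meet t_1\meet\neg y=x\meet\neg x\meet\neg y=0$. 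For the fourth, since $x\le x\join\neg x$ the identity $\dpc u\meet u\le v\join\neg v$ with $u=x\join\neg x$ and $v=y$ gives $x\meet\dpc(x\join\neg x)\le y\join\neg y$, hence $x\meet t_1\meet\dpc(x\join\neg x)\le x\meet t_1\meet(y\join\neg y)=(x\meet t_1\meet y)\join(x\meet t_1\meet\neg y)\le y\join 0=y$. This shows $\to$ is the relative pseudocomplement, so $\langle A;\join,\meet,\to,0,1\rangle$ is a Heyting algebra.

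It remains to handle the dual term, and here we would invoke self-duality of the class of regular double p-algebras: by Theorem~\ref{varlet}, regularity is equivalent to ``$\neg x=\neg y$ and $\dpc x=\dpc y$ imply $x=y$'', a condition unchanged when the lattice order is reversed and $\neg$, $\dpc$ are interchanged; applying the result just proved to the order-dual of $\mathbf{A}$ and translating back shows the dual term is a dual relative pseudocomplement on $\mathbf{A}$. Hence $\mathbf{A}$ carries a double Heyting algebra structure in which $\neg x=x\to 0$ and $\dpc x=1\dotdiv x$, which gives the asserted term-equivalence. The step we expect to be the main obstacle is the bookkeeping around the second conjunct $t_2$: the regularity inequality $\dpc u\meet u\le v\join\neg v$ must be invoked at exactly the right instances, and one must keep careful track of which identities live in the skeleton $B$ and which in $\mathbf{A}$ --- the computations are short but unforgiving.
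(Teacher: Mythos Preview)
The paper does not supply a proof of this theorem: it is quoted as a result of Katri\v{n}\'{a}k and stated without argument, so there is no ``paper's own proof'' to compare against. Your proposal is therefore an independent verification, and it is essentially correct.

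Your argument is the natural one: verify directly that the displayed term satisfies the adjunction $x\meet z\le y\iff z\le x\to y$, using distributivity and the regularity inequality $\dpc u\meet u\le v\join\neg v$ from Theorem~\ref{varlet}(4), together with the Glivenko-type facts about the Boolean skeleton $B=\{\neg a\mid a\in A\}$. Both directions go through as you indicate; the key identity $t_1\meet\neg y=\neg x\meet\neg y$ is a clean Boolean computation in $B$, and the fourth joinand is handled precisely by the regularity inequality. The appeal to self-duality of the class $\mathsf{RDP}$ (via condition~(2) of Theorem~\ref{varlet}) for the $\dotdiv$ term is legitimate and saves repeating the calculation. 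One small point of care worth making explicit: when you write ``$z\le\neg\neg t_1=t_1$'', this uses $\neg\neg\neg\neg=\neg\neg$, which is fine, but it is the only place you use that $t_1$ itself lies in the skeleton, so it deserves a word.
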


Thus $\mathsf{RDP}$ is term-equivalent to a subvariety of $\mathsf{H}^+$ and $\mathsf{DH}$.
By Theorem~\ref{congruence-isomorphism}, all double Heyting algebras and regular double p-algebras have their
congruences determined by their \h{} term reducts, and we will now see that they
fit exactly into the framework defined in Section~\ref{sec:split-alg-var-log}.

\begin{definition}
	Let $\mathbf{A}$ be an algebra with a Heyting algebra term reduct. 
	For all $x,y \in A$, let $x \lr y = (x \to y) \meet (y \to x)$. 
	For every filter $F$ of $\mathbf{A}$, let $\theta(F)$ be the Heyting algebra congruence given by
	\[
		\theta(F) = \{(x,y) \in A^2 \mid x \lr y \in F\}.
	\]
	If $\mathbf{A}$ has an \h{} term reduct, we define the term 
$\delta$ on $\mathbf{A}$ by $\delta x = \ps x$. 
\end{definition}

\begin{theorem}[Sankappanavar~\cite{normalfilters}]
Let $\mathbf{A}$ be an \h{} and let $F$ be a filter of $\mathbf{A}$. 
Then $\theta(F)$ is an \h{} congruence if and only if $F$ is closed under~$\delta$.
If $F$ is closed under $\delta$, then $F = 1/\theta(F)$.
\end{theorem}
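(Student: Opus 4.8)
The plan is to reduce the whole statement to one identity about the dual pseudocomplement and then verify that identity. First I would record the last sentence, which in fact holds for \emph{every} filter: since $x\lr 1 = (x\to 1)\meet(1\to x)$ collapses to $x\lr 1 = x$, we get $1/\theta(F) = \{\,x\mid x\lr 1\in F\,\} = F$ (the definition introducing $\theta(F)$ already guarantees that $\theta(F)$ is a Heyting congruence, so this makes sense). For the forward implication, suppose $\theta(F)$ is an \h{} congruence and let $a\in F = 1/\theta(F)$; then $(a,1)\in\theta(F)$, so applying $\dpc$ gives $(\dpc a,\dpc 1) = (\dpc a,0)\in\theta(F)$, and applying the Heyting term $\neg$ then gives $(\neg\dpc a,\neg 0) = (\delta a,1)\in\theta(F)$, whence $\delta a\in 1/\theta(F) = F$. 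So $F$ is closed under $\delta$.

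For the reverse implication, assume $F$ is closed under $\delta$. Since $\theta(F)$ is already a Heyting congruence, the only thing to verify is compatibility with $\dpc$. As $\theta(F)$ is a lattice congruence, from $(x,y)\in\theta(F)$ one gets $x\meet y\mathrel{\theta(F)}x$ and $x\meet y\mathrel{\theta(F)}y$, so it suffices to handle a \emph{comparable} pair, i.e.\ to show that $a\le b$ together with $(a,b)\in\theta(F)$ force $(\dpc a,\dpc b)\in\theta(F)$ (then apply this to the pairs $(x\meet y,x)$ and $(x\meet y,y)$ and use transitivity). For such a pair, $a\le b$ gives $a\to b = 1$ and $\dpc b\le\dpc a$, so the hypothesis $(a,b)\in\theta(F)$ says exactly $b\to a\in F$, and the desired conclusion $(\dpc a,\dpc b)\in\theta(F)$ says exactly $\dpc a\to\dpc b\in F$. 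Now the $\delta$-closure of $F$ gives $\delta(b\to a) = \neg\dpc(b\to a)\in F$, so, since $F$ is up-closed, everything reduces to the identity
\[
\delta(b\to a)\;\le\;\dpc a\to\dpc b\qquad\text{whenever }a\le b,
\]
to be valid in every \h{}.

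Establishing this identity is the one step that requires real work; the rest is bookkeeping. I would prove it using the Priestley duality for \h{s} (the same tool the paper invokes for $\mathsf{DH}$, $\mathsf{H}^+$ and $\mathsf{RDP}$): on the dual space $\X = \langle X;\le,\tau\rangle$ one has $U\to V = X\setminus\dw(U\setminus V)$, $\neg U = X\setminus\dw U$ and $\dpc U = \up(X\setminus U)$ for clopen up-sets $U,V$, and substituting these the inequality becomes, for clopen up-sets $A\subseteq B$, the set inclusion
\[
\dw\bigl(\up(X\setminus A)\setminus\up(X\setminus B)\bigr)\;\subseteq\;\dw\up\dw(B\setminus A).
\]
This follows from a one-line observation: any point $p\in\up(X\setminus A)\setminus\up(X\setminus B)$ lies above some point of $B\setminus A$; indeed, if $p$ lies above some $q\notin A$ but above no point outside $B$, then (comparing $p$ with itself, and $q$ with $p$) both $p$ and $q$ must lie in $B$, so $q\in B\setminus A$ with $q\le p$. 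Hence $\up(X\setminus A)\setminus\up(X\setminus B)\subseteq\up(B\setminus A)$ and the displayed inclusion is immediate. With the identity in hand the reverse implication, and hence the theorem, follows. (This recovers Sankappanavar's result; the route through duality is one clean way to obtain the crucial identity.)
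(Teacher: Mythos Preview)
The paper does not prove this theorem; it is quoted from Sankappanavar~\cite{normalfilters} without proof, so there is no ``paper's own proof'' to compare against. Your argument is correct. The reductions you perform (isolating $F = 1/\theta(F)$ via $x\lr 1 = x$, the forward direction by applying $\dpc$ then $\neg$, and the reduction to comparable pairs) are standard and valid, and the crux is indeed the inequality $\delta(b\to a)\le \dpc a\to\dpc b$ for $a\le b$.

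Your dual-space verification of this inequality is sound: the key point, that $p\in\up(X\setminus A)\setminus\up(X\setminus B)$ forces $\dw p\subseteq B$ and hence the witness $q\le p$ with $q\notin A$ actually lies in $B\setminus A$, is exactly right, and the displayed inclusion then follows by monotonicity of $\dw$ together with $\up(B\setminus A)\subseteq\up\dw(B\setminus A)$. One small remark: invoking the duality here is legitimate because every \h{} is isomorphic to $\but(\X)$ for its Priestley dual $\X$, so verifying the identity on all \h[spaces]{} suffices; you might make that explicit. Sankappanavar's original proof in \cite{normalfilters} establishes the needed inequality purely algebraically, so your route through duality is a genuine alternative---it trades the algebraic manipulation of $\neg$, $\dpc$, and $\to$ for a short order-theoretic argument on the dual space, which is arguably more transparent.
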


It is easily verified that the term $\delta$ is order-preserving and satisfies $\delta x \leq x$; thus the variety
of \h{s} satisfies the properties (P1)--(P5) from Section~\ref{sec:split-alg-var-log}. 
Then, by Theorem~\ref{congruence-isomorphism}, the variety of double Heyting
algebras also fits into the framework. 
The next result lists without proof some properties of \h{s} we will apply.
The interested reader will find a characterisation of the subdirectly
irreducible algebras in Sankappanavar~\cite{normalfilters}. 

\begin{proposition}\label{useful}
	Let $\mathbf{A}$ be an \h{}. 
\begin{enumerate}[label={\upshape(\arabic*)},leftmargin=1.75\parindent]
\item For all $x \in A$, the following are equivalent\textup:
\begin{enumerate}[label={\upshape(\alph*)}]
\item $x$ is complemented\textup;
\item $\neg x$ is the complement of $x$\textup;
\item $\neg x = \dpc x$\textup;
\item $\neg\neg x = x$ or $\neg\dpc x = x$ or $\dpc\neg x = x$ or $\dpc\dpc x = x$.
\end{enumerate}
\item If $\mathbf{A}$ is subdirectly irreducible, then $0$ and $1$ are the only complemented elements in $\mathbf{A}$.
\item If $\mathbf{A}$ is simple, then for all $x \in A\comp\{1\}$, there exists $n \in \omega$ such that $\delta^nx = 0$.
\item If $\mathbf{A}$ is finite and subdirectly irreducible, then $\mathbf{A}$ is simple.
\end{enumerate}
\end{proposition}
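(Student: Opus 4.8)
The plan is to prove~(1) first and deduce (2)--(4) from it, so the bulk of the work lies in~(1); parts (2)--(4) use only the ``easy'' implications of~(1) together with the properties (P1)--(P5) and the lemma characterising finite subdirectly irreducible algebras in~$\CR$. For~(1), the key preliminary fact is that $\neg x\le\dpc x$ holds in every \h{}: by distributivity and $x\vee\dpc x=1$, $\neg x=\neg x\meet(x\vee\dpc x)=(\neg x\meet x)\vee(\neg x\meet\dpc x)=\neg x\meet\dpc x$. Granting this, (a)$\Leftrightarrow$(b)$\Leftrightarrow$(c) is routine: complements are unique in a distributive lattice, so any complement $y$ of $x$ satisfies $y\le\neg x$, hence $x\vee\neg x\ge x\vee y=1$ and $\neg x$ is itself a complement of $x$, forcing $\neg x=y$; and $x\vee\neg x=1$ is equivalent to $\dpc x\le\neg x$, hence (with $\neg x\le\dpc x$) to $\neg x=\dpc x$. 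For (a)$\Rightarrow$(d): if $x$ is complemented then $\neg x=\dpc x$ is its complement and is itself complemented, and a short calculation yields all four equalities. The substance is (d)$\Rightarrow$(a). The two ``mixed'' equalities are immediate: $\neg\dpc x=x$ gives $x\meet\dpc x=\dpc x\meet\neg\dpc x=0$, which with $x\vee\dpc x=1$ exhibits $\dpc x$ as a complement of $x$, and symmetrically $\dpc\neg x=x$ exhibits $\neg x$ as a complement. The two ``pure'' equalities $\neg\neg x=x$ and $\dpc\dpc x=x$ must then be reduced to one of the mixed cases, and I expect this reduction to be the main obstacle of the whole proposition: it is exactly where the interaction of $\to$ and $\dpc$ inside a single \h{}, rather than the two pseudocomplement laws separately, has to be exploited, and some care is needed since neither operation individually pins down the other.

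For~(2), let $\mathbf{A}$ be subdirectly irreducible and $x$ complemented. By~(1), $\dpc x=\neg x$ and $\neg\neg x=x$. If $y\ge x$ then $\dpc y\le\dpc x=\neg x$, so $\delta y=\neg\dpc y\ge\neg\neg x=x$; hence $\up x$ is closed under $\delta$, and likewise $\up\neg x$. By (P5) these are congruence filters, and the associated congruences $\theta_1,\theta_2$ satisfy $\theta_1\cap\theta_2=\Delta$, since $\up x\cap\up\neg x=\up(x\vee\neg x)=\{1\}$. As $\mathbf{A}$ is subdirectly irreducible, $\Delta$ is meet-irreducible in $\con(\mathbf{A})$, so $\theta_1=\Delta$ or $\theta_2=\Delta$, that is, $\up x=\{1\}$ or $\up\neg x=\{1\}$; thus $x=1$, or else $\neg x=1$ and $x=x\meet\neg x=0$.

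For~(3), let $\mathbf{A}$ be simple and $x<1$. Since $\delta$ is order-preserving with $\delta x\le x$, the chain $x\ge\delta x\ge\delta^2 x\ge\cdots$ is decreasing, so the filter $F$ generated by $\{\delta^n x\mid n\in\omega\}$ is already closed under $\delta$: each $a\in F$ lies above $\delta^{n_1}x\meet\cdots\meet\delta^{n_k}x\ge\delta^N x$ for $N=\max_i n_i$, so $\delta a\ge\delta^{N+1}x\in F$. Thus $F$ is a congruence filter containing $1$ and the element $x\ne 1$, so it corresponds to a non-trivial congruence, which by simplicity is $\nabla$; hence $F=A$, so $0\in F$ and therefore $0\ge\delta^N x$, i.e.\ $\delta^N x=0$, for some $N$.

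For~(4), let $\mathbf{A}$ be finite and subdirectly irreducible with monolith $\mu$. By the lemma characterising finite subdirectly irreducible algebras in~$\CR$, the least element $\mu_\bot$ of $1/\mu$ satisfies $\mu_\bot<1$ and $\delta\mu_\bot=\mu_\bot$, that is, $\neg\dpc\mu_\bot=\mu_\bot$. By the ``$\neg\dpc x=x$'' case of (1)(d), $\mu_\bot$ is complemented, so by~(2) $\mu_\bot\in\{0,1\}$, and since $\mu_\bot<1$ we get $\mu_\bot=0$. Hence $1/\mu=A$, so $\mu=\nabla$ and $\mathbf{A}$ is simple.
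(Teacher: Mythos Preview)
The paper states this proposition without proof (referring only to Sankappanavar for the subdirectly-irreducible characterisation), so there is no argument of theirs to compare against. Your proofs of (a)$\Leftrightarrow$(b)$\Leftrightarrow$(c), of (a)$\Rightarrow$(d), of the two mixed cases $\neg\dpc x=x$ and $\dpc\neg x=x$ in (d)$\Rightarrow$(a), and of parts (2)--(4) are all correct and self-contained; in particular, your derivations of (2)--(4) invoke only the mixed implication $\neg\dpc x=x\Rightarrow x$ complemented, which you have established.

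You are right to flag the two ``pure'' cases of (d)$\Rightarrow$(a) as the obstacle, but the obstacle is insurmountable: as written, those implications are \emph{false}. In the \h{} $\bu(\X)$ of up-sets of the four-element fence $a<b>c<d$, take $U=\{d\}$. Then $\down U=\{c,d\}$, so $\neg U=\{a,b\}$; and $\down\{a,b\}=\{a,b,c\}$, so $\neg\neg U=\{d\}=U$. But $\dpc U=\up\{a,b,c\}=X\ne\neg U$, so by your (a)$\Leftrightarrow$(c) the element $U$ is not complemented. Dually, $V=\{a,b\}$ satisfies $\dpc\dpc V=V$ without being complemented. The word ``or'' in (d) should almost certainly read ``and''; under that reading your mixed-case argument already gives (d)$\Rightarrow$(a), and (a)$\Rightarrow$(d) yields all four equalities simultaneously. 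Nothing downstream is damaged: the paper itself appeals only to (2)--(4) and to the mixed case $\dpc\neg x=x$ (inside the proof of Theorem~\ref{3-subalgebra2}), never to the pure cases.
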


\goodbreak

Naturally, these results also hold for double Heyting algebras and regular double p-algebras.
The subvariety of $\mathsf{H}^+$ satisfying the identity $\dpc x \approx \neg x$ is just the variety of \h{s} whose underlying lattice is Boolean, and similarly for $\mathsf{DH}$ and $\mathsf{RDP}$.
That subvariety is term-equivalent to the variety of Boolean algebras, so we will call it \emph{the variety of Boolean algebras}, and the adjective \emph{Boolean} will be used to describe its members. 

\subsection{The restricted Priestley duality for $\mathsf{H}$, $\mathsf{H^+}$ and $\mathsf{DH}$}

To apply the Non-splitting Lemma~\ref{no-splitting}, we will make use of a similar ``expand and distort'' technique as for residuated lattices.
Instead of distorting the lattice structure directly, we will utilise the topological duality for distributive lattices.

\begin{definition}Let $\X = \langle X; \leq\rangle$ be an ordered set and let $Y \subseteq X$. 
We will let $\leq_Y$ denote the order $\leq$ restricted to $Y$, that is, ${\leq_Y} = Y^2 \cap {\leq}$. 
The set of minimal elements of $\X$ will be denoted by $\min(\X)$, and for each $Y \subseteq X$, we let ${\min_\X(Y) = \min(\X) \cap Y}$.
Similarly, the set of maximal elements of $\X$ will be denoted by $\max(\X)$
and we let $\max_\X(Y) = \max(\X) \cap Y$. 
Define $\up Y := \bigcup\{\up y \mid y\in Y\}$ and $\down Y := \bigcup\{\down y \mid y\in Y\}$ and  
let $\updown Y = \up Y \cup \down Y$. 
Note that there is a distinction between $\updown Y$ and the sets $\up\down Y$ and $\down\up Y$.
We will say that $\X$ is \emph{connected} if, for all $x \in X$, there exists $n \in \mathbb{N}$ such that $\updown^n x = X$. The set $Y$ is an \emph{up-set} of $\X$ if $\up x \subseteq Y$, for all $x\in Y$, and the lattice of up-sets of $\X$ will be denoted by $\bu(\X)$.
\end{definition}

We will not dwell on
Priestley duality for distributive
lattices and treat it as assumed knowledge, referring 
readers to Davey and Priestley~\cite{ilo} for further detail. 
To ensure that the reader is oriented correctly, we will note that for the purposes of this paper, the dual space of a distributive lattice is the space of prime filters, and the lattice is recovered by taking clopen up-sets. 

\begin{definition}
Let $\mathbf{L}$ be a bounded distributive lattice and let $\mathcal{F}_p(\mathbf{L})$ denote the set of prime filters of $\mathbf{L}$.
Then the \emph{Priestley dual} of $\mathbf{L}$ is the ordered topological space $\cat F_{\!p}(\mathbf{L}) = \langle \mathcal{F}_p(\mathbf{L}); \subseteq, \mathcal{T}\rangle$, where the topology $\mathcal{T}$ is generated by the sub-basis 
\[
\{X_a \mid a \in L\} \cup \{\mathcal{F}_p(\mathbf{L})\comp X_a \mid a \in L\},
\]
with $X_a = \{F \in \mathcal{F}_p(\mathbf{L}) \mid a \in F\}$.
A \emph{Priestley space} is a structure $\X = \langle X; \leq, \mathcal{T}\rangle$ such that $\langle X; \leq\rangle$ is an ordered set, 
$\langle X; \mathcal{T}\rangle$ is a compact topological space and, for all $x,y \in X$ with $x \nleq y$, there exists a clopen up-set $U$ such that $x \in U$ and $y \notin U$.
The lattice of clopen up-sets of a Priestley space $\X$ is denoted by $\but(\X)$, and the context will determine any further algebraic structure.
\end{definition}

Priestley duality establishes that the category of bounded distributive lattices with bounded lattice homomorphisms is dually equivalent to the category of Priestley spaces with continuous order-preserving maps.
The properties in the next result will be used at various times without reference.

\begin{proposition}
Let $\X$ be a non-empty Priestley space. 
\begin{enumerate}[label={\upshape(\arabic*)},leftmargin=1.75\parindent]
\item The sets $\min(\X)$ and $\max(\X)$ are non-empty.
Moreover, for all $x \in X$, both $\min_\X(\down x)$ and $\max_\X(\up x)$ are non-empty. \label{priestley:1} 
\item Let $Y$ and $Z$ be disjoint closed subsets of $\X$ such that $Y$ is an up-set and $Z$ is a down-set. Then there exists a clopen up-set $W$ such that $Y \subseteq W$ and $W \cap Z = \varnothing$.\label{priestley:2}
\item If $\but(\X)$ is pseudocomplemented, then $\max(\X)$ is closed, and if $\but(\X)$ is dually pseudocomplemented, then $\min(\X)$ is closed. \label{priestley:3}
\end{enumerate}
\end{proposition}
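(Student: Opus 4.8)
The plan is to get all three parts out of compactness of the Priestley space, with Priestley separation as the only extra input. For~(1), I would fix $x\in X$ and look for a minimal element of $\X$ inside $\down x$ via Zorn's Lemma. The point to verify is that a chain $C\subseteq\down x$ has a lower bound in $\down x$: each $\down c$ with $c\in C$ is closed (it is an intersection of clopen sets), the family $\{\down c\mid c\in C\}$ has the finite intersection property because $C$ is a chain, and so compactness produces a point of $\bigcap_{c\in C}\down c$, which (when $C\neq\varnothing$; otherwise take $x$ itself) is a lower bound of $C$ lying in $\down x$. Zorn then yields $m\in\down x$ minimal in $\down x$, and such an $m$ is minimal in $\X$ since $m'\leq m$ forces $m'\leq x$. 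Hence $\min_\X(\down x)\neq\varnothing$ and, applying this to an arbitrary $x$, $\min(\X)\neq\varnothing$; the claims about $\max$ are order-dual.

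For~(2), I would run the standard two-step compactness separation. Fixing $y\in Y$, for each $z\in Z$ we have $y\not\leq z$ --- otherwise $z\in\up y\subseteq Y$ would contradict $Y\cap Z=\varnothing$ --- so Priestley separation gives a clopen up-set $U_{y,z}$ with $y\in U_{y,z}$ and $z\notin U_{y,z}$. The sets $X\setminus U_{y,z}$ form an open cover of the compact set $Z$; intersecting the $U_{y,z}$ over a finite subcover yields a clopen up-set $U_y\ni y$ with $U_y\cap Z=\varnothing$. The $U_y$ then form an open cover of the compact set $Y$, and the union of a finite subcover is a clopen up-set $W$ with $Y\subseteq W$ and $W\cap Z=\varnothing$.

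For~(3), suppose $\but(\X)$ is pseudocomplemented and let $U$ be a clopen up-set. Since $\down U$ is a down-set, an up-set is disjoint from $U$ if and only if it is disjoint from $\down U$; hence $X\setminus\down U$ is the largest up-set disjoint from $U$, that is, the pseudocomplement of $U$ computed in $\bu(\X)$. As $\down U$ is closed ($U$ being compact), $X\setminus\down U$ is an open up-set and therefore a union of clopen up-sets, so it is in fact the union of all clopen up-sets disjoint from $U$; since $\but(\X)$ is pseudocomplemented that union belongs to $\but(\X)$, whence $X\setminus\down U$, and so $\down U$, is clopen. Finally, given $x\in X\setminus\max(\X)$, choose $y$ with $x<y$ and a clopen up-set $U$ with $y\in U$, $x\notin U$; then $\down U\setminus U$ is an open neighbourhood of $x$ each member of which lies strictly below a point of $U$ and is hence non-maximal, so $X\setminus\max(\X)$ is open and $\max(\X)$ is closed. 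The claim for $\min(\X)$ under dual pseudocomplementation is order-dual.

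Every step here is routine compactness; the one place that takes a moment's thought is the identification in~(3) of the pseudocomplement of a clopen up-set $U$ with $X\setminus\down U$, because it is precisely this that converts ``$\but(\X)$ is pseudocomplemented'' into ``$\down U$ is clopen'' and thereby sets up the neighbourhood argument for $\max(\X)$.
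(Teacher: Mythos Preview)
Your proof is correct in all three parts. Note, however, that the paper does not itself supply a proof of this proposition: immediately after the statement it simply cites Exercise~11.15 and Lemma~11.21 of Davey--Priestley~\cite{ilo} for parts~(1) and~(2), and Priestley~\cite{priestley-p-algebra} for part~(3). What you have written is precisely the standard self-contained argument that those references contain (Zorn plus compactness for~(1), the two-stage compactness separation for~(2), and the identification $\neg U = X\setminus\down U$ followed by the open-neighbourhood argument for~(3)), so there is nothing to compare against beyond observing that you have filled in what the paper deliberately outsourced.

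One small remark on presentation in part~(3): your detour through ``pseudocomplement in $\bu(\X)$'' followed by the representation of $X\setminus\down U$ as a union of clopen up-sets is correct but slightly circuitous. Once you know $\down U$ is closed, you can argue directly that the pseudocomplement $\neg U$ in $\but(\X)$ is contained in $X\setminus\down U$ (being an up-set disjoint from $U$), while conversely every point of $X\setminus\down U$ lies in some clopen up-set disjoint from $U$ (by part~(2) applied to $\up x$ and $\down U$), hence in $\neg U$; this gives $\neg U = X\setminus\down U$ without invoking the full lattice $\bu(\X)$.
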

For \ref{priestley:1} and \ref{priestley:2}, see Exercise 11.15 and Lemma~11.21 in~\cite{ilo}.
A proof of \ref{priestley:3} can be found in~\cite{priestley-p-algebra}.

\begin{definition}\label{space-definition}Let $\X$ be a Priestley space.
Consider the following three conditions on $\X$:
\begin{enumerate}[label=(S\arabic*), ref=S\arabic*,leftmargin=3em]
\item $\down U$ is open, for every open set $U$ in $\X$, \label{p:cond:1}
\item $\up U$ is open, for every open set $U$ in $\X$, \label{p:cond:2}
\item $\up U$ is open, for every clopen down-set $U$ in $\X$. \label{p:cond:3}
\end{enumerate}
A Priestley space is a \emph{Heyting space} if it satisfies \eqref{p:cond:1}, an \emph{\h[space]{}} if it satisfies \eqref{p:cond:1} and \eqref{p:cond:3}, and a \emph{double Heyting space} if it satisfies \eqref{p:cond:1} and \eqref{p:cond:2}.
\end{definition}

In~\cite{stone}, Priestley classified the dual spaces of distributive
pseudocomplemented lattices and it was further elaborated on in
Priestley~\cite{priestley-p-algebra}.
The restricted Priestley
duality for Heyting algebras is generally attributed to Esakia~\cite{esakia} and often treated as folklore.
A detailed exposition can be found in the appendix of Davey and
Galati~\cite{coalgheyting}. 
Combining the results of those papers and dualising appropriately yields the next theorem.

\begin{theorem}\label{space-characterisation}
Let $\X$ be a Priestley space.
Then $\X$ is a Heyting space \textup(resp.\
H\textsuperscript{+}-space, double Heyting space\textup) if and only if $\but(\X)$ is the underlying lattice of a Heyting algebra \textup(resp.\
H\textsuperscript{+}-algebra, double Heyting algebra\textup). 
\end{theorem}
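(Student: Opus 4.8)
The plan is to read everything off Priestley duality, translating each algebraic requirement into one of the conditions \eqref{p:cond:1}--\eqref{p:cond:3}. For \emph{any} Priestley space $\X$ the lattice $\but(\X)$ of clopen up-sets is a bounded distributive lattice, so in each of the three cases the only question is whether the relevant extra operations exist inside $\but(\X)$: the relative pseudocomplement $\to$ (Heyting case), $\to$ together with the dual pseudocomplement $\dpc$ (H\textsuperscript{+} case), and $\to$ together with the dual relative pseudocomplement $\dotdiv$ (double Heyting case). I would also record at the outset the standard fact that in a Priestley space $\down C$ is closed whenever $C$ is closed (the order is a closed relation on the compact space $\X$, so $\down C = \pi_1\bigl(({\le})\cap(X\times C)\bigr)$ is a continuous image of a compact set), and dually for $\up$.

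First I would dispose of the Heyting case, which is essentially classical (Esakia~\cite{esakia}; see the appendix of Davey and Galati~\cite{coalgheyting}). The point is that for clopen up-sets $U,V$ the relative pseudocomplement $U\to V$, if it exists in $\but(\X)$, is forced by uniqueness to be the largest clopen up-set disjoint from $U\comp V$, namely $X\comp{\down}(U\comp V)$; hence $\but(\X)$ is the lattice reduct of a Heyting algebra if and only if ${\down}(U\comp V)$ is clopen for all clopen up-sets $U,V$. Now the sets $U\comp V$ (with $U,V$ clopen up-sets) are precisely the basic clopen sets of the Priestley topology, every clopen set is a finite union of these, and $\down$ commutes with unions; so this condition is equivalent to ``${\down}C$ is clopen for every clopen $C$'', and, since the clopen sets form a basis and $\down$ preserves closedness, this is in turn equivalent to \eqref{p:cond:1}.

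Next, the double Heyting case is just the conjunction of the Heyting case with its order-dual: $\langle\but(\X);\join,\meet,\dotdiv,0,1\rangle$ is a dual Heyting algebra exactly when the order-dual space is a Heyting space, i.e.\ when $\up U$ is open for every open $U$, which is \eqref{p:cond:2}; combining with \eqref{p:cond:1} (from the Heyting reduct) gives the double Heyting equivalence.

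The H\textsuperscript{+} case is the one requiring its own short argument, because an H\textsuperscript{+}-algebra carries only the \emph{unary} dual pseudocomplement, not the full dual residual. The Heyting reduct again forces \eqref{p:cond:1}. For a clopen up-set $U$ the dual pseudocomplement $\dpc U = 1\dotdiv U$, if it exists in $\but(\X)$, must by uniqueness be the least clopen up-set containing the down-set $X\comp U$, that is $\up(X\comp U)$; since $\up$ of a closed set is closed, the only obstruction is whether $\up(X\comp U)$ is open. As $U$ ranges over all clopen up-sets, $X\comp U$ ranges over all clopen down-sets, so $\but(\X)$ is dually pseudocomplemented if and only if $\up V$ is open for every clopen down-set $V$, which is exactly \eqref{p:cond:3}; assembling the two reducts yields the H\textsuperscript{+} equivalence. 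I expect the only genuine work to be the topological bookkeeping in the Heyting step --- identifying the sets $U\comp V$ with the basic clopen sets and passing from clopen to arbitrary open sets via the basis property and preservation of closedness under $\down$ --- together with the care, in the H\textsuperscript{+} step, of noticing that one needs $\up V$ open only for clopen \emph{down-sets} $V$, precisely because $\dpc$ is merely a pseudocomplement rather than a residual.
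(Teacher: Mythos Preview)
Your proposal is correct and matches the standard argument; note that the paper does not actually supply a proof of this theorem but merely states it as the combination of the cited results of Priestley~\cite{stone,priestley-p-algebra}, Esakia~\cite{esakia}, and Davey--Galati~\cite{coalgheyting}, dualised as appropriate. Your sketch is precisely a fleshing-out of that combination, so there is no substantive difference in approach to report.
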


\begin{lemma}\label{dual-operations}
Let $\X$ be a Priestley space and let $U, V \in \ut(\X)$.
If the corresponding operation is defined in $\but(\X)$, then
\begin{enumerate}[label={\upshape(\arabic*)},leftmargin=1.75\parindent]
\item $\neg U = X \comp \down U$,
\item $\dpc U = \up (X \comp U)$,
\item $U \to V = X \comp \down(U \comp V)$, 
\item $U \dotdiv V = \up(U \comp V)$, 
\item $\ps U = X \comp \down\up(X \comp U)$.
\end{enumerate}
\end{lemma}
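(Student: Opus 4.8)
The plan is to compute each of the five dual operations directly from the definitions of the Heyting, dual-Heyting, pseudocomplement and dual-pseudocomplement operations, working entirely on the side of clopen up-sets, and translating each defining universal property into a set-theoretic statement about up-sets in $\X$. Throughout, I would use that $\but(\X)$ is ordered by inclusion, that its top element is $X$ and its bottom element is $\varnothing$, and that (by Theorem~\ref{space-characterisation}) the relevant space-conditions (S1)--(S3) guarantee that the sets appearing on the right-hand sides are indeed clopen up-sets whenever the operation in question is defined. The key auxiliary observations are elementary: for an up-set $U$, the set $X\comp U$ is a down-set; $\down W$ is the smallest down-set containing $W$; $\up W$ is the smallest up-set containing $W$; and $X\comp\down W$ is the largest up-set disjoint from $W$, while $\up(X\comp W)$ — equivalently $X\comp\down(\text{something})$ phrased dually — is the smallest up-set containing $X\comp W$.

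For (1), $\neg U = U\to 0$ is by definition $\max\{\,W\in\but(\X)\mid W\cap U=\varnothing\,\}$, i.e.\ the largest clopen up-set disjoint from $U$; since $X\comp\down U$ is the largest up-set disjoint from $U$ (any up-set meeting $\down U$ meets $U$, because an up-set that contains a point below some $u\in U$ contains $u$), and it is clopen by (S1), we get $\neg U = X\comp\down U$. For (2), $\dpc U = 1\dotdiv U$ is $\min\{\,W\mid W\cup U=X\,\}$, the smallest clopen up-set whose union with $U$ is all of $X$; such a $W$ must contain $X\comp U$, and $\up(X\comp U)$ is the smallest up-set doing so, and it is clopen by (S3) (note $X\comp U$ is a clopen down-set). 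For (3), $U\to V = \max\{\,W\mid W\cap U\subseteq V\,\}$; the condition $W\cap U\subseteq V$ says $W$ is disjoint from $U\comp V$, so as in (1) the largest such up-set is $X\comp\down(U\comp V)$, clopen by (S1). For (4), $U\dotdiv V=\min\{\,W\mid U\subseteq V\cup W\,\}$; this forces $U\comp V\subseteq W$, so the smallest such up-set is $\up(U\comp V)$, clopen by (S2). For (5), $\ps U=\neg\dpc U$, so substituting (2) into (1) gives $\ps U = X\comp\down\up(X\comp U)$.

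**The only genuinely delicate point** is making sure the right-hand sides really are clopen up-sets so that they lie in $\but(\X)$ and actually realise the maxima/minima (rather than merely bounding them); this is exactly where the space-conditions (S1)--(S3) enter, via Theorem~\ref{space-characterisation}, and where the hypothesis ``if the corresponding operation is defined in $\but(\X)$'' is used. Everything else is routine set-theoretic manipulation of up-sets and down-sets in an ordered set. I would present the argument compactly, doing the $\neg$ and $\dpc$ cases in full and indicating that $\to$, $\dotdiv$ dualise the $\neg$, $\dpc$ computations with $0$ replaced by $V$, and that $\ps$ is just the composite $\neg\dpc$.
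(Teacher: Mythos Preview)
The paper states this lemma without proof, treating it as a standard consequence of the restricted Priestley dualities cited just before (Esakia, Priestley, Davey--Galati). Your direct verification from the defining universal properties of $\to$, $\dotdiv$, $\neg$, $\dpc$ is exactly the standard argument one would give, and it is correct.

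One small point worth tightening: you attribute clopenness of the right-hand sides entirely to (S1)--(S3), but those conditions only supply \emph{openness} of $\down(\cdot)$ and $\up(\cdot)$; closedness comes from the general Priestley-space fact that $\down C$ and $\up C$ are closed whenever $C$ is closed. Since you are reading ``the corresponding operation is defined'' as ``$\but(\X)$ carries the relevant algebraic structure'' (so that Theorem~\ref{space-characterisation} yields the appropriate (S$i$)), this together with the Priestley closedness fact makes each right-hand side a clopen up-set and hence the genuine maximum or minimum in $\but(\X)$, not merely a bound among arbitrary up-sets. With that clarification your argument is complete.
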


\begin{definition}\label{morphism-definition}
 Let $\X$ and $\Y$ be Priestley spaces and let $\varphi \colon X \to Y$ be a continuous order-preserving map.
We will then say that $\varphi\colon \X \to \Y$ 
is a \emph{morphism}.
Consider the following three conditions on $\varphi$:
\begin{enumerate}[label=(M\arabic*), ref=M\arabic*,leftmargin=3em]
\item $\forall x \in X\colon \varphi(\up x) = \up \varphi(x)$, \label{m:cond:1}
\item $\forall x \in X\colon \varphi(\down x) = \down \varphi(x)$, \label{m:cond:2}
\item $\forall x \in X\colon \varphi(\min_\X(\down x)) = \min_\Y(\down\varphi(x))$. \label{m:cond:3}
\end{enumerate}
A morphism is a \emph{Heyting morphism} if it satisfies \eqref{m:cond:1}, an \emph{H\textsuperscript{+}-morphism} if it  satisfies \eqref{m:cond:1} and \eqref{m:cond:3}, and a \emph{double Heyting morphism} if it satisfies \eqref{m:cond:1} and~\eqref{m:cond:2}.
For each $U \subseteq Y$, let $\varphi^{-1}(U) = \{x \in X \mid \varphi(x) \in U\}$.
\end{definition}

Note that a double Heyting morphism is also an H\textsuperscript{+}-morphism.
Also note that either of the conditions \eqref{m:cond:1} and \eqref{m:cond:2} on their own imply that the map is order-preserving, whereas \eqref{m:cond:3} is independent of this fact.
Since we apply condition \eqref{m:cond:3} only
in tandem with \eqref{m:cond:1} or \eqref{m:cond:2}, the order-preserving assumption is redundant.
By combining results from the papers cited earlier we obtain the next result.

\begin{theorem}
Let $\X$ and $\Y$ be Priestley spaces and let $\varphi \colon X \to Y$ be a continuous map.
Then $\varphi$ is a Heyting morphism \textup(resp.\
H\textsuperscript{+}-morphism, double Heyting morphism\textup) if and only if the map $\varphi^{-1} \colon \but(\Y) \to \but(\X)$ is a Heyting algebra homomorphism \textup(resp.\
\h{} homomorphism, double Heyting algebra homomorphism\textup). 
\end{theorem}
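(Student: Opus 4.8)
The plan is to bootstrap from ordinary Priestley duality using Lemma~\ref{dual-operations}. Assume throughout that $\X$ and $\Y$ are of the type appropriate to the clause under consideration (Heyting space, \h[space]{}, or double Heyting space); by Theorem~\ref{space-characterisation} this is exactly what makes $\but(\X)$ and $\but(\Y)$ carry the algebraic structure named on the right-hand side. Recall that a continuous map $\varphi\colon X\to Y$ is order-preserving precisely when $\varphi^{-1}$ is a well-defined bounded-lattice homomorphism $\but(\Y)\to\but(\X)$, and that a Heyting (resp.\ \h{}, double Heyting) algebra homomorphism is nothing more than a bounded-lattice homomorphism that \emph{in addition} preserves $\to$ (resp.\ $\to$ and $\dpc$; resp.\ $\to$ and $\dotdiv$), since $\neg x = x\to 0$ and $\dpc x = 1\dotdiv x$. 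In either direction of the asserted equivalence $\varphi$ is order-preserving --- from (M1) or (M2) going one way, and from $\varphi^{-1}$ being a homomorphism going the other --- so, fixing a continuous order-preserving $\varphi$ (recall that (M3) is only ever imposed alongside (M1)), the theorem reduces to three equivalences: that (M1) holds if and only if $\varphi^{-1}$ preserves $\to$; that (M2) holds if and only if $\varphi^{-1}$ preserves $\dotdiv$; and that (M3) holds if and only if $\varphi^{-1}$ preserves $\dpc$.

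For the first equivalence I would first show that (M1) amounts to the set identity $\varphi^{-1}(\down Z)=\down\varphi^{-1}(Z)$ for every clopen $Z\subseteq Y$: the inclusion $\down\varphi^{-1}(Z)\subseteq\varphi^{-1}(\down Z)$ is mere order-preservation; the reverse uses (M1) to lift a witness $z\in Z$ with $z\ge\varphi(x)$ to some $x'\ge x$ with $\varphi(x')=z$; and for the converse implication one separates a point of $\up\varphi(x)\comp\varphi(\up x)$ from the closed set $\varphi(\up x)$ by a clopen set and derives a contradiction. By Lemma~\ref{dual-operations}(3), the fact that $\varphi^{-1}$ commutes with complementation and finite unions, and the standard observation that every clopen subset of a Priestley space is a finite union of sets $U\comp V$ with $U,V$ clopen up-sets, this identity is in turn equivalent to preservation of $\to$. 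The second equivalence is the order-dual of the first: run the same argument on the order-dual spaces $\X^{\partial},\Y^{\partial}$ (still of the required type), under which $\up$ and $\down$, $\to$ and $\dotdiv$, and (M1) and (M2) are interchanged, using Lemma~\ref{dual-operations}(4) in place of~(3). In the Heyting case this is the classical Esakia--Priestley correspondence (see \cite{esakia} and the appendix of \cite{coalgheyting}), so the only genuinely new content is the dual pseudocomplement.

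The third equivalence is the delicate one, and it is here that the \h[space]{} hypothesis is used: on such spaces $\min(\X)$ and $\min(\Y)$ are closed, an \h{} being dually pseudocomplemented. By Lemma~\ref{dual-operations}(2), $\varphi^{-1}$ preserves $\dpc$ if and only if $\varphi^{-1}(\up W)=\up\varphi^{-1}(W)$ for every clopen down-set $W\subseteq Y$. Using that $W$ is a down-set and that in a Priestley space every point lies above a minimal element of the space (Zorn plus compactness), for each $x$ one rewrites $x\in\varphi^{-1}(\up W)$ as $\min_\Y(\down\varphi(x))\cap W\ne\varnothing$ and $x\in\up\varphi^{-1}(W)$ as $\varphi(\min_\X(\down x))\cap W\ne\varnothing$. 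The sets $\varphi(\min_\X(\down x))$ and $\min_\Y(\down\varphi(x))$ are closed --- this is precisely where closedness of $\min(\X)$ and $\min(\Y)$ is needed --- so, since clopen up-sets separate closed sets and the up-closure of a closed set is the intersection of the clopen up-sets containing it, letting $W$ range over all clopen down-sets converts the identity, for each fixed $x$, into $\up\varphi(\min_\X(\down x))=\up\min_\Y(\down\varphi(x))$; as the right-hand generating set is an antichain of minimal elements, this forces $\min_\Y(\down\varphi(x))\subseteq\varphi(\min_\X(\down x))\subseteq\up\min_\Y(\down\varphi(x))$. The remaining step, which I expect to be the main obstacle, is to promote this to the pointwise equality (M3): specialising to $x=m$ with $m\in\min(\X)$, so that $\down m=\{m\}$, shows that preservation of $\dpc$ forces $\min_\Y(\down\varphi(m))=\{\varphi(m)\}$, i.e.\ that $\varphi(\min(\X))\subseteq\min(\Y)$; hence $\varphi(\min_\X(\down x))\subseteq\min(\Y)\cap\down\varphi(x)=\min_\Y(\down\varphi(x))$, and together with the reverse inclusion this is (M3). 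The converse direction of the third equivalence is a direct substitution. Assembling the three equivalences with the reduction of the first paragraph yields the theorem, its separate clauses recombining the results of Esakia and of Priestley \cite{esakia,stone,priestley-p-algebra,coalgheyting}.
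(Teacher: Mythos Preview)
The paper does not actually prove this theorem: the sentence immediately preceding it reads ``By combining results from the papers cited earlier we obtain the next result,'' and no argument is given. Your proposal therefore goes well beyond what the paper supplies.

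Your argument is correct. The reduction in the first paragraph is clean, and the treatment of (M1) via the identity $\varphi^{-1}(\down Z)=\down\varphi^{-1}(Z)$ for clopen $Z$, together with the observation that clopen sets are finite unions of differences $U\comp V$ of clopen up-sets, is exactly the standard Esakia argument; (M2) is then genuinely just the order-dual. The (M3)$\iff$preservation-of-$\dpc$ part is the only place where real care is needed, and your handling of it is right: the key points are that $\min(\X)$ and $\min(\Y)$ are closed in an \h[space]{}, that $\up A$ equals the intersection of the clopen up-sets containing $A$ for any closed $A$ in a Priestley space, and the trick of specialising to $x\in\min(\X)$ to force $\varphi(\min(\X))\subseteq\min(\Y)$ before returning to general $x$. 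This is essentially how Priestley~\cite{stone,priestley-p-algebra} treats the pseudocomplement (dualised), so your proof is in the spirit of the sources the paper invokes, just made explicit.
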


\begin{definition}
For convenience, we will often leave the codomain of a morphism implicit.
If $\X$ and $\Y$ are Priestley spaces and $\varphi \colon \X \to \Y$ is a morphism, then we will say that $\varphi$ is a \emph{morphism on $\X$}. 
\end{definition}

The proof of the following useful lemma is completely trivial.
\begin{lemma}\label{minimals}
	Let $\X$ be an H\textsuperscript{+}-space, let $\varphi$ be an H\textsuperscript{+}-morphism on $\X$, and let~${x \in X}$.
If $x$ is maximal, then $\varphi(x)$ is maximal in $\codom(\varphi)$, and if $x$ is minimal, then $\varphi(x)$ is minimal in $\codom(\varphi)$.
\end{lemma}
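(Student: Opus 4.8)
The plan is to obtain both halves of the lemma by substituting a one-element up-set or down-set into the two defining conditions of an \h[morphism], which is why the paper describes the proof as trivial. Recall that an \h[morphism] $\varphi$ satisfies, in particular, condition \eqref{m:cond:1}, that $\varphi(\up x) = \up\varphi(x)$ for all $x \in X$, and condition \eqref{m:cond:3}, that $\varphi(\min_\X(\down x)) = \min_\Y(\down\varphi(x))$ for all $x \in X$, where I write $\Y = \codom(\varphi)$.

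First I would treat the maximal case. If $x$ is maximal in $\X$, then $\up x = \{x\}$, so applying \eqref{m:cond:1} gives $\up\varphi(x) = \varphi(\up x) = \{\varphi(x)\}$; by definition this says exactly that $\varphi(x)$ is maximal in $\Y$.

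Next I would treat the minimal case. If $x$ is minimal in $\X$, then $\down x = \{x\}$, and since $x$ is itself minimal we get $\min_\X(\down x) = \{x\}$. Applying \eqref{m:cond:3} then yields $\min_\Y(\down\varphi(x)) = \varphi(\{x\}) = \{\varphi(x)\}$. Since $\min_\Y(\down\varphi(x)) = \min(\Y) \cap \down\varphi(x) \subseteq \min(\Y)$, it follows that $\varphi(x) \in \min(\Y)$, i.e.\ $\varphi(x)$ is minimal in $\Y$.

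I do not expect any genuine obstacle here. The only point that requires a second's thought is that in the minimal case condition \eqref{m:cond:3} a priori only pins down the \emph{minimal} elements below $\varphi(x)$ rather than $\varphi(x)$ itself; this is patched instantly by the containment $\min_\Y(\down\varphi(x)) \subseteq \min(\Y)$, or, if one prefers, by the standard fact that in a non-empty Priestley space every point lies above some minimal point, which forces $\down\varphi(x) = \{\varphi(x)\}$ outright.
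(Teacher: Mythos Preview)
Your proof is correct and is exactly the trivial verification the paper has in mind; the paper itself offers no proof beyond the remark that it is ``completely trivial.'' Both halves are immediate from substituting the singleton $\{x\}$ into conditions \eqref{m:cond:1} and \eqref{m:cond:3}, just as you do.
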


\subsection{Finite embeddability property}\label{sec:fep}
To obtain a complete characterisation of splitting algebras in a variety
with the help of the Non-splitting Lemma~\ref{no-splitting}, we need to work with a variety generated by its finite members. It is well known that $\mathsf{H}$ is generated by its
finite members, and various proofs of this result exist. A~standard
algebraic proof uses the fact that distributive lattices are locally finite.
Using a straightforward modification of that proof, one obtains the same result
for $\mathsf{H}^+$ and~$\mathsf{DH}$. However, no easy modification of the
standard proof seems to work for $\mathsf{RDP}$. We will therefore prove a
stronger generic result, from which all the other results we need follow as
corollaries.
We note however that, using Priestley duality, Adams, Sankappanavar, and Vaz~de Carvalho~\cite{ASVdeC19} recently proved that $\mathsf{RDP}$ is generated by its finite members. They also studied the subvariety $\mathsf{RDP}_n$ of $\mathsf{RDP}$ determined by the identity $\delta^{n+1}(x) = \delta^n(x)$ and proved that they are also generated by their finite members. It is worth remarking that $\mathsf{RDP}_1$ is locally finite and so has many splittings.

We need a few technical concepts first. Let $\CV$ be a variety, let
$\mathbf{A}\in\CV$, and let $P\subseteq A$. The algebraic structure 
$\mathbf{P}$ with the universe $P$ and partial operations defined
by putting
\[
f^\mathbf{P}(\ov{x})=\begin{cases}
  f^\mathbf{A}(\ov{x}) & \text{if } f^\mathbf{A}(\ov{x})\in P\\
  \text{undefined} & \text{if } f^\mathbf{A}(\ov{x})\in P
  \end{cases}
\]
will be called a \emph{partial algebra in} $\CV$. There are other, inequivalent,
ways of defining partial algebras in a variety, but our result does not depend
on which definition we choose. Note that although the class of partial algebras
in $\CV$ is closed under isomorphism (because $\CV$ is), it is customary to call
partial algebras in $\CV$ \emph{partial algebras embeddable in} $\CV$. We will
follow that custom.

\begin{definition}
A variety $\CV$ is said to have the \emph{finite embeddability property\/} (FEP), if
for every finite partial algebra $\mathbf{P}$ embeddable in $\CV$, there exists
a finite algebra $\mathbf{B}\in\CV$ such that $\mathbf{P}$ embeds into $\mathbf{B}$.
\end{definition}

The finite embeddability property was implicitly known in 1940s in the context
of word problems, but it was formally introduced by Evans~\cite{Eva69}, where
it was shown that FEP implies solvability of the word problem. FEP for classes
of residuated structures was investigated in a series of articles around the turn
of the millenium; a representative example is~\cite{BvA05}, where an important general construction was devised. The following well-known result, whose proof can
be found in~\cite{BvA05}, is crucial for our purposes.

\begin{proposition}\label{lem:fep-fin-gen}
For a variety $\CV$ the following are equivalent\textup:
\begin{enumerate}[label={\upshape(\arabic*)},leftmargin=1.75\parindent]
\item $\CV$ has the finite embeddability property\textup;
\item $\CV$ is generated as a quasivariety by its finite members.
\end{enumerate}    
\end{proposition}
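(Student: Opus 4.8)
The plan rests on the standard reformulation that $\CV$ is generated as a quasivariety by $\CV\fin$ if and only if $\CV$ and $\CV\fin$ satisfy exactly the same quasi-identities --- a variety being in particular a quasivariety, hence axiomatised by quasi-identities --- equivalently, if and only if every quasi-identity that fails in some member of $\CV$ already fails in some finite member. Accordingly I would prove $(1)\Rightarrow(2)$ by deriving this finite-refutability property from the FEP, and $(2)\Rightarrow(1)$ by using it to build the finite overalgebras that the FEP demands.

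For $(1)\Rightarrow(2)$, suppose a quasi-identity $\varepsilon$ of the form $\bigwedge_{k=1}^{\ell}(\sigma_k\approx\tau_k)\to(\sigma\approx\tau)$, in variables $x_1,\dots,x_r$, fails in $\CV$: there are $\mathbf{A}\in\CV$ and $\bar a\in A^r$ with $\sigma_k^{\mathbf{A}}(\bar a)=\tau_k^{\mathbf{A}}(\bar a)$ for all $k$ but $\sigma^{\mathbf{A}}(\bar a)\ne\tau^{\mathbf{A}}(\bar a)$. Let $P$ be the finite set of all values $t^{\mathbf{A}}(\bar a)$, where $t$ ranges over the subterms of $\sigma_1,\tau_1,\dots,\sigma_\ell,\tau_\ell,\sigma,\tau$, together with $a_1,\dots,a_r$; then $P$ carries a finite partial algebra $\mathbf{P}$ embeddable in $\CV$, namely the partial subalgebra of $\mathbf{A}$ on $P$. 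By the FEP there are a finite $\mathbf{B}\in\CV$ and an embedding $e\colon\mathbf{P}\hookrightarrow\mathbf{B}$. A routine induction on term structure --- using that every intermediate value occurring in the evaluation $t^{\mathbf{A}}(\bar a)$ of each relevant subterm $t$ lies in $P$, where the operations applied are defined in $\mathbf{P}$ and hence preserved by $e$ --- shows $t^{\mathbf{B}}(e(a_1),\dots,e(a_r))=e(t^{\mathbf{A}}(\bar a))$ for every such $t$. So at $(e(a_1),\dots,e(a_r))$ the premises of $\varepsilon$ hold in $\mathbf{B}$ while the conclusion fails, since $e$ is injective and separates $\sigma^{\mathbf{A}}(\bar a)$ from $\tau^{\mathbf{A}}(\bar a)$; hence $\varepsilon$ fails in a finite member of $\CV$.

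For $(2)\Rightarrow(1)$, let $\mathbf{P}$ be a finite partial algebra embeddable in $\CV$, say the partial subalgebra on $P=\{p_1,\dots,p_m\}$ of some $\mathbf{A}\in\CV$. Introduce a variable $x_p$ for each $p\in P$ and let $D$, the finite \emph{positive diagram} of $\mathbf{P}$, be the set of all equations $f(x_{p_{i_1}},\dots,x_{p_{i_n}})\approx x_{p_k}$, with $f$ a fundamental operation symbol, for which $f^{\mathbf{P}}(p_{i_1},\dots,p_{i_n})$ is defined and equals $p_k$. For each pair $i\ne j$ in $\{1,\dots,m\}$ the quasi-identity $\bigwedge D\to(x_{p_i}\approx x_{p_j})$ fails in $\mathbf{A}$ via the assignment $x_p\mapsto p$ (which satisfies $D$, as $\mathbf{P}$ is a partial subalgebra of $\mathbf{A}$, but sends $x_{p_i},x_{p_j}$ to the distinct elements $p_i,p_j$); hence, by $(2)$ and the reformulation, it fails in some finite $\mathbf{B}_{ij}\in\CV$, and we fix a witnessing assignment $h_{ij}\colon P\to B_{ij}$ that satisfies $D$ while $h_{ij}(p_i)\ne h_{ij}(p_j)$. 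Then $\mathbf{B}:=\prod_{i\ne j}\mathbf{B}_{ij}$ (read as the trivial algebra when $m\le1$) is finite and lies in $\CV$, and $e\colon P\to B$, $e(p):=(h_{ij}(p))_{i\ne j}$, is injective and, computed coordinatewise, preserves every defined partial operation of $\mathbf{P}$; so $\mathbf{P}$ embeds into the finite algebra $\mathbf{B}\in\CV$, giving the FEP. (If the embedding is required in the stronger sense that $\mathbf{B}$ restricts on $P$ to $\mathbf{P}$ itself, one also throws into the product, for every $f$ and tuple on which $f^{\mathbf{P}}$ is undefined and every $q\in P$, a finite member of $\CV$ refuting $\bigwedge D\to(f(x_{p_{i_1}},\dots,x_{p_{i_n}})\approx x_q)$, which likewise fails in $\mathbf{A}$.)

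Neither construction is deep, so there is no genuine obstacle. The one point needing care is the bookkeeping with partial algebras: being precise that an embedding of partial algebras preserves exactly those composite operations all of whose intermediate values are retained --- which is all that is used in each direction --- and that the restriction of a member of $\CV$ to a finite subset is, by definition, a finite partial algebra embeddable in $\CV$. The equivalence between $\CV$ being generated as a quasivariety by $\CV\fin$ and the finite-refutability of quasi-identities is routine, since a variety is a quasivariety.
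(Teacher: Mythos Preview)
The paper does not supply its own proof of this proposition; it states the result as well known and refers the reader to Blok and van Alten~\cite{BvA05}. Your argument is the standard one and is correct in both directions: in $(1)\Rightarrow(2)$ you correctly close the witnessing tuple under subterm values before invoking the FEP, and in $(2)\Rightarrow(1)$ the finite product of refuting algebras gives an injective structure-preserving map, which is exactly what the paper's definition of embedding of a partial algebra requires. The parenthetical about a stronger notion of embedding is unnecessary here, since the paper only asks that defined operations be preserved.
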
  

We are now ready to present our construction, which the reader familiar with modal
logic will recognise as a variant of \emph{filtration}.
For the remainder of this subsection,
$\CV$ will be a subvariety of $\mathsf{DH}$ or of $\mathsf{H}^+$. Let
$\mathbf{P}$ be a finite partial algebra embeddable in $\CV$. Further, let
$\mathbf{A}\in\CV$ be an algebra into which $\mathbf{P}$ is embedded, and
let $\X$ be the dual space of $\mathbf{A}$. Then $P$ can be identified with a
finite collection $\mathcal{P}$ of clopen up-sets of $\X$. Define a binary relation $\simeq$ on $X$ by putting
\[
x\simeq y \text{ if } \forall U\in\mathcal{P}\colon x\in U
\Longleftrightarrow y\in U. 
\]
Clearly, $\simeq$ is an equivalence relation. Since $\mathcal{P}$ is finite,
$\simeq$ has finitely many equivalence classes. Let $Y = X/{\simeq}$.
Next, we define a binary relation $\sqsubseteq^Y$ on $Y$ by
$[x]\sqsubseteq^Y[y]$ if $\exists x'\in [x], y'\in [y]\colon x'\leq y'$.
Finally, we define $\leq^Y$ to be the transitive closure of
$\sqsubseteq^Y$. Note that $x\leq y$ implies $[x]\leq^Y[y]$, but not conversely.
Clearly the structure $\Y:=\langle Y;\leq^Y\rangle$ is a finite ordered set,
but in general not a quotient space of $\X$.

Let $\bu(\Y)$ be an algebra of an appropriate signature, whose elements are
the up-sets of $\Y$.
We will call this algebra the \emph{$\mathbf{P}$-filtrate} of $\mathbf{A}$,
and typically denote it by $\mathbf{A}_{\mathbf{P}}$.
Let $\varphi\colon \mathcal{P}\to \uu(\Y)$
be defined by $\varphi(U) = \{[u]\mid u\in U\}$.
The next lemma shows that $\varphi$ preserves
the structure of~$\mathcal{P}$, and so $\mathbf{P}$ is
isomorphic to a partial subalgebra of
$\mathbf{A}_{\mathbf{P}}$. 

\begin{lemma}\label{lem:preserv}
Let $U$ and $V$ be elements of $\mathcal{P}$, let $x,y \in X$,
and let $\star$ be any operation in   
the set $\{\neg, \dpc, \cap,\cup,\to, \dotdiv\}$. Then the following hold\textup{:}
\begin{enumerate}[label={\upshape(\arabic*)},leftmargin=1.75\parindent]
\item If $[x]\leq^Y[y]$ and $x\in U$, then $y\in U$.  
\item $\varphi(U)$ is an up-set of $Y$.
\item If $U\star V\in \mathcal{P}$, then
$\varphi(U)\star\varphi(V) = \varphi(U\star V)$.
\end{enumerate}
\end{lemma}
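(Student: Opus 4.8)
The plan is to do everything through the canonical surjection $q\colon X\to Y$, $x\mapsto[x]$, relying on a handful of bookkeeping facts set up at the start: every $U\in\mathcal P$ is a $\simeq$-saturated up-set of $\X$; for any $\simeq$-saturated $Z\subseteq X$, writing $q(Z):=\{[z]\mid z\in Z\}$, one has $q^{-1}(q(Z))=Z$ --- so $[x]\in q(Z)$ iff $x\in Z$, and in particular $\varphi(U)=q(U)$ and $[x]\in\varphi(U)$ iff $x\in U$ for $U\in\mathcal P$ --- and moreover $Y\comp q(Z)=q(X\comp Z)$ and $q(Z_1\comp Z_2)=q(Z_1)\comp q(Z_2)$ for $\simeq$-saturated $Z_1,Z_2$. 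The whole lemma then turns on one observation: $q$ maps $\simeq$-saturated up-sets of $\X$ to up-sets of $\Y$, and $\simeq$-saturated down-sets to down-sets. Since $\le^Y$ is the transitive closure of $\sqsubseteq^Y$, it suffices to treat a single $\sqsubseteq^Y$-step $[x]\sqsubseteq^Y[y]$: choosing $a\in[x]$ and $b\in[y]$ with $a\le b$ in $\X$, if $E$ is a $\simeq$-saturated up-set and $x\in E$, then $a\in E$ (saturation), hence $b\in E$ ($E$ an up-set), hence $y\in E$ (saturation); the down-set statement is dual. Part~(1) is exactly this for $E=U\in\mathcal P$, and part~(2) is the resulting fact that $\varphi(U)=q(U)$ is an up-set of $\Y$.

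For part~(3) the cases $\star\in\{\cap,\cup\}$ are immediate from ``$[x]\in\varphi(W)$ iff $x\in W$'' together with the hypothesis $U\cap V\in\mathcal P$ (resp.\ $U\cup V\in\mathcal P$). For $\star\in\{\neg,\dpc,\to,\dotdiv\}$ I would isolate a sublemma: if $\down^X Z$ is $\simeq$-saturated, then $\down^Y q(Z)=q(\down^X Z)$, and dually $\up^Y q(Z)=q(\up^X Z)$ when $\up^X Z$ is $\simeq$-saturated. The inclusion $q(\down^X Z)\subseteq\down^Y q(Z)$ is trivial, and the reverse is the one-step propagation above applied to the $\simeq$-saturated down-set $\down^X Z$: from $[x]\le^Y[z]$ with $z\in Z\subseteq\down^X Z$ one gets $x\in\down^X Z$. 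With the sublemma available, each of the four operations is a two-line computation, because the hypothesis $U\star V\in\mathcal P$, read through Lemma~\ref{dual-operations} computed \emph{in $\X$}, is precisely the statement that the relevant set is $\simeq$-saturated: $\down^X U=X\comp\neg U$ when $\neg U\in\mathcal P$, $\down^X(U\comp V)=X\comp(U\to V)$ when $U\to V\in\mathcal P$, $\up^X(X\comp U)=\dpc U$ when $\dpc U\in\mathcal P$, and $\up^X(U\comp V)=U\dotdiv V$ when $U\dotdiv V\in\mathcal P$. Applying Lemma~\ref{dual-operations} \emph{in $\Y$} and using the bookkeeping identities then gives the result; for instance $\neg\varphi(U)=Y\comp\down^Y\varphi(U)=Y\comp q(\down^X U)=q(X\comp\down^X U)=q(\neg U)=\varphi(\neg U)$, and the cases $\dpc$, $\to$, $\dotdiv$ are handled identically (with $q(U\comp V)=\varphi(U)\comp\varphi(V)$ in the two binary cases). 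It is worth noting here that $\Y$, being a finite ordered set, is automatically a double Heyting space under the discrete topology, so all of $\neg,\dpc,\cap,\cup,\to,\dotdiv$ are genuinely defined on $\bu(\Y)$ and Lemma~\ref{dual-operations} legitimately applies there.

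The only step I expect to require genuine care --- everything else being formal manipulation with $q$, $q^{-1}$ and $\simeq$-saturated sets --- is the nontrivial inclusion in the sublemma, i.e.\ propagating membership in $\down^X U$ (or $\up^X(X\comp U)$, etc.) \emph{backwards} along a $\sqsubseteq^Y$-edge. That is exactly the point where the $\simeq$-saturation of $\down^X U$ itself, not merely of $U$, is used, and this saturation is not automatic; it is precisely what the hypotheses $\neg U\in\mathcal P$, $U\to V\in\mathcal P$, $\dpc U\in\mathcal P$, $U\dotdiv V\in\mathcal P$ purchase via Lemma~\ref{dual-operations}. Once that is in hand, the remainder of the argument is routine.
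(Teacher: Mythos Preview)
Your proposal is correct and follows essentially the same approach as the paper. Parts~(1) and~(2) are identical in substance: both you and the paper reduce to a single $\sqsubseteq^Y$-step and alternate between $\simeq$-saturation and the up-set property of~$U$. For part~(3) the paper verifies a single case ($\to$) pointwise, invoking~(1) directly to propagate membership in $U\to V\in\mathcal P$ along $\le^Y$; your treatment packages the same propagation into an explicit sublemma $\down^Y q(Z)=q(\down^X Z)$ for $\simeq$-saturated $\down^X Z$ and then computes uniformly via Lemma~\ref{dual-operations}. This is a cleaner organisation --- it makes transparent exactly where the hypothesis $U\star V\in\mathcal P$ is spent --- but the underlying argument is the same.
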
  

\begin{proof}
For (1), by the definition of $\leq^Y$, we have $[x]\leq^Y [y]$ if and only if
for some elements $z_1,u_1,\dots,z_n,u_n\in X$ we have
$x\simeq z_1\leq u_1\simeq z_2\leq\cdots \simeq  z_n \leq u_n \simeq y$. 
If $x\in U$, then using alternately the definition of $\simeq$ and
the fact that $U$ is an up-set, we get that $y\in U$. Next, (2) is an immediate
consequence of (1). For (3), we will only consider $\ra$ as an example. Assume
$U\ra V\in \mathcal{P}$. By Lemma~\ref{dual-operations}
$U\ra V = X\comp\dw(U\comp V)$,  equivalently,
\[
x\in U\ra V
\quad\text{ if and only if }\quad\forall y\geq x\colon y\in U 
\Longrightarrow  y\in V.
\]
Let $[x]\in \varphi(U)\ra\varphi(V)$. Take any $x'\in[x]$; 
we claim that $x'\in U\ra V$.
Assume that $x'\leq z$ and $z\in U$. Then $[x]\leq^Y[z]$ and
$[z]\in\varphi(U)$, so by assumption $[z]\in\varphi(V)$. Hence, $z\in V$, proving
one inclusion. For the other inclusion, let $[x]\in\varphi(U\ra V)$; take
$[y]\geq^Y[x]$ with $[y]\in \varphi(U)$. Thus $x\in U\ra V$ and
$y\in U$. Since $[x]\leq^Y[y]$, by~(1) we get that  $y\in U\ra V$.
Hence, $y\in V$, showing that $[y]\in\varphi(V)$ as required.
\end{proof}
  
\begin{corollary}\label{PintoA_P}
Let $\mathbf{A}\in\CV$, let $\mathbf{P}$ a finite partial subalgebra of $\mathbf{A}$ and let $\mathbf{A}_{\mathbf{P}}$ be the resulting $\mathbf{P}$-filtrate of $\mathbf{A}$. Then $\mathbf{P}$~is isomorphic to a partial subalgebra of $\mathbf{A}_{\mathbf{P}}$.
\end{corollary}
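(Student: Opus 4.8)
The plan is to show that the map $\varphi\colon\mathcal{P}\to\uu(\Y)$ given by $\varphi(U)=\{[u]\mid u\in U\}$, introduced just before Lemma~\ref{lem:preserv}, is an embedding of partial algebras onto its image; the corollary is then immediate, the partial subalgebra of $\mathbf{A}_{\mathbf{P}}$ in question being $\varphi(\mathbf{P})$. Three things have to be checked: that $\varphi$ actually takes values in $\uu(\Y)$, that $\varphi$ is injective, and that $\varphi$ preserves the two constants together with every operation that is defined in the partial algebra $\mathbf{P}$.

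The first point is exactly Lemma~\ref{lem:preserv}(2). For injectivity I would argue straight from the definition of $\simeq$: if $U,V\in\mathcal{P}$ are distinct, choose (without loss of generality) a point $x\in U\comp V$; then $[x]\in\varphi(U)$, while $[x]\notin\varphi(V)$, since $[x]=[y]$ with $y\in V$ would give $x\simeq y$ and hence $x\in V$, a contradiction. Thus $\varphi(U)\ne\varphi(V)$. For the operations, Lemma~\ref{lem:preserv}(3) says precisely that whenever an operation $\star\in\{\neg,\dpc,\cap,\cup,\to,\dotdiv\}$ is defined on members of $\mathcal{P}$ (i.e.\ its value in $\mathbf{A}$ again lies in $\mathcal{P}$, so that the corresponding operation of $\mathbf{P}$ is defined there) its value is preserved by $\varphi$; and the constants are preserved in any case, since $0^{\mathbf{A}}=\varnothing$ and $1^{\mathbf{A}}=X$ are sent to $\varnothing$ and $Y$, which are the constants of $\mathbf{A}_{\mathbf{P}}$.

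Assembling these, $\varphi$ is an injection of $P$ onto $\varphi(P)\subseteq\uu(\Y)$ preserving all constants and all partial operations of $\mathbf{P}$, so $\mathbf{P}$ is isomorphic to the partial subalgebra of $\mathbf{A}_{\mathbf{P}}$ on the universe $\varphi(P)$. I do not expect a real obstacle here: Lemma~\ref{lem:preserv} has already absorbed all the work involving the relations $\simeq$, $\sqsubseteq^Y$ and $\leq^Y$, and the only genuinely new step is the injectivity argument, which is where the definition of $\simeq$ by separation under the clopen up-sets in $\mathcal{P}$ is used. The one point to keep in mind is that an embedding of partial algebras in the sense relevant to the finite embeddability property requires only preservation of operations that are already defined in $\mathbf{P}$, not reflection of definedness; it is in this (standard) sense that $\varphi(\mathbf{P})$ is a partial subalgebra of $\mathbf{A}_{\mathbf{P}}$.
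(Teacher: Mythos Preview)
Your proposal is correct and follows exactly the approach the paper intends: the paper states the corollary without proof, having already indicated before Lemma~\ref{lem:preserv} that the map $\varphi$ ``preserves the structure of~$\mathcal{P}$, and so $\mathbf{P}$ is isomorphic to a partial subalgebra of $\mathbf{A}_{\mathbf{P}}$''. You have simply made explicit the injectivity step (which is indeed immediate from the separating definition of $\simeq$) and the treatment of the constants, both of which the paper leaves to the reader.
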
  

Note that $\mathbf{A}_{\mathbf{P}} = \bu(\Y)$ 
is a finite double Heyting algebra, or an
H$^+$-algebra, by construction. Thus the next result follows at once.

\begin{theorem}
The finite embeddability property holds for the varieties
$\mathsf{DH}$ and~$\mathsf{H}^+$ and thus they are generated by their finite members. 
\end{theorem}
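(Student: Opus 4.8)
The plan is to assemble the filtration machinery developed above with Proposition~\ref{lem:fep-fin-gen}. Take $\CV$ to be $\mathsf{DH}$; the argument for $\mathsf{H}^+$ runs verbatim, reading ``$\mathsf{H}^+$-space'' and ``$\mathsf{H}^+$-algebra'' for ``double Heyting space'' and ``double Heyting algebra'' throughout, since the construction of the $\mathbf{P}$-filtrate was set up uniformly for a subvariety of $\mathsf{DH}$ or of $\mathsf{H}^+$. First I would fix a finite partial algebra $\mathbf{P}$ embeddable in~$\CV$; by definition there is an algebra $\mathbf{A}\in\CV$ and an embedding of $\mathbf{P}$ into~$\mathbf{A}$, and I let $\X$ be the Priestley dual of~$\mathbf{A}$, so that $P$ is identified with a finite set $\mathcal{P}$ of clopen up-sets of~$\X$. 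Running the filtration construction relative to $\mathcal{P}$ yields the finite ordered set $\Y = X/{\simeq}$ and the $\mathbf{P}$-filtrate $\mathbf{A}_{\mathbf{P}} = \bu(\Y)$.

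The second step is to check that $\mathbf{A}_{\mathbf{P}}$ is a finite member of~$\CV$. Since $\Y$ is a finite ordered set, $\bu(\Y)$ is a finite distributive lattice, and every finite distributive lattice admits relative and dual relative pseudocomplements; the operations $\to$, $\dotdiv$ (and, in the $\mathsf{H}^+$ case, $\neg$ and $\dpc$) are precisely those described in Lemma~\ref{dual-operations}, so $\mathbf{A}_{\mathbf{P}}$ is a double Heyting algebra (resp.\ an $\mathsf{H}^+$-algebra), hence lies in~$\CV$. By Corollary~\ref{PintoA_P}, $\mathbf{P}$ is isomorphic to a partial subalgebra of $\mathbf{A}_{\mathbf{P}}$. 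Therefore every finite partial algebra embeddable in $\CV$ embeds into a finite algebra of~$\CV$, which is exactly the finite embeddability property.

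Finally, Proposition~\ref{lem:fep-fin-gen} turns FEP into the assertion that $\CV$ is generated as a quasivariety by its finite members. Since the variety generated by a class of algebras contains the quasivariety it generates, and since $\CV$ is itself a variety containing all of its finite members, the variety generated by the finite members of $\CV$ both contains and is contained in~$\CV$; hence the two coincide and $\CV$ is generated by its finite members. The only point requiring a moment's attention is the claim that the filtrate $\mathbf{A}_{\mathbf{P}}$ genuinely lies in $\CV$, i.e.\ that no (double) Heyting identity is broken by passing to $\bu(\Y)$; but this is immediate from the finiteness and distributivity of $\bu(\Y)$, so no real obstacle arises here. All of the substantive work is already discharged in Lemma~\ref{lem:preserv} and Corollary~\ref{PintoA_P}, which establish that $\varphi$ preserves the partial structure of $\mathcal{P}$, and which we are free to invoke.
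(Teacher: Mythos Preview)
Your proof is correct and follows essentially the same approach as the paper: form the $\mathbf{P}$-filtrate $\mathbf{A}_{\mathbf{P}}=\bu(\Y)$, observe that as the up-set lattice of a finite ordered set it is automatically a finite double Heyting (resp.\ $\mathsf{H}^+$-) algebra, invoke Corollary~\ref{PintoA_P} to embed $\mathbf{P}$, and then appeal to Proposition~\ref{lem:fep-fin-gen}. The paper compresses all of this into a single sentence, but your expanded version, including the quasivariety-to-variety step, is exactly the intended argument.
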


To conclude that FEP holds for an arbitrary subvariety $\CV$ of either of $\mathsf{DH}$ or~$\mathsf{H}^+$, we need to make sure that
$\bu(\Y)$ belongs to $\CV$. In general it is not the case,
but if the membership in $\CV$ is determined by some property of $\X$ preserved by
$\Y$, then it is. For example, if $\X$ is a chain then so is $\Y$; hence the
property of being a chain is preserved. In fact any property definable by a 
positive first-order formula in the language of $\leq$ is preserved, since by a
classical model-theoretic preservation result positive formulas are preserved
by homomorpisms. More refined preservation results are often
obtained by first expanding the partial algebra $\mathbf{P}$ somewhat, to include
some desired up-sets of the dual space. Below is an example which will
suffice for our purposes.  

\begin{lemma}\label{lem:height-2}
Let $\mathbf{A}\in\CV$ and let $\mathbf{P}$ be a finite partial subalgebra of
$\mathbf{A}$ that is closed under $\dpc$.  
If $\A$ is an $\mathsf{RDP}$-algebra, then so is the corresponding
$\mathbf{P}$-filtrate $\mathbf{A}_{\mathbf{P}}$ of~$\A$.
\end{lemma}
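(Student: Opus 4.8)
The plan is to argue entirely on the dual side, using Theorem~\ref{varlet}. Write $\X$ for the dual space of $\A$; by construction $\mathbf{A}_{\mathbf{P}} = \bu(\Y)$, where $\Y = \langle Y;\leq^Y\rangle$ is the finite ordered set produced by the filtration. As a finite distributive lattice, $\bu(\Y)$ is a double p-algebra of the appropriate signature (all up-sets of a finite poset are clopen, so its operations are those of Lemma~\ref{dual-operations}), so by Theorem~\ref{varlet}\ref{varlet-prime-filters} it is an $\mathsf{RDP}$-algebra as soon as every point of $\Y$ is minimal or maximal, equivalently as soon as $\Y$ has no chain of three distinct elements. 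The same criterion applied to $\A$ says exactly that $X = \min(\X)\cup\max(\X)$, and this is what we may use.

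First I would record what closure of $\mathbf{P}$ under $\dpc$ buys. By Lemma~\ref{dual-operations}, $\dpc U = \up(X\comp U)$, so for every $U\in\mathcal{P}$ we have $z\in\dpc U$ if and only if $\down z\not\subseteq U$. Now suppose $p, p'\in X$ lie in the same $\simeq$-class and $p$ is minimal in $\X$. For each $U\in\mathcal{P}$ we also have $\dpc U\in\mathcal{P}$, hence $p\in U\iff p'\in U$ and $p\in\dpc U\iff p'\in\dpc U$. Since $\down p=\{p\}$, the first membership forces $p\in\dpc U\iff p\notin U$; transporting along $\simeq$ gives $\down p'\not\subseteq U\iff p'\in\dpc U\iff p'\notin U$. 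Hence, for every $U\in\mathcal{P}$: if $p'\in U$ then $\down p'\subseteq U$.

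Now suppose, for a contradiction, that $\Y$ has a chain of three distinct elements. Since $\leq^Y$ is the transitive closure of $\sqsubseteq^Y$ and, by antisymmetry of $\leq^Y$, every strictly increasing $\sqsubseteq^Y$-chain has pairwise distinct entries, we obtain three distinct classes $C_0\sqsubset^Y C_1\sqsubset^Y C_2$. Because $X=\min(\X)\cup\max(\X)$, a strict step $C_i\sqsubset^Y C_{i+1}$ is witnessed by a strict inequality in $\X$ whose lower endpoint is minimal; so there are $a\in C_0$ minimal with $a<b_1$ in $\X$ for some $b_1\in C_1$, and $b_0\in C_1$ minimal with $b_0<c$ for some $c\in C_2$. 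Applying the previous paragraph with $p=b_0$ and $p'=b_1$ (valid since $b_0\simeq b_1$ and $b_0$ is minimal) we get, for every $U\in\mathcal{P}$: $b_1\in U$ implies $\down b_1\subseteq U$, and since $a<b_1$ this gives $b_1\in U\Rightarrow a\in U$; as $b_0\simeq b_1$ we also get $b_0\in U\Rightarrow a\in U$. But $C_0\neq C_1$, so some $U_0\in\mathcal{P}$ contains exactly one of $a, b_0$; by the implication just proved it cannot contain $b_0$ while omitting $a$, so $a\in U_0$ and $b_0\notin U_0$, whence $b_1\notin U_0$ because $b_0\simeq b_1$. This contradicts the fact that $U_0$ is an up-set containing $a$ together with $a<b_1$. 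Hence $\Y$ has no chain of three distinct elements, so $\mathbf{A}_{\mathbf{P}}$ is an $\mathsf{RDP}$-algebra.

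I expect the main obstacle to be the second paragraph: seeing that closure under $\dpc$ has bite precisely on the $\simeq$-classes that meet both the bottom and the top layer of $\X$, and that for such a class it forces the $\mathcal{P}$-type of an upper point to determine its whole down-set. Once that is in hand, the clash with ``$U_0$ is an up-set'' is immediate; the remaining ingredients -- rephrasing regularity as height at most two via Theorem~\ref{varlet}, reducing a three-element chain in $\Y$ to a length-two strict $\sqsubseteq^Y$-chain, and extracting the minimal witnesses from the height-two structure of $\X$ -- are routine manipulations with the filtration already set up in this subsection.
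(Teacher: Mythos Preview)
Your proof is correct and follows essentially the same route as the paper: reduce via Theorem~\ref{varlet} to showing $\Y$ has no $3$-chain, extract from a putative $3$-chain a configuration $a<b_1\simeq b_0<c$ with $b_0$ minimal in $\X$, and use closure of $\mathcal{P}$ under $\dpc$ to force a contradiction. The paper arrives at the same configuration (written there as $a<u\simeq v<b$) and uses $\dpc U$ inline to show $v\notin U$ while $v\in U$; you package the $\dpc$-step as a preliminary observation (``if a $\simeq$-class contains a minimal point then membership in any $U\in\mathcal P$ forces $\down$-closure''), which amounts to the same computation.
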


\begin{proof}
Let $\X$ be the dual space of~$\A$, so by Theorem~\ref{varlet}, every element of $\X$ is either minimal or maximal. Suppose that $\mathbf{A}_{\mathbf{P}}$ is not 
an $\mathsf{RDP}$-algebra. Then, by Theorem~\ref{varlet} again, $[x]<^Y[z]<^Y[y]$ holds for some $x,y,z\in X$.
By construction, there exist clopen up-sets
$U,V$ of $\X$ such that $x\notin U$, $z\in U$, and $z\notin V$, $y\in V$.   
Reasoning as in the proof of Lemma~\ref{lem:preserv}(1), we   
obtain a configuration ${a< u\simeq v< b}$, with $u,v,b\in U$ and
$a\notin U$, so $a\in\dpc U$ and $u\in \dpc U$.
Since $\mathbf{P}$ is closed under~$\dpc$,
we have $\dpc U\in \mathcal{P}$; hence $v\in \dpc U$. 
As $v\in \dpc U$, by Lemma~\ref{dual-operations}(2) there exists $w\leq v$ with $w\notin U$. But $v<b$, so $v$ is a minimal element of $\X$. Hence $v = w \notin U$, contradicting the fact that $v\in U$.
\end{proof}

\begin{theorem}\label{thm:double-p-fep}
The variety of regular double p-algebras has the finite embeddability property 
and thus it is generated by its finite members.
\end{theorem}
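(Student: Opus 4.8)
The plan is to establish the finite embeddability property and then invoke Proposition~\ref{lem:fep-fin-gen}. So let $\mathbf{P}$ be a finite partial algebra embeddable in $\mathsf{RDP}$, and fix an algebra $\mathbf{A}\in\mathsf{RDP}$ of which $\mathbf{P}$ is a partial subalgebra. As throughout this subsection, I would view $\mathsf{RDP}$ inside $\mathsf{H}^+$ (equivalently, inside $\mathsf{DH}$) via the term-equivalence of Theorem~\ref{regular-implies-heyting}, so that the dual pseudocomplement $\dpc$ is a fundamental operation and the filtration machinery of Section~\ref{sec:fep} applies to $\CV=\mathsf{H}^+$.

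The only point at which that machinery does not apply verbatim is that Lemma~\ref{lem:height-2} requires the partial subalgebra to be closed under $\dpc$, which $\mathbf{P}$ need not be. To fix this, I would first enlarge $\mathbf{P}$ to the partial subalgebra $\mathbf{P}'$ of $\mathbf{A}$ whose universe $P'$ is the closure of $P$ under the unary operation $\dpc$. Since $\dpc\dpc\dpc x=\dpc x$ holds in every dually pseudocomplemented lattice, $P'=P\cup\{\dpc a\mid a\in P\}\cup\{\dpc\dpc a\mid a\in P\}$, so $\mathbf{P}'$ is finite and closed under $\dpc$; and since $P\subseteq P'$, every operation defined in $\mathbf{P}$ is also defined in $\mathbf{P}'$ with the same value, so $\mathbf{P}$ is a partial subalgebra of $\mathbf{P}'$. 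Now form the $\mathbf{P}'$-filtrate $\mathbf{A}_{\mathbf{P}'}=\bu(\Y)$: it is finite by construction, and because $\mathbf{P}'$ is closed under $\dpc$ and $\mathbf{A}$ is an $\mathsf{RDP}$-algebra, Lemma~\ref{lem:height-2} gives $\mathbf{A}_{\mathbf{P}'}\in\mathsf{RDP}$. By Corollary~\ref{PintoA_P}, $\mathbf{P}'$ embeds as a partial subalgebra into $\mathbf{A}_{\mathbf{P}'}$, and restricting this embedding to $P$ shows that $\mathbf{P}$ embeds into the finite $\mathsf{RDP}$-algebra $\mathbf{A}_{\mathbf{P}'}$. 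Hence $\mathsf{RDP}$ has the finite embeddability property, and by Proposition~\ref{lem:fep-fin-gen} it is generated as a quasivariety — a fortiori as a variety — by its finite members.

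The heart of the matter is thus the closure-under-$\dpc$ requirement of Lemma~\ref{lem:height-2}: one has to verify that adjoining the extra clopen up-sets $\dpc U$ keeps the partial algebra finite (via $\dpc\dpc\dpc x=\dpc x$) and leaves the embedding of the original $\mathbf{P}$ undisturbed, and — if one wishes to be scrupulous about signatures — that passing through the term-equivalence between regular double $p$-algebras and their double Heyting reducts is harmless for finite partial algebras, all the operations concerned being mutually term-definable. Everything else is delivered by the results already proved in this subsection, and no genuinely new construction is needed.
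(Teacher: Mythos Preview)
Your proposal is correct and follows essentially the same route as the paper's proof: enlarge the given finite partial subalgebra by adjoining $\dpc s$ and $\dpc\dpc s$ for each element $s$ (finite because $\dpc\dpc\dpc x=\dpc x$), apply Lemma~\ref{lem:height-2} to the resulting $\dpc$-closed partial algebra, and conclude via Corollary~\ref{PintoA_P} and Proposition~\ref{lem:fep-fin-gen}. The only difference is notational (the paper calls the original partial algebra $\mathbf{S}$ and the enlarged one $\mathbf{P}$), and your extra remarks about the term-equivalence between $\mathsf{RDP}$ and its $\mathsf{H}^+$-reduct are a reasonable bit of caution that the paper leaves implicit.
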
  

\begin{proof}
Let $\A$ be a regular double p-algebra and let $\mathbf{S}$ be a finite partial subalgebra of~$\A$.
As the equality
$\dpc\dpc\dpc x\approx \dpc x$ holds in $\mathsf{H}^+$, the set
${P := S\cup \{\dpc s, \dpc\dpc s\mid s\in S\}}$ (where $\dpc$ is taken in~$\mathbf{A}$) is closed under $\dpc$, and finite. Let $\mathbf{P}$ be the
partial subalgebra of $\mathbf{A}$ with the universe~$P$, and let
$\mathbf{A}_{\mathbf{P}}$ be the corresponding $\mathbf{P}$-filtrate of $\mathbf{B}$. Then we have $\mathbf{S}\leq \mathbf{P}\leq\mathbf{A}_{\mathbf{P}}$,
 by Corollary~\ref{PintoA_P}. As $\mathbf{A}_{\mathbf{P}}\in\mathsf{RDP}$ by Lemma~\ref{lem:height-2}, we are done.
\end{proof}

\section{Subvarieties of \h{s}}\label{sec:subvarofH+}

\subsection{Small subvarieties}

Let $\mathbf{2}$ and $\mathbf{3}$ denote, respectively, the 
two-element and three-element chains.
Any structure on the chains will be determined by the context.
Since every non-trivial double Heyting algebra has $\{0,1\}$ as a subuniverse, the variety $\Var(\mathbf{2})$ of Boolean algebras is the smallest non-trivial subvariety of $\mathsf{DH}$. The same thing applies to $\mathsf{H^+}$ and $\mathsf{RDP}$.
The next obvious candidate is the three-element chain. 
We will begin by characterising, in terms of the dual space, the double Heyting algebras that have a subalgebra isomorphic to $\mathbf{3}$.
The next lemma shows that it suffices to do so for \h{s}. 

\begin{lemma}\label{remark-subalgebra} 
	If $\mathbf{A}$ is a double Heyting algebra, then $\mathbf{3} \leq \mathbf{A}$ if and only if $\mathbf{3} \leq \mathbf{A}^\flat$.
\end{lemma}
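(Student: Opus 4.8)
The plan is to settle the forward implication by simply forgetting structure, and the converse by a short case check inside the three-element chain.

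\emph{Forward.} In a double Heyting algebra the dual pseudocomplement satisfies $\dpc x = 1\dotdiv x$, so $\dpc$ is a term of the double Heyting signature and is therefore preserved by every double Heyting embedding. Consequently, composing a double Heyting embedding $\mathbf 3\to\mathbf A$ with the forgetful map onto the $\langle\join,\meet,\to,\dpc,0,1\rangle$-reduct yields an embedding $\mathbf 3\to\mathbf A^\flat$ of \h{s}. One should note here that the \h{} reduct of the double Heyting algebra $\mathbf 3$ really is ``the'' \h{} $\mathbf 3$; this is automatic because the relative and dual relative pseudocomplements on any chain are uniquely determined by the order.

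\emph{Converse.} Let $h\colon\mathbf 3\to\mathbf A^\flat$ be an embedding and put $S:=h(\mathbf 3)=\{0,m,1\}$ with $0<m<1$. Since $h$ preserves $\join$ and $\meet$, the set $S$ is a $\{0,1\}$-sublattice of $\mathbf A$; it remains to show that $S$ is closed under $\dotdiv^{\mathbf A}$ and that $h$ preserves $\dotdiv$, for then $h$ witnesses $\mathbf 3\le\mathbf A$. I would use three facts that hold in every double Heyting algebra and follow at once from $x\vee a\ge b\Longleftrightarrow x\ge b\dotdiv a$: (i)~$b\dotdiv a=0$ whenever $b\le a$; (ii)~$b\dotdiv 0=b$ for all $b$; (iii)~$1\dotdiv a=\dpc a$. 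Running over the pairs $(b,a)\in S^2$: if $b\le a$, then (i) gives $0\in S$; if $b\not\le a$ and $a=0$, then (ii) gives $b\in S$; the only remaining pair is $(1,m)$, for which (iii) gives $\dpc^{\mathbf A}m=h(\dpc^{\mathbf 3}m)\in S$ since $h$ preserves $\dpc$. Hence $S$ is closed under $\dotdiv^{\mathbf A}$. Finally, $b\dotdiv^{\mathbf A}a$ is the least element of $A$ with $x\vee a\ge b$; as it lies in $S$, it is a fortiori the least such element of $S$, which is precisely $h(b\dotdiv^{\mathbf 3}a)$. Thus $h$ preserves $\dotdiv$ and so is a double Heyting embedding.

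I do not anticipate a genuine obstacle: the lemma is a bookkeeping check, and the case analysis is short precisely because the only value $\dotdiv^{\mathbf A}$ can produce on $S$ that is not obviously in $S$ is $\dpc^{\mathbf A}$ of an argument, which $h$ handles. The one point meriting care is the reduct identification in the forward direction — making precise that the two occurrences of $\mathbf 3$ carry compatible $\to$ and $\dpc$ — and this is dealt with by uniqueness of (dual) relative pseudocomplements on chains.
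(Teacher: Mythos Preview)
Your proof is correct and follows essentially the same approach as the paper. Both directions are handled the same way: the forward implication by taking reducts, and the converse by checking that a three-element subuniverse of $\mathbf A^\flat$ is closed under $\dotdiv$. Your case split via (i)--(iii) is a slightly cleaner packaging than the paper's, which lists the identities $x\dotdiv 1=0$, $0\dotdiv x=0$, $x\dotdiv 0=x$ and leaves the cases $x\dotdiv x$ and $1\dotdiv x$ implicit (the latter handled, as you note, by closure under $\dpc$). Your explicit verification that $h$ preserves $\dotdiv$, not just that $S$ is closed under it, is a nice extra bit of rigour that the paper omits.
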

\begin{proof}
	Let $\mathbf{A}$ be a double Heyting algebra.
If $\ub{3}$ embeds into $\mathbf{A}$, then it is obvious that $\ub{3}$ embeds into $\mathbf{A}^\flat$.
For the converse, it is easily checked that, for all $x \in A$, we have $x \dotdiv 1 = 0$, $0 \dotdiv x = 0$, and $x \dotdiv 0 = x$.
Consequently, if $\{0,1,x\}$ is a subuniverse of $\mathbf{A}^\flat$, then it is closed under $\dotdiv$ and therefore it is a subuniverse of $\mathbf{A}$.
\end{proof}

By observing that finite products of \h{s} correspond to disjoint unions of ordered sets in the dual, what follows is a consequence of Corollary~4.5 in~\cite{normalfilters}.
\begin{proposition}\label{si-simple}
Let $\X$ be a finite ordered set. 
Then, as an \h{} or a double Heyting algebra, $\bu{(\X)}$ is simple if and only if $\X$ is connected.
\end{proposition}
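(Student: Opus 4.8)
The plan is to reduce to the case of \h{s} and then to list the congruences of $\bu(\X)$ through their associated filters. Since $\X$ is finite its topology is discrete, so every up-set is clopen, conditions \eqref{p:cond:1}--\eqref{p:cond:3} hold trivially, and hence $\bu(\X)$ underlies both an \h{} and a double Heyting algebra by Theorem~\ref{space-characterisation}. By Theorem~\ref{congruence-isomorphism} the congruence lattice of $\bu(\X)$ as a double Heyting algebra coincides with the congruence lattice of its \h{} term reduct, so it suffices to prove the equivalence for $\bu(\X)$ regarded as an \h{} (and, as always, for $\X \ne \varnothing$, since $\bu(\varnothing)$ is trivial).

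For the direction ``simple $\Rightarrow$ connected'' I would argue contrapositively. If $\X$ is not connected, pick $x\in X$ with $C := \bigcup_{n\in\mathbb{N}}\updown^n x \ne X$; then $C$ is at once an up-set and a down-set of $\X$, so $\X$ is the disjoint union $\X\restrictedto{C}\dotcup\X\restrictedto{X\comp C}$ of two non-empty ordered sets with no comparabilities across. Since a disjoint union of Priestley spaces dualises to the product of the corresponding algebras, $\bu(\X)\cong\bu(\X\restrictedto{C})\times\bu(\X\restrictedto{X\comp C})$ as \h{s}, and both factors are non-trivial; a non-trivial direct product has a kernel of a projection as a proper non-zero congruence, so $\bu(\X)$ is not simple.

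For the converse, assume $\X$ is connected. The congruences of the \h{} $\bu(\X)$ correspond bijectively to its filters closed under $\delta$ via $F\mapsto\theta(F)$, with $F = 1/\theta(F)$. As $\bu(\X)$ is finite, every filter is principal, say $F_U = \{\,V\in\bu(\X)\mid V\supseteq U\,\}$ for a unique $U\in\bu(\X)$; here $F_X = \{X\}$ yields the identity congruence and $F_\varnothing = \bu(\X)$ the full congruence. By Lemma~\ref{dual-operations}(5), $\delta U = \ps U = X\comp\down\up(X\comp U)$, and since $\delta$ is order-preserving, $F_U$ is closed under $\delta$ precisely when $U\subseteq\delta U$, that is, when $U\cap\down\up(X\comp U) = \varnothing$; a short check shows this is equivalent to $U$ being a down-set as well as an up-set. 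In a connected $\X$ the only subsets that are both up-sets and down-sets are $\varnothing$ and $X$: given such a non-empty $U$ and $u\in U$, we have $U = \updown U$, hence $\updown^n u\subseteq U$ for all $n$, and connectedness forces $U = X$. Thus $\bu(\X)$ has exactly two congruences and is simple.

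The one step that demands genuine care is the computation identifying the $\delta$-closed principal filters of $\bu(\X)$ via Lemma~\ref{dual-operations}(5) --- in particular the equivalence of $U\cap\down\up(X\comp U) = \varnothing$ with ``$U$ is both an up-set and a down-set'' --- after which connectedness finishes the argument; the disconnected direction is routine bookkeeping. Alternatively, this last paragraph can be replaced by a direct appeal to the classification of simple \h{s} in Sankappanavar~\cite[Corollary~4.5]{normalfilters}, which is presumably how the paper intends to proceed.
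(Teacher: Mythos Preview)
Your proof is correct. The paper does not give a self-contained argument: it simply observes that disjoint unions of finite ordered sets correspond to products of \h{s} and then cites Sankappanavar~\cite[Corollary~4.5]{normalfilters}, exactly the alternative you mention in your final sentence. Your direct route --- identifying $\delta$-closed principal filters $F_U$ via $U\subseteq\delta U$ and showing this forces $U$ to be both an up-set and a down-set --- is a genuine improvement in self-containment, and the verification of that equivalence is straightforward (if $u\in U\cap\down\up(X\comp U)$, pick $z\geq u$ and $w\leq z$ with $w\notin U$; then $z\in U$ since $U$ is an up-set, and $w\leq z$ witnesses that $U$ is not a down-set; the converse is immediate). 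So you recover the result without the external reference, at the cost of a short computation the paper outsources.
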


\begin{definition}
	Let $\X$ be an ordered set. If $x \in \min(\X) \cap \max(\X)$, then we will call $x$ \emph{order-isolated}.
\end{definition}

Recall that under the duality, if $\mathbf{A}$ and $\mathbf{B}$ are \h{s}, then an embedding $h \colon \mathbf{A} \to \mathbf{B}$ corresponds to a surjective \h[morphism] $\varphi \colon \cat F_{\!p}(\mathbf{B}) \to \cat F_{\!p}(\mathbf{A})$.

\begin{proposition}\label{3-chain}
	Let $\X$ be an \h[space]{}. Then there exists a surjective \h[morphism]{} $\varphi\colon \X \to {\mathbf{2}}$ if and only if $\X$ has no order-isolated elements. 
\end{proposition}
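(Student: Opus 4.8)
The plan is to translate ``there is a surjective $\h[morphism]$ onto the two-element chain $\mathbf 2$'' into the existence of a single clopen up-set of $\X$ that contains all maximal points and avoids all minimal points, and then to observe that such a set can exist if and only if no point is simultaneously maximal and minimal. Write $\bot < \top$ for the two elements of $\mathbf 2$, and recall that $\up\bot = \mathbf 2$, $\up\top = \{\top\}$, while $\min_{\mathbf 2}(\down t) = \{\bot\}$ for every $t\in\mathbf 2$.

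For the forward implication I would argue by contraposition, in fact proving the stronger statement that an $\h[space]$ with an order-isolated point admits no $\h[morphism]$ onto $\mathbf 2$ at all. Suppose $x_0\in\min(\X)\cap\max(\X)$, so that $\up x_0 = \down x_0 = \{x_0\}$, and let $\varphi\colon\X\to\mathbf 2$ be any $\h[morphism]$. Condition~\eqref{m:cond:1} at $x_0$ reads $\{\varphi(x_0)\} = \varphi(\up x_0) = \up\varphi(x_0)$, and since $\top$ is the only element of $\mathbf 2$ equal to its own up-set, this forces $\varphi(x_0) = \top$. But then condition~\eqref{m:cond:3} at $x_0$ reads $\{\top\} = \{\varphi(x_0)\} = \varphi(\min_\X(\down x_0)) = \min_{\mathbf 2}(\down\varphi(x_0)) = \min_{\mathbf 2}(\down\top) = \{\bot\}$, a contradiction.

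For the converse, assume $\X$ has no order-isolated point, i.e.\ $\max(\X)\cap\min(\X) = \varnothing$. Since $\but(\X)$ is an $\h[algebra]$ it is both pseudocomplemented and dually pseudocomplemented, so by \ref{priestley:3} both $\max(\X)$ and $\min(\X)$ are closed; the former is an up-set, the latter a down-set, and both are non-empty by \ref{priestley:1}. The Priestley separation property \ref{priestley:2}, applied to this disjoint pair, produces a clopen up-set $W$ with $\max(\X)\subseteq W$ and $W\cap\min(\X) = \varnothing$, whence $\varnothing\ne W\ne X$. I would then define $\varphi\colon\X\to\mathbf 2$ by $\varphi^{-1}(\top) = W$; this map is continuous because $W$ is clopen, order-preserving because $W$ is an up-set, and surjective because $W$ is proper and non-empty. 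Checking \eqref{m:cond:1}: if $x\in W$ then $\up x\subseteq W$ and both sides equal $\{\top\}$; if $x\notin W$ then $\bot\in\varphi(\up x)$, and choosing a maximal $m\ge x$ (available by \ref{priestley:1}) we have $m\in\max(\X)\subseteq W$, so $\top\in\varphi(\up x)$ as well, and both sides equal $\mathbf 2 = \up\bot$. Checking \eqref{m:cond:3}: for every $x$ the set $\min_\X(\down x)$ is non-empty (by \ref{priestley:1}) and contained in $\min(\X)$, hence disjoint from $W$, so $\varphi(\min_\X(\down x)) = \{\bot\} = \min_{\mathbf 2}(\down\varphi(x))$, the last equality holding whatever the value of $\varphi(x)$. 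This yields the required surjective $\h[morphism]$. The only step that needs a moment's thought is the construction of $W$: one must observe that an $\h[space]$ is the dual of a lattice carrying \emph{both} a pseudocomplement and a dual pseudocomplement, so that \ref{priestley:3} makes $\max(\X)$ and $\min(\X)$ simultaneously closed and the separation lemma applies; once $W$ is in hand the verification of \eqref{m:cond:1} and \eqref{m:cond:3} is routine.
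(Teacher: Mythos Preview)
Your proof is correct and follows essentially the same approach as the paper: both directions hinge on the same separating clopen up-set $W$ with $\max(\X)\subseteq W$ and $W\cap\min(\X)=\varnothing$, obtained from the closedness of $\max(\X)$ and $\min(\X)$ together with Priestley separation. The only cosmetic difference is that the paper phrases the converse dually, noting that $\{\varnothing,W,X\}$ is a subalgebra of $\but(\X)$ isomorphic to~$\mathbf 3$, whereas you construct the corresponding surjection onto~$\mathbf 2$ explicitly and verify \eqref{m:cond:1} and \eqref{m:cond:3} by hand; and for the forward direction the paper invokes Lemma~\ref{minimals} where you unpack the morphism conditions directly.
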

\begin{proof}
If $\max(\X) \cap \min(\X) = \varnothing$, then since $\min(\X)$ and $\max(\X)$ are closed subsets of $\X$, there exists a clopen up-set $U$ such that $\max(\X) \subseteq U$ and $\min(\X) \cap U = \varnothing$. It is then easily verified that the set $\{\varnothing, U, X\}$ is a subalgebra of $\but(\X)$ isomorphic to~$\mathbf{3}$. Conversely, let $x \in X$, assume that $x$ is order-isolated, and let $\varphi$ be a morphism on $\X$. By Lemma~\ref{minimals} it follows that $\varphi(x)$ is both minimal and maximal. Since ${\mathbf{2}}$ has no such elements, the codomain of $\varphi$ cannot be ${\mathbf{2}}$.
\end{proof}

\begin{corollary}
Let $\X$ be a double Heyting space. Then there exists a surjective double Heyting morphism $\varphi\colon \X \to {\mathbf{2}}$ if and only if $\X$ has no order-isolated elements. 
\end{corollary}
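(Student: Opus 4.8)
The plan is to reduce this to Proposition~\ref{3-chain} and to check only that the extra double Heyting structure is harmless. For the backward direction I would argue by contraposition, exactly as there: if $x\in X$ is order-isolated, i.e.\ $x\in\min(\X)\cap\max(\X)$, and $\varphi$ is any morphism on $\X$, then -- since a double Heyting morphism is in particular an H\textsuperscript{+}-morphism -- Lemma~\ref{minimals} forces $\varphi(x)$ to be simultaneously minimal and maximal in $\codom(\varphi)$; as the two-element chain $\mathbf{2}$ has no element that is both minimal and maximal, $\codom(\varphi)$ cannot be $\mathbf{2}$, so no surjective double Heyting morphism $\X\to\mathbf{2}$ exists.

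For the forward direction, assume $\X$ has no order-isolated element, so that $\min(\X)\cap\max(\X)=\varnothing$. Since $\X$ is a double Heyting space, $\but(\X)$ is a double Heyting algebra, hence both pseudocomplemented and dually pseudocomplemented, so both $\max(\X)$ and $\min(\X)$ are closed. As they are disjoint, $\max(\X)$ is an up-set and $\min(\X)$ is a down-set, the separation property for Priestley spaces yields a clopen up-set $U$ with $\max(\X)\subseteq U$ and $U\cap\min(\X)=\varnothing$. I would first record the two consequences $\down U=X$ and $\up(X\comp U)=X$: every point of $X$ lies below some maximal element, which belongs to $U$, and above some minimal element, which belongs to $X\comp U$.

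Next I would check that $\{\varnothing,U,X\}$ is a double Heyting subalgebra of $\but(\X)$ isomorphic to the three-element chain $\mathbf{3}$. Closure under $\cap$, $\cup$, $0$ and $1$ is trivial, and for $\to$ and $\dotdiv$ one reads off the values from Lemma~\ref{dual-operations}; the only cases that are not immediately $\varnothing$ or $X$ are $U\to\varnothing=X\comp\down U=\varnothing$, $X\to U=X\comp\down(X\comp U)=U$ (using that $X\comp U$ is a down-set), $U\dotdiv\varnothing=\up U=U$ and $X\dotdiv U=\up(X\comp U)=X$, all of which reduce to the two identities recorded above. Dualising the resulting double Heyting embedding $\mathbf{3}\cong\{\varnothing,U,X\}\hookrightarrow\but(\X)$ gives a surjective double Heyting morphism from $\X$ onto the dual space of $\mathbf{3}$, which is $\mathbf{2}$, as required.

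The only step that is genuinely new compared with Proposition~\ref{3-chain} is the verification that $\{\varnothing,U,X\}$ is closed under the binary dual relative pseudocomplement $\dotdiv$, and this is just the order dual of the computation already carried out there for $\to$; so I do not expect any real obstacle. Equivalently, one could simply observe that the subalgebra exhibited in the proof of Proposition~\ref{3-chain} is automatically a double Heyting subalgebra and that the backward direction of that argument is unchanged here, since double Heyting morphisms are H\textsuperscript{+}-morphisms.
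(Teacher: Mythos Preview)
Your proof is correct and matches the paper's approach; the corollary is stated without proof there, being immediate from Proposition~\ref{3-chain} together with Lemma~\ref{remark-subalgebra} (which says that a three-element subuniverse $\{0,x,1\}$ of the $\mathsf{H}^+$-reduct is automatically closed under $\dotdiv$). Your direct verification of closure under $\dotdiv$ simply unpacks what Lemma~\ref{remark-subalgebra} already provides, and your final sentence identifies exactly this shortcut.
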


This is not enough to show that every non-trivial and non-Boolean subvariety of \h{s} contains the three-element chain.
The \h[space]{} depicted in Figure~\ref{fig:counterexample} is the dual of a subdirectly irreducible \h{} (its congruence lattice is a three-element chain) and it has an order-isolated element, so the algebra has no subalgebra isomorphic to $\mathbf{3}$. 
Yet, as we will see shortly, the variety it generates contains $\mathbf{3}$. 
On the other hand, it is true that $\mathbf{3}$ embeds into every \emph{finite} non-Boolean subdirectly irreducible \h{}.
Indeed, by Proposition~\ref{useful}, if $\bu(\X)$ is a finite subdirectly irreducible \h{}, then it is simple.
Then $\X$ is connected by Proposition~\ref{si-simple}, so it cannot have any order-isolated elements unless $|X| = 1$.

\begin{corollary}
	If $\mathbf{A}$ is a finite non-Boolean subdirectly irreducible double Heyting algebra or \h{}, then $\mathbf{3} \leq \mathbf{A}$.
\end{corollary}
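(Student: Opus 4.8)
The plan is to reduce the double Heyting case to the \h{} case and then read the result straight off the restricted Priestley duality. First I would observe that if $\mathbf{A}$ is a finite non-Boolean subdirectly irreducible double Heyting algebra, then its \h{} term reduct $\mathbf{A}^\flat$ is finite, has the same (non-Boolean) underlying lattice, and is subdirectly irreducible by Theorem~\ref{congruence-isomorphism}, since $\con(\mathbf{A}) = \con(\mathbf{A}^\flat)$. By Lemma~\ref{remark-subalgebra}, $\mathbf{3} \leq \mathbf{A}$ if and only if $\mathbf{3} \leq \mathbf{A}^\flat$, so it suffices to prove the statement for finite non-Boolean subdirectly irreducible \h{s}.

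So suppose $\mathbf{A}$ is such an \h{}. Being finite, its Priestley dual reduces to a finite ordered set $\X = \cat F_{\!p}(\mathbf{A})$, which is in particular an \h[space]{} by Theorem~\ref{space-characterisation}, and $\mathbf{A} \cong \bu(\X)$. By Proposition~\ref{useful}(4) the algebra $\mathbf{A}$ is simple, hence $\X$ is connected by Proposition~\ref{si-simple}. If $|X| = 1$ then $\mathbf{A} \cong \bu(\X) \cong \mathbf{2}$ would be Boolean, contrary to hypothesis; so $|X| \geq 2$. A connected ordered set with more than one element can have no order-isolated point, because for an order-isolated $x$ we would have $\updown^n x = \{x\} \ne X$ for every $n$.

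Then I would apply Proposition~\ref{3-chain}: since $\X$ has no order-isolated elements there is a surjective \h[morphism] $\varphi \colon \X \to \mathbf{2}$ onto the two-element chain viewed as an \h[space]{}. Dualising this surjection, and using $\bu(\mathbf{2}) \cong \mathbf{3}$, yields an embedding $\mathbf{3} \hookrightarrow \bu(\X) \cong \mathbf{A}$, which is exactly what is wanted. Concretely this is just the subalgebra $\{\varnothing, U, X\}$ of $\mathbf{A}$ produced in the proof of Proposition~\ref{3-chain}, where $U$ is any clopen up-set separating $\max(\X)$ from $\min(\X)$; using Lemma~\ref{dual-operations} together with $\down U = X = \up(X \comp U)$ one checks routinely that this three-element chain is closed under $\neg, \dpc, \cap, \cup, \to, \dotdiv$.

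I do not anticipate a real obstacle here: every ingredient is already in place. The only points needing a little care are the opening reduction — confirming that subdirect irreducibility and non-Booleanness transfer cleanly between $\mathbf{A}$ and $\mathbf{A}^\flat$ — and, should one prefer the explicit construction to the dual formulation, the verification that $\{\varnothing, U, X\}$ is a subalgebra, which is immediate from the operation formulas in Lemma~\ref{dual-operations}.
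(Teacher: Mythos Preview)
Your proposal is correct and follows essentially the same route as the paper: finite subdirectly irreducible implies simple (Proposition~\ref{useful}(4)), simple implies the dual is connected (Proposition~\ref{si-simple}), connectedness with $|X|\ge 2$ rules out order-isolated points, and then Proposition~\ref{3-chain} supplies the embedding of $\mathbf{3}$. The paper handles the double Heyting case directly via the double Heyting analogue of Proposition~\ref{3-chain} rather than by first passing to $\mathbf{A}^\flat$, but your reduction through Lemma~\ref{remark-subalgebra} and Theorem~\ref{congruence-isomorphism} is an equally valid way to cover that case.
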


\begin{figure}[ht]
\begin{tikzpicture}
\foreach \x[evaluate=\x as \y using {3*\x/28},evaluate=\x as \yi using {1.5-3*\x/28},evaluate=\x as \i using {int(2*\x)}, evaluate=\x as \j using {int(2*\x+1)}] in {0,...,4} {
\draw (\x,\y) node[fence node] (x\x) {};
\draw (\x,\yi) node[fence node] (y\x) {};
\draw (x\x) -- (y\x);
}
\draw \foreach \x [remember=\x as \lastx (initially 0)] in {1,...,4}{(y\lastx) -- (x\x)};
\draw (7,0.75) node[fence node] (limit) {};
\coordinate (temp) at ($(5, {15/28})$);
\draw (y4) -- ($(y4)!0.4!(temp)$) ;
\draw[loosely dotted] (y4) -- (limit) -- (x4);
\end{tikzpicture}
\caption{}\label{fig:counterexample}
\end{figure}

To prove that every non-trivial and non-Boolean subvariety of double Heyting algebras contains the three-element chain, the next lemma will be useful. 
For convenience, let $\sigma x = \dpc\neg x$. 

\begin{lemma}\label{dq-counting}
Let $\X$ be an \h[space]{} and let $U$ be a clopen up-set in $\X$. If $U \neq \varnothing$, then $\delta^n\sigma^{n+1} U \neq \varnothing$, for all $n \in \omega$.
\end{lemma}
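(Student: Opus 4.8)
The plan is to transfer everything to the Priestley dual $\X$ using Lemma~\ref{dual-operations} and then argue order-theoretically. Since $\X$ is an \h[space]{}, the algebra $\but(\X)$ is an \h{}, so $\neg$ and $\dpc$ are total operations on it and hence so are $\sigma=\dpc\neg$ and $\delta=\ps$. A direct computation from parts~(1), (2) and~(5) of Lemma~\ref{dual-operations} gives, for every clopen up-set $W$ of $\X$,
\[
\sigma W=\dpc\neg W=\up\bigl(X\comp(X\comp\down W)\bigr)=\up\down W
\qquad\text{and}\qquad
\delta W=\ps W=X\comp\down\up(X\comp W),
\]
the second of which unwinds to the pointwise description: $x\in\delta W$ if and only if $\down\up x\subseteq W$ (indeed $x\in\down\up(X\comp W)$ precisely when some $y\ge x$ has an element $z\le y$ lying outside $W$). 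First I would record three easy facts: $\sigma W$ is always an up-set of $\X$ (it has the form $\up(\cdot)$); $W\subseteq\sigma W$ for every up-set $W$; and $\delta$ is order-preserving (already noted in Section~\ref{sec:split-alg-var-log}).

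The heart of the argument is the following one-step inclusion: \emph{for every up-set $W$ of $\X$ we have $W\subseteq\delta\sigma W$.} This is immediate from the two displayed formulas. Let $x\in W$ and let $z\in\down\up x$; pick $y\in X$ with $x\le y$ and $z\le y$. Since $W$ is an up-set and $x\le y$, we get $y\in W$, hence $z\in\down W\subseteq\up\down W=\sigma W$. Therefore $\down\up x\subseteq\sigma W$, i.e.\ $x\in\delta\sigma W$.

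It then remains only to iterate, and I would prove the slightly stronger statement $U\subseteq\delta^n\sigma^{n+1}U$ by induction on $n$. For $n=0$ this is just $U\subseteq\up\down U=\sigma U$. For $n\ge1$ the set $\sigma^nU$ is an up-set (it has the form $\up(\cdot)$), so the one-step inclusion applied with $W=\sigma^nU$ gives $\sigma^nU\subseteq\delta\sigma(\sigma^nU)=\delta\,\sigma^{n+1}U$; applying the order-preserving operation $\delta^{n-1}$ to both sides and invoking the induction hypothesis $U\subseteq\delta^{n-1}\sigma^nU$ yields
\[
U\subseteq\delta^{n-1}\sigma^nU\subseteq\delta^{n-1}\bigl(\delta\,\sigma^{n+1}U\bigr)=\delta^n\sigma^{n+1}U.
\]
Since $U\neq\varnothing$ by hypothesis, this gives $\delta^n\sigma^{n+1}U\neq\varnothing$ for all $n\in\omega$, as required.

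I do not expect any genuine obstacle: once the dual-space descriptions of $\delta$ and $\sigma$ are written down, the proof is a one-line computation (the one-step inclusion $W\subseteq\delta\sigma W$) followed by a routine induction. The only points needing a little care are bookkeeping ones — checking that each set to which the one-step inclusion is applied is really an up-set (so that $\sigma^nU$ qualifies), and using the monotonicity of $\delta$ so that the induction hypothesis passes through the outer $\delta^{n-1}$.
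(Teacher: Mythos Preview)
Your proof is correct. Both you and the paper translate to the dual space and effectively establish the stronger inclusion $U\subseteq\delta^n\sigma^{n+1}U$, but the arguments differ in structure. The paper gives a single non-inductive contradiction: assuming $\delta^n\sigma^{n+1}U=\varnothing$, it picks $u\in U$ and $y\in X\comp(\up\down)^{n+1}U$ with $u\in(\down\up)^n y$, then uses the symmetry of the $(\down\up)^n$ relation together with the regrouping $\up(\down\up)^n\down=(\up\down)^{n+1}$ to force $y\in(\up\down)^{n+1}U$. You instead isolate the one-step inclusion $W\subseteq\delta\sigma W$ for up-sets $W$ and iterate by induction using monotonicity of $\delta$. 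Your route makes the inductive mechanism explicit and yields a reusable lemma ($W\subseteq\delta\sigma W$), while the paper's route is a one-shot argument that handles all $n$ simultaneously without induction; both are equally elementary and of comparable length.
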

\begin{proof}
Assume that $U \neq \varnothing$ and suppose that $\delta^n\sigma^{n+1}U = \varnothing$. 
This means that $(\down\up)^n(X \comp (\up\down)^{n+1}U) = X$. 
Then, for each $u \in U$, there exists $y \in X\comp(\up\down)^{n+1}U$ such that $u \in (\down\up)^n y$.
But then $y \in (\down\up)^nu \subseteq \up(\down\up)^n\down u \subseteq (\up\down)^{n+1}U$, a contradiction. 
\end{proof}

\begin{theorem}\label{3-subalgebra2}
	Let $\A$ be an \h{}.
	If $\A$ is not Boolean, then $\mathbf{3} \in \Var(\A)$.
	More precisely, if $\A$ is non-Boolean and subdirectly irreducible, then
	there exists a congruence 
	${\alpha \in \con(\mathbf{A}^\omega)}$ such that 
	$\mathbf{3} \leq \mathbf{A}^\omega/\alpha$. 
\end{theorem}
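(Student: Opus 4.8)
The plan is to reduce to the subdirectly irreducible case and then, inside $\mathbf{A}^\omega$, to build an element whose image in a well-chosen quotient realises the three-element chain, with Lemma~\ref{dq-counting} supplying exactly the non-collapse that is needed. For the reduction: $\mathsf{H}^+$ has a Heyting reduct, so it is congruence-distributive, and every variety is generated by its subdirectly irreducible members; hence it is enough to prove the refined assertion, and then for a general non-Boolean $\mathbf{A}$ one passes to a non-Boolean subdirectly irreducible homomorphic image $\mathbf{B}$ of $\mathbf{A}$ to conclude $\mathbf{3}\in\Var(\mathbf{B})\subseteq\Var(\mathbf{A})$. Such a $\mathbf{B}$ exists because if $\mathbf{A}$ is a subdirect product of subdirectly irreducibles $\mathbf{A}_i$ all of which are Boolean, then $\neg x=\dpc x$ in every $\mathbf{A}_i$ — both sides being the complement of $x$ — hence in $\mathbf{A}$, so $\mathbf{A}$ is Boolean. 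From now on $\mathbf{A}$ is subdirectly irreducible and non-Boolean.

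Being non-Boolean, $\mathbf{A}$ has an element $u$ with $0<u<1$; choosing $u$ with care — using non-Booleanness again to avoid the degenerate case in which $\sigma$ sends every nonzero element to $1$ — we may assume in addition that $\sigma^{n+1}u\ne1$ for all $n$. Put $c_n:=\delta^n\sigma^{n+1}u$. Identifying the elements of $\mathbf{A}$ with the clopen up-sets of its Priestley dual and using Lemma~\ref{dual-operations}, Lemma~\ref{dq-counting} applied to the up-set of $u$ gives $c_n\ne0$ for every $n\in\omega$; and since $\mathbf{A}$ is subdirectly irreducible and non-Boolean its only complemented elements are $0$ and $1$ (Proposition~\ref{useful}(2)), so each $c_n$ is non-complemented. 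Now pass to $\mathbf{A}^\omega$ and let $\bar c\in A^\omega$ have $n$-th coordinate $c_n\vee\neg c_n$; each such coordinate is dense and non-complemented, so $\neg\bar c=\bar 0$ and $\bar c\ne\bar 1$. Let $F$ be the least $\delta$-closed filter of $\mathbf{A}^\omega$ containing $\sim\bar c$, put $\alpha:=\theta(F)$ (a congruence on $\mathbf{A}^\omega$ by~(P5)) and $\hat c:=\bar c/\alpha$. Then $\neg\hat c=(\neg\bar c)/\alpha=0$ and $\sim\hat c=(\sim\bar c)/\alpha=1$. Provided that $\bar 0\notin F$ and $\bar c\notin F$ — so that $0/\alpha$, $\hat c$ and $1/\alpha$ are pairwise distinct — a direct check of the values of $\join$, $\meet$, $\to$ and $\dpc$ on these three elements shows that $\{0/\alpha,\hat c,1/\alpha\}$ is a subuniverse of $\mathbf{A}^\omega/\alpha$ isomorphic to $\mathbf{3}$, and then $\mathbf{3}\le\mathbf{A}^\omega/\alpha\in\Var(\mathbf{A})$ as required.

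The main obstacle is therefore exactly the non-collapse $\bar 0\notin F$ and $\bar c\notin F$. Since $F$ is the up-closure of $\{\delta^k(\sim\bar c):k\in\omega\}$, this amounts to showing that for every $k$ the element $\delta^k(\sim\bar c)$ lies below neither $\bar 0$ nor $\bar c$; coordinatewise, that for each $k$ there is an $n$ with $\delta^k\!\bigl(\sim(c_n\vee\neg c_n)\bigr)\ne0$ and $c_n\vee\neg c_n\not\ge\delta^k\!\bigl(\sim(c_n\vee\neg c_n)\bigr)$. This is precisely where the non-vanishing in Lemma~\ref{dq-counting} is used: one takes $n$ large relative to $k$ and compares the $\delta$-, $\sim$- and $\neg$-iterates occurring in $\delta^k(\sim(c_n\vee\neg c_n))$ with a surviving witness $\delta^{m}\sigma^{m+1}u\ne0$. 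Turning this comparison into a proof is a bookkeeping matter about how the three unary operations $\delta$, $\sigma$ and $\neg$ interact, and it may be necessary to re-index $\bar c$ as $(c_{m_n}\vee\neg c_{m_n})_n$ with $m_n$ growing fast enough that each coordinate is affected by only finitely many of the inequalities coming from $\delta$-closure. I expect this to be the only genuinely delicate step.
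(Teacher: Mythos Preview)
Your proposal has the right general shape—build a sequence in $\mathbf{A}$, pass to $\mathbf{A}^\omega$, quotient, and use Lemma~\ref{dq-counting} for the non-collapse—but there are two genuine gaps.

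First, the choice of $u$. You assume you can pick $u$ with $0<u<1$ and $\sigma^{n+1}u\ne 1$ for every $n$, justifying this only by ``non-Booleanness again.'' That is false in general: take $\mathbf{A}=\bu(\X)$ for any finite \emph{connected} ordered set $\X$ with $|X|>1$ (e.g.\ a four-element fence). Such an $\mathbf{A}$ is simple and non-Boolean, yet for every non-empty clopen up-set $U$ one has $(\up\!\down)^nU=X$ for some $n$, i.e.\ $\sigma^n u=1$. The paper deals with exactly this by a case split you are missing: if the dual $\X$ has no order-isolated points, then Proposition~\ref{3-chain} gives $\mathbf{3}\le\mathbf{A}$ outright; only when $\X$ \emph{does} have an order-isolated point can one produce a $U$ with $\min_\X(U)=\varnothing$ and then argue (using that isolated point) that $\sigma^iU\ne X$ for all~$i$. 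Without this split your construction does not get off the ground.

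Second, the ``bookkeeping'' you defer is where the real work lies, and your setup makes it harder than it needs to be. You force $\dpc\hat c=1$ only in the quotient, by putting $\sim\bar c$ into $F$; this means the non-collapse conditions $\bar 0\notin F$ and $\bar c\notin F$ require controlling iterates $\delta^k\bigl(\sim(c_n\vee\neg c_n)\bigr)$, which mixes $\delta$, $\sim$, and $\neg$ in a way not obviously reducible to Lemma~\ref{dq-counting}. The paper avoids this entirely: it builds, for each $i$, an $M_i$ (using the dual space) with $\neg M_i=\neg(\sigma^iU)$ and $\dpc M_i=1$ \emph{already in $\mathbf{A}$}. Then, with $M=\langle M_i\rangle$, the congruence $\alpha=\cg(\neg M,0)=\cg(\neg\neg\langle\sigma^iU\rangle,1)$ has $\dpc M/\alpha=1$ and $\neg M/\alpha=0$ for free, and a single application of Lemma~\ref{dq-counting} (that $\delta^n\sigma^iU\ne\varnothing$ forbids $\delta^n\neg\neg\langle\sigma^iU\rangle=0$) shows $\alpha$ is non-full; the three-element subalgebra then drops out immediately from $\dpc(M/\alpha)=1$, $\neg(M/\alpha)=0$, and non-triviality. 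I would recommend abandoning the $c_n\vee\neg c_n$ device and instead constructing elements with $\dpc=1$ directly, as the paper does.
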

\begin{proof}
Let $\X$ be the Priestley dual of $\mathbf{A}$ and assume that $\mathbf{A}$ is non-Boolean and subdirectly irreducible.
If $X$ has no order-isolated elements, then we are covered by Proposition~\ref{3-chain}.
So, assume that $X$ has at least one order-isolated element.
Recall that $\min_\X(U) = \min(\X) \cap U$ and $\max_\X(U) = \max(\X) \cap U$, for all $U \subseteq X$.
If $X = \min(\X)$, then $\mathbf{A}$ is Boolean.
So $X \comp {\min(\X)}$ is non-empty, and since $\min(\X)$ is closed, there exists a non-empty clopen up-set $U \subseteq X$ such that $\min_\X(U) = \varnothing$.
Then $U$ cannot contain any order-isolated elements.
But $\X$ does, so we must have $\sigma^i U = (\up\down)^i U \neq X$, for all $i \in \omega$.
Additionally, if there exists $i \in \omega$ such that $\sigma^i U = \sigma^{i+1}U$, then $\sigma^iU$ is complemented, and in a subdirectly irreducible \h{} this only occurs if $\sigma^iU = \varnothing$ or $\sigma^iU = X$.
We have already seen that $\sigma^i U \neq X$, for all $i \in \omega$. 
Moreover, we have $\sigma U = 0$ if and only if $U = 0$, and so, by induction, the former case does not occur either. 
Therefore, the members of $\langle \sigma^iU\rangle_{i \in \omega}$ are pairwise distinct. Let $U_i = \sigma^iU$. 

Since $\max(\X)$ is closed, $\max_\X(U_i)$ is also closed.
Hence, for each $i \in \omega$, there is a non-empty clopen up-set $V_i$ such that $\max_\X(U_i) \subseteq V_i$ and $\min_\X(V_i) = \varnothing$.
Let $M_i = V_i \cap U_i$, and observe that $\max_\X(M_i) = \max_\X(U_i)$ and $\min_\X(M_i) = \varnothing$.
Because they share their maximal elements, we have $\down M_i = \down U_i$ and it follows that $\neg M_i = \neg U_i$.
Moreover, since $\min_\X(M_i) = \varnothing$, we have $\up(X \comp M_i) = X$ and therefore $\dpc M_i = X$. 

Now let $\mathbf{H} = \mathbf{A}^\omega$.
Denote the tuple $\langle{M_i}\rangle_{i \in \omega}$ by $M$, and let $\alpha$ be
the congruence 
\[
\alpha = \cg^\mathbf{H}(\neg M, 0).
\]
In any \h{}, $\neg x = 1$ if and only if $x = 0$, so we then have 
\[
\alpha = \cg^\mathbf{H}(\neg\neg M, 1) = \cg^\mathbf{H}(\neg\neg\langle{U_i}\rangle_{i \in \omega}, 1).
\]
To see that $\alpha$ is not the full congruence on $\mathbf{H}$, we will suppose that it is.
Then there exists $n \in \omega$ such that $\delta^n\neg\neg\langle{U_i}\rangle_{i \in \omega} = 0$. 
Since $\delta$ is order-preserving and $\neg\neg x \geq x$, we have $\delta^n\langle{U_i}\rangle_{i \in \omega} = 0$.
In other words, for each $i \in \omega$, we have $\delta^nU_i = \delta^n\sigma^i U = \varnothing$.
But by Lemma~\ref{dq-counting} this is impossible.
Hence, $\mathbf{H}/\alpha$ is a non-trivial algebra. 

We finish the proof by showing that 
${\mathbf{3}}$ embeds into $\mathbf{H}/\alpha$.
Since $\dpc M_i = X$, for all $i \in \omega$, it follows that $\dpc M = 1$ in $\mathbf{H}$, so $\dpc M/\alpha = 1/\alpha$.
By definition of $\alpha$, we have $\neg M/\alpha = 0/\alpha$.
These two facts combined with the fact that $\mathbf{H}/\alpha$ is non-trivial imply that $M/\alpha \notin \{0/\alpha, 1/\alpha\}$.
We thus conclude that $\{0/\alpha, M/\alpha, 1/\alpha\}$ is the underlying set of a subalgebra of $\mathbf{H}/\alpha$ isomorphic to ${\mathbf{3}}$. 
\end{proof}

A similar argument also applies to double Heyting algebras, but assuming ignorance of the proof, we can still prove the analogous result as a direct corollary.
Let $\mathbf{A}$ be a non-Boolean subdirectly irreducible double Heyting algebra.
By the previous result, there exists a congruence $\alpha$ on $(\mathbf{A}^\flat)^\omega$ such that $\mathbf{3} \leq (\mathbf{A}^\flat)^\omega/\alpha$.
But since the operations $\to$ and $\dotdiv$ depend only on the underlying lattice, it follows that $(\mathbf{A}^\flat)^\omega = (\mathbf{A}^\omega)^\flat$.
By Theorem~\ref{congruence-isomorphism}, we have $\con(\mathbf{A}^\omega) = \con((\mathbf{A}^\omega)^\flat)$, so $\alpha$ is a congruence on $\mathbf{A}^\omega$.
But we also have $(\mathbf{A}^\omega/\alpha)^\flat = (\mathbf{A}^\omega)^\flat/\alpha = (\mathbf{A}^\flat)^\omega/\alpha$.
So, by Lemma~\ref{remark-subalgebra}, it follows that $\mathbf{3} \leq \mathbf{A}^\omega/\alpha$, as claimed.
The next two results follow by observing that by the previous result, the only subvarieties not containing $\mathbf{3}$ are the trivial subvariety and the variety of Boolean algebras.

\begin{corollary}[Wolter~\cite{wolter}]
In $\cat{L}(\mathsf{DH})$, the variety $\Var(\mathbf{3})$ is completely join-irreducible and covers the variety $\Var(\mathbf{2})$. 	Hence, $\mathbf{3}$ is a splitting algebra in $\mathsf{DH}$.
\end{corollary}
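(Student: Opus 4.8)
The plan is to exhibit the pair $(\Var(\mathbf{3}),\Var(\mathbf{2}))$ explicitly as a splitting pair in $\cat{L}(\mathsf{DH})$ and then read off all three assertions (complete join-irreducibility, the cover, and the splitting-algebra conclusion) from Lemma~\ref{lem:easy1} and the definition of a splitting algebra. Two preliminary facts are needed. First, $\mathbf{3}$ is a finite subdirectly irreducible (in fact simple) double Heyting algebra. Second, $\mathbf{3}$ is \emph{not} Boolean: in the chain $0<a<1$ we have $\neg a=0$ but $\dpc a=1$, so $a$ is not complemented, whence $\mathbf{3}\notin\Var(\mathbf{2})$; on the other hand $\mathbf{2}\leq\mathbf{3}$, so $\mathbf{2}\in\Var(\mathbf{3})$ and therefore $\Var(\mathbf{2})\subsetneq\Var(\mathbf{3})$.

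The key step is to pin down the set $\{\CV\in\cat{L}(\mathsf{DH})\mid\mathbf{3}\notin\CV\}$; I claim it equals $\down\Var(\mathbf{2})$. The inclusion $\down\Var(\mathbf{2})\subseteq\{\CV\mid\mathbf{3}\notin\CV\}$ is immediate since $\mathbf{3}\notin\Var(\mathbf{2})$. For the reverse inclusion, let $\CV$ be a subvariety of $\mathsf{DH}$ with $\mathbf{3}\notin\CV$. Since $\CV$ is generated by its subdirectly irreducible members, and since by Theorem~\ref{3-subalgebra2} together with the observation for double Heyting algebras that follows it every non-Boolean subdirectly irreducible double Heyting algebra $\mathbf{A}$ satisfies $\mathbf{3}\in\Var(\mathbf{A})$, every subdirectly irreducible member of $\CV$ must be Boolean — otherwise $\mathbf{3}\in\Var(\mathbf{A})\subseteq\CV$. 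Hence $\CV\subseteq\Var(\mathbf{2})$, proving the claim.

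Now, for any subvariety $\CV$ of $\mathsf{DH}$ we have $\Var(\mathbf{3})\subseteq\CV$ if and only if $\mathbf{3}\in\CV$, so $\up\Var(\mathbf{3})=\{\CV\mid\mathbf{3}\in\CV\}$ is precisely the complement of $\down\Var(\mathbf{2})=\{\CV\mid\mathbf{3}\notin\CV\}$ in the complete lattice $\cat{L}(\mathsf{DH})$. Thus $\cat{L}(\mathsf{DH})=\up\Var(\mathbf{3})\dotcup\down\Var(\mathbf{2})$, that is, $(\Var(\mathbf{3}),\Var(\mathbf{2}))$ is a splitting pair (Definition~\ref{def:splitting}). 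By Lemma~\ref{lem:easy1}, $\Var(\mathbf{3})$ is then completely join-prime, hence completely join-irreducible, in $\cat{L}(\mathsf{DH})$, and by the definition of a splitting algebra this says exactly that $\mathbf{3}$ is a splitting algebra in $\mathsf{DH}$. For the covering claim: we have already noted $\Var(\mathbf{2})\subsetneq\Var(\mathbf{3})$, and if $\Var(\mathbf{2})\subsetneq\CV\subseteq\Var(\mathbf{3})$ then $\CV\not\subseteq\Var(\mathbf{2})$, so $\CV$ contains a non-Boolean algebra and therefore a non-Boolean subdirectly irreducible algebra, whence $\mathbf{3}\in\CV$ and $\CV=\Var(\mathbf{3})$.

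I do not expect a genuine obstacle: the substance is carried entirely by Theorem~\ref{3-subalgebra2} and its double-Heyting counterpart, and what remains is lattice-theoretic bookkeeping. The two points that merit a little care are the verification that $\mathbf{3}$ is non-Boolean and subdirectly irreducible, and the step ``a non-Boolean subvariety of $\mathsf{DH}$ contains a non-Boolean subdirectly irreducible algebra'', which rests on Birkhoff's theorem that every variety is generated by its subdirectly irreducible members.
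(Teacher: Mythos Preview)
Your proposal is correct and follows essentially the same approach as the paper: the paper states just before this corollary that it follows ``by observing that by the previous result, the only subvarieties not containing $\mathbf{3}$ are the trivial subvariety and the variety of Boolean algebras,'' which is exactly the identification $\{\CV\mid\mathbf{3}\notin\CV\}=\down\Var(\mathbf{2})$ that you establish and then feed into Lemma~\ref{lem:easy1}. Your write-up is simply a more explicit unpacking of that one sentence.
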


\begin{corollary}
In $\cat{L}(\mathsf{H}^+)$,	the variety $\Var(\mathbf{3})$ is completely join-irreducible and covers the variety $\Var(\mathbf{2})$. Hence, $\mathbf{3}$ is a splitting algebra in $\mathsf{H}^+$. 
\end{corollary}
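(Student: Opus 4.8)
The plan is to deduce the corollary directly from Theorem~\ref{3-subalgebra2} together with the general theory of splittings developed in Section~\ref{sec:split-latt-svar}, by exhibiting the splitting pair $(\Var(\mathbf{3}),\Var(\mathbf{2}))$ explicitly in the lattice $\cat L(\mathsf{H}^+)$. First I would record two elementary facts: the three-element chain $\mathbf{3}$ is a finite subdirectly irreducible \h{} (in fact simple), and $\mathbf{3}$ is not Boolean, since for instance $\dpc x \approx \neg x$ fails in it; on the other hand $\mathbf{2} \le \mathbf{3}$, so $\Var(\mathbf{2}) \subseteq \Var(\mathbf{3})$. Consequently $\Var(\mathbf{2}) \subsetneq \Var(\mathbf{3})$.

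The core of the argument is the dichotomy that, for every subvariety $\CV$ of $\mathsf{H}^+$, either $\Var(\mathbf{3}) \subseteq \CV$ or $\CV \subseteq \Var(\mathbf{2})$. Indeed, suppose $\mathbf{3} \notin \CV$. If some $\A \in \CV$ were not Boolean, then Theorem~\ref{3-subalgebra2} would give $\mathbf{3} \in \Var(\A) \subseteq \CV$, a contradiction; hence every member of $\CV$ is Boolean, that is, $\CV$ satisfies $\dpc x \approx \neg x$, and therefore $\CV \subseteq \Var(\mathbf{2})$. The two alternatives are mutually exclusive, since $\Var(\mathbf{3}) \subseteq \Var(\mathbf{2})$ would contradict the fact that $\mathbf{3}$ is not Boolean. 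Thus $\cat L(\mathsf{H}^+) = \up\Var(\mathbf{3}) \dotcup \down\Var(\mathbf{2})$; that is, $(\Var(\mathbf{3}),\Var(\mathbf{2}))$ is a splitting pair in the sense of Definition~\ref{def:splitting}.

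It then remains to package this in the language of the corollary. By Lemma~\ref{lem:easy1}, $\Var(\mathbf{3})$ is completely join-prime, and hence completely join-irreducible, in $\cat L(\mathsf{H}^+)$. Any subvariety strictly below $\Var(\mathbf{3})$ does not contain $\Var(\mathbf{3})$ and so, by the dichotomy, is contained in $\Var(\mathbf{2})$; since also $\Var(\mathbf{2}) \subsetneq \Var(\mathbf{3})$, this shows $\Var(\mathbf{2}) \prec \Var(\mathbf{3})$ (alternatively, as $\Var(\mathbf{2}) \subseteq \Var(\mathbf{3})$, this is Lemma~\ref{lem:easy2}(1)). Finally, $\mathbf{3}$ is a finite subdirectly irreducible algebra whose generated variety is completely join-prime, so it is a splitting algebra by the definition following Lemma~\ref{lem:Birk}. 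I do not expect any genuine obstacle here: the substantive content is entirely in Theorem~\ref{3-subalgebra2}, and the only points needing a word of care are the identification of $\Var(\mathbf{2})$ with the subvariety of all Boolean \h{s} --- so that ``every member of $\CV$ is Boolean'' really does give $\CV \subseteq \Var(\mathbf{2})$ --- and the subdirect irreducibility of $\mathbf{3}$, needed to invoke the definition of splitting algebra; both are immediate from facts quoted earlier.
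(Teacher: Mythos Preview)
Your proposal is correct and follows essentially the same approach as the paper: the paper's proof is simply the sentence preceding the corollaries, observing that by Theorem~\ref{3-subalgebra2} the only subvarieties of $\mathsf{H}^+$ not containing $\mathbf{3}$ are the trivial variety and $\Var(\mathbf{2})$. You have spelled out this dichotomy and its consequences in more detail than the paper does, but the underlying argument is the same.
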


\begin{corollary}
In $\cat{L}(\mathsf{RDP})$,	the variety $\Var(\mathbf{3})$ is completely join-irreducible and covers the variety $\Var(\mathbf{2})$. Hence, $\mathbf{3}$ is a splitting algebra in $\mathsf{RDP}$.
\end{corollary}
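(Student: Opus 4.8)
The plan is to follow the route indicated for $\mathsf{DH}$ and $\mathsf{H}^+$, with the one extra wrinkle that $\mathsf{RDP}$ sits inside $\mathsf{H}^+$ only up to the term equivalence of Theorem~\ref{regular-implies-heyting}. First I would record the two routine facts that $\mathbf 3\in\mathsf{RDP}$ (it is a finite chain, hence distributive, and a direct check of the identity in Theorem~\ref{varlet}(4) shows it is regular) and that $\mathbf 3$ is simple, hence subdirectly irreducible; also $\mathbf 3$ is not Boolean, so $\mathbf 3\notin\Var(\mathbf 2)$, while $\mathbf 2\leq\mathbf 3$ gives $\Var(\mathbf 2)\subseteq\Var(\mathbf 3)$, and this inclusion is proper. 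Since $\mathsf{RDP}$ is, up to term equivalence, a subvariety of $\mathsf{H}^+$, the variety generated by an $\mathsf{RDP}$-algebra is the same whether computed in $\mathsf{RDP}$ or in $\mathsf{H}^+$, and --- by Theorem~\ref{congruence-isomorphism} --- congruences, hence subdirect irreducibility, are unaffected; the property of being Boolean depends only on the underlying lattice, so it too transfers unambiguously when an $\mathsf{RDP}$-algebra is regarded as an \h{}.

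The key step is to show that $\Var(\mathbf 2)$ is the largest subvariety of $\mathsf{RDP}$ that does not contain $\mathbf 3$. Let $\CV\subseteq\mathsf{RDP}$ with $\mathbf 3\notin\CV$. By Birkhoff's subdirect representation theorem $\CV$ is generated by its subdirectly irreducible members, so it suffices to show that every subdirectly irreducible $\mathbf A\in\CV$ is isomorphic to $\mathbf 2$. Regarding $\mathbf A$ as an \h{} via Theorem~\ref{regular-implies-heyting}, if $\mathbf A$ were non-Boolean then Theorem~\ref{3-subalgebra2} would give $\mathbf 3\in\Var(\mathbf A)\subseteq\CV$, a contradiction; hence $\mathbf A$ is Boolean, and then Proposition~\ref{useful}(2) forces $A=\{0,1\}$, i.e.\ $\mathbf A\cong\mathbf 2$. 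Therefore $\CV\subseteq\Var(\mathbf 2)$, as claimed.

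It follows at once that $(\Var(\mathbf 3),\Var(\mathbf 2))$ is a splitting pair in $\cat{L}(\mathsf{RDP})$: the principal filter $\up\Var(\mathbf 3)$ is exactly the set of subvarieties containing $\mathbf 3$, the principal ideal $\down\Var(\mathbf 2)$ is exactly the set of subvarieties not containing $\mathbf 3$ (by the claim together with $\mathbf 3\notin\Var(\mathbf 2)$), and these two sets partition $\cat{L}(\mathsf{RDP})$. By Lemma~\ref{lem:easy1}, $\Var(\mathbf 3)$ is completely join-prime, in particular completely join-irreducible; by Lemma~\ref{lem:easy2}(1), $\Var(\mathbf 3)\wedge\Var(\mathbf 2)\prec\Var(\mathbf 3)$, and since $\Var(\mathbf 2)\subseteq\Var(\mathbf 3)$ this meet is $\Var(\mathbf 2)$, so $\Var(\mathbf 3)$ covers $\Var(\mathbf 2)$. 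Finally, $\mathbf 3$ is a finite --- hence finitely generated --- subdirectly irreducible algebra whose generated variety is completely join-prime, so by definition $\mathbf 3$ is a splitting algebra in $\mathsf{RDP}$. I anticipate no real difficulty here: the substantive content is all in Theorem~\ref{3-subalgebra2}, and the only thing demanding care is the bookkeeping around the term equivalence of Theorem~\ref{regular-implies-heyting} together with the routine check that $\mathbf 3\in\mathsf{RDP}$.
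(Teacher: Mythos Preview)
Your proposal is correct and follows essentially the same approach as the paper: the paper gives no separate proof for the $\mathsf{RDP}$ corollary, simply remarking (just before the three corollaries) that ``the only subvarieties not containing $\mathbf{3}$ are the trivial subvariety and the variety of Boolean algebras,'' which is exactly your key step via Theorem~\ref{3-subalgebra2} and Proposition~\ref{useful}(2). You supply more detail than the paper does on the term-equivalence bookkeeping (Theorem~\ref{regular-implies-heyting}), but this is just making explicit what the paper leaves implicit when it notes that $\mathsf{RDP}$ is term-equivalent to a subvariety of $\mathsf{H}^+$.
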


\subsection{Fences and double-pointed ordered sets}
In this subsection, again, $\CV$ will be a subvariety of $\mathsf{H}^+$, or of
$\mathsf{DH}$, generated by its finite members.
This includes $\mathsf{RDP}$ as a special case.
All candidates for splitting algebras in $\CV$ are finite and subdirectly irreducible, and so,
by Proposition~\ref{useful}(4), simple.
By Proposition~\ref{si-simple}, the dual spaces of finite simple algebras are finite connected ordered sets.
Thus, henceforth, we will focus on these. 

If $\X$ is a Priestley space and $\varphi$ is a constant \h[morphism]{} on $\X$, then under the duality, $\varphi$ corresponds to the two-element Boolean subalgebra of $\but(\X)$.
Since $\mathbf{2}$ embeds into every \h{}, to avoid an overload of exemptions, we will disregard it in most of what follows.
Using Lemma~\ref{minimals}, the following result is easy to prove.
\begin{lemma}\label{non-constant}
	Let $\X$ be a finite connected ordered set and let $\varphi$ be an \h[morphism]{} on $\X$.
	Then $\varphi$ is a constant map if and only if, for every $m_1 \in \max(\X)$ and every $m_2 \in \min(\X)\comp{\max(\X)}$, we have $\varphi(m_1) \neq \varphi(m_2)$.
\end{lemma}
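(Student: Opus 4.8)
The plan is to prove the equivalence with \emph{constant} as the subject, reading the displayed condition correctly: the statement ``$\varphi(m_1)\neq\varphi(m_2)$ for every $m_1\in\max(\X)$ and every $m_2\in\min(\X)\comp\max(\X)$'' is exactly the negation of ``some maximal element and some non-maximal minimal element have a common $\varphi$-image''. I would therefore establish the lemma in the form: $\varphi$ \emph{is constant if and only if there exist} $m_1\in\max(\X)$ \emph{and} $m_2\in\min(\X)\comp\max(\X)$ \emph{with} $\varphi(m_1)=\varphi(m_2)$, so that the printed inequality condition holds precisely when $\varphi$ is non-constant. The direction ``constant $\Rightarrow$ such a coincidence'' is immediate, provided the two index sets are non-empty.

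To secure non-emptiness I would first dispose of the case $|X|=1$, where $\varphi$ is trivially constant and there is nothing to check. Assuming $|X|\ge 2$, connectedness of $\X$ forbids order-isolated elements: if $x$ were both minimal and maximal then $\updown x=\{x\}$, whence $\updown^{\,n}x=\{x\}\neq X$ for all $n$, contradicting connectedness. Consequently $\min(\X)\cap\max(\X)=\varnothing$, so $\min(\X)\comp\max(\X)=\min(\X)$, and both $\max(\X)$ and $\min(\X)$ are non-empty. A constant map then visibly produces the required coincidence.

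The key step is the converse, and here I would use Lemma~\ref{minimals} together with connectedness. Suppose $\varphi(m_1)=\varphi(m_2)=:p$ with $m_1\in\max(\X)$ and $m_2\in\min(\X)$. By Lemma~\ref{minimals}, $p=\varphi(m_1)$ is maximal in $\codom(\varphi)$ and $p=\varphi(m_2)$ is minimal in $\codom(\varphi)$; thus $p$ is order-isolated in $\codom(\varphi)$, so nothing lies strictly above or strictly below $p$ there. Now consider the fibre $S:=\{x\in X\mid \varphi(x)=p\}$, which contains $m_1$ and $m_2$. If $x\in S$ and $y$ is comparable to $x$, then $\varphi(y)$ is comparable to $\varphi(x)=p$ in $\codom(\varphi)$ (as $\varphi$ is order-preserving), and order-isolation of $p$ forces $\varphi(y)=p$, i.e. $y\in S$. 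Hence $\updown S=S$, so $\updown^{\,n}x\subseteq S$ for every $x\in S$ and every $n$; by connectedness $\updown^{\,n}x=X$ for some $n$, giving $S=X$ and $\varphi$ constant.

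The one point demanding care is that order-isolation is a property of the codomain $\codom(\varphi)$, not merely of the image, so the comparability of $\varphi(y)$ with $p$ must be read in $\codom(\varphi)$; this is precisely what Lemma~\ref{minimals} supplies, after which the propagation is purely formal. Everything else reduces to order-preservation of $\varphi$ and the definition of connectedness, so I expect no further obstacle. (To reconcile with the displayed formulation, one records the contrapositive: $\varphi$ is non-constant if and only if $\varphi(m_1)\neq\varphi(m_2)$ for all such $m_1,m_2$.)
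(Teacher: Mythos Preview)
Your proof is correct and follows exactly the route the paper indicates (the paper gives no proof, only the remark that the result is easy using Lemma~\ref{minimals}, which is precisely the tool you invoke). You are also right that the printed biconditional has the polarity reversed: for $|X|\ge 2$ the condition ``$\varphi(m_1)\neq\varphi(m_2)$ for all $m_1\in\max(\X)$, $m_2\in\min(\X)\comp\max(\X)$'' characterises the \emph{non}-constant morphisms, and this is how the lemma is actually used later (e.g.\ in Lemma~\ref{tails-to-tails}); your reformulation and the propagation argument via the order-isolated image point are exactly what is needed.
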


\begin{definition}
A non-trivial finite ordered set $\X$ is a \emph{fence} if there is an enumeration $x_1, \dots, x_n$ of elements of $X$, where $n = |X|$, such that the only order relations on $\X$ are given by one of the following:
\begin{enumerate}
\item $x_1 < x_2 > x_3 < \dots > x_{n-1} < x_n$, \label{fence:1} 
\item $x_1 < x_2 > x_3 < \dots < x_{n-1} > x_n$, or \label{fence:2}
\item $x_1 > x_2 < x_3 > \dots > x_{n-1} < x_n$. \label{fence:3}
\end{enumerate}
Examples of fences of each type are given in Figure~\ref{fence-figure}. 
We will permit the two-element fence under this definition, which is covered by 
each of \eqref{fence:1}, \eqref{fence:2}, and \eqref{fence:3}. Note that, by
assumption, a fence is finite and has at least two elements.
\end{definition}
\begin{figure}[ht] 
\centering
\begin{subfigure}{0.25\textwidth}\centering
\begin{tikzpicture}
\tikzfence{6}
\end{tikzpicture}
\caption{}\label{fig:fence:1}
\end{subfigure}
\quad
\begin{subfigure}{0.25\textwidth}\centering
\begin{tikzpicture}
\tikzfence{7}
\end{tikzpicture}
\caption{}\label{fig:fence:2}
\end{subfigure}
\quad
\begin{subfigure}{0.25\textwidth}\centering
\begin{tikzpicture}
\tikzfencedual{7}
\end{tikzpicture}
\caption{}\label{fig:fence:3}
\end{subfigure}
\caption{The fences (\subref{fig:fence:1}), (\subref{fig:fence:2}), and (\subref{fig:fence:3}) are of type \eqref{fence:1}, \eqref{fence:2}, and \eqref{fence:3} respectively.}\label{fence-figure}
\end{figure}

In the study and application of finite ordered sets, fences are of the utmost importance;
however, this definition of a fence is not particularly user friendly, so we will give a characterisation that is more suited to the current setting.

\begin{definition}Let $\X$ be an ordered set and let $\tau_1, \tau_2 \in X$ with $\tau_1 \neq \tau_2$.
We will say that the pair $(\tau_1,\tau_2)$ is an \emph{up-tail} if $\tau_1$ is maximal  and $\down \tau_1 = \{\tau_1,\tau_2\}$.
Dually, $(\tau_1,\tau_2)$ is a \emph{down-tail} if $\tau_1$ is minimal and $\up \tau_1 = \{\tau_1,\tau_2\}$.
In either case we will say that the pair $(\tau_1,\tau_2)$ is a \emph{tail} and that $\X$ \emph{has a tail}.
\end{definition}

\begin{figure}[ht]
\begin{subfigure}{0.45\textwidth}\centering
	\begin{tikzpicture}[scale=0.7,nodes={draw,circle,fill=white}]
\draw (0,0) circle [rotate=60, x radius = 1.5, y radius = 0.8] (0,0);
\node[scale=0.4] at (-1.8,-1.22) (y) {};
\node[scale=0.4] at ($(y)+(135:1 and 1)$) (x) {};
\coordinate (first) at ($(y)+(40:1.3 and 1.3)$);
\coordinate (second) at ($(y)+(30:1.25 and 1.25)$);
\coordinate (third) at ($(y)+(20:1.2 and 1.2)$);
\draw (y) -- (first);
\draw (y) -- (second);
\draw (y) -- (third);
\draw (x) -- (y);

\node [inner sep=1,text height=1.5ex,text depth=.25ex,draw=none,above=0.32 of x] (x_label) {$\tau_1$};
\draw [densely dotted,very thin] (x) -- (x_label);
\node [inner sep=1,text height=1.5ex,text depth=.25ex,draw=none] at (y |- x_label) (y_label) {$\tau_2$};
\draw [densely dotted,very thin] (y) -- (y_label);
\end{tikzpicture}
\caption{}\label{tail:a}
\end{subfigure}
\begin{subfigure}{0.45\textwidth}\centering
	\begin{tikzpicture}[scale=0.7,yscale=-1,nodes={draw,circle,fill=white}]
\draw (0,0) circle [rotate=60, x radius = 1.5, y radius = 0.8] (0,0);
\node[scale=0.4] at (-1.8,-1.22) (y) {};
\node[scale=0.4] at ($(y)+(135:1 and 1)$) (x) {};
\coordinate (first) at ($(y)+(40:1.3 and 1.3)$);
\coordinate (second) at ($(y)+(30:1.25 and 1.25)$);
\coordinate (third) at ($(y)+(20:1.2 and 1.2)$);
\draw (y) -- (first);
\draw (y) -- (second);
\draw (y) -- (third);
\draw (x) -- (y);

\node [inner sep=1,text height=1.5ex,text depth=.25ex,draw=none,below=0.32 of x] (x_label) {$\tau_1$};
\draw [densely dotted,very thin] (x) -- (x_label);
\node [inner sep=1,text height=1.5ex,text depth=.25ex,draw=none] at (y |- x_label) (y_label) {$\tau_2$};
\draw [densely dotted,very thin] (y) -- (y_label);
\end{tikzpicture}
\caption{}\label{tail:b}
\end{subfigure}
\caption{In (\subref{tail:a}), the pair $(\tau_1,\tau_2)$ is an up-tail, and in (\subref{tail:b}), the pair $(\tau_1,\tau_2)$ is a down-tail.}
\end{figure}
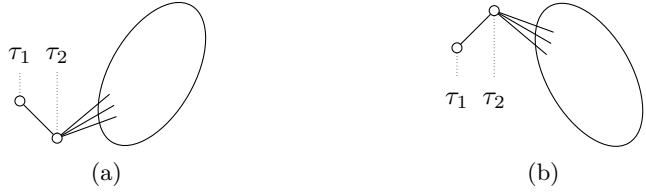

Observe that a tail $(\tau_1,\tau_2)$ is both a down-tail and an up-tail if and only if $\updown\{\tau_1,\tau_2\} = \{\tau_1,\tau_2\}$.
Also note that if $(\tau_1,\tau_2)$ is a down-tail, then $\tau_2$ must be minimal, and if it is an up-tail, then $\tau_2$ must be maximal.

\begin{lemma}\label{tails-to-tails}
Let $\X$ be a connected ordered set, let $x,y \in X$, and let $\varphi$ be a non-constant H\textsuperscript{+}-morphism on~$\X$.
If $(\tau_1,\tau_2)$ is a down-tail in $\X$, then $(\varphi(\tau_1),\varphi(\tau_2))$ is a down-tail in $\langle \varphi(X); \leq_{\varphi(X)}\rangle$.
\end{lemma}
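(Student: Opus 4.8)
The plan is to check, for the pair $(\varphi(\tau_1),\varphi(\tau_2))$ inside the ordered set $\langle \varphi(X);\leq_{\varphi(X)}\rangle$, the three conditions in the definition of a down-tail: that $\varphi(\tau_1)$ and $\varphi(\tau_2)$ are distinct, that $\varphi(\tau_1)$ is minimal, and that $\up_{\varphi(X)}\varphi(\tau_1)=\{\varphi(\tau_1),\varphi(\tau_2)\}$.

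First I would record the relevant order-theoretic features of a down-tail $(\tau_1,\tau_2)$ in $\X$. Since $\tau_2\in\up\tau_1$ and $\tau_2\neq\tau_1$, we have $\tau_1<\tau_2$, so $\tau_1\in\min(\X)\comp\max(\X)$; moreover $\tau_2\in\max(\X)$, for $\tau_2<z$ would force $z\in\up\tau_1=\{\tau_1,\tau_2\}$, which is impossible. Next I would handle the up-set condition: applying \eqref{m:cond:1} at $\tau_1$ gives $\varphi(\up\tau_1)=\up\varphi(\tau_1)$ inside $\codom(\varphi)$, and since $\up\tau_1=\{\tau_1,\tau_2\}$ this says $\up_{\codom(\varphi)}\varphi(\tau_1)=\{\varphi(\tau_1),\varphi(\tau_2)\}$; intersecting with $\varphi(X)$ (which contains both $\varphi(\tau_1)$ and $\varphi(\tau_2)$) yields $\up_{\varphi(X)}\varphi(\tau_1)=\{\varphi(\tau_1),\varphi(\tau_2)\}$. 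Minimality of $\varphi(\tau_1)$ in $\varphi(X)$ is then immediate from Lemma~\ref{minimals}: $\tau_1$ is minimal in $\X$, so $\varphi(\tau_1)$ is minimal in $\codom(\varphi)$ and hence in the subposet $\varphi(X)$.

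The one step that genuinely uses that $\X$ is connected and $\varphi$ is non-constant is the distinctness $\varphi(\tau_1)\neq\varphi(\tau_2)$, and this is exactly Lemma~\ref{non-constant}: we have shown $\tau_1\in\min(\X)\comp\max(\X)$ and $\tau_2\in\max(\X)$, so a non-constant \h[morphism]{} cannot identify the two. Assembling these three facts shows $(\varphi(\tau_1),\varphi(\tau_2))$ is a down-tail in $\langle\varphi(X);\leq_{\varphi(X)}\rangle$, as required. I expect the only place a reader might pause is in observing that $\tau_2$ must be maximal (so that Lemma~\ref{non-constant} applies) and in getting the polarity of Lemma~\ref{non-constant} the right way round; the remainder is a routine unwinding of the definitions together with \eqref{m:cond:1} and Lemma~\ref{minimals}.
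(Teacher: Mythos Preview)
Your proof is correct and follows essentially the same route as the paper's: use \eqref{m:cond:1} to get $\up\varphi(\tau_1)=\{\varphi(\tau_1),\varphi(\tau_2)\}$, invoke Lemma~\ref{minimals} for minimality of $\varphi(\tau_1)$, and invoke Lemma~\ref{non-constant} for distinctness. You add a little more detail than the paper does---explicitly checking that $\tau_2\in\max(\X)$ so that the hypotheses of Lemma~\ref{non-constant} are met, and distinguishing $\up$ in $\codom(\varphi)$ from $\up$ in the image subposet---but these are exactly the points a careful reader would want filled in.
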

\begin{proof}
Assume that $(\tau_1,\tau_2)$ is a down-tail.
Then $\up\varphi(\tau_1) = \varphi(\up\tau_1) = \{\varphi(\tau_1),\varphi(\tau_2)\}$.
By Lemma~\ref{minimals}, since $\tau_1$ is minimal, $\varphi(\tau_1)$ is also minimal.
Because $\varphi$ is non-constant, we have $\varphi(\tau_1) \neq \varphi(\tau_2)$ by Lemma~\ref{non-constant}, so $(\varphi(\tau_1), \varphi(\tau_2))$ is a down-tail.
\end{proof}

It is false that H\textsuperscript{+}-morphisms must preserve up-tails, although a dual argument to the one above shows that double Heyting morphisms do.
This marks a notable distinction between the two types of morphism, and mildly complicates some of the proofs that follow.

\begin{lemma}\label{fence-characterisation}
Let $\X$ be a non-trivial finite connected ordered set.
The following are equivalent\textup:
\begin{enumerate}[label={\upshape(\arabic*)},leftmargin=1.75\parindent]
\item $\X$ is a fence\textup; \label{fence-characterisation:1}
\item $|\up x| \leq 3$ and $|\down x| \leq 3$, for all $x \in X$, and if $|X| > 2$, then $\X$ has two tails\textup; \label{fence-characterisation:2}
\item $|\up x| \leq 3$ and $|\down x| \leq 3$, for all $x \in X$, and $\X$ has at least one tail. \label{fence-characterisation:3}
\end{enumerate}
\end{lemma}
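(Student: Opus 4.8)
The plan is to establish the cycle of implications $(1)\Rightarrow(2)\Rightarrow(3)\Rightarrow(1)$, the first two of which are routine. For $(1)\Rightarrow(2)$ I would observe that in a fence every element is comparable only to its (at most two) neighbours in the enumeration, which immediately gives $|\up x|\le 3$ and $|\down x|\le 3$ for all $x$; and when $|X|>2$ the two endpoints of the enumeration are distinct, and each of them, having a unique incident relation, yields a tail of $\X$ (a down-tail or an up-tail according to the direction of that relation), so $\X$ has two tails. For $(2)\Rightarrow(3)$ there is nothing to prove when $|X|>2$, while if $|X|=2$ then, being connected, $\X$ is the two-element chain $\{a<b\}$, and $(a,b)$ is a down-tail.

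The substance lies in $(3)\Rightarrow(1)$, which I would prove in three steps after disposing of the case $|X|=2$ (where $\X$ is the two-element chain, hence a fence). \emph{Step 1: $\X$ has no three-element chain.} Assuming $a<b<c$, the cardinality bounds force $\up a=\{a,b,c\}$ and $\down c=\{a,b,c\}$; a further element $d<a$ would give $|\up d|\ge 4$, so $a$ is minimal and dually $c$ is maximal; and any $e>b$ with $e\ne c$ would lie in $\up a$, so $\up b=\{b,c\}$ and dually $\down b=\{a,b\}$. Hence $\updown\{a,b,c\}=\{a,b,c\}$, so $\updown^k a=\{a,b,c\}$ for all $k$, and connectedness gives $X=\{a,b,c\}$; but the three-element chain has no tail, contradicting~(3). \emph{Step 2: structure at height one.} By Step~1 every element is minimal or maximal, so $X=\min(\X)\cup\max(\X)$, and this union is disjoint because a connected ordered set with at least two elements has no order-isolated point; moreover every comparability is of the form $m<M$ with $m\in\min(\X)$, $M\in\max(\X)$. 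Form the comparability graph $G$ on vertex set $X$; it is bipartite with parts $\min(\X),\max(\X)$, it is connected (this is exactly $\updown$-connectedness of $\X$), and every vertex has degree at most $2$, since a minimal $m$ has degree $|\up m|-1\le 2$ and dually for maximal vertices. \emph{Step 3: $G$ is a path.} A finite connected graph of maximum degree at most $2$ is a path or a cycle; if $G$ were a cycle every vertex would have degree exactly $2$, so every minimal $m$ would have $|\up m|=3$ and every maximal $M$ would have $|\down M|=3$, whence $\X$ would have no tail, again contradicting~(3). Therefore $G$ is a path, and since $G$ is bipartite its vertices in sequence alternate between minimal and maximal elements with the minimal one below the maximal one; reading off the enumeration (reversing it if necessary) gives exactly one of the patterns in the definition of a fence.

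I expect the main obstacle to be Step~1: this is the only place where all three hypotheses of~(3) — the cardinality bounds, connectedness, and the presence of a tail — must be used together, and the argument is slightly delicate in that one must first completely pin down the local order around a putative three-element chain and only then invoke connectedness to see that such a chain would exhaust $\X$. Once Step~1 is in place, the remainder is bookkeeping together with the elementary fact that a connected graph all of whose degrees are at most $2$ is a path or a cycle.
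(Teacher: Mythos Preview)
Your proof is correct, and your route to $(3)\Rightarrow(1)$ is genuinely different from the paper's. The paper argues by induction on $|X|$: given a tail $(x,y)$, it removes the endpoint $x$, observes that the remaining ordered set $X\setminus\{x\}$ is still connected, still satisfies the cardinality bounds, and now has a new tail $(y,z)$ (where $z$ is the unique other neighbour of $y$); applying the inductive hypothesis shows $X\setminus\{x\}$ is a fence, and reattaching $x$ gives the result. Your approach instead first proves the structural fact that $\X$ has height at most one (your Step~1), then recasts the problem in terms of the bipartite comparability graph and invokes the elementary classification of connected graphs of maximum degree two. The inductive argument is shorter and more self-contained; your argument is more conceptual, makes explicit the useful intermediate fact that every element of $\X$ is minimal or maximal, and identifies fences cleanly as exactly the ordered sets whose comparability graph is a path.
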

\begin{proof}
\ref{fence-characterisation:1} $\Rightarrow$ \ref{fence-characterisation:2} $\Rightarrow$ \ref{fence-characterisation:3} is obvious. 
As for \ref{fence-characterisation:3} $\Rightarrow$ \ref{fence-characterisation:1}, we proceed by induction. 
If $|X| \in \{2,3\}$, then the implication is obvious. 
So, assume that $|X| > 3$, that \ref{fence-characterisation:3} holds for $\X$, and that the characterisation holds for all fences of a smaller size than~$\X$. 
Let $(x,y)$ be a tail in $\X$. 
Assume first that $(x,y)$ is a down-tail. 
Then $x$ is minimal and $y$ is maximal.
Since $\X$ is connected and $|X| > 3$, there must be some $z \in X$ with $x \neq z$ such that $z \in \down y$.
Since $|\down y| \leq 3$, we have that $z$ is minimal and $\down y = \{x,y,z\}$.
Now consider the ordered set $Y = X \comp \{x\}$, with the order inherited from $\X$.
Then, in $\Y$, we have $\down y = \{y,z\}$, so $(y,z)$ is an up-tail in~$\Y$.
Clearly all of the conditions in \ref{fence-characterisation:3} hold for $\Y$.
Thus, $\Y$ is a fence, where the description of the order is of the form $\cdots< w > z < y$. 
Hence the order on $\X$ is of the form $\cdots < w > z < y > x$, and we conclude that $\X$ is a fence. 
A similar argument holds if we had instead assumed $(x,y)$ to be an up-tail.
\end{proof}

\begin{remark}\label{fence-algebra}
	Since every element of a fence is minimal or maximal, if $\X$ is a fence, then by Theorem~\ref{varlet}, the lattice $\bu(\X)$ underlies a regular double p-algebra.
	Then by Theorem~\ref{regular-implies-heyting}, up to term-equivalence, there is no difference between treating $\bu(\X)$ as a double p-algebra, an \h{}, or a double Heyting algebra.
\end{remark}

The proof of the next lemma illustrates the utility of Lemma~\ref{fence-characterisation}.

\begin{proposition}\label{image-of-fence-baby}
Let $\F= \langle F; \leq\rangle$ be a fence and let $\varphi$ be a non-constant H\textsuperscript{+}-morphism on~$\F$.
Then $\varphi(\F) = \langle \varphi(F); \leq_{\varphi(F)}\rangle $ is also a fence.
\end{proposition}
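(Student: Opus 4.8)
The plan is to verify that $\varphi(\F)$ satisfies clause~(3) of the fence characterisation in Lemma~\ref{fence-characterisation} and then invoke the implication (3)$\,\Rightarrow\,$(1) of that lemma. Throughout I would regard $\varphi$ as a surjective H\textsuperscript{+}-morphism onto $\Y:=\varphi(\F)=\langle\varphi(F);\leq_{\varphi(F)}\rangle$, which is the form in which Lemmas~\ref{minimals} and~\ref{tails-to-tails} are stated. First I would record the easy structural facts about $\Y$: it is finite; it is non-trivial, since $\varphi$ is non-constant; it is connected, because $\varphi$ is order-preserving, so $\varphi(\updown^n_\F x)\subseteq\updown^n_\Y\varphi(x)$ for every $x$ and every $n$, whence $\updown^n_\F x=F$ forces $\updown^n_\Y\varphi(x)=\varphi(F)$; and every element of $\Y$ is minimal or maximal, because every element of the fence $\F$ is, and by Lemma~\ref{minimals} an H\textsuperscript{+}-morphism sends maximal elements to maximal ones and minimal elements to minimal ones. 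It then remains only to check the two clauses of Lemma~\ref{fence-characterisation}(3): that $|\up y|\leq 3$ and $|\down y|\leq 3$ for all $y\in Y$, and that $\Y$ has at least one tail.

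For the size bounds, fix $x'\in F$ and put $y=\varphi(x')$. By condition~\eqref{m:cond:1} we have $\up_\Y y=\varphi(\up_\F x')$, so $|\up_\Y y|\leq|\up_\F x'|\leq 3$, since in a fence every element has at most two neighbours. For the down-set: if $x'$ is minimal then $y$ is minimal and $\down_\Y y=\{y\}$; if $x'$ is maximal then $y$ is maximal, and since every element of $\Y$ is minimal or maximal, every element of $\down_\Y y$ other than $y$ is minimal, so $\down_\Y y=\{y\}\cup\min_\Y(\down_\Y y)$. By condition~\eqref{m:cond:3}, $\min_\Y(\down_\Y y)=\varphi(\min_\F(\down_\F x'))$, and in a fence a maximal element lies above at most two (necessarily minimal) elements, so this set has at most two elements; hence $|\down_\Y y|\leq 3$.

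The remaining point, that $\Y$ has a tail, is where the real work lies, and it is exactly the place where the failure of H\textsuperscript{+}-morphisms to preserve up-tails must be circumvented. Being a fence, $\F$ has an endpoint $x_1$ whose only neighbour is some $x_2$, and $(x_1,x_2)$ is a tail of $\F$. If $(x_1,x_2)$ is a down-tail, Lemma~\ref{tails-to-tails} immediately yields that $(\varphi(x_1),\varphi(x_2))$ is a down-tail of $\Y$. If instead $(x_1,x_2)$ is an up-tail, so that $x_1$ is maximal with $\down_\F x_1=\{x_1,x_2\}$ and $x_2$ minimal, then $\varphi(x_1)$ is maximal in $\Y$ by Lemma~\ref{minimals}, and by condition~\eqref{m:cond:3} we get $\min_\Y(\down_\Y\varphi(x_1))=\varphi(\min_\F(\down_\F x_1))=\{\varphi(x_2)\}$; since $\varphi(x_1)$ is maximal and every element of $\Y$ is minimal or maximal, this forces $\down_\Y\varphi(x_1)=\{\varphi(x_1),\varphi(x_2)\}$. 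Finally $\varphi(x_1)\neq\varphi(x_2)$, for otherwise that element would be both minimal and maximal, i.e.\ order-isolated, which a connected ordered set with at least two elements cannot contain. Hence $(\varphi(x_1),\varphi(x_2))$ is an up-tail of $\Y$. In either case $\Y$ has a tail, so all the hypotheses of Lemma~\ref{fence-characterisation}(3) hold and $\varphi(\F)=\Y$ is a fence. The one genuinely delicate step is the up-tail case; the essential observation making it go through is that $\Y$ inherits ``height at most two'' from the fence $\F$, so there are no intermediate elements below $\varphi(x_1)$ that could spoil the tail.
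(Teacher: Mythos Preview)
Your proof is correct, but it takes a different route from the paper's. The paper begins with the observation (via Remark~\ref{fence-algebra}) that, since every element of a fence is minimal or maximal, $\bu(\F)$ is a regular double p-algebra and hence term-equivalent to a double Heyting algebra; consequently an H\textsuperscript{+}-morphism on $\F$ is automatically a double Heyting morphism, i.e.\ condition~\eqref{m:cond:2} holds for free. With \eqref{m:cond:2} in hand, the paper can invoke both Lemma~\ref{tails-to-tails} and its order-dual to get a tail in the image, and bound $|\down\varphi(x)|$ symmetrically to $|\up\varphi(x)|$ via $\down\varphi(x)=\varphi(\down x)$. Your argument, by contrast, never upgrades $\varphi$ to a double Heyting morphism: you work purely with \eqref{m:cond:1} and \eqref{m:cond:3}, first establishing that the image has height at most two and then using that fact to replace the missing \eqref{m:cond:2} in both the down-set bound and the up-tail case. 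The paper's route is shorter once one is willing to invoke the Varlet--Katri\v{n}\'ak machinery behind Remark~\ref{fence-algebra}; your route is more elementary and self-contained, and in fact foreshadows the argument of Lemma~\ref{up-tails}, where the same height-two trick is used in a setting where the upgrade to a double Heyting morphism is genuinely unavailable.
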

\begin{proof}
Observe by Remark~\ref{fence-algebra} that an H\textsuperscript{+}-morphism on $\F$ is a double Heyting morphism as well.
So the dual of Lemma~\ref{tails-to-tails} applies.
As $\F$ is connected, the image $\varphi(\F)$ is also connected, and by using Lemma~\ref{tails-to-tails} and its dual we see that $\varphi(\F)$ has at least one tail.
For all $x \in F$, we have $|\up x| \leq 3$, so $|\up\varphi(x)| = |\varphi(\up x)| \leq 3$.
Similarly, we have $|\down\varphi(x)| \leq 3$.
Thus, by Lemma~\ref{fence-characterisation}, $\varphi(\F)$ is a fence.
\end{proof}

\begin{definition}
A structure $\mathbf{S} = \langle S; \bot^\mathbf{S}, \top^\mathbf{S}, \leq^\mathbf{S}\rangle$ is a \emph{double-pointed ordered set} if the reduct $\ov{\mathbf{S}} := \langle{S; \leq}\rangle$ is a finite ordered set, $\bot^\mathbf{S}$ and $\top^\mathbf{S}$ are nullary operations such that 
$\bot^\mathbf{S} \neq \top^\mathbf{S}$, and $\bot^\mathbf{S}$ is minimal and $\top^\mathbf{S}$ is maximal. 
\end{definition}

Note that, by definition, a double-pointed ordered set has at least two elements.
The constraint that $\bot^\mathbf{S}$ is minimal and $\top^\mathbf{S}$ is maximal is somewhat artificial, but we justify it for a few reasons.
Although we can generalise some of the machinery below, the result we apply in the next subsection, namely Corollary~\ref{main-corollary}, is false if $\top^\mathbf{S}$ is left arbitrary.
We will also apply the results only with both $\bot^\mathbf{S}$ minimal and $\top^\mathbf{S}$ maximal.
Lastly, removing these constraints on $\bot^\mathbf{S}$ and $\top^\mathbf{S}$ produces somewhat more cluttered proofs with no proportional increase in enlightenment.
The next definition is essential to the ``expand-and-distort'' construction.

\begin{definition}\label{arrow-definition}
Let $\mathbf{S}$ and $\mathbf{T}$ be double-pointed ordered sets and assume that 
$S \cap T = \varnothing$.
Then  $\mathbf{S} \arrow \mathbf{T} = \langle{S \cup T; \bot^{\mathbf{S \arrow T}}, \top^{\mathbf{S \arrow T}}, \leq^{\mathbf{S \arrow T}}}\rangle$ is the double-pointed ordered set defined by 
\begin{enumerate}
\item ${\leq^{\mathbf{S \arrow T}}} = {\leq^{\mathbf{S}}} \cup {\leq^{\mathbf{T}}} \cup \{(\bot^\mathbf{T}, \top^\mathbf{S})\}$,
\item $\bot^{\mathbf{S}\arrow \mathbf{T}} = \bot^\mathbf{S}$, 
\item $\top^{\mathbf{S} \arrow \mathbf{T}} = \top^\mathbf{T}$.
\end{enumerate}
Figure~\ref{arrow-figure} illustrates the construction.
It is easy to verify that $\mathbf{S} \arrow \mathbf{T}$ is an ordered set and that $\arrow$ is associative.
To avoid excessive formality, we will always assume that different objects have disjoint underlying sets.
\end{definition}

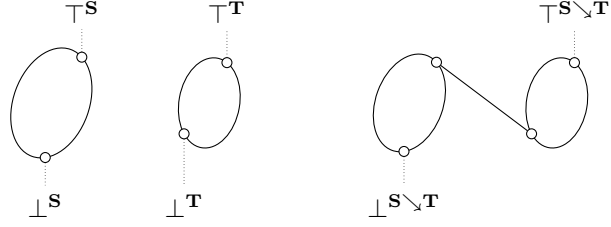
\begin{figure}[ht]
\centering
\begin{tikzpicture}[nodes={draw,circle,fill=white},scale=0.7]
\begin{scope}[scale=0.6,rotate=-20,xscale=1.2,yscale=1.8]
\draw (0,0) circle [x radius = 1, y radius = 1] (0,0);
\coordinate (beta-1) at ($(0,0)+(40:1)$);
\coordinate (beta-2) at ($(0,0)+(70:1)$);
\coordinate (alpha) at ($(0,0)+(210:0.5)$);
\coordinate (alpha-1) at ($(0,0)+(230:1)$);
\coordinate (alpha-2) at ($(0,0)+(290:1)$);
\node[fence node] at (alpha-2) (alpha-drawa) {};
\node[fence node] at (beta-2) (beta-drawa) {};
\end{scope}

\begin{scope}[shift={(3,0)},scale=0.8]
\begin{scope}[scale=0.6,rotate=-10,xscale=1.2,yscale=1.8]
\draw (0,0) circle [x radius = 1, y radius = 1] (0,0);
\coordinate (beta) at ($(0,0)+(90:0.6)$);
\coordinate (beta-1) at ($(0,0)+(70:1)$);
\coordinate (beta-2) at ($(0,0)+(115:1)$);
\coordinate (alpha) at ($(0,0)+(260:0.5)$);
\coordinate (alpha-1) at ($(0,0)+(230:1)$);
\coordinate (alpha-2) at ($(0,0)+(300:1)$);
\node[fence node] at (alpha-1) (alpha-drawb) {};
\node[fence node] at (beta-1) (beta-drawb) {};

\end{scope}
\end{scope}

\begin{scope}[scale=0.8,shift={(8.5,0)}]
\begin{scope}[rotate=-20,xscale=0.8,yscale=1.2]
\draw (0,0) circle [x radius = 1, y radius = 1] (0,0);
\coordinate (beta-1) at ($(0,0)+(40:1)$);
\coordinate (beta-2) at ($(0,0)+(70:1)$);
\coordinate (alpha) at ($(0,0)+(210:0.5)$);
\coordinate (alpha-1) at ($(0,0)+(230:1)$);
\coordinate (alpha-2) at ($(0,0)+(290:1)$);
\node[fence node] at (alpha-2) (alpha-drawc) {};
\node[fence node] at (beta-2) (beta-drawc) {};
\end{scope}

\begin{scope}[shift={(3.5,0)},scale=0.6]
\begin{scope}[rotate=-10,xscale=1.2,yscale=1.8]
\draw (0,0) circle [x radius = 1, y radius = 1] (0,0);
\coordinate (beta) at ($(0,0)+(90:0.6)$);
\coordinate (beta-1) at ($(0,0)+(70:1)$);
\coordinate (beta-2) at ($(0,0)+(115:1)$);
\coordinate (alpha) at ($(0,0)+(260:0.5)$);
\coordinate (alpha-1) at ($(0,0)+(230:1)$);
\coordinate (alpha-2) at ($(0,0)+(300:1)$);
\node[fence node] at (alpha-1) (alpha-drawd) {};
\node[fence node] at (beta-1) (beta-drawd) {};
\end{scope}
\end{scope}
\end{scope}

\node[rectangle,draw=none,fill=none,below=0.3 of alpha-drawa] (label-1) {$\bot^\mathbf{S}$};
\node[rectangle,draw=none,fill=none] at (alpha-drawb |- label-1) (label-2) {$\bot^\mathbf{T}$};
\node[rectangle,draw=none,fill=none] at (alpha-drawc |- label-1) (label-3) {$\bot^\mathbf{S\arrow T}$};

\draw [densely dotted,very thin] (alpha-drawa) -- (label-1);
\draw [densely dotted,very thin] (alpha-drawb) -- (label-2);
\draw [densely dotted,very thin] (alpha-drawc) -- (label-3);

\node[rectangle,inner sep=1,text height=1.5ex,text depth=.25ex,draw=none,fill=none,above=0.3 of beta-drawa] (label-a) {$\top^\mathbf{S}$};
\node[rectangle,inner sep=1,text height=1.5ex,text depth=.25ex,draw=none,fill=none] at (beta-drawb |- label-a) (label-b) {$\top^\mathbf{T}$};
\node[rectangle,inner sep=1,text height=1.5ex,text depth=.25ex,draw=none,fill=none] at (beta-drawd |- label-a) (label-c) {$\top^\mathbf{S\arrow T}$};
\draw (beta-drawc) -- (alpha-drawd);

\draw [densely dotted,very thin] (beta-drawa) -- (label-a);
\draw [densely dotted,very thin] (beta-drawb) -- (label-b);
\draw [densely dotted,very thin] (beta-drawd) -- (label-c);
\end{tikzpicture}

\caption{The double-pointed ordered sets $\mathbf{S}$ and $\mathbf{T}$ are on the left, and $\mathbf{S} \arrow \mathbf{T}$ is on the right.}\label{arrow-figure}
\end{figure}

Henceforth, if $\varphi$ is a morphism on an ordered set $\X$ and $S \subseteq X$, then $\mathbf{S} := \langle S; \le_S\rangle$ is the induced sub-ordered set and $\varphi(\Sb) := \langle \varphi(S); \leq_{\varphi(S)}\rangle $.
The next few lemmas will exhibit properties of morphisms on ordered sets of the form $\ov{\mathbf{S} \arrow \mathbf{T}}$.
\begin{lemma}\label{image-of-arrow}
Let $\mathbf{S}$ and $\mathbf{T}$ be double-pointed ordered sets, assume $\mathbf{S}$ is connected, and let $\varphi$ be an H\textsuperscript{+}-morphism on $\ov{\mathbf{S} \arrow \mathbf{T}}$.
If $\varphi(\top^\mathbf{S}) \in \varphi(T)$, then $\varphi(\ov{\mathbf{S} \arrow \mathbf{T}}) = \varphi(\ov{\mathbf{T}})$. 
\end{lemma}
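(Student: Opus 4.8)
The plan is to put $\X := \ov{\mathbf{S}\arrow\mathbf{T}}$, with underlying set $X = S\cup T$, to write $\Y$ for the (implicit) codomain of $\varphi$, and to prove that $\varphi(S)\subseteq\varphi(T)$. This suffices: since $\varphi(X) = \varphi(S)\cup\varphi(T)$ it forces $\varphi(X) = \varphi(T)$, and then $\varphi(\ov{\mathbf{S}\arrow\mathbf{T}})$ and $\varphi(\ov{\mathbf{T}})$ are literally the same induced ordered set.

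The first step is to record the order of $\X$. Because the only relation linking $S$ and $T$ is $\bot^{\mathbf{T}}\leq\top^{\mathbf{S}}$, for every $t\in T$ we have $\down^{\X}t = \down^{\mathbf{T}}t\subseteq T$ and $\up^{\X}t\subseteq T\cup\{\top^{\mathbf{S}}\}$, while for every $s\in S$ we have $\up^{\X}s = \up^{\mathbf{S}}s\subseteq S$. Moreover, since $\mathbf{S}$ is connected and, being double-pointed, has at least two elements, $\top^{\mathbf{S}}$ is not minimal in $\mathbf{S}$; hence every $m\in\min(\mathbf{S})$ has $\down^{\X}m = \{m\}$, so $\min(\mathbf{S})\subseteq\min(\X)$ and therefore $\min_{\mathbf{S}}(\down^{\mathbf{S}}x)\subseteq\min_{\X}(\down^{\X}x)$ for all $x\in S$. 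Finally, using the hypothesis $\varphi(\top^{\mathbf{S}})\in\varphi(T)$ together with $\down^{\X}t\subseteq T$ and $\up^{\X}t\subseteq T\cup\{\top^{\mathbf{S}}\}$, we obtain $\varphi(\up^{\X}t)\subseteq\varphi(T)$ and $\varphi(\min_{\X}(\down^{\X}t))\subseteq\varphi(T)$ for every $t\in T$.

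Next I would set $S' := \{s\in S\mid\varphi(s)\in\varphi(T)\}$ and prove $S' = S$; note $\top^{\mathbf{S}}\in S'$ by hypothesis. The heart of the argument is:
\begin{enumerate}[label={\upshape(\alph*)}]
\item $S'$ is an up-set of $\mathbf{S}$;
\item $\min_{\mathbf{S}}(\down^{\mathbf{S}}x)\subseteq S'$ whenever $x\in S'$.
\end{enumerate}
For (a): if $a\leq_{\mathbf{S}}b$ with $a\in S'$, choose $t\in T$ with $\varphi(a) = \varphi(t)$; as $\varphi$ is order-preserving, \eqref{m:cond:1} gives $\varphi(b)\in\up^{\Y}\varphi(a) = \up^{\Y}\varphi(t) = \varphi(\up^{\X}t)\subseteq\varphi(T)$, so $b\in S'$. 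For (b): given $x\in S'$, choose $t\in T$ with $\varphi(x) = \varphi(t)$; applying \eqref{m:cond:3} to $x$ and to $t$,
\[
\varphi\bigl(\min_{\X}(\down^{\X}x)\bigr) = \min_{\Y}\bigl(\down^{\Y}\varphi(x)\bigr) = \min_{\Y}\bigl(\down^{\Y}\varphi(t)\bigr) = \varphi\bigl(\min_{\X}(\down^{\X}t)\bigr)\subseteq\varphi(T),
\]
and since $\min_{\mathbf{S}}(\down^{\mathbf{S}}x)\subseteq\min_{\X}(\down^{\X}x)$ we conclude $\min_{\mathbf{S}}(\down^{\mathbf{S}}x)\subseteq S'$.

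Finally I would combine (a) and (b). Fix $x\in S'$: then $\up^{\mathbf{S}}x\subseteq S'$ by (a), and each $z\in\down^{\mathbf{S}}x$ lies above some $m\in\min_{\mathbf{S}}(\down^{\mathbf{S}}x)\subseteq S'$, so $z\in S'$ by (a) again; hence $\updown^{\mathbf{S}}x\subseteq S'$. Thus $S'$ is closed under the operation $\updown$ computed in $\mathbf{S}$, so $\updown^{n}x\subseteq S'$ for every $n$ and every $x\in S'$; applying this to $\top^{\mathbf{S}}$ and using connectedness of $\mathbf{S}$ to pick $n$ with $\updown^{n}\top^{\mathbf{S}} = S$ yields $S\subseteq S'$, hence $S = S'$. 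Therefore $\varphi(S)\subseteq\varphi(T)$, as required. The main obstacle is the bookkeeping in the second paragraph: one must pin down $\up^{\X}$ and $\down^{\X}$ restricted to $S$ and to $T$, and check that \eqref{m:cond:3} transports the minimal-set computation correctly across the identifications $\varphi(\top^{\mathbf{S}}) = \varphi(t)$; once that is in place the rest is a short connectedness induction.
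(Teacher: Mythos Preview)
Your proof is correct and follows essentially the same strategy as the paper's: both argue, by induction along the connectedness of $\mathbf{S}$ starting from $\top^{\mathbf{S}}$, that the set of $s\in S$ with $\varphi(s)\in\varphi(T)$ is closed under $\updown$, using \eqref{m:cond:1} for the upward direction and \eqref{m:cond:3} followed by \eqref{m:cond:1} for the downward direction. Your version is more explicit in separating the order computations in $\mathbf{S}$ from those in $\X$ (in particular your observation that $\top^{\mathbf{S}}$ is not minimal in $\mathbf{S}$, whence $\min(\mathbf{S})\subseteq\min(\X)$), whereas the paper carries out the induction directly in $\X$ and leaves these distinctions implicit.
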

\begin{proof}
	Assume that $\varphi(\top^\mathbf{S}) \in \varphi(T)$ and let $x \in S$.
By the connectedness of $\mathbf{S}$, every element of $S$ is in the set $\updown^n\top^\mathbf{S}$, for some $n \in \omega$.
	We will prove that $\varphi(x) \in \varphi(T)$ implies $\varphi(\updown x) \subseteq \varphi(T)$. 
The result will then follow by induction, as $\varphi(\top^\mathbf{S}) \in \varphi(T)$.
Let $y \in \updown x$ and assume that $\varphi(x) \in \varphi(T)$.
Then there is some $t \in T$ such that $\varphi(x) = \varphi(t)$.
If $y \geq x$, then 
\[
\varphi(y) \in \varphi(\up x) = \up\varphi(x) = \up\varphi(t) = \varphi(\up t) \subseteq \varphi(T \cup \{\top^\mathbf{S}\}),
\]
which is a subset of $\varphi(T)$ by assumption.
If $y \leq x$, then there is some minimal element $w \leq y$, and then, with $\mathbf{X} = \mathbf{S \arrow T}$ and $\Y = \operatorname{codom}(\varphi)$,
\[
\varphi(w) \in \varphi(\min\nolimits_\X(\down x)) = \min\nolimits_\Y(\down\varphi(x)) = \min\nolimits_\Y(\down\varphi(t)) = \varphi(\min\nolimits_\X(\down t)).
\]
 So there is some $s \leq t$ such that $\varphi(w) = \varphi(s)$.
Then, since $y \geq w$, we have 
 \[\varphi(y) \in \up\varphi(w) = \up\varphi(s) = \varphi(\up s) \subseteq \varphi(T \cup \{\top^\mathbf{S}\}) \subseteq \varphi(T),\]
  as required.
\end{proof}

\begin{lemma}\label{up-tails}
	Let $\mathbf{S}$ and $\mathbf{T}$ be connected double-pointed ordered sets and let $\varphi$ be a non-constant \h[morphism]{} on $\ov{\mathbf{S} \arrow \mathbf{T}}$. 
	Assume that every element of $\mathbf{T}$ is minimal or maximal.
Then, for all $t \in T$, we have $\varphi(\down t) = \down\varphi(t)$.
It follows that if $(x,y)$ is an up-tail in $\mathbf{T}$, then $(\varphi(x),\varphi(y))$ is an up-tail in $\varphi(\ov{\mathbf{S} \arrow \mathbf{T}})$.
\end{lemma}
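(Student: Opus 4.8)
The plan is to work inside the finite ordered set $\X := \ov{\mathbf{S} \arrow \mathbf{T}}$, which is an \h[space]{} and, as $\mathbf{S}$ and $\mathbf{T}$ are connected, is itself connected; put $\Y := \varphi(\ov{\mathbf{S} \arrow \mathbf{T}})$, a finite ordered set onto which $\varphi$ is an \h[morphism]{}. First I would record the shape of $\X$ near the junction. The only relation of $\leq^\X$ linking $S$ to $T$ is $\bot^\mathbf{T} \leq \top^\mathbf{S}$, and $\leq^\mathbf{S} \cup \leq^\mathbf{T} \cup \{(\bot^\mathbf{T},\top^\mathbf{S})\}$ is already transitive (because $\bot^\mathbf{T}$ is minimal in $\mathbf{T}$ and $\top^\mathbf{S}$ is maximal in $\mathbf{S}$). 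One reads off at once: $\down_\X t = \down_{\mathbf{T}} t \subseteq T$ for every $t \in T$ (no $T$-element has an $S$-element below it); $\top^\mathbf{S}$ is maximal and $\bot^\mathbf{T}$ is minimal in $\X$; $\up_\X \bot^\mathbf{T} = \up_{\mathbf{T}} \bot^\mathbf{T} \cup \{\top^\mathbf{S}\}$ while $\up_\X m = \up_{\mathbf{T}} m$ for $m \in T \setminus \{\bot^\mathbf{T}\}$; each minimal element of $\mathbf{T}$ is minimal in $\X$, and each maximal, non-minimal element of $\mathbf{T}$ is maximal in $\X$. Since every element of $\mathbf{T}$ is minimal or maximal, it then follows that anything lying strictly above a minimal element of $\mathbf{T}$ in $\X$ is a maximal element of~$\X$.

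Next I would prove the displayed identity $\varphi(\down t) = \down \varphi(t)$ for $t \in T$ by splitting on $t$. If $t$ is minimal in $\mathbf{T}$, then $\down_\X t = \{t\}$, so $t$ is minimal in $\X$, hence $\varphi(t)$ is minimal in $\Y$ by Lemma~\ref{minimals}, and both sides equal $\{\varphi(t)\}$. If $t$ is maximal but not minimal in $\mathbf{T}$, then $t \neq \bot^\mathbf{T}$, so $t$ is maximal in $\X$ and thus $\varphi(t)$ is maximal in $\Y$; the inclusion $\varphi(\down_\X t) \subseteq \down_\Y \varphi(t)$ is just order-preservation, and for the reverse I would take $z \leq \varphi(t)$ in $\Y$, choose a minimal element $z_0$ of $\Y$ with $z_0 \leq z$, use \eqref{m:cond:3} to write $z_0 = \varphi(m)$ with $m \in \min_\X(\down_\X t)$, and then \eqref{m:cond:1} to write $z = \varphi(y)$ with $y \geq m$ in $\X$. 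As $m$ is a minimal element of $\mathbf{T}$, the remark above gives that either $y = m$ — whence $z = z_0 = \varphi(m) \in \varphi(\down_\X t)$ — or $y$ is maximal in $\X$, whence $\varphi(y)$ is maximal in $\Y$ by Lemma~\ref{minimals} and, being $\leq \varphi(t)$, equals $\varphi(t) \in \varphi(\down_\X t)$. In either case $z \in \varphi(\down_\X t)$, which proves the identity.

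For the final assertion, suppose $(x,y)$ is an up-tail in $\mathbf{T}$. Then $x$ is maximal in $\mathbf{T}$ with $x \neq \bot^\mathbf{T}$, so $x$ is maximal in $\X$, and $\down_{\mathbf{T}} x = \{x,y\}$; as $y < x$ and $\mathbf{T}$ has every element minimal or maximal, $y$ is minimal and non-maximal in $\mathbf{T}$, hence in~$\X$. Applying the identity to $t = x$ gives $\{\varphi(x),\varphi(y)\} = \varphi(\down_\X x) = \down_\Y \varphi(x)$ with $\varphi(x)$ maximal in $\Y = \varphi(\ov{\mathbf{S} \arrow \mathbf{T}})$, while $\varphi(x) \neq \varphi(y)$ since $\varphi$ is non-constant (Lemma~\ref{non-constant}, using connectedness of $\X$, maximality of $x$, and that $y$ is minimal and non-maximal). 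Hence $(\varphi(x),\varphi(y))$ is an up-tail in $\varphi(\ov{\mathbf{S} \arrow \mathbf{T}})$.

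The step I expect to be the crux is the reverse inclusion in the second case: ruling out elements of $\down_\Y \varphi(t)$ that lie strictly between a minimal element and $\varphi(t)$ but are not in $\varphi(\down_\X t)$. This is precisely where the hypothesis that every element of $\mathbf{T}$ is minimal or maximal is indispensable — it forces everything above a minimal element of $\down_\X t$ to be maximal in $\X$ — together with the fact that $t$, a non-minimal maximal element of the "upper block" $\mathbf{T}$, has maximal image. Everything else is routine bookkeeping around the single cross-relation $\bot^\mathbf{T} \leq \top^\mathbf{S}$.
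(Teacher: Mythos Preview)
Your proof is correct and follows essentially the same approach as the paper's: both split on whether $t$ is minimal or maximal, and in the maximal case both descend to a minimal element of $\Y$ below $\varphi(t)$, pull it back via \eqref{m:cond:3} to some $m \in \min_\X(\down t)$, lift via \eqref{m:cond:1}, and then use the hypothesis on $\mathbf{T}$ to force the lifted element to be either minimal or maximal. The only cosmetic difference is that the paper phrases the final step in terms of the image (``$\varphi(x)$ is minimal or maximal, hence equals $\varphi(w)$ or $\varphi(t)$'') while you phrase it in terms of the preimage (``$y=m$ or $y$ is maximal''); for the up-tail clause the paper simply invokes the dual of Lemma~\ref{tails-to-tails} (legitimate now that $\varphi(\down t)=\down\varphi(t)$ on $T$), whereas you unpack that argument explicitly.
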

\begin{proof}
	Let $\mathbf{X} = \mathbf{S} \arrow \mathbf{T}$, let $\Y = \codom(\varphi)$, and let $t \in T$.
If $t$ is minimal, then $\varphi(t)$ is minimal, and the result holds trivially in that case.
Assume that $t$ is maximal.
Then $\varphi(t)$ is maximal.
Let $x \in X$ and assume $\varphi(x) \leq \varphi(t)$.
Then there is some element $y \in X$ such that $\varphi(y)$ is minimal and $\varphi(y) \leq \varphi(x) \leq \varphi(t)$, implying $\varphi(y) \in \min_\Y(\down\varphi(t)) = \varphi(\min_\X(\down t))$.
Therefore, there exists $w \in \min_\X(\down t)$ such that $\varphi(y) = \varphi(w)$.
So $\varphi(x) \in \up\varphi(w) = \varphi(\up w)$, and since ${\up w \subseteq T \cup \{\top^\mathbf{S}\}}$, we must have that $\varphi(x)$ is minimal or maximal by assumption.
Since $\varphi(t)$ is maximal and $\varphi(w) \leq \varphi(x) \leq \varphi(t)$, we conclude that ${\varphi(x) \in \{\varphi(w),\varphi(t)\} \subseteq \varphi(\down t)}$.
It follows that $\down\varphi(t) \subseteq \varphi(\down t)$, and the reverse inclusion holds because $\varphi$ is order-preserving.
To see that $\varphi$ preserves up-tails in~$\mathbf{T}$, use the dual of Lemma~\ref{tails-to-tails}.
\end{proof}

\begin{lemma}\label{image-of-fence}
Let $\mathbf{S}$ be a connected double-pointed ordered set, let $\mathbf{F}$ be a 
double-pointed fence, and let $\varphi$ be a non-constant H\textsuperscript{+}-morphism on $\ov{\mathbf{S} \arrow \mathbf{F}}$. Then $\varphi(\ov{\F})$ is a fence. 
\end{lemma}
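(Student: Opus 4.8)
The plan is to verify that $G:=\varphi(\ov{\mathbf{F}})$ satisfies the criterion for being a fence in Lemma~\ref{fence-characterisation}(3): a non-trivial finite connected ordered set all of whose principal up-sets and down-sets have at most three elements and which has at least one tail. Throughout, I write $\mathbf{X}:=\mathbf{S}\arrow\mathbf{F}$ and $\mathbf{Y}:=\codom(\varphi)$, so that $G$ is the sub-ordered set of $\mathbf{Y}$ carried by $\varphi(F)$. The first task is bookkeeping on $\mathbf{X}$: by Definition~\ref{arrow-definition} the only comparability of $\mathbf{X}$ not already internal to $\mathbf{S}$ or to $\mathbf{F}$ is $\bot^{\mathbf{F}}<\top^{\mathbf{S}}$, and from this one reads off that, for $t\in F$, $\down_{\mathbf{X}}t=\down_{\mathbf{F}}t$, that $\up_{\mathbf{X}}t=\up_{\mathbf{F}}t$ for $t\ne\bot^{\mathbf{F}}$ while $\up_{\mathbf{X}}\bot^{\mathbf{F}}=\up_{\mathbf{F}}\bot^{\mathbf{F}}\cup\{\top^{\mathbf{S}}\}$, that $\top^{\mathbf{S}}$ is maximal in $\mathbf{X}$, that an element of $F$ is minimal (maximal) in $\mathbf{X}$ exactly when it is so in $\mathbf{F}$, and that $\mathbf{X}$ is connected. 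Since $\varphi$ is non-constant and $\mathbf{X}$ is connected, $\mathbf{Y}$, and hence $G$, is a connected ordered set with at least two elements, and therefore has no order-isolated element. I will use freely that H\textsuperscript{+}-morphisms satisfy $\varphi(\up x)=\up\varphi(x)$ by \eqref{m:cond:1} and preserve minimal and maximal elements by Lemma~\ref{minimals}.

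For the down-sets: $\mathbf{S}$ and $\mathbf{F}$ are connected and every element of the fence $\mathbf{F}$ is minimal or maximal, so Lemma~\ref{up-tails} gives $\varphi(\down_{\mathbf{X}}t)=\down_{\mathbf{Y}}\varphi(t)$ for every $t\in F$; hence $\down_{G}\varphi(t)=\down_{\mathbf{Y}}\varphi(t)\cap\varphi(F)=\varphi(\down_{\mathbf{F}}t)$ has at most $|\down_{\mathbf{F}}t|\le 3$ elements, $\mathbf{F}$ being a fence. For the up-sets, $\up_{G}\varphi(t)\subseteq\up_{\mathbf{Y}}\varphi(t)=\varphi(\up_{\mathbf{X}}t)$, which for $t\ne\bot^{\mathbf{F}}$ equals $\varphi(\up_{\mathbf{F}}t)$ and so has at most three elements. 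The one case requiring care — and the step I expect to be the crux of the argument — is $\varphi(\bot^{\mathbf{F}})$, since $\up_{\mathbf{X}}\bot^{\mathbf{F}}$ can have four elements. If $\varphi(\top^{\mathbf{S}})\notin\varphi(F)$, then intersecting $\up_{\mathbf{Y}}\varphi(\bot^{\mathbf{F}})=\varphi(\up_{\mathbf{F}}\bot^{\mathbf{F}})\cup\{\varphi(\top^{\mathbf{S}})\}$ with $\varphi(F)$ discards $\varphi(\top^{\mathbf{S}})$ and leaves a set of size at most three. If $\varphi(\top^{\mathbf{S}})\in\varphi(F)$, write $\varphi(\top^{\mathbf{S}})=\varphi(t')$ with $t'\in F$; as $\top^{\mathbf{S}}$ is maximal in $\mathbf{X}$ and $\mathbf{Y}$ has no order-isolated element, $t'$ must be maximal in $\mathbf{F}$. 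Applying \eqref{m:cond:3} at $\top^{\mathbf{S}}$ and at $t'$ and using $\varphi(\top^{\mathbf{S}})=\varphi(t')$ yields $\varphi(\min_{\mathbf{X}}(\down_{\mathbf{X}}\top^{\mathbf{S}}))=\varphi(\min_{\mathbf{X}}(\down_{\mathbf{X}}t'))$; since $\bot^{\mathbf{F}}$ lies in the left-hand set and $\min_{\mathbf{X}}(\down_{\mathbf{X}}t')=\min(\mathbf{F})\cap\down_{\mathbf{F}}t'$, there is a minimal element $w$ of $\mathbf{F}$ with $w<_{\mathbf{F}}t'$ and $\varphi(w)=\varphi(\bot^{\mathbf{F}})$. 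If $w=\bot^{\mathbf{F}}$ then $t'\in\up_{\mathbf{F}}\bot^{\mathbf{F}}$, so $\varphi(\top^{\mathbf{S}})\in\varphi(\up_{\mathbf{F}}\bot^{\mathbf{F}})$ and $\up_{\mathbf{Y}}\varphi(\bot^{\mathbf{F}})=\varphi(\up_{\mathbf{F}}\bot^{\mathbf{F}})$; if $w\ne\bot^{\mathbf{F}}$ then $\up_{\mathbf{Y}}\varphi(\bot^{\mathbf{F}})=\up_{\mathbf{Y}}\varphi(w)=\varphi(\up_{\mathbf{F}}w)$. In either case $\up_{G}\varphi(\bot^{\mathbf{F}})$ has at most three elements.

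It remains to exhibit a tail of $G$. Every double-pointed fence has an up-tail or else (when it has more than two elements, by Lemma~\ref{fence-characterisation}(2)) has two down-tails with distinct first entries, at least one of which, say $(\tau_1,\tau_2)$, has $\tau_1\ne\bot^{\mathbf{F}}$. If $(x,y)$ is an up-tail of $\mathbf{F}$, then from $\down_{\mathbf{X}}x=\down_{\mathbf{F}}x=\{x,y\}\subseteq F$, Lemma~\ref{up-tails} and Lemma~\ref{minimals} one gets $\down_{G}\varphi(x)=\{\varphi(x),\varphi(y)\}$ with $\varphi(x)$ maximal in $G$ and $\varphi(x)\ne\varphi(y)$ (else $\varphi(x)$ would be order-isolated in $\mathbf{Y}$), so $(\varphi(x),\varphi(y))$ is an up-tail of $G$; symmetrically, from $\up_{\mathbf{X}}\tau_1=\up_{\mathbf{F}}\tau_1=\{\tau_1,\tau_2\}$ and \eqref{m:cond:1}, $(\varphi(\tau_1),\varphi(\tau_2))$ is a down-tail of $G$. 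In all cases $G$ is a non-trivial finite connected ordered set meeting the hypotheses of Lemma~\ref{fence-characterisation}(3), whence $\varphi(\ov{\mathbf{F}})$ is a fence. The principal difficulty, as noted, is the up-set bound at $\varphi(\bot^{\mathbf{F}})$, which is exactly the point that forces the distinction between the two cases for $\varphi(\top^{\mathbf{S}})$ and the use of \eqref{m:cond:3}.
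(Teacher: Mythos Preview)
Your proof is correct and follows the paper's approach—verify the criterion of Lemma~\ref{fence-characterisation}(3) via Lemma~\ref{up-tails} for the down-set bound, \eqref{m:cond:1} for the up-set bound, and preservation of tails—with the notable addition that you explicitly handle the up-set bound at $\varphi(\bot^{\mathbf{F}})$ by a two-case analysis using \eqref{m:cond:3}; the paper's argument is terse at precisely that point, and your treatment makes it rigorous. One small correction: the assertion that $\mathbf{Y}=\codom(\varphi)$ has no order-isolated element is not warranted, since the codomain can be arbitrary; what you need (and what holds) is that the image $\varphi(X)$ has none, or more directly invoke Lemma~\ref{non-constant} to obtain $\varphi(m_1)\ne\varphi(m_2)$ whenever $m_1$ is maximal and $m_2$ is minimal-not-maximal in $\mathbf{X}$, which immediately gives both that $t'$ is maximal in $\mathbf{F}$ and that your tail images have distinct entries.
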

\begin{proof}
We will use the characterisation of fences in Lemma~\ref{fence-characterisation}.
Since $\F$ is connected, so is $\varphi(\F)$.
If $|F| = 2$, then because $\varphi$ is non-constant, it follows that $\varphi(\F)$ is a connected ordered set with 2 elements, implying it is a two-element fence.
If $|F| > 2$, then it is easy to see that $\mathbf{S} \arrow \mathbf{F}$ contains at least one tail. Specifically, the two tails of $\F$ are tails in $\mathbf{S} \arrow \mathbf{F}$, unless $\bot^\mathbf{F}$ is the lower element of a down-tail, in which case the other tail in $\mathbf{F}$ is a tail in $\mathbf{S} \arrow \mathbf{F}$.
Then, either by using Lemma~\ref{tails-to-tails} or Lemma~\ref{up-tails}, there is at least one tail in $\varphi(\F)$.
It only remains to check that $|\varphi(F) \cap \down \varphi(x)| \leq 3$ and $|\varphi(F) \cap \up\varphi(x)| \leq 3$, for all $x \in F$.
Let $x \in F$. 
Since $\F$ is a fence, we have $|\up x| \leq 3$, and then $\varphi(\up x) = \up\varphi(x)$ implies $|\varphi(F) \cap \up\varphi(x)| \leq 3$.
Dually, by Lemma~\ref{up-tails}, we have $\varphi(\down x) = \down\varphi(x)$, and so $|\varphi(F) \cap \down\varphi(x)| \leq 3$. 
\end{proof}

\begin{definition}
A double-pointed ordered set $\mathbf{T}$ has a
\emph{pointed down-tail}
if there exists $\tau_1, \tau_2 \in T$ such that $(\tau_1,\tau_2)$ is a down-tail with $\tau_1 =\bot^\mathbf{T}$. Note that $\top^\mathbf{T}$ is still an arbitrary maximal element of~$\T$. 
In what follows, we will let $\tau_1^\mathbf{T}$ and $\tau_2^\mathbf{T}$ denote $\tau_1$ and $\tau_2$ as stated here.
\end{definition}

We draw special attention to the fact that if a double-pointed ordered set
$\mathbf{T}$ has a
pointed down-tail, then $\arrow$ entails a more specific construction (see
Figure~~\ref{down-tail-fence}). 

\begin{figure}[ht]
\centering
\begin{tikzpicture}[scale=0.7]
\begin{scope}[shift={(-3.5,0)},scale=0.72]
\begin{scope}[rotate=-10,xscale=1.2,yscale=1.8]
\draw (0,0) circle [x radius = 1, y radius = 1] (0,0);
\coordinate (beta-1) at ($(0,0)+(40:1)$);
\coordinate (beta-2) at ($(0,0)+(70:1)$);
\coordinate (alpha) at ($(0,0)+(210:0.5)$);
\coordinate (alpha-1) at ($(0,0)+(230:1)$);
\coordinate (alpha-2) at ($(0,0)+(290:1)$);
\node[fence node] at (alpha-2) (alpha-drawa) {};
\node[fence node] at (beta-2) (beta-drawa) {};
\end{scope}
\end{scope}

\begin{scope}[scale=0.6,rotate=-10,xscale=1.2,yscale=1.8]
\draw (0,0) circle [rotate=75, x radius = 1, y radius = 1] (0,0);
\node[fence node] at (-1.3,0.5) (y1) {};
\node[fence node] at ($(y1)+(-135:1 and 1)$) (x1) {};
\coordinate (first) at ($(y1)+(-40:1 and 1)$);
\coordinate (second) at ($(y1)+(-30:1 and 1)$);
\coordinate (third) at ($(y1)+(-20:1 and 1)$);
\coordinate (beta-T) at ($(0,0)+(70:1)$);
\node[fence node] at (beta-T) (beta-T-draw){};
\draw (y1) -- (first);
\draw (y1) -- (second);
\draw (y1) -- (third);
\draw (x1) -- (y1);
\end{scope}

\begin{scope}[shift={(6.8,0)}]
\begin{scope}[shift={(-3.5,0)},scale=0.72]
\begin{scope}[rotate=-10,xscale=1.2,yscale=1.8]
\draw (0,0) circle [x radius = 1, y radius = 1] (0,0);
\coordinate (beta-1) at ($(0,0)+(40:1)$);
\coordinate (beta-2) at ($(0,0)+(70:1)$);
\coordinate (alpha) at ($(0,0)+(210:0.5)$);
\coordinate (alpha-1) at ($(0,0)+(230:1)$);
\coordinate (alpha-2) at ($(0,0)+(290:1)$);
\node[fence node] at (alpha-2) (alpha-drawc) {};
\node[fence node] at (beta-2) (beta-drawc) {};
\end{scope}
\end{scope}

\begin{scope}[scale=0.6,rotate=-10,xscale=1.2,yscale=1.8]
\draw (0,0) circle [rotate=75, x radius = 1, y radius = 1] (0,0);
\node[fence node] at (-1.3,0.5) (y) {};
\node[fence node] at ($(y)+(-135:1 and 1)$) (x) {};
\coordinate (first) at ($(y)+(-40:1 and 1)$);
\coordinate (second) at ($(y)+(-30:1 and 1)$);
\coordinate (third) at ($(y)+(-20:1 and 1)$);
\coordinate (beta-ST) at ($(0,0)+(70:1)$);
\node[fence node] at (beta-ST) (beta-ST-draw){};
\draw (y) -- (first);
\draw (y) -- (second);
\draw (y) -- (third);
\draw (x) -- (y);
\draw (beta-drawc) -- (x);
\end{scope}
\end{scope}

\node[rectangle,draw=none,fill=none,below=0.3 of alpha-drawa] (label-1) {$\bot^\mathbf{S}$};
\node[rectangle,draw=none,fill=none] at (x1 |- label-1) (label-2) {$\bot^\mathbf{T} = \tau_1^\mathbf{T}$};
\node[rectangle,draw=none,fill=none] at (alpha-drawc |- label-1) (label-3) {$\bot^\mathbf{S\arrow T}$};
\draw [densely dotted,very thin] (alpha-drawa) -- (label-1);
\draw [densely dotted,very thin] (x1) -- (label-2);
\draw [densely dotted,very thin] (alpha-drawc) -- (label-3);

\node[rectangle,draw=none,fill=none,above=0.3 of beta-drawa] (label-a) {$\top^\mathbf{S}$};
\node[rectangle,draw=none,fill=none] at (y1 |- label-a) (label-b) {$\tau_2^\mathbf{T}$};
\node[rectangle,draw=none,fill=none] at (beta-T-draw |- label-a) (label-d) {$\top^\mathbf{T}$};
\node[rectangle,draw=none,fill=none] at (beta-ST-draw |- label-a) (label-c) {$\top^\mathbf{S\arrow T}$};

\draw [densely dotted,very thin] (beta-drawa) -- (label-a);
\draw [densely dotted,very thin] (y1) -- (label-b);
\draw [densely dotted,very thin] (beta-ST-draw) -- (label-c);
\draw [densely dotted, very thin] (beta-T-draw) -- (label-d);
\end{tikzpicture}

\begin{tikzpicture}[scale=0.6]
\begin{scope}[rotate=-20,xscale=1.2,yscale=1.8,scale=0.8,shift={(-2,0)}]
\draw (0,0) circle [x radius = 1, y radius = 1] (0,0);
\coordinate (beta-1) at ($(0,0)+(40:1)$);
\coordinate (beta-2) at ($(0,0)+(70:1)$);
\coordinate (alpha) at ($(0,0)+(210:0.5)$);
\coordinate (alpha-1) at ($(0,0)+(230:1)$);
\coordinate (alpha-2) at ($(0,0)+(290:1)$);
\node[fence node] at (alpha-2) (alpha-drawa) {};
\node[fence node] at (beta-2) (beta-drawa) {};
\end{scope}
\tikzfence{10}
\draw (beta-drawa) -- (fence1);
\node[rectangle, below = 0.3 of alpha-drawa] (label-a) {$\bot^\mathbf{S}$};
\node[rectangle] at (fence1 |- label-a) (label-b) {$\bot^\mathbf{F} = \tau_1^\mathbf{F}$};
\node[rectangle, above = 0.3 of beta-drawa] (label-c) {$\top^\mathbf{S}$};
\node[rectangle] at (fence2 |- label-c) (label-d) {$\tau_2^\mathbf{F}$};
\draw [densely dotted,very thin] (alpha-drawa) -- (label-a);
\draw [densely dotted,very thin] (fence1) -- (label-b);
\draw [densely dotted,very thin] (beta-drawa) -- (label-c);
\draw [densely dotted, very thin] (fence2) -- (label-d);
\end{tikzpicture}
\caption{Special case. Upper: generic $\mathbf{S} \arrow \mathbf{T}$ when
  $\mathbf{T}$ has a pointed down-tail. Lower: specific $\mathbf{S} \arrow
  \mathbf{F}$ with~$\mathbf{F}$ a fence.}\label{down-tail-fence} 
\end{figure}
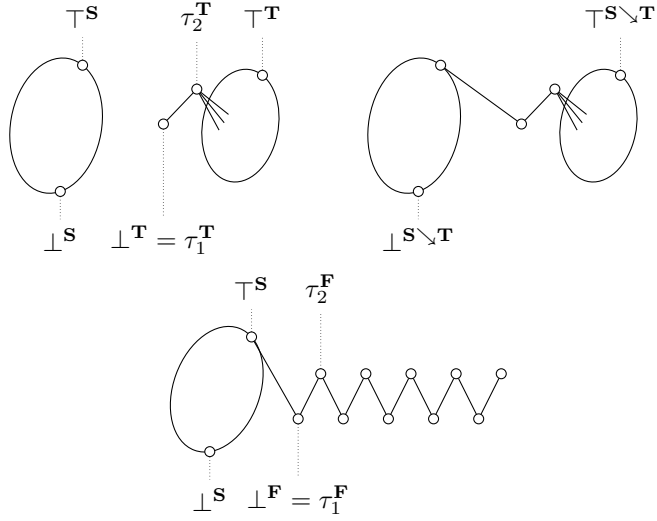

\begin{lemma}\label{only-element}Let $\mathbf{S}$ be a double-pointed ordered
set, let $\mathbf{T}$ be a double-pointed ordered set with a
pointed 
down-tail, and let $\varphi$ be an H\textsuperscript{+}-morphism on $\ov{\mathbf{S} \arrow \mathbf{T}}$.
If $\varphi(\top^\mathbf{S}) \notin \varphi(T)$, then ${\varphi(\tau_2^\mathbf{T}) \notin \varphi(T \comp \{\tau_2\})}$.
\end{lemma}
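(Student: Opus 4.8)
The plan is to prove the contrapositive: assuming $\varphi(\tau_2^\mathbf{T})\in\varphi(T\comp\{\tau_2^\mathbf{T}\})$, I will deduce $\varphi(\top^\mathbf{S})\in\varphi(T)$. Throughout write $\tau_1=\tau_1^\mathbf{T}$, $\tau_2=\tau_2^\mathbf{T}$, $\X=\ov{\mathbf{S}\arrow\mathbf{T}}$ and $\Y=\codom(\varphi)$, and fix $t\in T\comp\{\tau_2\}$ with $\varphi(t)=\varphi(\tau_2)$.

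First I would record the order-theoretic bookkeeping forced by the way $\arrow$ glues $\mathbf{S}$ to $\mathbf{T}$. Since $(\tau_1,\tau_2)$ is a pointed down-tail of $\mathbf{T}$, in $\mathbf{T}$ the element $\tau_1=\bot^\mathbf{T}$ is minimal and $\up\tau_1=\{\tau_1,\tau_2\}$; the latter forces $\tau_2$ to be maximal, since anything strictly above $\tau_2$ would be strictly above $\tau_1$ and hence lie in $\{\tau_1,\tau_2\}$, which is impossible. Adjoining the single relation $(\bot^\mathbf{T},\top^\mathbf{S})$ to form $\X$ leaves $\tau_1$ minimal, leaves $\tau_2$ and $\top^\mathbf{S}$ maximal, and gives $\up_\X\tau_1=\{\tau_1,\tau_2,\top^\mathbf{S}\}$; moreover the only new comparability runs upward out of $\tau_1$, so for every $s\in T$ we have $\down_\X s=\down_\mathbf{T}s\subseteq T$, for every $s\in T\comp\{\tau_1\}$ we have $\up_\X s=\up_\mathbf{T}s\subseteq T$, and $\tau_1\notin\down_\mathbf{T}s$ whenever $s\notin\{\tau_1,\tau_2\}$ (as $\tau_1\le s$ would force $s\in\up_\mathbf{T}\tau_1$). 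Applying \eqref{m:cond:1} to $\tau_1$ gives $\up\varphi(\tau_1)=\{\varphi(\tau_1),\varphi(\tau_2),\varphi(\top^\mathbf{S})\}$, and Lemma~\ref{minimals} gives that $\varphi(\tau_2)$ is maximal in $\Y$.

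Now I would split on $t$. If $t=\tau_1$, then $\varphi(\tau_1)=\varphi(\tau_2)$ is maximal, while $\tau_1\le\top^\mathbf{S}$ gives $\varphi(\tau_1)\le\varphi(\top^\mathbf{S})$; maximality then forces $\varphi(\top^\mathbf{S})=\varphi(\tau_1)=\varphi(\tau_2)\in\varphi(T)$. If $t\neq\tau_1$ (so $t\in T\comp\{\tau_1,\tau_2\}$), the heart of the argument is to use condition \eqref{m:cond:3} to pull the minimal element $\tau_1$ back into $T$ away from $\tau_1$ itself. Since $\tau_1$ is a minimal element of $\X$ lying below $\tau_2$, it belongs to $\min_\X(\down_\X\tau_2)$, so by \eqref{m:cond:3}
\[
\varphi(\tau_1)\in\varphi\bigl(\min\nolimits_\X(\down_\X\tau_2)\bigr)=\min\nolimits_\Y\bigl(\down\varphi(\tau_2)\bigr)=\min\nolimits_\Y\bigl(\down\varphi(t)\bigr)=\varphi\bigl(\min\nolimits_\X(\down_\X t)\bigr).
\]
Hence $\varphi(w)=\varphi(\tau_1)$ for some $w\in\min_\X(\down_\X t)$; by the bookkeeping $w\in\down_\mathbf{T}t\subseteq T$ and $w\neq\tau_1$. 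Then $\up_\X w=\up_\mathbf{T}w\subseteq T$, so by \eqref{m:cond:1} we get $\up\varphi(\tau_1)=\up\varphi(w)=\varphi(\up_\X w)\subseteq\varphi(T)$; since $\varphi(\top^\mathbf{S})\in\up\varphi(\tau_1)$, this yields $\varphi(\top^\mathbf{S})\in\varphi(T)$, completing the contrapositive.

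The step I expect to be the crux — indeed the only non-mechanical one — is the use of \eqref{m:cond:3} in the last paragraph. Because H\textsuperscript{+}-morphisms need not preserve up-tails (as noted after Lemma~\ref{tails-to-tails}), one cannot simply transport the tail structure around $\tau_1$ wholesale; the asymmetric condition \eqref{m:cond:3} is exactly what lets one recover a point of $T$ that $\varphi$ collapses onto $\varphi(\tau_1)$. Everything else is the (fiddly but routine) verification of how comparabilities behave in $\ov{\mathbf{S}\arrow\mathbf{T}}$.
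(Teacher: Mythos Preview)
Your proof is correct and follows essentially the same argument as the paper's: both hinge on applying \eqref{m:cond:3} to pull back $\varphi(\tau_1)$ into $\min_\X(\down t)$, and then \eqref{m:cond:1} to transport $\varphi(\top^\mathbf{S})$ into $\varphi(T)$. Your explicit case split on $t=\tau_1$ is in fact slightly more careful than the paper, which asserts $\tau_1\notin\down t$ ``because $t\neq\tau_2$'' without separately disposing of the possibility $t=\tau_1$.
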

\begin{proof}
	Let $\tau_1 = \tau_1^\mathbf{T}$, let $\tau_2 = \tau_2^\mathbf{T}$, let $\mathbf{X} = \mathbf{S} \arrow \mathbf{T}$, and let $\Y = \operatorname{codom}(\varphi)$.
Assume that $\varphi(\top^\mathbf{S}) \notin \varphi(T)$. 
Suppose, by way of contradiction, that there is some $t \in T\comp\{\tau_2\}$ such that $\varphi(t) = \varphi(\tau_2)$.
Note that $\tau_1 \notin \down t$ because $t \neq \tau_2$.
But since $\tau_1$ is minimal, we have $\varphi(\tau_1) \in \min(Y)$. 
Then since $\varphi$ is order-preserving, we have $\varphi(\tau_1) \leq \varphi(\tau_2)$.
Thus, 
\[
\varphi(\tau_1) \in \min\nolimits_\Y(\down\varphi(\tau_2)) = \min\nolimits_\Y(\down\varphi(t)) = \varphi(\min\nolimits_\X(\down t)).
\]
So there exists $s \in \down t$ such that $\varphi(s) = \varphi(\tau_1)$ and $s \neq \tau_1$.
Note that $\up s \subseteq T$ because $s \neq \tau_1$.
By construction, we have $\top^\mathbf{S} > \bot^{\mathbf{T}} = \tau_1$, and then because $\varphi(\up \tau_1) = \up\varphi(\tau_1) = \up\varphi(s) = \varphi(\up s)$, it follows that there must be some $u \in \up s$ such that $\varphi(\top^\mathbf{S}) = \varphi(u)$.
By assumption, $u$ cannot be in $T$, but $u \in \up s \subseteq T$, a contradiction.
\end{proof}

The final leg of this subsection returns the focus to fences. From Lemma~\ref{image-of-arrow} and Lemma~\ref{image-of-fence} we obtain the next result.

\begin{lemma}\label{fence-corollary}
Let $\mathbf{S}$ be a connected double-pointed ordered set, let $\mathbf{F}$ be
a double-pointed fence with a
pointed
down-tail, and let $\varphi$ be a non-constant H\textsuperscript{+}-morphism on $\ov{\mathbf{S} \arrow \mathbf{F}}$.
If ${\varphi(\top^\mathbf{S}) \in \varphi(F)}$, then $\varphi(\ov{\mathbf{S} \arrow \mathbf{F}})$ is a fence.  
\end{lemma}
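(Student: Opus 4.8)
The plan is to obtain the result by composing the two preceding lemmas, as foreshadowed in the sentence introducing the statement. First I would invoke Lemma~\ref{image-of-arrow} with $\mathbf{T} := \mathbf{F}$: the hypotheses there are exactly that $\mathbf{S}$ is connected and that $\varphi$ is an H\textsuperscript{+}-morphism on $\ov{\mathbf{S} \arrow \mathbf{F}}$ with $\varphi(\top^\mathbf{S}) \in \varphi(F)$, all of which hold here (a non-constant H\textsuperscript{+}-morphism is in particular an H\textsuperscript{+}-morphism). That lemma then yields the equality of ordered sets $\varphi(\ov{\mathbf{S} \arrow \mathbf{F}}) = \varphi(\ov{\mathbf{F}})$.

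Next I would apply Lemma~\ref{image-of-fence}. Its hypotheses are that $\mathbf{S}$ is a connected double-pointed ordered set, that $\mathbf{F}$ is a double-pointed fence, and that $\varphi$ is a non-constant H\textsuperscript{+}-morphism on $\ov{\mathbf{S} \arrow \mathbf{F}}$ — again all granted — and its conclusion is that $\varphi(\ov{\mathbf{F}})$ is a fence. Chaining this with the equality from the previous step gives $\varphi(\ov{\mathbf{S} \arrow \mathbf{F}}) = \varphi(\ov{\mathbf{F}})$ is a fence, which is the assertion.

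I do not expect any genuine obstacle here: the substantive work has already been carried out in Lemmas~\ref{image-of-arrow} and~\ref{image-of-fence}, and all that remains is to confirm that the hypotheses of both match, verbatim, the hypotheses given in the present statement, which they do. I would note only — for the reader's orientation rather than as part of the argument — that the assumption that $\mathbf{F}$ has a pointed down-tail plays no role in this case; it is retained in the statement purely so that the complementary case $\varphi(\top^\mathbf{S}) \notin \varphi(F)$, which is treated separately via Lemma~\ref{only-element} and for which the pointed down-tail is essential, can be stated alongside it.
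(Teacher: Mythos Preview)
Your proposal is correct and matches the paper's approach exactly: the paper introduces this lemma with the sentence ``From Lemma~\ref{image-of-arrow} and Lemma~\ref{image-of-fence} we obtain the next result'' and gives no further proof. Your observation that the pointed down-tail hypothesis is idle here is also accurate.
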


\begin{lemma}\label{one-to-one}Let $\mathbf{S}$ be a connected double-pointed
ordered set, let $\mathbf{F}$ be a double-pointed fence with a
pointed
down-tail, and let $\varphi$ be a non-constant H\textsuperscript{+}-morphism on $\ov{\mathbf{S} \arrow \mathbf{F}}$.
If $\varphi(\top^\mathbf{S}) \notin \varphi(F)$, then $\varphi{\restriction_F}$ is one-to-one.
\end{lemma}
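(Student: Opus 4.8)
The plan is to separate $\bot^{\mathbf{F}}$ and $\tau_2^{\mathbf{F}}$ from the rest of $F$ under $\varphi$, then peel those two elements off and finish by induction on $|F|$. Write $\mathbf{X} = \mathbf{S}\arrow\mathbf{F}$, $\mathbf{Y}=\codom(\varphi)$, $\tau_1 = \tau_1^{\mathbf{F}} = \bot^{\mathbf{F}}$ and $\tau_2 = \tau_2^{\mathbf{F}}$. Since $(\tau_1,\tau_2)$ is a down-tail of the fence $\mathbf{F}$ whose lower element is $\bot^{\mathbf{F}}$, the element $\tau_1$ is a minimal endpoint of $\mathbf{F}$, so I would enumerate $F=\{f_1,\dots,f_n\}$ with $f_1=\tau_1$, $f_2=\tau_2$ and $f_1<f_2>f_3<f_4>\cdots$.

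The first separation is that $\varphi(\bot^{\mathbf{F}})\ne\varphi(x)$ for every $x\in F\comp\{\bot^{\mathbf{F}}\}$. In $\mathbf{X}$ the only cross-relation is $\bot^{\mathbf{F}}<\top^{\mathbf{S}}$, so $\up\bot^{\mathbf{F}} = \up_{\mathbf{F}}\tau_1\cup\{\top^{\mathbf{S}}\} = \{\tau_1,\tau_2,\top^{\mathbf{S}}\}$, and hence by \eqref{m:cond:1} we get $\varphi(\top^{\mathbf{S}})\in\up\varphi(\bot^{\mathbf{F}})$; whereas for $x\in F$ with $x\ne\bot^{\mathbf{F}}$ no element of $S$ lies above $x$ in $\mathbf{X}$, so $\up_{\mathbf{X}}x=\up_{\mathbf{F}}x\subseteq F$ and, again by \eqref{m:cond:1}, $\up\varphi(x)=\varphi(\up_{\mathbf{X}}x)\subseteq\varphi(F)$; as $\varphi(\top^{\mathbf{S}})\notin\varphi(F)$, the two up-sets differ, so $\varphi(\bot^{\mathbf{F}})\ne\varphi(x)$. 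The second separation is immediate from Lemma~\ref{only-element} applied with $\mathbf{T}:=\mathbf{F}$: $\varphi(\tau_2)\notin\varphi(F\comp\{\tau_2\})$.

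Now induct on $|F|$. If $|F|\le 3$ the claim follows at once from the two separation facts. For $n=|F|\ge 4$ I would decompose $\mathbf{F}=\mathbf{F}_1\arrow\mathbf{F}_2$, where $\mathbf{F}_1$ is the induced order on $\{f_1,f_2\}$ (the two-element fence, $\bot=f_1$, $\top=f_2$) and $\mathbf{F}_2$ is the induced order on $\{f_3,\dots,f_n\}$ with $\bot^{\mathbf{F}_2}:=f_3$, $\top^{\mathbf{F}_2}:=f_4$; then $\mathbf{F}_2$ is again a fence and $(f_3,f_4)$ is a pointed down-tail of $\mathbf{F}_2$. Set $\mathbf{S}_1:=\mathbf{S}\arrow\mathbf{F}_1$, a connected double-pointed ordered set with $\top^{\mathbf{S}_1}=\tau_2$; by associativity of $\arrow$ (equivalently, by comparing the defining orders) $\ov{\mathbf{S}_1\arrow\mathbf{F}_2}=\ov{\mathbf{S}\arrow\mathbf{F}}=\mathbf{X}$. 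Thus $\varphi$ is a non-constant H\textsuperscript{+}-morphism on $\ov{\mathbf{S}_1\arrow\mathbf{F}_2}$ and, since $\varphi(\top^{\mathbf{S}_1})=\varphi(\tau_2)\notin\varphi(\{f_3,\dots,f_n\})$ by the second separation, the induction hypothesis gives that $\varphi{\restriction_{F_2}}$ is one-to-one. Combining this with the first separation (for $f_1=\bot^{\mathbf{F}}$) and the second separation (for $f_2=\tau_2$): if $x,y\in F$ and $\varphi(x)=\varphi(y)$, then $x=y$, so $\varphi{\restriction_F}$ is one-to-one.

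The arguments themselves are short; the only place needing care is the bookkeeping of the $\arrow$-construction in the inductive step — checking that detaching the anchored end $\{f_1,f_2\}$ from $\mathbf{F}$ and reattaching it to $\mathbf{S}$ returns the same underlying ordered set $\mathbf{X}$, that $\mathbf{F}_2$ genuinely inherits a pointed down-tail, and that the hypothesis "$\varphi$ of the new top lies outside $\varphi$ of the new fence" is precisely what Lemma~\ref{only-element} delivers. Notably, no tail-preservation result is needed for the injectivity argument; only condition \eqref{m:cond:1} and Lemma~\ref{only-element} are used.
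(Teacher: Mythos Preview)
Your proof is correct and follows essentially the same approach as the paper's: both argue by induction on $|F|$, peel off the two-element tail $\{\tau_1,\tau_2\}$, use associativity of $\arrow$ to rewrite $\mathbf{S}\arrow\mathbf{F}$ as $(\mathbf{S}\arrow\mathbf{T})\arrow\mathbf{F}'$, invoke Lemma~\ref{only-element} to show $\varphi(\tau_2)\notin\varphi(F')$, and use the \eqref{m:cond:1}-argument about $\varphi(\top^{\mathbf{S}})\in\up\varphi(\tau_1)$ to separate $\tau_1$ from the rest. Your presentation is slightly cleaner in that you isolate the two separation facts once at the outset, which dispatches the base cases $|F|\le 3$ uniformly, whereas the paper re-derives the $\tau_1$-separation inside each case; but this is purely organisational.
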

\begin{proof}
	Assume $\varphi(\top^\mathbf{S}) \notin \varphi(F)$. 
If $|F| = 2$, the result holds because $\varphi$ is non-constant.
Assume $|F| \geq 3$.
Then there exists $\gamma \in F$ such that ${\down \tau_2^\mathbf{F} = \{\tau_1^\mathbf{F}, \tau_2^\mathbf{F},\gamma\}}$.
If $|F| = 3$, it needs only to be checked that $\varphi(\tau_1^\mathbf{F}) \neq \varphi(\gamma)$.
But since ${\up \gamma = \{\gamma,\tau_2^\mathbf{F}\}}$ and $\top^\mathbf{S} > \tau_1^\mathbf{F}$, if $\varphi(\tau_1^\mathbf{F}) = \varphi(\gamma)$, then $\varphi(\top^\mathbf{S}) \in \up \varphi(\tau_1^\mathbf{F}) = \up\varphi(\gamma) \subseteq \varphi(F)$, a contradiction.
So $\varphi(\tau^\mathbf{F}) \neq \varphi(\gamma)$. 
Let $|F| > 3$ and assume inductively that the result holds for all fences of a smaller size.
It is easy to see that $F' = F \comp\{\tau_1^\mathbf{F},\tau_2^\mathbf{F}\}$ 
forms a fence in which $\gamma$ is the minimum element of a down-tail. Let $\mathbf{F}'$ be the corresponding
double-pointed fence with a
pointed
down-tail in which 
$\bot^\mathbf{F'} = \gamma$ and $\top^\mathbf{F'}$ 
is chosen arbitrarily. 
Define $\mathbf{T}$ on $T = \{\tau_1^\mathbf{F},\tau_2^\mathbf{F}\}$ by $\bot^\mathbf{T} = \tau_1^\mathbf{F}$ and $\top^\mathbf{T} = \tau_2^\mathbf{F}$.
Then the underlying ordered sets of $\mathbf{F}$ and $\mathbf{T} \arrow \mathbf{F}'$ are equal.
Thus, the underlying ordered sets of $\mathbf{S} \arrow \mathbf{F}$ and $(\mathbf{S} \arrow \mathbf{T}) \arrow \mathbf{F}'$ are also equal.
Since $\varphi(\top^\mathbf{S}) \notin \varphi(F)$, it follows by Lemma~\ref{only-element} that $\varphi(\tau_2^\mathbf{F}) \notin \varphi({F}')$.
So by the inductive hypothesis, $\varphi$ is one-to-one on ${F}'$.
It remains to show that $\varphi(\tau_1^\mathbf{F}) \notin \varphi({F}')$.
But if this were not the case, then since $\top^\mathbf{S} > \tau_1^\mathbf{F}$, we would have $\varphi(\top^\mathbf{S}) \in \varphi(F)$, a contradiction.
\end{proof}

The next corollary is the key result used in Section~\ref{final-construction}.

\begin{corollary}\label{main-corollary}
Let $\mathbf{S}$ be a connected double-pointed ordered set, let $\mathbf{F}$ be
a double-pointed fence with a
pointed
down-tail, and let $\varphi$ be a non-constant H\textsuperscript{+}-morphism on $\ov{\mathbf{S} \arrow \mathbf{F}}$.
If $\varphi{\restriction_F}$ is not one-to-one, then $\varphi(\ov{\mathbf{S} \arrow \mathbf{F}})$ is a fence.  
\end{corollary}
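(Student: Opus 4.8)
The statement is an immediate case split using the two preceding lemmas, so the plan is simply to combine them. First I would assume that $\varphi\restriction_F$ is not one-to-one and aim to show $\varphi(\ov{\mathbf{S} \arrow \mathbf{F}})$ is a fence. The key observation is that Lemma~\ref{one-to-one} says: if $\varphi(\top^\mathbf{S}) \notin \varphi(F)$, then $\varphi\restriction_F$ \emph{is} one-to-one. Taking the contrapositive, the hypothesis that $\varphi\restriction_F$ fails to be one-to-one forces $\varphi(\top^\mathbf{S}) \in \varphi(F)$.

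Once $\varphi(\top^\mathbf{S}) \in \varphi(F)$ is in hand, I would invoke Lemma~\ref{fence-corollary} directly — which under exactly the standing hypotheses on $\mathbf{S}$, $\mathbf{F}$ and $\varphi$ (connected double-pointed $\mathbf{S}$, double-pointed fence $\mathbf{F}$ with a pointed down-tail, non-constant H$^+$-morphism $\varphi$) concludes that $\varphi(\ov{\mathbf{S} \arrow \mathbf{F}})$ is a fence. This closes the argument.

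There is essentially no obstacle here: the real content was already dispatched in Lemmas~\ref{image-of-arrow}, \ref{image-of-fence}, \ref{only-element}, \ref{fence-corollary} and \ref{one-to-one}, and this corollary just packages the dichotomy "$\varphi(\top^\mathbf{S}) \in \varphi(F)$ or not" into a single clean statement tailored for use in Section~\ref{final-construction}. Accordingly I would keep the proof to two or three sentences, citing Lemma~\ref{one-to-one} (in contrapositive form) and then Lemma~\ref{fence-corollary}.

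\begin{proof}
Suppose that $\varphi\restriction_F$ is not one-to-one. By the contrapositive of Lemma~\ref{one-to-one}, it follows that $\varphi(\top^\mathbf{S}) \in \varphi(F)$. Hence, by Lemma~\ref{fence-corollary}, the ordered set $\varphi(\ov{\mathbf{S} \arrow \mathbf{F}})$ is a fence.
\end{proof}
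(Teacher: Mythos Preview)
Your proof is correct and essentially identical to the paper's own proof: both take the contrapositive of Lemma~\ref{one-to-one} to obtain $\varphi(\top^\mathbf{S}) \in \varphi(F)$, and then apply Lemma~\ref{fence-corollary} to conclude.
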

\begin{proof}
By Lemma~\ref{one-to-one}, if $\varphi{\restriction_F}$ is not one-to-one, then $\varphi(\top^\mathbf{S}) \in \varphi(F)$, and then $\varphi(\ov{\mathbf{S} \arrow \mathbf{F}})$ is a fence by Lemma~\ref{fence-corollary}.
\end{proof}

\subsection{Expanding and distorting}\label{final-construction}
We will begin by providing a sufficient condition to apply the Non-splitting
Lemma~\ref{no-splitting}.  
By the end of this section we will have proved that the only splitting algebras
in $\mathsf{DH}$, $\mathsf{H}^+$, and $\mathsf{RDP}$ 
are the two-element and three-element chains.
Recall that $\delta x = \ps x$, and recall by Proposition~\ref{useful} that every finite subdirectly irreducible double Heyting algebra and \h{} is simple.

\begin{lemma}\label{var-sub}
Let $\mathbf{A}$ and $\mathbf{B}$ be finite simple \h{s} or double Heyting algebras.
Then $\mathbf{A} \in \Var(\mathbf{B})$ if and only if $\mathbf{A} \leq \mathbf{B}$.
\end{lemma}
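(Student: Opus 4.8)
The reverse implication is immediate, since $\mathbf{A}\leq\mathbf{B}$ gives $\mathbf{A}\in\mathbb{S}(\mathbf{B})\subseteq\Var(\mathbf{B})$. So the plan is to prove the forward implication: assume $\mathbf{A}\in\Var(\mathbf{B})=\HSP(\mathbf{B})$ and deduce $\mathbf{A}\leq\mathbf{B}$.

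First I would cut the problem down to $\HS(\mathbf{B})$. The ambient variety is congruence distributive: every \h{} (resp.\ double Heyting algebra) has a distributive lattice term reduct, and the congruence lattice of the algebra is a sublattice of the (distributive) congruence lattice of that reduct. Hence J\'onsson's Lemma~\cite{Jon} applies to $\Var(\mathbf{B})$; since $\mathbf{A}$ is simple and therefore subdirectly irreducible, we get $\mathbf{A}\in\HSP_U(\mathbf{B})$, and as $\mathbf{B}$ is finite every ultrapower of $\mathbf{B}$ is a copy of $\mathbf{B}$, so in fact $\mathbf{A}\in\HS(\mathbf{B})$. Fix $\mathbf{C}\leq\mathbf{B}$ and $\theta\in\con(\mathbf{C})$ with $\mathbf{A}\cong\mathbf{C}/\theta$.

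Next I would use the congruence extension property to promote this to membership in $\SH(\mathbf{B})$. The varieties of \h{s} and of double Heyting algebras have the CEP: by (P2), (P3) and (P5), congruences correspond bijectively to $\delta$-closed filters containing $1$, and if $F$ is such a filter of a subalgebra $\mathbf{C}\leq\mathbf{B}$ then its up-closure $\up^{\mathbf{B}}F$ is again a $\delta$-closed filter of $\mathbf{B}$ — here one uses that $\delta$ is order-preserving and $\delta F\subseteq F$ — and $\up^{\mathbf{B}}F\cap C=F$ because $F$ is an up-set of $\mathbf{C}$. Thus the congruence $\bar\theta$ of $\mathbf{B}$ determined by $\up^{\mathbf{B}}(1/\theta)$ satisfies $\bar\theta\cap C^2=\theta$, so $c/\theta\mapsto c/\bar\theta$ embeds $\mathbf{A}\cong\mathbf{C}/\theta$ into $\mathbf{B}/\bar\theta$. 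Therefore $\mathbf{A}\in\SH(\mathbf{B})$.

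Finally, since $\mathbf{B}$ is simple, its only homomorphic images up to isomorphism are $\mathbf{B}$ itself and the trivial algebra; as $\mathbf{A}$ is simple, hence non-trivial, it embeds into neither the trivial algebra, so $\mathbf{A}\leq\mathbf{B}$. The only step that is not a routine assembly of J\'onsson's Lemma, finiteness and simplicity of $\mathbf{B}$ is the verification of the congruence extension property, and that follows cleanly from properties (P2)--(P5) as sketched above (alternatively it can be quoted from the literature on \h{s} and double Heyting algebras).
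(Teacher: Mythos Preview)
Your proof is correct and follows the same route as the paper: J\'onsson's Lemma together with finiteness reduces $\Var(\mathbf{B})$ to $\HS(\mathbf{B})$, the congruence extension property turns $\HS$ into $\SH$ (equivalently $\IS$ here), and simplicity of $\mathbf{B}$ finishes. The paper simply asserts that $\mathsf{DH}$ and $\mathsf{H}^+$ have the CEP, whereas you supply a direct verification via the $\delta$-closed filter description; this is a harmless elaboration rather than a different approach.
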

\begin{proof}
Since both $\mathbf{A}$ and $\mathbf{B}$ are finite simple algebras, by
J\'onsson's Lemma, we have $\mathbf{A} \in \Var(\mathbf{B})$ if and only if
$\mathbf{A} \in \HS(\mathbf{B})$.
Both $\mathsf{DH}$ and $\mathsf{H}^+$ have the congruence extension property, so every non-trivial algebra in  $\HS(\mathbf{B})$ is in $\IS(\mathbf{B})$.
\end{proof}

Therefore, if $\CV$ is a variety of \h{s} or double Heyting algebras, condition~(2) of the Non-splitting Lemma~\ref{no-splitting} is implied by
\[
\forall i \in \omega\;\exists \mathbf{B}_i \in \CV\colon\; \mathbf{B}_i~\text{is simple,}~\mathbf{A} \nleq \mathbf{B}_i~\text{and}~\mathbf{B}_i \nvDash \delta^i\Delta_{\mathbf{A}} = 0.\tag{$\dagger$}\label{inline-ref}
\]

For convenience, in this paragraph we will speak only of \h{s} and take note that everything we say also applies to double Heyting algebras.
Let $\CV$ be a variety of \h{s} and let $\mathbf{A} \in \CV$.
From Proposition~\ref{si-simple}, if $\mathbf{A}$ is finite, then $\mathbf{A}$ is simple if and only if its Priestley dual is connected.

To simplify the presentation, from now on, given a double-pointed ordered set $\Sb$, we will use the notation $\Sb$ for both the double-pointed ordered set and its underlying ordered set~$\ov{\Sb}$.

The operation $\arrow$ clearly preserves connectedness, so an algebra of the form $\bu(\mathbf{X} \arrow \mathbf{Y})$ will be simple if and only if $\mathbf{X}$ and $\mathbf{Y}$ are connected double-pointed ordered sets.
This will ensure that the algebras we construct are simple.

\begin{definition}
Let $\mathbf{X}$ be a double-pointed ordered set. 
For each $i \geq 1$, let $X_i = X \times \{i\}$.
Now let $\mathbf{X}_i$ be the double-pointed ordered set with underlying set~$X_i$, with the order defined by $(x,i) \leq (y,i)$ if and only if $x \leq y$, and let $\bot^{\mathbf{X}_i} = (\bot,i)$ and $\top^{\mathbf{X}_i} = (\top,i)$.
For each $n \geq 1$, let 
\[
\mathbf{X}^{(n)} = \mathbf{X}_1 \arrow \mathbf{X}_{2} \arrow 
\cdots \arrow \mathbf{X}_{n-1} \arrow \mathbf{X}_{n}. 
\]
Note that $\bot^{\mathbf{X}^{(n)}} = (\bot,1)$ and $\top^{\mathbf{X}^{(n)}} = (\top,n)$.
See Figure~\ref{fig:x-n} for an illustration.
For two finite ordered sets $\X$ and $\Y$, we will say that $\X$ \emph{never maps onto $\Y$} if there is no surjective H\textsuperscript{+}-morphism $\varphi \colon \X \to \Y$. 
\end{definition}

\begin{figure}[ht]
\begin{subfigure}[b]{0.5\textwidth}\centering
\begin{tikzpicture}[scale=0.6,nodes={scale=0.6}]
\begin{scope}[rotate=-20,xscale=1.2,yscale=1.8,scale=0.8,shift={(-2,0)}]
\draw (0,0) circle [x radius = 1, y radius = 1] (0,0);
\coordinate (beta-1) at ($(0,0)+(40:1)$);
\coordinate (beta-2) at ($(0,0)+(70:1)$);
\coordinate (alpha) at ($(0,0)+(210:0.5)$);
\coordinate (alpha-1) at ($(0,0)+(230:1)$);
\coordinate (alpha-2) at ($(0,0)+(290:1)$);
\node[fence node] at (alpha-2) (alpha-drawa) {};
\node[fence node] at (beta-2) (beta-drawa) {};
\end{scope}
\end{tikzpicture}
\caption{$\mathbf{X}$}
\end{subfigure}
\begin{subfigure}[b]{0.33\textwidth}\centering
\begin{tikzpicture}[scale=0.7,nodes={scale=0.7}]
\tikzfence{8}
\node [draw=none] at (0,-0.9) {}; 
\end{tikzpicture}
 \caption{$\mathbf{F}$}
\end{subfigure}

\bigskip

\begin{subfigure}[b]{\textwidth}\centering
\begin{tikzpicture}[scale=0.6,nodes={scale=0.6}]
\begin{scope}[shift={(0,0)}]
\begin{scope}[rotate=-20,xscale=1.2,yscale=1.8,scale=0.8]
\draw (0,0) circle [x radius = 1, y radius = 1] (0,0);
\coordinate (beta-1) at ($(0,0)+(40:1)$);
\coordinate (beta-2) at ($(0,0)+(70:1)$);
\coordinate (alpha) at ($(0,0)+(210:0.5)$);
\coordinate (alpha-1) at ($(0,0)+(230:1)$);
\coordinate (alpha-2) at ($(0,0)+(290:1)$);
\node[fence node] at (alpha-2) (alpha-draw1) {};
\node[fence node] at (beta-2) (beta-draw1) {};
\end{scope}
\end{scope}

\begin{scope}[shift={(3,0)}]
\begin{scope}[rotate=-20,xscale=1.2,yscale=1.8,scale=0.8]
\draw (0,0) circle [x radius = 1, y radius = 1] (0,0);
\coordinate (beta-1) at ($(0,0)+(40:1)$);
\coordinate (beta-2) at ($(0,0)+(70:1)$);
\coordinate (alpha) at ($(0,0)+(210:0.5)$);
\coordinate (alpha-1) at ($(0,0)+(230:1)$);
\coordinate (alpha-2) at ($(0,0)+(290:1)$);
\node[fence node] at (alpha-2) (alpha-draw2) {};
\node[fence node] at (beta-2) (beta-draw2) {};
\end{scope}
\end{scope}

\begin{scope}[shift={(6,0)}]
\begin{scope}[rotate=-20,xscale=1.2,yscale=1.8,scale=0.8]
\draw (0,0) circle [x radius = 1, y radius = 1] (0,0);
\coordinate (beta-1) at ($(0,0)+(40:1)$);
\coordinate (beta-2) at ($(0,0)+(70:1)$);
\coordinate (alpha) at ($(0,0)+(210:0.5)$);
\coordinate (alpha-1) at ($(0,0)+(230:1)$);
\coordinate (alpha-2) at ($(0,0)+(290:1)$);
\node[fence node] at (alpha-2) (alpha-draw3) {};
\node[fence node] at (beta-2) (beta-draw3) {};
\end{scope}
\end{scope}

\begin{scope}[shift={(9,0)}]
\begin{scope}[rotate=-20,xscale=1.2,yscale=1.8,scale=0.8]
\draw (0,0) circle [x radius = 1, y radius = 1] (0,0);
\coordinate (beta-1) at ($(0,0)+(40:1)$);
\coordinate (beta-2) at ($(0,0)+(70:1)$);
\coordinate (alpha) at ($(0,0)+(210:0.5)$);
\coordinate (alpha-1) at ($(0,0)+(230:1)$);
\coordinate (alpha-2) at ($(0,0)+(290:1)$);
\node[fence node] at (alpha-2) (alpha-draw4) {};
\node[fence node] at (beta-2) (beta-draw4) {};
\end{scope}
\end{scope}
\draw (beta-draw1) -- (alpha-draw2);
\draw (beta-draw2) -- (alpha-draw3);
\draw (beta-draw3) -- (alpha-draw4);
\end{tikzpicture}
\caption{$\mathbf{X}^{(4)}$}\label{fig:x-n}
\end{subfigure}

\bigskip

\begin{subfigure}{\textwidth}\centering
\begin{tikzpicture}[scale=0.6,nodes={scale=0.6}]

\begin{scope}[shift={(0,0)}]
\begin{scope}[rotate=-20,xscale=1.2,yscale=1.8,scale=0.8]
\draw (0,0) circle [x radius = 1, y radius = 1] (0,0);
\coordinate (beta-1) at ($(0,0)+(40:1)$);
\coordinate (beta-2) at ($(0,0)+(70:1)$);
\coordinate (alpha) at ($(0,0)+(210:0.5)$);
\coordinate (alpha-1) at ($(0,0)+(230:1)$);
\coordinate (alpha-2) at ($(0,0)+(290:1)$);
\node[fence node] at (alpha-2) (alpha-draw1) {};
\node[fence node] at (beta-2) (beta-draw1) {};
\end{scope}
\end{scope}

\begin{scope}[shift={(3,0)}]
\begin{scope}[rotate=-20,xscale=1.2,yscale=1.8,scale=0.8]
\draw (0,0) circle [x radius = 1, y radius = 1] (0,0);
\coordinate (beta-1) at ($(0,0)+(40:1)$);
\coordinate (beta-2) at ($(0,0)+(70:1)$);
\coordinate (alpha) at ($(0,0)+(210:0.5)$);
\coordinate (alpha-1) at ($(0,0)+(230:1)$);
\coordinate (alpha-2) at ($(0,0)+(290:1)$);
\node[fence node] at (alpha-2) (alpha-draw2) {};
\node[fence node] at (beta-2) (beta-draw2) {};
\end{scope}
\end{scope}

\begin{scope}[shift={(6,0)}]
\begin{scope}[rotate=-20,xscale=1.2,yscale=1.8,scale=0.8]
\draw (0,0) circle [x radius = 1, y radius = 1] (0,0);
\coordinate (beta-1) at ($(0,0)+(40:1)$);
\coordinate (beta-2) at ($(0,0)+(70:1)$);
\coordinate (alpha) at ($(0,0)+(210:0.5)$);
\coordinate (alpha-1) at ($(0,0)+(230:1)$);
\coordinate (alpha-2) at ($(0,0)+(290:1)$);
\node[fence node] at (alpha-2) (alpha-draw3) {};
\node[fence node] at (beta-2) (beta-draw3) {};
\end{scope}
\end{scope}

\begin{scope}[shift={(9,0)}]
\begin{scope}[rotate=-20,xscale=1.2,yscale=1.8,scale=0.8]
\draw (0,0) circle [x radius = 1, y radius = 1] (0,0);
\coordinate (beta-1) at ($(0,0)+(40:1)$);
\coordinate (beta-2) at ($(0,0)+(70:1)$);
\coordinate (alpha) at ($(0,0)+(210:0.5)$);
\coordinate (alpha-1) at ($(0,0)+(230:1)$);
\coordinate (alpha-2) at ($(0,0)+(290:1)$);
\node[fence node] at (alpha-2) (alpha-draw4) {};
\node[fence node] at (beta-2) (beta-draw4) {};
\end{scope}
\end{scope}
\begin{scope}[shift={(11,-0.5)}]
\tikzfence{8}
\end{scope}
\draw (beta-draw1) -- (alpha-draw2);
\draw (beta-draw2) -- (alpha-draw3);
\draw (beta-draw3) -- (alpha-draw4);
\draw (beta-draw4) -- (fence1);
\end{tikzpicture}
\caption{$\mathbf{X}^{(4)} \arrow \mathbf{F}$}
\end{subfigure}
\caption{}\label{fig:all-built}
\end{figure}

If $\X$ never maps onto $\Y$, then in the dual this means that, as \h{s}, we have $\bu(\Y) \nleq \bu(\X)$.
This implies that $\bu(\Y) \nleq \bu(\X)$ when treating them as double Heyting algebras. Thus, we consider only H\textsuperscript{+}-morphisms in what follows.

\begin{proposition}\label{lemma-b}
Let $\mathbf{X}$ be a finite double-pointed ordered set and let $\mathbf{F}$ be
a double-pointed fence with a
pointed
down-tail.
Assume that $\mathbf{X}$ is not a fence and that $|F| > |X|$. 
Then, for all $i \geq 1$, the ordered set $\mathbf{X}^{(i)}\arrow \mathbf{F}$ never maps onto $\mathbf{X}$.
\end{proposition}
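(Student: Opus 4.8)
The plan is to argue by contradiction and reduce everything to Corollary~\ref{main-corollary}. So suppose that, for some $i \geq 1$, there were a surjective H\textsuperscript{+}-morphism $\varphi \colon \mathbf{X}^{(i)} \arrow \mathbf{F} \to \mathbf{X}$; I want to derive a contradiction. The first, trivial, observation is that $\varphi$ is non-constant: a double-pointed ordered set has at least two elements, so $|X| \geq 2$, and a surjection onto a set with at least two elements is never constant.

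The one genuinely new point I would make is that $\varphi{\restriction_F}$ cannot be one-to-one. This is nothing more than the pigeonhole principle applied to the hypothesis $|F| > |X|$: the map $\varphi{\restriction_F}$ sends the $|F|$-element set $F$ into the $|X|$-element set $X$. Having noted this, I would set $\mathbf{S} := \mathbf{X}^{(i)}$; since $\arrow$ preserves connectedness, $\mathbf{S}$ is a connected double-pointed ordered set, and $\mathbf{F}$ is, by hypothesis, a double-pointed fence with a pointed down-tail. Thus the hypotheses of Corollary~\ref{main-corollary} are satisfied by the non-constant H\textsuperscript{+}-morphism $\varphi$ on $\mathbf{S} \arrow \mathbf{F} = \mathbf{X}^{(i)} \arrow \mathbf{F}$, and since $\varphi{\restriction_F}$ is not one-to-one, that corollary yields that $\varphi(\mathbf{X}^{(i)} \arrow \mathbf{F})$ is a fence.

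To finish, I would use surjectivity of $\varphi$: by definition the image $\varphi(\mathbf{X}^{(i)} \arrow \mathbf{F})$ carries the order inherited from $\mathbf{X}$ restricted to $\varphi$'s image, which is all of $X$; hence $\varphi(\mathbf{X}^{(i)} \arrow \mathbf{F}) = \mathbf{X}$. So $\mathbf{X}$ would be a fence, contradicting the hypothesis that it is not. Therefore no such $\varphi$ exists, i.e.\ $\mathbf{X}^{(i)} \arrow \mathbf{F}$ never maps onto $\mathbf{X}$; and since $i \geq 1$ was arbitrary, this holds for all $i$.

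I do not expect a real obstacle here: all of the difficulty has been front-loaded into Corollary~\ref{main-corollary} and the chain of lemmas on images of fences and on pointed down-tails (Lemmas~\ref{image-of-arrow}, \ref{image-of-fence}, \ref{only-element}, \ref{fence-corollary}, \ref{one-to-one}) that establish it. The only things to be careful about are confirming the two elementary hypotheses of that corollary — that $\varphi$ is non-constant and that $\varphi{\restriction_F}$ is not injective — and that $\mathbf{X}^{(i)}$ is connected so the corollary genuinely applies.
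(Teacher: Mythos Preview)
Your proposal is correct and follows essentially the same approach as the paper: the pigeonhole principle forces $\varphi{\restriction_F}$ to fail injectivity, Corollary~\ref{main-corollary} then forces the image to be a fence, and this contradicts surjectivity onto the non-fence~$\mathbf{X}$. Your version is in fact slightly more careful than the paper's, since you explicitly verify the non-constant hypothesis and the connectedness of $\mathbf{X}^{(i)}$ needed to invoke the corollary.
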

\begin{proof}
	Because $|F| > |X|$, by the pigeonhole principle, if $\varphi \colon \mathbf{X}^{(i)} \arrow \mathbf{F} \to \mathbf{X}$ is an \h[morphism]{}, then it is not one-to-one when restricted to $F$.
Hence, by Corollary~\ref{main-corollary}, $\varphi(\mathbf{X}^{(i)} \arrow \mathbf{F})$ is a fence.
Since $\mathbf{X}$ is not a fence, $\varphi$ is not surjective.
\end{proof}

This supplies us with our candidate algebras for condition~\eqref{inline-ref}, provided that the dual of the algebra is not a fence.
We require a special argument otherwise.
If $\mathbf{X}$ is a fence that has only
down-tails, then we can choose a large enough fence $\mathbf{F}$ with one up-tail.
In this case, by Lemma~\ref{up-tails}, for all $i \geq 1$, if $\varphi$ is a surjective \h[morphism]{} from $\varphi(\mathbf{X}^{(i)}\arrow \mathbf{F})$ to $\mathbf{X}$, then there is an up-tail in $\mathbf{X}$.
But $\mathbf{X}$ has none, so Proposition~\ref{lemma-b} holds in this case as well.
Similarly, if $\mathbf{X}$ is a fence with no down-tails, we can choose $\mathbf{F}$ so that it has only down-tails, and by Lemma~\ref{tails-to-tails}, the result still holds.
If $\mathbf{X}$ is a two-element fence, or in other words, a two-element chain, then $\bu(\mathbf{X}) \cong \mathbf{3}$ which we have already seen is a splitting algebra.
Thus, the only case that remains is if $\mathbf{X}$ is a fence with at least 3 elements, exactly one up-tail, and exactly one down-tail. 

\begin{proposition}\label{lemma-c}
Let $\mathbf{X}$ be a double-pointed 
fence and assume $|X| > 2$.
Then there is a fence $\mathbf{F}$ such that, for all $i \geq 1$, the ordered set $\mathbf{X}^{(i)} \arrow \mathbf{F}$ never maps onto~$\mathbf{X}$. 
\end{proposition}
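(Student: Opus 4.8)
By the discussion immediately preceding the statement, the only case still to be treated is that in which $\mathbf{X}$ is a fence possessing exactly one up-tail and exactly one down-tail; in particular $|X|$ is even and $|X|\ge 4$. The plan is to exhibit an $\mathbf{F}$ for which no surjective H\textsuperscript{+}-morphism $\mathbf{X}^{(i)}\arrow\mathbf{F}\to\mathbf{X}$ can exist, and the idea is to beat the obvious obstruction — namely that, since $\mathbf{X}$ is itself a fence, Corollary~\ref{main-corollary} on its own yields no contradiction — by arranging the \emph{length} of $\mathbf{F}$ so as to be arithmetically incompatible with such a surjection. Concretely, I would take $\mathbf{F}$ to be a double-pointed fence with a pointed down-tail and exactly one up-tail, with $|F|>|X|$ and with $|F|$ chosen in a prescribed residue class modulo $2(|X|-1)$; the pointed down-tail is exactly what is needed to invoke Corollary~\ref{main-corollary}, Lemma~\ref{one-to-one}, Lemma~\ref{image-of-arrow} and Lemma~\ref{up-tails} with $\mathbf{S}:=\mathbf{X}^{(i)}$, which is connected since $\arrow$ preserves connectedness.

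Now suppose, towards a contradiction, that $\varphi\colon\mathbf{X}^{(i)}\arrow\mathbf{F}\to\mathbf{X}$ is a surjective H\textsuperscript{+}-morphism for some $i\ge 1$; it is non-constant because $|X|>1$. Since $|F|>|X|$, the restriction $\varphi\restriction_F$ cannot be injective, so Lemma~\ref{one-to-one} forces $\varphi(\top^{\mathbf{X}^{(i)}})\in\varphi(F)$, and then Lemma~\ref{image-of-arrow} gives $\varphi(\mathbf{X}^{(i)}\arrow\mathbf{F})=\varphi(F)$, whence $\varphi(F)=X$ by surjectivity. Because $\mathbf{F}$ and $\mathbf{X}$ are fences, their up-set lattices are regular double p-algebras (Remark~\ref{fence-algebra}), so $\varphi$ is automatically a double Heyting morphism and therefore preserves both up-sets and down-sets; a routine analysis then shows that a double Heyting morphism between fences acts on the underlying paths as a walk that can reverse direction only at the two tails. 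Using Lemma~\ref{up-tails} and the dual of Lemma~\ref{tails-to-tails}, the surviving up-tail of $\mathbf{F}$ is carried to the unique up-tail of $\mathbf{X}$, and $\varphi(\top^{\mathbf{X}^{(i)}})\in\varphi(F)$ pins down the behaviour at the one extra covering relation glued onto $\bot^{\mathbf{F}}$. Assembling these facts, $\varphi\restriction_F$ realises $\mathbf{F}$ as a surjective such walk onto the path $\mathbf{X}$; its number of steps equals $|F|-1$, and this number is constrained to lie in an explicit set of residues modulo $2(|X|-1)$ depending only on $|X|$. The residue class of $|F|$ was chosen precisely to avoid that set, so we reach a contradiction, and hence $\mathbf{X}^{(i)}\arrow\mathbf{F}$ never maps onto $\mathbf{X}$ for any $i\ge 1$.

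The main obstacle is this final step: extracting, from the chain Corollary~\ref{main-corollary}, Lemma~\ref{one-to-one}, Lemma~\ref{image-of-arrow}, Lemma~\ref{up-tails}, the sharp statement that a hypothetical surjection essentially folds the path of $\mathbf{F}$ onto the path of $\mathbf{X}$, and then carrying out the bookkeeping of admissible walk-lengths relative to the gluing. In the cases already disposed of, $\mathbf{F}$ could be made to carry a tail of a type absent from $\mathbf{X}$, so one of the tail-preservation lemmas finished the argument at once; that shortcut is unavailable here because $\mathbf{X}$ has tails of both kinds, which is exactly why the numerical argument, and the careful choice of $|F|$ modulo $2(|X|-1)$ together with the pointed down-tail of $\mathbf{F}$, are needed.
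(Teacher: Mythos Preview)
Your approach differs from the paper's and, as sketched, has a real gap. You choose $\mathbf{F}$ with a pointed down-tail and a single up-tail, then try to force a contradiction by a residue argument on $|F|-1$ modulo $2(|X|-1)$. The walk analysis is sound: from Lemma~\ref{up-tails} the unique up-tail of $\mathbf{F}$ must hit the unique up-tail $(x_n,x_{n-1})$ of $\mathbf{X}$, and the interior vertices force a deterministic zigzag that reverses only at the two endpoints of $\mathbf{X}$. But that is where the argument stalls. With $|X|=n$ even and $|F|=m$ even, $m-1$ is odd, and an odd-length zigzag from $x_n$ always lands on a minimal element of $\mathbf{X}$; every odd residue modulo $2(n-1)$ occurs for some even $m>n$, so there is \emph{no} residue class for $|F|$ that you can avoid. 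The remaining local constraint at $\bot^{\mathbf{F}}$ only determines $\varphi(\top^{\mathbf{X}^{(i)}})$ (as the other upper cover of $\varphi(\bot^{\mathbf{F}})$, or $x_2$ if the walk ends at $x_1$) and never contradicts anything. To get a contradiction you would have to analyse extendability of $\varphi$ over $\mathbf{X}^{(i)}$, which depends on $i$ and on the unspecified choice of $\top^{\mathbf{X}},\bot^{\mathbf{X}}$; you do not do this.

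The paper's fix is much simpler and avoids all of this bookkeeping: it takes $\mathbf{F}$ to be a fence with $|X|+1$ elements having \emph{two} up-tails (and hence no pointed down-tail), namely $f_0>f_1<f_2>\cdots<f_n$ with $\bot^{\mathbf{F}}=f_1$. One up-tail pins $\varphi(f_n)=x_n$, $\varphi(f_{n-1})=x_{n-1}$; then an easy induction using only $\varphi(\up f_k)=\up\varphi(f_k)$ and Lemma~\ref{up-tails} forces $\varphi(f_k)=x_k$ for all $k\ge 1$. But then the \emph{other} up-tail $(f_0,f_1)$ must also map to an up-tail of $\mathbf{X}$, while $\varphi(f_1)=x_1$ lies in no up-tail --- a direct contradiction. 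No modular arithmetic or analysis of $\mathbf{X}^{(i)}$ is needed.
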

\begin{proof}
We just discussed the case that $\mathbf{X}$ has no down-tails or no up-tails.
So assume that $\mathbf{X}$ has one up-tail and one down-tail.
Note that this implies that $|X| \neq 3$.
The elements of $\mathbf{X}$ have their order given by 
\[x_1 < x_2 > x_3 < \dots > x_{n-1} < x_n.\] 
Let $\mathbf{F}$ be a fence with $|X| + 1$ elements, with the order given by 
\[f_0 > f_1 < f_2 > f_3 < \dots > f_{n-1} < f_n,\] 
and let $\bot^\mathbf{F} = f_1$, with $\top^\mathbf{F}$ left arbitrary.
Let $n \geq 1$
and let $\varphi$ be a morphism from $\mathbf{X}^{(n)} \arrow \mathbf{F}$ to $\mathbf{X}$. 
Suppose, by way of contradiction, that $\varphi(\mathbf{X}^{(n)}\arrow \mathbf{F}) = \mathbf{X}$. 

The pair $(f_{n-1}, f_n)$ is an up-tail in $\mathbf{X}^{(i)}\arrow \mathbf{F}$, so Lemma~\ref{up-tails} tells us that $(\varphi(f_{n-1}),\varphi(f_n))$ is an up-tail in $\mathbf{X}$.
There is exactly one up-tail in $\mathbf{X}$, namely $(x_{n-1},x_n)$, so $\varphi(f_{n-1}) = x_{n-1}$ and $\varphi(f_n) = x_n$.
We will now prove inductively that $\varphi(f_k) = x_k$, for all $i \geq 1$.
Let $k > 1$ and assume that $\varphi(f_i) = x_i$, for all $i \geq k$.
We will show that $\varphi(f_{k-1}) = x_{k-1}$.
If $f_k$ is minimal, then, since $\varphi(f_k) = x_k$, we have $\varphi(\up f_k) = \up\varphi(f_k) = \up x_k = \{x_{k-1},x_k,x_{k+1}\}$, and because $\varphi(f_{k+1}) = x_{k+1}$, we must have $\varphi(f_{k-1}) = x_{k-1}$.
By Lemma~\ref{up-tails}, a dual argument holds if $f_k$ is maximal.
Thus, for all $i \geq 1$, we have $\varphi(f_i) = x_i$.
Since $(f_0,f_1)$ is an up-tail in $\mathbf{X}^{(i)}\arrow \mathbf{F}$, we must have that $(\varphi(f_0), \varphi(f_1))$ is an up-tail in $\mathbf{X}$.
But $\varphi(f_1) = x_1$, and $x_1$ is certainly not part of any up-tail in $\mathbf{X}$, a contradiction. 
\end{proof}

\begin{corollary}\label{fence-corollary2}
Let $\mathbf{A}$ be a finite simple \h{} and let $\mathbf{X} = \cat{F}_p(\mathbf{A})$ be the Priestley dual of $\A$.
Make $\X$ into a double-pointed ordered set by choosing $\bot^\mathbf{X}$ and $\top^\mathbf{X}$ arbitrarily.
Then there exists a fence $\mathbf{F}$
such that $\mathbf{A}$ does not embed into $\bu(\mathbf{X}^{(i)}\arrow \mathbf{F})$, for all $i \in \omega$.
\end{corollary}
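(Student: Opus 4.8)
The plan is to read Corollary~\ref{fence-corollary2} off from Propositions~\ref{lemma-b} and~\ref{lemma-c} together with the duality dictionary recorded just before it: for finite ordered sets $\mathbf{Y}$ and $\mathbf{Z}$, the assertion ``$\mathbf{Y}$ never maps onto $\mathbf{Z}$'' is, on the algebra side, precisely the assertion that $\bu(\mathbf{Z})$ does not embed into $\bu(\mathbf{Y})$ as an \h{}. Since $\mathbf{A}$ is finite, $\mathbf{X}$ carries the discrete topology, so every up-set of $\mathbf{X}$ is clopen and $\bu(\mathbf{X}) = \but(\mathbf{X}) \cong \mathbf{A}$. Hence it suffices to produce a fence $\mathbf{F}$ for which $\mathbf{X}^{(i)}\arrow\mathbf{F}$ never maps onto $\mathbf{X}$, for every $i \geq 1$.

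First I would use Proposition~\ref{si-simple} to record that the dual $\mathbf{X}$ of the finite simple \h{} $\mathbf{A}$ is a finite connected ordered set, and fix $\bot^{\mathbf{X}}$ and $\top^{\mathbf{X}}$ arbitrarily. The two propositions are then tailored to the two possible shapes of $\mathbf{X}$. If $\mathbf{X}$ is not a fence, take $\mathbf{F}$ to be any double-pointed fence with a pointed down-tail and with $|F| > |X|$; Proposition~\ref{lemma-b} then yields that $\mathbf{X}^{(i)}\arrow\mathbf{F}$ never maps onto $\mathbf{X}$ for all $i \geq 1$. If $\mathbf{X}$ is a fence, I would first set aside the degenerate possibilities $|X| \leq 2$: a one-element $\mathbf{X}$ forces $\mathbf{A} \cong \mathbf{2}$ and a two-element connected $\mathbf{X}$ forces $\mathbf{A} \cong \mathbf{3}$, both of which are splitting algebras and so fall outside the scope of the present argument; thus we may assume $|X| > 2$, and then Proposition~\ref{lemma-c} supplies the required fence $\mathbf{F}$ directly. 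In either case, dualising the conclusion ``$\mathbf{X}^{(i)}\arrow\mathbf{F}$ never maps onto $\mathbf{X}$'' gives $\mathbf{A} \nleq \bu(\mathbf{X}^{(i)}\arrow\mathbf{F})$ for all $i$, as claimed.

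I do not expect a genuine obstacle here, since the real work has already been done in Corollary~\ref{main-corollary} and in the tail-preservation Lemmas~\ref{tails-to-tails} and~\ref{up-tails} on which the two propositions rest. The only points needing a line of care are checking that the hypotheses of Propositions~\ref{lemma-b} and~\ref{lemma-c} are actually met --- connectedness of $\mathbf{X}$, the existence of a fence with a pointed down-tail of the required size, and the strict inequality $|F| > |X|$ that forces any \h[morphism]{} onto $\mathbf{X}$ to be non-injective on $\mathbf{F}$ by pigeonhole --- and observing that the cases ``$\mathbf{X}$ is a fence'' and ``$\mathbf{X}$ is not a fence'' are exhaustive once $\mathbf{2}$ and $\mathbf{3}$ are excluded. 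Since $\arrow$ preserves connectedness, $\bu(\mathbf{X}^{(i)}\arrow\mathbf{F})$ is again simple, so the finite-space Priestley duality transports the combinatorial statement to the algebraic one with no friction.
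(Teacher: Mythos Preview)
Your proposal is correct and follows exactly the route the paper takes: the corollary is not given a separate proof in the paper but is read off from Propositions~\ref{lemma-b} and~\ref{lemma-c} together with the discussion between them, via the duality translation you cite. Your observation that the cases $\mathbf{A}\cong\mathbf{2}$ and $\mathbf{A}\cong\mathbf{3}$ must be set aside is also in line with the paper's intent---note that $|X|=1$ is already excluded by the requirement that $\mathbf{X}$ be double-pointed, and the case $|X|=2$ is indeed genuinely excluded (the statement would fail for $\mathbf{A}=\mathbf{3}$, since $\mathbf{3}$ embeds into every $\bu(\mathbf{X}^{(i)}\arrow\mathbf{F})$), which the paper handles only implicitly by applying the corollary downstream only when $|A|>3$.
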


The other part of condition~\eqref{inline-ref} is evaluating the term-diagram.
For this, the size and type of the fence is not important.
In fact, assuming it is a fence is not even necessary.
The following lemmas will aid in the calculation.

\begin{lemma}
Let $\mathbf{X}$ and $\mathbf{Y}$ be finite connected double-pointed ordered sets.
Let $U$ and $V$ be up-sets in $\mathbf{X}$ and then, for each $\star \in \{\join, \meet, \to, \dotdiv\}$, let $U \mathbin{\mathring{\star}} V$ be shorthand for $U \mathbin{\star}^{\bu(\X)} V$, and similarly for $\mathring{\dpc}U$.
Then, when evaluated in $\bu(\mathbf{X} \arrow \mathbf{Y})$, for each $\star \in \{\join,\meet,\dotdiv\}$, we have
\[
U \star V = U \mathbin{\mathring{\star}} V,
\]
for $\dpc$ we have
\[
 \dpc U = \mathring{\dpc}U \cup Y \cup \{\top^\mathbf{X}\},
\]
and for $\to$ we have
\[
U \to V = \begin{cases} (U \mathbin{\mathring{\to}}V)  \cup Y &\text{if $\top^\mathbf{X} \notin U \comp V$,} \\ 
(U \mathbin{\mathring{\to}} V) \cup Y\comp\{\bot^\mathbf{Y}\}&\text{otherwise.}\end{cases}
\]
\end{lemma}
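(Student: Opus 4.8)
The plan is to reduce all five formulas to direct computations using the concrete descriptions of the (dual) Heyting operations from Lemma~\ref{dual-operations}. I would view the finite ordered set $\mathbf{X}\arrow\mathbf{Y}$ as a Priestley space carrying the discrete topology, so that $\bu(\mathbf{X}\arrow\mathbf{Y})=\but(\mathbf{X}\arrow\mathbf{Y})$ is a finite double Heyting algebra in which all of the operations below are defined, and Lemma~\ref{dual-operations} applies with the ambient set taken to be $X\cup Y$. The single fact that drives everything is an observation about the order of $\mathbf{X}\arrow\mathbf{Y}$: by Definition~\ref{arrow-definition}, together with the minimality of $\bot^\mathbf{Y}$ and the maximality of $\top^\mathbf{X}$, the relation $\le^{\mathbf{X}\arrow\mathbf{Y}}$ is the disjoint union of $\le^\mathbf{X}$ and $\le^\mathbf{Y}$ with the single pair $\bot^\mathbf{Y}<\top^\mathbf{X}$ adjoined, and is already transitive. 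So the first step is to record: for $x\in X$, $\up^{\mathbf{X}\arrow\mathbf{Y}}x=\up^\mathbf{X}x$, while $\down^{\mathbf{X}\arrow\mathbf{Y}}x=\down^\mathbf{X}x$ unless $x=\top^\mathbf{X}$, in which case $\down^{\mathbf{X}\arrow\mathbf{Y}}\top^\mathbf{X}=\down^\mathbf{X}\top^\mathbf{X}\cup\{\bot^\mathbf{Y}\}$; dually, for $y\in Y$, $\down^{\mathbf{X}\arrow\mathbf{Y}}y=\down^\mathbf{Y}y$, while $\up^{\mathbf{X}\arrow\mathbf{Y}}y=\up^\mathbf{Y}y$ unless $y=\bot^\mathbf{Y}$, in which case $\up^{\mathbf{X}\arrow\mathbf{Y}}\bot^\mathbf{Y}=\up^\mathbf{Y}\bot^\mathbf{Y}\cup\{\top^\mathbf{X}\}$; in particular $\up^{\mathbf{X}\arrow\mathbf{Y}}(Y)=Y\cup\{\top^\mathbf{X}\}$. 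Since $U\subseteq X$ and up-sets of points of $X$ agree in $\mathbf{X}$ and in $\mathbf{X}\arrow\mathbf{Y}$, the sets $U$ and $V$ are again up-sets of $\mathbf{X}\arrow\mathbf{Y}$, so all the left-hand sides make sense.

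Next I would dispatch the easy cases. For $\meet$ and $\join$ there is nothing to do, since these are intersection and union in any lattice of up-sets and $U,V\subseteq X$. For $\dotdiv$: by Lemma~\ref{dual-operations}(4), $U\dotdiv V=\up^{\mathbf{X}\arrow\mathbf{Y}}(U\comp V)$, and as $U\comp V\subseteq X$ the facts above give $\up^{\mathbf{X}\arrow\mathbf{Y}}(U\comp V)=\up^\mathbf{X}(U\comp V)$, which is the value of $U\dotdiv V$ computed in $\bu(\mathbf{X})$. For $\dpc$: by Lemma~\ref{dual-operations}(2), $\dpc U=\up^{\mathbf{X}\arrow\mathbf{Y}}\big((X\cup Y)\comp U\big)$; since $U\subseteq X$ the complement is $(X\comp U)\cup Y$, and applying $\up^{\mathbf{X}\arrow\mathbf{Y}}$ to each piece, using $\up^{\mathbf{X}\arrow\mathbf{Y}}(X\comp U)=\up^\mathbf{X}(X\comp U)=\mathring{\dpc}U$ and $\up^{\mathbf{X}\arrow\mathbf{Y}}(Y)=Y\cup\{\top^\mathbf{X}\}$, gives $\dpc U=\mathring{\dpc}U\cup Y\cup\{\top^\mathbf{X}\}$.

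The one case that needs genuine care is $\to$, and this is where I expect whatever small difficulty there is to lie. By Lemma~\ref{dual-operations}(3), $U\to V=(X\cup Y)\comp\down^{\mathbf{X}\arrow\mathbf{Y}}(U\comp V)$ with $U\comp V\subseteq X$. Using the down-set facts from the first step, $\down^{\mathbf{X}\arrow\mathbf{Y}}(U\comp V)$ equals $\down^\mathbf{X}(U\comp V)$ when $\top^\mathbf{X}\notin U\comp V$ and equals $\down^\mathbf{X}(U\comp V)\cup\{\bot^\mathbf{Y}\}$ when $\top^\mathbf{X}\in U\comp V$, the extra point $\bot^\mathbf{Y}$ arising precisely from $\down^{\mathbf{X}\arrow\mathbf{Y}}\top^\mathbf{X}$ and from nowhere else. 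Complementing inside $X\cup Y$ and recalling that $U\mathbin{\mathring{\to}}V=X\comp\down^\mathbf{X}(U\comp V)$ then yields $(U\mathbin{\mathring{\to}}V)\cup Y$ in the first case and $(U\mathbin{\mathring{\to}}V)\cup(Y\comp\{\bot^\mathbf{Y}\})$ in the second, which is exactly the stated case split. The main obstacle throughout is simply bookkeeping: one must keep straight which points the single new comparability $\bot^\mathbf{Y}<\top^\mathbf{X}$ adds to the up-set or the down-set of an individual element, and be sure it perturbs only the two distinguished points $\bot^\mathbf{Y}$ and $\top^\mathbf{X}$; once the identities in the first paragraph are established, each of the five formulas is a one-line verification.
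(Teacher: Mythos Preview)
Your proposal is correct and follows essentially the same approach as the paper: both reduce each operation to the formulas in Lemma~\ref{dual-operations} and then track how the single extra comparability $\bot^\mathbf{Y}<\top^\mathbf{X}$ perturbs the relevant up-sets and down-sets, yielding identical case analyses. The only cosmetic difference is that you first record the pointwise identities for $\up$ and $\down$ in $\mathbf{X}\arrow\mathbf{Y}$ and then apply them, whereas the paper does the same computations inline.
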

\begin{proof}
First note that $U$ and $V$ are also up-sets in $\mathbf{X} \arrow \mathbf{Y}$.
Let $\Uparrow$ and $\Downarrow$ denote the operations $\up$ and $\down$ with respect to the order on~$\mathbf{X}$.
Recall by Lemma~\ref{dual-operations} that the operations listed above are given by:
\begin{alignat*}{2}
U \mathbin{\mathring{\join}} V &= U \cup V, &U \mathbin{\mathring{\meet}}V &= U \cap V,\\
U \mathbin{\mathring{\to}} V &= X \comp {\Downarrow}(U\comp V),&\qquad U \mathbin{\mathring{\dotdiv}} V &= {\Uparrow}(U\comp V),\\
\mathring{\dpc}U &= {\Uparrow}(X\comp U).
\end{alignat*}
The calculations for the lattice operations are trivial.
For $\dpc$, in $\bu(\mathbf{X} \arrow \mathbf{Y})$ we have
\begin{align*}
\dpc U & = \up \big[(X \cup Y)\comp U\big]
= \up\big[X \comp U \cup Y \comp U\big]
= \up(X\comp U) \cup \up Y.
\end{align*}
Since $\up(X \comp U)$ and $Y$ are disjoint, we have $\up(X \comp U) = {\Uparrow}(X\comp U) = \mathbin{\mathring{\dpc}}U$, and by construction, we have $\up Y = Y \cup \{\top^\mathbf{X}\}$.
Thus $\dpc U = \mathring{\dpc}U \cup Y \cup \{\top^\mathbf{X}\}$.
For $\dotdiv$, we have $U \dotdiv V = \up(U \comp V)$.
Since $U, V\subseteq X$, we have that $\up(U \comp V)$ and ${Y}$ are disjoint.
So $\up(U \comp V) = {\Uparrow}(U\comp V)$, which proves the claim.
For $\to$, we have
\begin{align*}
U \to V &= (X \cup Y) \comp\down(U \comp V)\\
&= \big[X \comp \down(U \comp V)\big] \cup \big[Y \comp \down(U \comp V)\big]\\
&= U \mathbin{\mathring{\to}} V \cup \big[Y \comp \down(U \comp V)\big].
\end{align*}
If $\top^\mathbf{X} \notin U\comp V$, then $Y \cap \down(U \comp V) = \varnothing$, and otherwise, $Y \cap \down(U \comp V) = \{\bot^\mathbf{Y}\}$.
So, 
\[
Y \comp \down(U \comp V) =  \begin{cases} Y &\text{if $\top^\mathbf{X} \notin U \comp V$,} \\ Y\comp\{\bot^\mathbf{Y}\}&\text{otherwise,}\end{cases}
\] 
completing the proof.
\end{proof}

\begin{lemma}
Let $\mathbf{X}$ and $\mathbf{Y}$ be finite connected double-pointed ordered sets.
Let $U$ and $V$ be up-sets in $\mathbf{X}$ and then, for each $\star \in \{\join, \meet, \to, \dotdiv\}$, let $U \mathbin{\mathring{\star}} V$ be shorthand for $U \mathbin{\star}^{\bu(\X)} V$, and similarly for $\mathring{\dpc}U$.
Then, when evaluated in $\bu(\mathbf{X} \arrow \mathbf{Y})$, for each $\star \in \{\join,\meet,\dotdiv\}$, we have
\[
(U \star V) \lr (U \mathbin{\mathring{\star}} V) = X \cup Y,
\]
for $\dpc$ we have
\[
\dpc U \lr \mathring{\dpc}U = \begin{cases}
X &\text{if $\top^\mathbf{X} \in \mathring{\dpc}U$},\\
X \comp \down\top^\mathbf{X}&\text{otherwise,}
\end{cases}
\]
and for $\to$ we have
\[
(U \to V) \lr (U \mathbin{\mathring{\to}} V) = {X}.
\]
\end{lemma}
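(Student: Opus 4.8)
The plan is to deduce everything from the previous lemma together with three standard facts: in the (finite, hence double Heyting) algebra $\bu(\mathbf{X}\arrow\mathbf{Y})$ one has $P\lr Q=(P\to Q)\meet(Q\to P)$, the meet is intersection, and $P\to Q=(X\cup Y)\comp\down(P\comp Q)$ by Lemma~\ref{dual-operations}, where throughout $\down$ denotes the down-set operator of $\mathbf{X}\arrow\mathbf{Y}$ and $\Downarrow$ that of $\mathbf{X}$. Before the case analysis I would record two elementary observations about $\mathbf{X}\arrow\mathbf{Y}$, both immediate from the fact that the only order relation linking $X$ and $Y$ is $\bot^{\mathbf{Y}}\le\top^{\mathbf{X}}$: first, no element of $X$ lies below any element of $Y$, so $\down Y=Y$; second, since $\bot^{\mathbf{Y}}$ is minimal in $\mathbf{Y}$, the only element of $Y$ below $\top^{\mathbf{X}}$ is $\bot^{\mathbf{Y}}$, so $\down\{\top^{\mathbf{X}}\}=\Downarrow\top^{\mathbf{X}}\cup\{\bot^{\mathbf{Y}}\}$.

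For $\star\in\{\join,\meet,\dotdiv\}$ the previous lemma gives $U\star V=U\mathbin{\mathring{\star}}V$ as one and the same subset $C$ of $X\cup Y$, so $(U\star V)\lr(U\mathbin{\mathring{\star}}V)=C\lr C=C\to C=(X\cup Y)\comp\down\varnothing=X\cup Y$, settling that part. In each of the two remaining cases the previous lemma exhibits the value $A$ computed in $\bu(\mathbf{X}\arrow\mathbf{Y})$ as the union of the value $B$ computed in $\bu(\mathbf{X})$ — a subset of $X$ — with a subset of $Y\cup\{\top^{\mathbf{X}}\}$; in particular $B\subseteq A$, so $B\comp A=\varnothing$, whence $B\to A=X\cup Y$ and therefore $A\lr B=(A\to B)\meet(X\cup Y)=A\to B=(X\cup Y)\comp\down(A\comp B)$. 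Thus the whole computation reduces to identifying the set $A\comp B$ (the part of $A$ lying outside $B$) and taking its down-set in $\mathbf{X}\arrow\mathbf{Y}$.

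For $\dpc$ one gets $A\comp B=Y\cup(\{\top^{\mathbf{X}}\}\comp\mathring{\dpc}U)$. If $\top^{\mathbf{X}}\in\mathring{\dpc}U$ this is $Y$, and $\down Y=Y$ yields $A\to B=(X\cup Y)\comp Y=X$; otherwise it is $Y\cup\{\top^{\mathbf{X}}\}$, whose down-set is $Y\cup\Downarrow\top^{\mathbf{X}}$ by the second observation, so $A\to B=X\comp\Downarrow\top^{\mathbf{X}}$, matching the stated formula. For $\to$, when $\top^{\mathbf{X}}\notin U\comp V$ we again have $A\comp B=Y$ and hence $A\lr B=X$. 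The one place where connectedness is used is the case $\top^{\mathbf{X}}\in U\comp V$: here $A\comp B=Y\comp\{\bot^{\mathbf{Y}}\}$, and I would show $\down(Y\comp\{\bot^{\mathbf{Y}}\})=Y$. The inclusion into $Y$ is $\down Y=Y$; for the reverse inclusion, $|Y|\ge 2$ together with connectedness of $\mathbf{Y}$ forces $\bot^{\mathbf{Y}}$ to be non-order-isolated, so the minimal element $\bot^{\mathbf{Y}}$ lies strictly below some $y\in Y\comp\{\bot^{\mathbf{Y}}\}$ and hence belongs to the down-set, which together with $Y\comp\{\bot^{\mathbf{Y}}\}\subseteq\down(Y\comp\{\bot^{\mathbf{Y}}\})$ gives equality. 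Then $A\to B=(X\cup Y)\comp Y=X$.

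The main obstacle here is purely bookkeeping: keeping the two down-set operators $\down$ and $\Downarrow$ apart and computing $\down(A\comp B)$ correctly in each branch. The only genuinely non-formal step is the last one — recognising that connectedness of $\mathbf{Y}$ (equivalently, that $\bot^{\mathbf{Y}}$ is not order-isolated) is exactly what pulls $\bot^{\mathbf{Y}}$ back into $\down(Y\comp\{\bot^{\mathbf{Y}}\})$, and hence what makes the $\to$-identity hold on the nose rather than merely up to the stray point $\bot^{\mathbf{Y}}$.
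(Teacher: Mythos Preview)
Your argument is correct and follows essentially the same route as the paper: reduce to $A\to B$ using $B\subseteq A$, compute $A\comp B$ from the previous lemma, and take its down-set in $\mathbf{X}\arrow\mathbf{Y}$. Your treatment is in fact slightly more careful than the paper's in one place: where the paper simply asserts $\down(Y\comp\{\bot^{\mathbf{Y}}\})=Y$, you correctly isolate connectedness of $\mathbf{Y}$ (together with $|Y|\ge 2$) as the reason $\bot^{\mathbf{Y}}$ is recaptured.
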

\begin{proof}
The first part holds because $U \star V = U \mathbin{\mathring{\star}} V$ whenever $\star \in \{\join,\meet,\dotdiv\}$.
For $\dpc$, we have $\mathring{\dpc}U \subseteq \dpc U$, so
\begin{align*}
\dpc U \lr \mathring{\dpc}U &= \dpc U \to \mathring{\dpc}U = (X \cup Y) \comp \down(\dpc U \comp \mathring{\dpc}U).
\end{align*}
Now, 
\[
\down(\dpc U \comp \mathring{\dpc}U) = \begin{cases} 
Y & \text{if $\top^\mathbf{X} \in \mathring{\dpc}U$,} \\
Y \cup \{\top^\mathbf{X}\} &\text{otherwise.}
\end{cases}
\]
Hence,
\begin{align*}
\dpc U \to \mathring{\dpc}U &= \begin{cases}
(X \cup Y) \comp Y &\text{if $\top^\mathbf{X} \in \mathring{\dpc}U$},\\
(X \cup Y) \comp (Y \cup \down\top^\mathbf{X}) &\text{otherwise,}
\end{cases}\\
&= \begin{cases}
X &\text{if $\top^\mathbf{X} \in \mathring{\dpc}U$},\\
X \comp \down\top^\mathbf{X}&\text{otherwise,}
\end{cases}
\end{align*}
as required.
For $\to$, we have $U \mathbin{\mathring{\to}} V \subseteq U \to V$, so 
\[
(U \to V) \lr (U\mathbin{\mathring{\to}} V) = (U \to V) \to (U \mathbin{\mathring{\to}} V).
\]
First observe that
\[
(U \to V) \comp (U \mathbin{\mathring{\to}} V) =  \begin{cases} Y &\text{if $\top^\mathbf{X} \notin U \comp V$,} \\ Y\comp\{\bot^\mathbf{Y}\}&\text{otherwise,}\end{cases}
\]
and in either case we have $\down[(U \to V) \comp (U \mathbin{\mathring{\to}} V)] = Y$.
Hence,
\[
(U \to V) \to (U \mathbin{\mathring{\to}} V) = (X \cup Y)\comp \down[(U \to V) \comp (U \mathbin{\mathring{\to}} V)] = (X \cup Y) \comp Y = X,
\]
as claimed.
\end{proof}

The next lemma is now immediate.

\begin{lemma}\label{pre-yankov}
Let $\mathbf{X}$ and $\mathbf{Y}$ be double-pointed ordered sets.
Let $U$ and $V$ be up-sets in $\mathbf{X}$ and then, for each $\star \in \{\join, \meet, \to, \dotdiv\}$, let $U \mathbin{\mathring{\star}} V$ be shorthand for $U \mathbin{\star}^{\bu(\X)} V$, and similarly for $\mathring{\dpc}U$.
Let $\chi(U,V)$ denote the element of $\bu(\mathbf{X} \arrow \mathbf{Y})$ given by
\begin{multline*}
\chi(U,V) = [(U \mathbin{\mathring{\meet}} V)\lr (U \meet V)] \meet [(U \mathbin{\mathring{\join}} V)\lr (U \join V)] \\
\meet [(U \mathbin{\mathring{\to}} V)\lr (U \to V)] \meet [(U \mathbin{\mathring{\dotdiv}} V) \lr (U \dotdiv V)].
\end{multline*}
Then $\chi(U,V) = X$. 
Similarly, if $\chi^+(U,V)$ is given by
\begin{multline*}
\chi^+(U,V) = [(U \mathbin{\mathring{\meet}} V)\lr (U \meet V)] \meet [(U \mathbin{\mathring{\join}} V)\lr (U \join V)] \\
\meet [(U \mathbin{\mathring{\to}} V)\lr (U \to V)] \meet [(\mathring{\dpc} U \lr \dpc U)], 
\end{multline*}
then
\[
\chi^+(U,V) = \begin{cases}X &\text{if $\top^\mathbf{X} \in \mathring{\dpc}U,$}\\X \comp \down \top^\mathbf{X}&\text{otherwise.}\end{cases}\]
\end{lemma}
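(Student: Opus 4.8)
The plan is to read off the value of each conjunct from the two lemmas immediately preceding the statement and then take the meet. The only facts needed beyond those lemmas are that the meet in $\bu(\mathbf{X}\arrow\mathbf{Y})$ is set-theoretic intersection, that the top element of this lattice is the whole underlying set $X\cup Y$, and that $X\comp\down\top^\mathbf{X}\subseteq X$.

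First I would handle $\chi(U,V)$. By the previous lemma, each of the conjuncts $(U\mathbin{\mathring{\meet}}V)\lr(U\meet V)$, $(U\mathbin{\mathring{\join}}V)\lr(U\join V)$ and $(U\mathbin{\mathring{\dotdiv}}V)\lr(U\dotdiv V)$ equals $X\cup Y$, i.e.\ the top element of $\bu(\mathbf{X}\arrow\mathbf{Y})$, while $(U\mathbin{\mathring{\to}}V)\lr(U\to V)$ equals $X$. Hence $\chi(U,V)=(X\cup Y)\cap(X\cup Y)\cap X\cap(X\cup Y)=X$, as claimed. For $\chi^+(U,V)$ the same three lemma evaluations show that the $\meet$-, $\join$- and $\to$-conjuncts equal $X\cup Y$, $X\cup Y$ and $X$ respectively, and the previous lemma gives $\mathring{\dpc}U\lr\dpc U$ equal to $X$ when $\top^\mathbf{X}\in\mathring{\dpc}U$ and to $X\comp\down\top^\mathbf{X}$ otherwise. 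Intersecting and using $X\comp\down\top^\mathbf{X}\subseteq X$ then yields $\chi^+(U,V)=X$ in the first case and $\chi^+(U,V)=X\comp\down\top^\mathbf{X}$ in the second.

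There is essentially no obstacle here; the statement is, as the surrounding text says, immediate from the two preceding lemmas. The only points requiring a little care are that all four operands inside each $\lr$ are up-sets of $\mathbf{X}\arrow\mathbf{Y}$, so that the meet genuinely computes as intersection, and the routine identification of $a\lr b$ with $a\to b$ whenever $a\supseteq b$, which was already used in establishing the lemmas this proof invokes.
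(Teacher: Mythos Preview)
Your proposal is correct and matches the paper's approach exactly: the paper simply declares the lemma ``immediate'' from the two preceding computations, and you have spelled out precisely that immediate deduction by intersecting the values already established. The only cosmetic difference is that the paper records the $\lr$-terms with the operands in the opposite order, but since $a\lr b=b\lr a$ this is irrelevant.
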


With these calculations established, we can now evaluate the term-diagram.
For the remainder of this section, we will assume all double-pointed ordered sets satisfy $\bot^\mathbf{X} \leq \top^\mathbf{X}$.
This is not a problematic assumption, since we can always find such a pair of elements in any finite connected ordered set with two or more elements.

\begin{definition}
Let $\mathbf{X}$ be a finite connected double-pointed ordered set, assume that $\bot^\mathbf{X} \leq \top^\mathbf{\X}$, and let $\mathbf{A} = \bu(\X)$.
For each $i \in \omega$ and each $a \in A$, let $a_i = a \times \{i\}$, and then let $U_n(a)$ denote the element of $\mathbf{X}^{(n)}$ given by
\[
U_n(a) = \bigcup_{i \leq n}a_i.
\]
By assuming $\bot^\mathbf{X} \leq \top^\mathbf{X}$, we ensure that $U(a)$ is an up-set, for all $a \in A$.
\end{definition}

\begin{lemma}\label{big-calculation}
	Let $\mathbf{X}$ be a finite connected double-pointed ordered set and let ${n \in \omega}$.
The map $U\colon \bu(\mathbf{X}) \to \bu(\mathbf{X}^{(n)})$ given by $a \mapsto U_n(a)$ is a double Heyting algebra homomorphism.
\end{lemma}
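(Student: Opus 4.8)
The plan is to show that $U_n$ preserves each operation in the double Heyting signature $\{\join, \meet, \to, \dotdiv, 0, 1\}$ by induction on $n$, using the preceding "calculation lemmas" as the engine. The base case $n=1$ is trivial since $\mathbf{X}^{(1)} = \mathbf{X}_1 \cong \mathbf{X}$ and $U_1 = \id$. For the inductive step, I would write $\mathbf{X}^{(n)} = \mathbf{X}^{(n-1)} \arrow \mathbf{X}_n$, so that $\bu(\mathbf{X}^{(n)}) = \bu(\mathbf{Z} \arrow \mathbf{X}_n)$ with $\mathbf{Z} := \mathbf{X}^{(n-1)}$. Observe that $U_n(a) = U_{n-1}(a) \cup a_n$, where $U_{n-1}(a)$ lives in $\mathbf{Z}$ and $a_n$ lives in $\mathbf{X}_n$; in the notation of Lemma~\ref{pre-yankov} (with $\mathbf{X} := \mathbf{Z}$, $\mathbf{Y} := \mathbf{X}_n$), this is the "stacked" up-set whose restriction to $\mathbf{Z}$ is $U_{n-1}(a)$ and whose restriction to $\mathbf{X}_n$ is the up-set $a_n$ corresponding to $a$.

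The key step is then a direct application of the two calculation lemmas preceding Lemma~\ref{pre-yankov}. For $\star \in \{\join, \meet, \dotdiv\}$, those lemmas give $U_n(a) \star U_n(b) = U_n(a) \mathbin{\mathring{\star}} U_n(b)$ evaluated in $\bu(\mathbf{Z})$, i.e. the operation in $\bu(\mathbf{Z}\arrow\mathbf{X}_n)$ restricted to "stacked" sets agrees with the operation in $\bu(\mathbf{Z})$; combined with the fact that the top summand $a_n$ behaves according to the operation in $\bu(\mathbf{X}_n) \cong \bu(\mathbf{X})$, and the inductive hypothesis that $U_{n-1}$ is a homomorphism $\bu(\mathbf{X}) \to \bu(\mathbf{Z})$, we get $U_n(a) \star U_n(b) = U_{n-1}(a\star b) \cup (a\star b)_n = U_n(a \star b)$. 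For $\to$, the calculation lemma shows $U \to V$ (in $\bu(\mathbf{Z}\arrow\mathbf{X}_n)$) equals $(U \mathbin{\mathring{\to}} V) \cup Y$ or $(U\mathbin{\mathring{\to}}V) \cup (Y\comp\{\bot^\mathbf{Y}\})$ depending on whether $\top^\mathbf{Z} \in U \comp V$; the crucial point is that for $U = U_{n-1}(a)$, $V = U_{n-1}(b)$ the relevant top element $\top^{\mathbf{X}^{(n-1)}}$ lies above the copy $(\top^\mathbf{X})_{n-1}$, and the behaviour of $\to$ on the top layer $\mathbf{X}_n$ must be reconciled with the Heyting implication $a \to b$ computed in $\bu(\mathbf{X})$. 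Here the assumption $\bot^\mathbf{X} \leq \top^\mathbf{X}$ (and hence that $U_n(a)$ is genuinely an up-set) is used. The constants $0 = \emptyset$ and $1 = X^{(n)}$ are preserved by inspection since $U_n(\emptyset) = \emptyset$ and $U_n(X) = X^{(n)}$.

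The main obstacle I anticipate is bookkeeping for $\to$ (and dually $\dotdiv$, though by the lemma $\dotdiv$ is the easy case here): one must carefully track the condition "$\top^\mathbf{X} \in U \comp V$" through the tower $\mathbf{X}_1 \arrow \cdots \arrow \mathbf{X}_n$ and check that it is equivalent to "$\top^\mathbf{X} \in a \comp b$" at each layer, so that the case split in the calculation lemma matches the case split in the definition of Heyting implication in $\bu(\mathbf{X})$. In particular one needs that $\top^{\mathbf{X}^{(k)}} = (\top^\mathbf{X})_k$ is maximal and that $(\top^\mathbf{X})_k \in U_k(a) \comp U_k(b)$ iff $\top^\mathbf{X} \in a \comp b$, which follows from $U_k(c)\cap X_k = c_k$. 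Once this is verified, assembling the induction is routine; it may in fact be cleanest to avoid the induction altogether and instead verify directly that $U_n(a) \star U_n(b) = U_n(a \star b)$ for each $\star$ by a single application of the relevant calculation lemma with $\mathbf{X} := \mathbf{X}^{(n-1)}$, $\mathbf{Y} := \mathbf{X}_n$, which I would present as the write-up.
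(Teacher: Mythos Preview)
Your proposal takes a computational, inductive route, whereas the paper's proof is a two-line duality argument: it simply observes that the projection $h\colon \mathbf{X}^{(n)} \to \mathbf{X}$, $(x,i)\mapsto x$, satisfies $h(\up z)=\up h(z)$ and $h(\down z)=\down h(z)$ (this is where $\bot^{\mathbf{X}}\le\top^{\mathbf{X}}$ is used), hence is a double Heyting morphism, and that $U_n=h^{-1}$. No induction, no case analysis, no calculation lemmas.

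More importantly, your proposed use of the calculation lemmas does not go through as stated. Those lemmas compute $U\star V$ in $\bu(\mathbf{X}\arrow\mathbf{Y})$ \emph{only for up-sets $U,V\subseteq X$}, i.e.\ up-sets living entirely in the first factor. But $U_n(a)=U_{n-1}(a)\cup a_n$ is not of this form when you take $\mathbf{Z}=\mathbf{X}^{(n-1)}$ and $\mathbf{Y}=\mathbf{X}_n$: it has a non-trivial component $a_n$ in the second factor. So the identities ``$U_n(a)\star U_n(b)=U_n(a)\mathbin{\mathring{\star}}U_n(b)$'' you invoke are not what the lemmas say, and your case analysis for $\to$ (tracking whether $\top^{\mathbf{Z}}\in U\setminus V$) is similarly not covered. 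To salvage the inductive approach you would need to prove fresh versions of those lemmas for up-sets of the stacked form $W\cup a_n$ with $W\subseteq Z$, $a\in\uu(X)$; this is doable but amounts to recomputing $\down$ and $\up$ in $\mathbf{Z}\arrow\mathbf{X}_n$ across both layers---at which point you are essentially verifying $h(\up z)=\up h(z)$ and $h(\down z)=\down h(z)$ by hand, which is exactly the content of the paper's one-line proof. I would recommend abandoning the induction and just checking the morphism conditions (M1) and (M2) for the projection $h$ directly.
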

\begin{proof}
We show that the map $h \colon \mathbf{X}^{(n)} \to \mathbf{X}$ given by $(x,i) \mapsto x$ is a double Heyting morphism whose dual is $U$.
Demanding that $\bot^\mathbf{X} \leq \top^\mathbf{X}$ ensures that $\down h(x) = h(\down x)$ and $\up h(x) = h(\up x)$.
Moreover, for each $a \in \uu(X)$, we have 
\[
h^{-1}(a) = \{(x,i) \in \mathbf{X}^{(n)} \mid h((x,i)) \in a\} = \{(x,i) \in \mathbf{X}^{(n)} \mid x \in a\} = U_n(a),
\]
which proves that $h$ is the dual of $U$.
\end{proof}

It follows immediately that the map $U$ is also an H\textsuperscript{+}-algebra homomorphism.
Now let $\mathbf{A}$ be a finite non-Boolean simple \h{} and let $\mathbf{X}$ be a double-pointed ordered set such that $\mathbf{A} \cong \bu(\mathbf{X})$. 
From Corollary~\ref{fence-corollary2}, there exists a finite connected double-pointed ordered set $\mathbf{F}$ such that $\mathbf{A} \nleq \bu(\mathbf{X}^{(i)}\arrow \mathbf{F})$, for all $i \geq 1$.
For each $i \in \omega$, let $\mathbf{C}_i = \bu(\mathbf{X}^{(i+2)}\arrow \mathbf{F})$.
Then $\mathbf{A} \nleq \mathbf{C}_i$.
The use of $i+2$ is necessary for Lemma~\ref{diagram-final} to work for \h{s} -- for double Heyting algebras, $i+1$ would suffice.
All that remains is to prove the following:
\[
\forall i \in \omega\colon\;\mathbf{C}_i \nvDash \delta^i\Delta_{\mathbf{A}} = 0.
\]

Let $\Delta^\mathsf{DH}_{\mathbf{A}}$ denote the term-diagram of $\mathbf{A}$ as a double Heyting algebra and let $\Delta^\mathsf{H^+}_{\mathbf{A}}$
denote the term-diagram of $\mathbf{A}$ as an \h{}.
Then,
\begin{align*}
\Delta^\mathsf{DH}_{\mathbf{A}}(\ov{x}) &= \bigwedge\{[x_{a \meet b}\lr (x_a \meet x_b)] \meet [x_{a \join b}\lr (x_a \join x_b)] \\
&\hspace*{2cm}\meet [x_{a \to b}\lr (x_a \to x_b)] \meet [x_{a \dotdiv b} \lr (x_a \dotdiv x_b)]\\
&\hspace*{5cm} \meet [x_0 \lr 0] \meet [x_1 \lr 1] \mid a,b \in A\},\\
\Delta^{\mathsf{H}^+}_{\mathbf{A}}(\ov{x}) &= \bigwedge\{[x_{a \meet b}\lr (x_a \meet x_b)] \meet [x_{a \join b}\lr (x_a \join x_b)] \\
&\hspace*{2cm}\meet [x_{a \to b}\lr (x_a \to x_b)] \meet [x_{\dpc a} \lr \dpc x_a]\\
&\hspace*{5cm} \meet [x_0 \lr 0] \meet [x_1 \lr 1] \mid a,b \in A\}.
\end{align*}

Notice that the next lemma does not rely on any particular choice of $\mathbf{Y}$.

\begin{lemma}\label{diagram-final}
Let $\mathbf{X}$ and $\mathbf{Y}$ be finite connected double-pointed ordered sets and let $\mathbf{A} = \bu(\mathbf{X})$.
For each $n \in \omega$, let $\mathbf{C}_n = \bu(\mathbf{X}^{(n+2)} \arrow \mathbf{Y})$.
Then $\mathbf{C}_n \nvDash \delta^n \Delta_{\mathbf{A}} = 0$.
\end{lemma}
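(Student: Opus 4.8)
The goal is to produce, inside $\mathbf{C}_n = \bu(\mathbf{X}^{(n+2)}\arrow\mathbf{Y})$, an assignment $\ov{U}$ to the variables $\ov{x}$ of the term-diagram $\Delta_{\mathbf{A}}$ such that $\delta^n\Delta_{\mathbf{A}}[\ov{U}]$ is not the bottom element $\varnothing$ of $\mathbf{C}_n$. The natural assignment is the one coming from Lemma~\ref{big-calculation}: for each $a\in A$, set $x_a := U_{n+2}(a)$, the ``stacked'' up-set $\bigcup_{i\le n+2} a_i$ inside $\mathbf{X}^{(n+2)}$, viewed as an up-set of $\mathbf{X}^{(n+2)}\arrow\mathbf{Y}$. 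Since $U\colon\bu(\mathbf{X})\to\bu(\mathbf{X}^{(n+2)})$ is a double Heyting (hence H$^+$) homomorphism, for every subterm of $\Delta_{\mathbf{A}}$ built from the lattice operations, $\to$, $\dotdiv$ and $\dpc$ the value $U_{n+2}(\cdot)$ of that subterm computed in $\bu(\mathbf{X}^{(n+2)})$ matches the corresponding combination of the $U_{n+2}(a)$'s. The point is then to compare these values, once transported into $\mathbf{C}_i = \bu(\mathbf{X}^{(n+2)}\arrow\mathbf{Y})$, with the ``internal'' values $U\mathbin{\mathring{\star}}V$ computed in $\bu(\mathbf{X}^{(n+2)})$, using the comparison lemmas culminating in Lemma~\ref{pre-yankov}.

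\textbf{Key steps.} First I would record that each conjunct of $\Delta_{\mathbf{A}}[\ov{U}]$ is of the form $\chi(U_{n+2}(a),U_{n+2}(b))$ or $\chi^+(U_{n+2}(a),U_{n+2}(b))$ from Lemma~\ref{pre-yankov} (the extra conjuncts $x_0\lr 0$ and $x_1\lr 1$ evaluate to $X^{(n+2)}\cup Y$ since $U_{n+2}(0)=\varnothing$ and $U_{n+2}(1)=X^{(n+2)}$ — here I use $\bot^\mathbf{X}\le\top^\mathbf{X}$ so that $0$ and $1$ stack correctly). By Lemma~\ref{pre-yankov}, every conjunct equals all of $X^{(n+2)}\cup Y$, except possibly those coming from $\dpc$, which in the H$^+$ case equal either $X^{(n+2)}\cup Y$ or $X^{(n+2)}\comp\down\top^{\mathbf{X}^{(n+2)}}$ (with $\top^{\mathbf{X}^{(n+2)}} = (\top,n+2)$). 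In the double Heyting case there is no $\dpc$-defect at all, and one already gets $\Delta_{\mathbf{A}}[\ov{U}] = X^{(n+2)}\cup Y$ — a single step of $\delta$ then strips off just the top copy of $\mathbf{X}$ plus $\mathbf{Y}$, and I would check by induction that $\delta^k$ applied to $X^{(n+2)}\cup Y$ still contains $(\bot,1)$ for all $k\le n$, because each application of $\delta = \neg\dpc$ removes at most one ``layer'' $\mathbf{X}_j$ from the top of the $\arrow$-chain together with what sits above it, and the chain has $n+1$ layers below the top. For the H$^+$ case the worst conjunct knocks out $\down(\top,n+2)$, i.e. essentially the top two layers $\mathbf{X}_{n+1}\cup\mathbf{X}_{n+2}$ together with $Y$; this is exactly why $n+2$ rather than $n+1$ copies are taken. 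So $\Delta_{\mathbf{A}}[\ov{U}]$ still contains the bottom layer $\mathbf{X}_1$, and in particular $(\bot,1)$.

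\textbf{The induction on $\delta$.} The remaining work is to show $\delta^n$ applied to this set is still nonempty. I would prove the clean statement that for an up-set $W$ of $\mathbf{X}^{(n+2)}\arrow\mathbf{Y}$ containing the whole layer $\mathbf{X}_1$ (equivalently containing $(x,1)$ for all $x\in X$), we have $(\bot,1)\in\delta^k(W)$ for all $k$ with $k\le n+1$ — the key combinatorial fact being that $\delta = \neg\dpc$ applied to such a $W$ produces an up-set whose complement's downset grows by at most one layer of the $\mathbf{X}$-chain per application, so after $n$ applications at least the layer $\mathbf{X}_1$ (and certainly the minimal element $(\bot,1)$, which is minimal in the whole poset) survives. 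Concretely I would use Lemma~\ref{dual-operations}(5): $\delta W = X\comp\down\up(X\comp W)$ with $X = X^{(n+2)}\cup Y$; since $W\supseteq\mathbf{X}_1$ and $(\bot,1)$ is a minimal element of the whole ordered set, track that $(\bot,1)$ stays out of $\down\up(X\comp\delta^{k-1}W)$ as long as layer $\mathbf{X}_1$ is not yet entirely eaten, and count layers. I expect the main obstacle to be precisely this layer-counting bookkeeping in the H$^+$ case — making sure the ``$\down\top^{\mathbf{X}^{(n+2)}}$'' defect, compounded with $n$ applications of $\delta$, still leaves the bottom layer intact, which is the reason the construction was set up with $n+2$ copies. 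Once that inequality is verified, $\delta^n\Delta_{\mathbf{A}}[\ov{U}]\ne\varnothing = 0^{\mathbf{C}_n}$, so $\mathbf{C}_n\nvDash\delta^n\Delta_{\mathbf{A}}=0$, as required.
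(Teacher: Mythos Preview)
Your overall strategy---the assignment $x_a\mapsto U_{n+2}(a)$, the appeal to Lemma~\ref{big-calculation} to identify subterms with $U(a)\mathbin{\mathring{\star}}U(b)$, and the invocation of Lemma~\ref{pre-yankov}---is exactly the paper's. But your reading of Lemma~\ref{pre-yankov} is off. That lemma gives $\chi(U,V)=Z$, not $Z\cup Y$: the $\to$-conjunct $(U\to V)\lr(U\mathbin{\mathring{\to}}V)$ already evaluates to $Z$, so the meet of all conjuncts is at most $Z$. Hence $\Delta^{\mathsf{DH}}_{\mathbf{A}}[\ov{U}]=Z$, not the top element $Z\cup Y$; your claim would make $\delta^k$ of it trivially equal to $1$ for all $k$, which is true but not what is going on. Similarly, in the $\mathsf{H}^+$ case $\down\top^{\mathbf{Z}}=\down(\top,n+2)$ is a subset of $X_{n+2}$ only (there is no copy sitting above $X_{n+2}$ in $\mathbf{Z}$), so the defect removes part of the \emph{top} layer, not two layers. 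The correct common lower bound in both cases is $X^{(n+1)}\subseteq\Delta_{\mathbf{A}}[\ov{U}]$, which is what the paper records.

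For the $\delta^n$ step the paper avoids your inductive layer-tracking entirely. Since $\delta W=\mathrm{whole}\comp\down\up(\mathrm{whole}\comp W)$, iterating gives the closed form $\delta^n W=(Z\cup Y)\comp(\down\up)^n\bigl((Z\cup Y)\comp W\bigr)$. Applying this with $W=X^{(n+1)}$ yields complement $X_{n+2}\cup Y$, and one only needs the single observation that $(\down\up)^n(X_{n+2}\cup Y)$ does not reach $X_1$---each $\down\up$ moves leftward by one copy in the $\arrow$-chain. By monotonicity of $\delta$ this suffices. Your induction would get there too, but once you fix the value of $\Delta_{\mathbf{A}}[\ov{U}]$, the closed-form route is both shorter and sidesteps the bookkeeping you flagged as the main obstacle.
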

\begin{proof}
Let $n \in \omega$ and, for convenience, let $\mathbf{Z} = \mathbf{X}^{(n+2)}$, so that $\mathbf{C}_n = \bu(\mathbf{Z} \arrow \mathbf{Y})$. 
Observe that because $\up Z = Z$ in $\mathbf{Z} \arrow \mathbf{Y}$, we have $\mathcal{U}(\mathbf{Z}) \subseteq \mathcal{U}(\mathbf{Z} \arrow \mathbf{Y}) = C_n$, and so $U_{n+2}(a) \in C_n$, for each $a \in A$.
Henceforth, we will omit $n+2$ from the subscript of $U$.
Map the variable $x_a$ into $C_n$ by $x_a \mapsto U(a)$.
As we did earlier, for each $* \in \{\join,\meet,\to,\dotdiv,\dpc\}$, let $\mathring{*}$ be shorthand for $*^{\bu(\mathbf{Z})}$.
Lemma~\ref{big-calculation} then tells us that $x_{a*b} = U({a*b}) = U(a)\mathbin{\mathring{*}}U(b)$ and $U(\dpc a) = \mathring{\dpc} U(a)$, for all $a,b \in A$.
We also have $U(0) = \varnothing$ and $U(1) = Z$.
Each $U(a)$ is a subset of $Z$, so Lemma~\ref{pre-yankov} applies.
Define $\chi$ and $\chi^+$ as in Lemma~\ref{pre-yankov}. 
By the definition of $\Delta_\mathbf{A}$, and evaluating it in $\bu(\mathbf{Z} \arrow \mathbf{Y})$, we then obtain
\begin{align*}
\Delta^\mathsf{DH}_{\mathbf{A}}(\ov{x}) &= \bigwedge\{\chi(U(a),U(b)) \meet \neg U(0) \meet U(1) \mid a,b \in A\} = Z,\\
\Delta^\mathsf{H^+}_{\mathbf{A}}(\ov{x}) 
&= \bigwedge\{\chi^+(U(a),U(b)) \meet \neg U(0) \meet U(1) \mid a,b \in A\} = Z \comp \down \top^\mathbf{Z},
\end{align*}
where the latter equality holds by choosing $U(a) = U(1)$.
In each case we have 
${{X}^{(n+1)} \subseteq \Delta_\mathbf{A}(\ov{x})}$.
Now write ${\mathbf{W} = \mathbf{Z}\arrow \mathbf{Y}}$.
In $\mathbf{C}_n$, we have 
\[ 
\delta^n {X}^{(n+1)} = {W}\comp (\down\up)^n({W} \comp {X}^{(n+1)}) = {W} \comp (\down\up)^n({X}_{n+2} \cup {Y}),
\]
and this is equal to $\varnothing$ if and only if 
$(\down\up)^n({X}_{n+2} \cup {Y}) = {W}$.
But, by construction, the leftmost part ${X}_1$ is not a subset of $(\down\up)^n({X}_{n+2} \cup {Y})$.
So $\delta^n{X}^{(n+1)} \neq \varnothing$. 
Then since $\delta$ is order-preserving, we have $\delta^n\Delta_{\mathbf{A}}(\ov{x}) \neq \varnothing$.
Hence, ${\mathbf{C}_n \nvDash \delta^n\Delta_{\mathbf{A}} = 0}$. 
\end{proof}

We will 
say that a variety $\CV$ of \h{s} or double Heyting algebras \emph{contains all fences} if $\bu(\F) \in \CV$, for every fence $\F$, and will
say that $\CV$ is \emph{finitarily closed under $\arrow$} provided that, for all double-pointed ordered sets
$\mathbf{X}$ and $\mathbf{Y}$, if $\bu(\mathbf{X})$ and $\bu(\mathbf{Y})$ are in $\CV$, then $\bu(\mathbf{X}\arrow\mathbf{Y})$ is in $\CV$ as well.
Let $\CV$ be a variety of double Heyting algebras or \h{s}
that is finitarily closed under $\arrow$ and contains all fences.
For every finite subdirectly irreducible algebra $\mathbf{A} \in \CV$ such that $|A| > 3$, we now have
\[
\forall i \in \omega\;\exists \mathbf{B}_i \in \CV\colon \mathbf{B}_i~\text{is simple,}~\mathbf{A} \nleq \mathbf{B}_i~\text{and}~\mathbf{B}_i \nvDash \delta^i\Delta_{\mathbf{A}} = 0.
\]
Recall that $\mathbf{2}$ is trivially a splitting algebra, and Theorem~\ref{3-subalgebra2} ensures that $\mathbf{3}$ is splitting.
Thus, by the Non-splitting Lemma~\ref{no-splitting}, we have proved the following theorem.

\begin{theorem}\label{thm:main}
Let $\CV$ be a variety of \h{s} or double Heyting algebras.
If $\CV$ is finitarily closed under $\arrow$ and contains all fences, then the
only finite splitting algebras in $\CV$ are $\mathbf{2}$ and $\mathbf{3}$. 
Moreover, if $\CV$ is generated by its finite members, then $\mathbf{2}$ and
$\mathbf{3}$ are the only splitting algebras in $\CV$.
\end{theorem}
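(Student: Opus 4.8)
The plan is to assemble this section's machinery into a verification of the Non-splitting Lemma~\ref{no-splitting}, treating $\mathbf 2$ and $\mathbf 3$ separately by hand. First I would record that a variety of \h{s} or double Heyting algebras is congruence distributive; hence, by the discussion following the Non-splitting Lemma together with McKenzie~\cite{McK}, if $\CV$ is generated by its finite members then every splitting algebra in $\CV$ is finite, so the ``moreover'' clause reduces to the first assertion. Since a splitting algebra is by definition finitely generated and subdirectly irreducible, and the only finite subdirectly irreducible \h{s} (or double Heyting algebras) of cardinality $2$ or $3$ are $\mathbf 2$ and $\mathbf 3$, it remains to prove (i) that no finite subdirectly irreducible $\mathbf{A}\in\CV$ with $|A|>3$ is splitting, and (ii) that $\mathbf 2$ and $\mathbf 3$ are splitting in $\CV$.

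For (i) I would fix such an $\mathbf{A}$. By Proposition~\ref{useful}(4) it is simple, so by Proposition~\ref{si-simple} its Priestley dual $\mathbf{X}:=\cat F_{\!p}(\mathbf{A})$ is a finite connected ordered set; I make $\mathbf{X}$ into a double-pointed ordered set by choosing a pair $\bot^\mathbf{X}\le\top^\mathbf{X}$. Corollary~\ref{fence-corollary2} then supplies a fence $\mathbf{F}$ with $\mathbf{A}\nleq\bu(\mathbf{X}^{(i)}\arrow\mathbf{F})$ for all $i\in\omega$, and I take $\mathbf{C}_i:=\bu(\mathbf{X}^{(i+2)}\arrow\mathbf{F})$ as the candidate witnesses for condition~\eqref{inline-ref}. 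The four points to check are: $\mathbf{C}_i\in\CV$ — because $\bu(\mathbf{X})=\mathbf{A}$ and $\bu(\mathbf{F})$ lie in $\CV$ (the latter since $\CV$ contains all fences), and finitary closure of $\CV$ under $\arrow$, applied repeatedly along $\mathbf{X}_1\arrow\cdots\arrow\mathbf{X}_{i+2}$ and once more with $\mathbf{F}$, keeps us inside $\CV$; $\mathbf{C}_i$ is simple — since $\arrow$ preserves connectedness, so Proposition~\ref{si-simple} again applies; $\mathbf{A}\nleq\mathbf{C}_i$ — Corollary~\ref{fence-corollary2} with $i$ replaced by $i+2$; and $\mathbf{C}_i\nvDash\delta^i\Delta_{\mathbf{A}}\approx 0$ — Lemma~\ref{diagram-final}. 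As $\mathbf{A}$ and each $\mathbf{C}_i$ are finite and simple, Lemma~\ref{var-sub} converts $\mathbf{A}\nleq\mathbf{C}_i$ into $\mathbf{A}\notin\Var(\mathbf{C}_i)$; together with the last point this is exactly condition~\eqref{inline-ref}, which implies condition~(2) of the Non-splitting Lemma~\ref{no-splitting}. Hence $\mathbf{A}$ is not splitting.

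For (ii) the argument is soft. Since $\mathbf 2$ embeds into every non-trivial member of $\CV$, the variety $\Var(\mathbf 2)$ lies below every non-trivial subvariety of $\CV$ and is therefore completely join-prime in $\LV$, so $\mathbf 2$ is a splitting algebra. For $\mathbf 3$: as $\CV$ contains all fences it contains $\bu$ of the two-element chain, which is (term-equivalent to) $\mathbf 3$, so $\mathbf 3\in\CV$; and by Theorem~\ref{3-subalgebra2} (together with the remark extending it to double Heyting algebras) every non-Boolean member of $\CV$ generates a subvariety containing $\mathbf 3$. Hence the only subvarieties of $\CV$ that omit $\mathbf 3$ are the trivial one and $\Var(\mathbf 2)$, and since $\mathbf 3\notin\Var(\mathbf 2)$ this says precisely that $(\Var(\mathbf 3),\Var(\mathbf 2))$ is a splitting pair in $\LV$ (by Lemma~\ref{lem:easy1}), so $\mathbf 3$ is splitting. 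Combining (i) and (ii) with the reduction of the first paragraph completes the proof.

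The genuine content has already been packed into Corollary~\ref{fence-corollary2} (the ``never maps onto'' constructions) and Lemma~\ref{diagram-final} (evaluating the term-diagram), so the main task here is organisational: confirming that the single family $\mathbf{C}_i$ simultaneously lies in $\CV$, is simple, omits $\mathbf{A}$, and refutes $\delta^i\Delta_{\mathbf{A}}\approx 0$. The one conceptual point not to overlook is that $\mathbf 3$ is a true exception and must be certified as splitting via Theorem~\ref{3-subalgebra2} rather than excluded; a minor subtlety is that Corollary~\ref{fence-corollary2} is stated for \h{s}, and the double Heyting case follows because ``$\mathbf{X}^{(i)}\arrow\mathbf{F}$ never maps onto $\mathbf{X}$'' yields $\bu(\mathbf{X})\nleq\bu(\mathbf{X}^{(i)}\arrow\mathbf{F})$ as \h{s} and hence also as double Heyting algebras.
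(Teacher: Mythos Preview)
Your proposal is correct and follows essentially the same approach as the paper: the paper's proof is the paragraph immediately preceding the theorem, which assembles Corollary~\ref{fence-corollary2}, Lemma~\ref{diagram-final}, Lemma~\ref{var-sub}, closure of $\CV$ under $\arrow$ and fences, and Theorem~\ref{3-subalgebra2} in exactly the way you describe to verify condition~\eqref{inline-ref} and hence the Non-splitting Lemma for $|A|>3$, with $\mathbf 2$ and $\mathbf 3$ handled separately and McKenzie's result invoked for the ``moreover'' clause.
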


It follows from the discussion in Section~\ref{sec:fep} that every
splitting algebra in each of $\mathsf{DH}$, $\mathsf{H}^+$, and $\mathsf{RDP}$
is finite. The following corollary is
immediate. For $\CV = \mathsf{DH}$ it is due to Wolter~\cite{wolter}.


\begin{corollary}\label{cor:DH-etal}
Let $\CV\in \{\mathsf{DH}, \mathsf{H}^+,\mathsf{RDP}\}$.   
The only splitting algebras in $\CV$ are $\mathbf{2}$ and $\mathbf{3}$.
\end{corollary}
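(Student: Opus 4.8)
The plan is to obtain Corollary~\ref{cor:DH-etal} as a direct application of Theorem~\ref{thm:main} to each of the varieties $\mathsf{DH}$, $\mathsf{H}^+$ and $\mathsf{RDP}$. For this it suffices to verify, in each case, that the variety $\CV$ (i) is a variety of \h{s} or of double Heyting algebras, (ii) is finitarily closed under $\arrow$, (iii) contains all fences, and (iv) is generated by its finite members. Then the ``moreover'' clause of Theorem~\ref{thm:main} gives exactly that $\mathbf{2}$ and $\mathbf{3}$ are the only splitting algebras. (That every splitting algebra in these varieties is in any case finite follows from McKenzie's theorem, since all three are congruence distributive and, by (iv), generated by their finite members, as noted just before the corollary.)

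For $\CV\in\{\mathsf{DH},\mathsf{H}^+\}$ there is almost nothing to check: (i) is by definition, (iii) holds because every finite ordered set $\mathbf{Z}$ makes $\bu(\mathbf{Z})$ a (double) Heyting algebra by Theorem~\ref{space-characterisation}, and the same remark makes (ii) trivial since $\mathbf{X}\arrow\mathbf{Y}$ is again a finite ordered set and we are considering the whole variety. Condition (iv) is the theorem in Section~\ref{sec:fep} asserting that $\mathsf{DH}$ and $\mathsf{H}^+$ have the finite embeddability property, which by Proposition~\ref{lem:fep-fin-gen} yields generation by the finite members. So Theorem~\ref{thm:main} applies verbatim and settles $\mathsf{DH}$ and $\mathsf{H}^+$.

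For $\mathsf{RDP}$ the content is in (ii), (iii) and (iv). By Katri\v n\'ak's Theorem~\ref{regular-implies-heyting}, $\mathsf{RDP}$ is term-equivalent to a subvariety of both $\mathsf{H}^+$ and $\mathsf{DH}$, so it counts as ``a variety of \h{s}'' for the purposes of Theorem~\ref{thm:main}; moreover, by Theorem~\ref{varlet}, $\bu(\mathbf{Z})\in\mathsf{RDP}$ precisely when every element of $\mathbf{Z}$ is minimal or maximal, i.e. $\mathbf{Z}$ has no three-element chain. Condition (iii) is then just Remark~\ref{fence-algebra}. For (ii) I would argue directly at the level of ordered sets: if neither $\mathbf{X}$ nor $\mathbf{Y}$ contains a three-element chain, then $\mathbf{X}\arrow\mathbf{Y}$ does not either, because the only new comparability is $\bot^\mathbf{Y}<\top^\mathbf{X}$, and $\bot^\mathbf{Y}$ stays minimal while $\top^\mathbf{X}$ stays maximal in $\mathbf{X}\arrow\mathbf{Y}$, so no chain gets lengthened to three elements. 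Finally (iv) is Theorem~\ref{thm:double-p-fep}: $\mathsf{RDP}$ has the FEP (the $\dpc$-closed filtration of Lemma~\ref{lem:height-2}, which preserves the ``no three-element chain'' property, is what makes this go through), whence $\mathsf{RDP}$ is generated by its finite members via Proposition~\ref{lem:fep-fin-gen}.

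The main obstacle is confined to the $\mathsf{RDP}$ case, namely keeping the ``expand-and-distort'' constructions of Section~\ref{final-construction} inside $\mathsf{RDP}$. The closure of $\mathsf{RDP}$ under $\arrow$ should be the short poset-theoretic check indicated above, but the generation by finite members genuinely relies on the FEP proof for $\mathsf{RDP}$ in Section~\ref{sec:fep}, which is less routine than for $\mathsf{DH}$ and $\mathsf{H}^+$ because the usual local-finiteness argument fails and one must invoke the height-preserving filtration of Lemma~\ref{lem:height-2}. Once these two facts are in place, the corollary follows immediately from Theorem~\ref{thm:main}.
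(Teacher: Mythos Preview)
Your proposal is correct and matches the paper's approach: the paper declares the corollary ``immediate'' from Theorem~\ref{thm:main} together with the fact (from Section~\ref{sec:fep}) that each of $\mathsf{DH}$, $\mathsf{H}^+$, $\mathsf{RDP}$ is generated by its finite members, and you have simply spelled out the hypothesis-checking that the paper leaves implicit. In particular, your verification that $\mathsf{RDP}$ is finitarily closed under $\arrow$ (via the ``no three-element chain'' observation) and contains all fences (Remark~\ref{fence-algebra}) supplies the only details not written out explicitly in the paper.
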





\bibliographystyle{amsplain}

\begin{thebibliography}{10}

\bibitem{ASVdeC19}
M.~E. Adams, Hanamantagouda~P. Sankappanavar, and J\'{u}lia Vaz~de Carvalho,
  \emph{Regular double {$p$}-algebras}, Math. Slovaca \textbf{69} (2019),
  no.~1, 15--34.

\bibitem{BvA05}
W.~J. Blok and C.~J. van Alten, \emph{On the finite embeddability property for
  residuated ordered groupoids}, Trans. Amer. Math. Soc. \textbf{357} (2005),
  no.~10, 4141--4157.

\bibitem{BF00}
W.~J. Blok and I.~M.~A. Ferreirim,
\emph{On the structure of hoops},
Algebra Universalis, \textbf{43}, (2000) no.~2--3, 233--257.
  
\bibitem{BP82}
W.J. Blok and D.~Pigozzi, \emph{On the structure of varieties with equationally
  definable principal congruences I},
  Algebra Universalis \textbf{15} (1982),
  195--227.

\bibitem{CZ97}
A.~Chagrov and M.~Zakharyaschev, \emph{Modal logic}, Clarendon Press, Oxford,
  1997.

\bibitem{D79}
B.~A. Davey, \emph{On the lattice of subvarieties}, Houston J. Math. \textbf{5}
  (1979), 183--192.

\bibitem{coalgheyting}
B.~A. Davey and J.~C. Galati, \emph{A coalgebraic view of {H}eyting duality},
  Studia Logica \textbf{75} (2003), no.~3, 259--270. 
\bibitem{ilo}
B.~A. Davey and H.~A. Priestley, \emph{Introduction to lattices and order},
  second ed., Cambridge University Press, New York, 2002. 
  
\bibitem{Day}
A.~Day, \emph{Splitting algebras and a weak notion of projectivity}, Algebra
  Universalis \textbf{5} (1975), 153--162.

\bibitem{Erdos}
P.~Erd{\H o}s, \emph{Graph theory and probability}, Canad. J. Math. \textbf{11}
  (1959), 34--38.

\bibitem{esakia}
L.~L. \`Esakia, \emph{Topological {K}ripke models 
\textup({R}ussian\textup)}, Dokl. Akad.
  Nauk SSSR \textbf{214} (1974), 298--301. 

\bibitem{Eva69}
T.~Evans, \emph{Some connections between residual finiteness, finite
  embeddability and the word problem}, Journal of London Mathematics Society
  \textbf{2} (1969), no.~1, 399--403.

\bibitem{GJ13}
N.~Galatos and P.~Jipsen, \emph{Residuated frames with applications to
  decidability}, Trans. Amer. Math. Soc. \textbf{365} (2013), no.~3,
  1219--1249.

\bibitem{GJKO07}
N.~Galatos, P.~Jipsen, T.~Kowalski, and H.~Ono, \emph{Residuated lattices: an
  algebraic glimpse at substructural logics}, Studies in Logic and the
  Foundations of Mathematics, vol. 151, Elsevier B. V., Amsterdam, 2007.

\bibitem{Jan63}
V.A. Jankov, \emph{The relationship between deducibility in the intuitionistic
  propositional calculus and finite implicational structures}, Soviet
  Mathematics Doklady \textbf{4} (1963), 1203--1204.

\bibitem{Jon}
B.~J\'onsson, \emph{Algebras whose congruence lattices are distributive}, Math.
  Scand. \textbf{21} (1967), 110--121.

\bibitem{doublep}
T.~Katri{\v{n}}{\'a}k, \emph{The structure of distributive double
  {$p$}-algebras. {R}egularity and congruences}, Algebra Universalis \textbf{3}
  (1973), 238--246. 

\bibitem{KM09}
T.~Kowalski and Y.~Miyazaki, \emph{All splitting logics in the lattice
  {NE}xt({KTB})}, Towards mathematical philosophy, Trends Log. Stud. Log.
  Libr., vol.~28, Springer, Dordrecht, 2009, pp.~53--67.

\bibitem{KO00}
T.~Kowalski and H.~Ono, \emph{Splittings in the variety of residuated
  lattices}, Algebra Universalis \textbf{44} (2000), no.~3--4, 283--298.

\bibitem{Kra92}
M.~Kracht, \emph{Even more about the lattice of tense logics}, Arch. Math.
  Logic \textbf{31} (1992), no.~4, 243--257.

\bibitem{Kra99}
\bysame, \emph{Tools and techniques in modal logic}, Studies in Logics, vol.
  142, Elsevier, Amsterdam, 1999.

\bibitem{McK}
R.~McKenzie, \emph{Equational bases and nonmodular lattice varieties}, Trans.
  Amer. Math. Soc. \textbf{174} (1972), 1--43.

\bibitem{NPT}
J.~Ne{\v s}et{\v r}il, A.~Pultr, and C.~Tardif, \emph{Gaps and dualities in
  {H}eyting categories}, Comment. Math. Univ. Carolin. \textbf{48} (2007), 9--23.

\bibitem{NT}
J.~Ne{\v s}et{\v r}il and C.~Tardif, \emph{Duality theorems for finite
  structures 
  \textup(characterising gaps and good characterisations\textup)}, J. of Comb.
  Theory, Series B \textbf{80} (2000), 80--97.

\bibitem{stone}
H.~A. Priestley, \emph{Stone lattices: a topological approach}, Fund. Math.
  \textbf{84} (1974), no.~2, 127--143. 
  
\bibitem{priestley-p-algebra}
\bysame, \emph{The construction of spaces dual to pseudocomplemented
  distributive lattices}, Quart. J. Math. Oxford Ser. (2) \textbf{26} (1975),
  no.~102, 215--228. 

\bibitem{rauszer1}
C.~Rauszer, \emph{Semi-{B}oolean algebras and their applications to
  intuitionistic logic with dual operations}, Fund. Math. \textbf{83}
  (1973/74), no.~3, 219--249. 

\bibitem{rauszer2}
\bysame, \emph{A formalization of the propositional calculus of {$H$-$B$} logic},
  Studia Logica \textbf{33} (1974), 23--34. 

\bibitem{normalfilters}
H.~P. Sankappanavar, \emph{Heyting algebras with dual pseudocomplementation},
  Pacific J. Math. \textbf{117} (1985), 405--415. 

\bibitem{Taylor2017}
C.~J. Taylor, \emph{Expansions of dually pseudocomplemented heyting algebras},
  Studia Logica (2017), 1--25.

\bibitem{regdp}
J.~C. Varlet, \emph{Alg\`ebres de {L}ukasiewicz trivalentes}, Bull. Soc. Roy.
  Sci. Li\`ege \textbf{37} (1968), 399--408. 
  
\bibitem{dpregular}
\bysame, \emph{A regular variety of type {$\langle
  2,\,2,\,1,\,1,\,0,\,0\rangle$}}, Algebra Universalis \textbf{2} (1972),
  218--223. 

\bibitem{Whi43}
P.~M. Whitman, \emph{Splittings of a lattice}, Amer. J. Math. \textbf{65}
  (1943), 179--196. 

\bibitem{wolter}
F.~Wolter, \emph{On logics with coimplication}, J. Philos. Logic \textbf{27}
  (1998), no.~4, 353--387. 

\end{thebibliography}


\end{document}